\numberwithin{equation}{section}  
\theoremstyle{plain}
\newtheorem{lemma}{Lemma}[section]
\newtheorem{theorem}[lemma]{Theorem}
\newtheorem*{theorem*}{Theorem}
\newtheorem{proposition}[lemma]{Proposition}
\newtheorem{corollary}[lemma]{Corollary}
\theoremstyle{definition}
\newtheorem{definition}[lemma]{Definition}
\newtheorem{example}[lemma]{Example}
\newtheorem{notation}[lemma]{Notation}
\newtheorem{construction}[lemma]{Construction}
\newtheorem{convention}[lemma]{Convention}
\newtheorem{warning}[lemma]{Warning}
\newtheorem{hyp}[lemma]{Hypothesis}
\theoremstyle{remark} 
\newtheorem{remark}[lemma]{Remark}
\newtheorem{claim}{Claim}[lemma]
\newcommand{\coind}{\operatorname{\mathrm{coind}}}
\newcommand{\colim}{\operatorname{colim}}
\renewcommand{\dim}{\operatorname{dim}}
\newcommand{\End}{\operatorname{End}}
\newcommand{\Ext}{\operatorname{Ext}}
\newcommand{\ev}{{\mathrm{ev}}}
\newcommand{\odd}{{\mathrm{odd}}}
\newcommand{\fl}{\operatorname{\mathsf{fl}}}
\newcommand{\GL}{\operatorname{GL}}
\newcommand{\Gr}{\mathsf{Gr}}%adding operatorname to this introduces a spurious gap.
\newcommand{\Hom}{\operatorname{Hom}}
\renewcommand{\Im}{\operatorname{Im}}
\newcommand{\ind}{\operatorname{\mathrm{ind}}}
\newcommand{\Ker}{\operatorname{Ker}}
\newcommand{\Lie}{\operatorname{\mathsf{Lie}}}
\renewcommand{\mod}{\operatorname{\mathsf{mod}}}
\newcommand{\Mod}{\operatorname{\mathsf{Mod}}}
\newcommand{\res}{\operatorname{res}}
\newcommand{\sGr}{\mathsf{sGr}}
\newcommand{\stmod}{\operatorname{\mathsf{stmod}}}
\newcommand{\half}{{\textstyle\frac{1}{2}}}
\newcommand{\gr}{\operatorname{gr}}
\newcommand{\Dist}{\operatorname{Dist}}
\newcommand{\Fl}{\mathrm{Fold}} 
\newcommand{\bul}{\bullet}
\newcommand{\da}{{\downarrow}}
\newcommand{\ov}{\overline}
\def\mcA{\mathcal{A}}
\def\mcG{\mathcal{G}} 
\def\mcH{\mathcal{H}}
\def\mcL{\mathcal{L}}
\def\mcU{\mathcal{U}}
\def\bbF{\mathbb F} 
\def\bbG{\mathbb G}
\def\bbM{\mathbb M}
\def\bbZ{\mathbb Z}
\newcommand{\fA}{\mathfrak{A}}
\newcommand{\fg}{\mathfrak{g}}
\newcommand{\si}{\sigma}
\newcommand{\bsi}{{\boldsymbol\sigma}}
\newcommand{\cP}{\mathscr{P}}
\date{\today}
\title[Finite supergroup schemes]
{Detecting nilpotence and projectivity over finite unipotent supergroup schemes}
\author[BIK$\Pi$]{Dave Benson, Srikanth B. Iyengar,
  Henning Krause\\ and Julia Pevtsova}
\address{Dave Benson \\ 
Institute of Mathematics\\ 
University of Aberdeen\\ 
King's College\\ 
Aberdeen AB24 3UE\\ 
Scotland U.K.}
\address{Srikanth B. Iyengar\\ 
Department of Mathematics\\
University of Utah\\ 
Salt Lake City, UT 84112\\ 
U.S.A.}
\address{Henning Krause\\ 
Fakult\"at f\"ur Mathematik\\ 
Universit\"at Bielefeld\\ 
33501 Bielefeld\\ 
Germany.}
\address{Julia Pevtsova\\
Department of Mathematics\\
University of Washington\\
Seattle, WA 98195\\
U.S.A.}
\begin{document}

\begin{abstract}
This work concerns the representation theory and cohomology of a finite unipotent supergroup scheme $G$ over a perfect field $k$ of positive characteristic $p\ge 3$. It is proved that an element $x$ in the cohomology of $G$ is nilpotent if and only if for every extension field $K$ of $k$ and every elementary
sub-supergroup scheme $E\subseteq G_K$, the restriction of $x_K$ to $E$ is nilpotent.  It is also shown that a $kG$-module $M$ is projective if and only if for every extension field $K$ of $k$ and every elementary sub-supergroup scheme $E\subseteq G_K$, the restriction of $M_K$ to $E$ is projective. The statements are motivated by, and are analogues of, similar results for finite groups and finite group schemes, but the structure of elementary supergroups schemes necessary for detection is more complicated than in either of these cases. 
One application is a detection theorem for the nilpotence of cohomology, and projectivity of modules, over finite dimensional Hopf subalgebras of the Steenrod algebra.
\end{abstract}

\thanks{The work was supported by the NSF grant DMS-1440140 while DB, SBI and JP were in residence at the MSRI.  SBI was partly supported by NSF grant DMS-1700985 and JP was partly supported by NSF grants DMS-0953011 and DMS-1501146 and Brian and Tiffinie Pang faculty fellowship.}

\maketitle
\setcounter{tocdepth}{1}
\tableofcontents

\section{Introduction}
There has been considerable research, some of recent vintage, aimed at understanding  representations of finite group schemes through the lens of their support varieties; see \cite{Bendel:2001a,Bendel:2001b, Bendel/Friedlander/Suslin:1997a, Bendel/Friedlander/Suslin:1997b, Benson/Iyengar/Krause/Pevtsova:2017b, Benson/Iyengar/Krause/Pevtsova:bikp4, Benson/Iyengar/Krause/Pevtsova:2018a, Friedlander/Pevtsova:2005a, Friedlander/Pevtsova:2007a}. The paradigm for these developments is the work on the modular representation theory of finite groups due to Alperin and Evens~\cite{Alperin/Evens:1982a},  Avrunin and Scott~\cite{Avrunin/Scott:1982a}, Chouinard~\cite{Chouinard:1976a}, Carlson~\cite{Carlson:1983a}, Dade~\cite{Dade:1978b}, Quillen~\cite{Quillen:1971b+c}, among others.   This paper is part of a project aimed at finding analogues of some of these results and techniques for finite supergroup schemes. The first step in this direction was taken by Drupieski \cite{Drupieski:2013a,Drupieski:2016a}, who proved finite generation of cohomology for finite supergroup schemes, generalizing the theorem of Friedlander and Suslin for finite group schemes \cite{Friedlander/Suslin:1997a}. Drupieski and Kujawa \cite{Drupieski/Kujawa:sv,Drupieski/Kujawa:ga,Drupieski/Kujawa:cs}  have initiated a study of support varieties for restricted Lie superalgebras.

A starting point for any theory of support varieties is the identification of a family of subgroups that detect nilpotence of cohomology classes and projectivity of representations. Once again, finite groups provide a model: Quillen~\cite{Quillen:1971b+c} proved that a class in mod $p$ cohomology  of a finite group $G$ is nilpotent if (and only if) its restriction to any elementary abelian $p$-subgroup $E < G$ is nilpotent in $H^*(E, \overline{\bbF}_p)$; see also  Quillen and Venkov~\cite{Quillen/Venkov:1972a}. This detection theorem is a key ingredient in the proof of Quillen's  stratification theorem that gives a complete description of the Zariski  spectrum of $H^*(G, \overline{\mathbb F}_p)$. Around the same time, Chouinard~\cite{Chouinard:1976a} proved that a representation $M$ of $G$ is projective if (and only if) the restriction of $M$ to any elementary abelian $p$-subgroups $E < G$ is projective.

In this work we establish analogues of the results of Quillen and Chouinard for  finite supergroup schemes. Throughout  we fix a perfect field $k$ of positive characteristic $p\ge 3$. A finite supergroup scheme over $k$ may be viewed either as a  functor on the category of $\bbZ/2$-graded commutative $k$-algebras with values in finite groups, or a finite dimensional $\bbZ/2$-graded cocommutative Hopf algebra; see Section~\ref{se:prelim} for details. The focus will be on unipotent supergroup schemes, though some of the preliminary results apply more generally. Each finite supergroup scheme has an \emph{even} part which is a finite  group scheme. In turn any finite group or group scheme furnishes an example of a supergroup scheme, but there are many more. Notably, the odd version of the additive group $\bbG_a$, denoted $\bbG_a^{-}$ and defined as a functor by $\bbG_a^{-}(R)=R_{1}^{+}$, the additive group on the odd part of $R$. The corresponding Hopf algebra  is $k[\si]/(\si^2)$, where $\si$ is in odd degree and a primitive element.

The notion of an ``elementary" supergroup scheme is a lot more
involved than in the case of finite groups. To begin with, we
construct a two-parameter family of finite supergroup schemes related
to the Witt vectors, denoted $E^-_{m,n}$, with $m \geq 2, n \geq 1$;
see Construction~\ref{not:Emn^-}. For example, $E^{-}_{m,1}$ can be
realised as an extension of $\bbG_{a}^{-}$ by $W_{m,1}$, the first
Frobenius kernel of Witt vectors of length $m$, recalled in
Appendix~\ref{sec:Witt}. Also
$E^-_{1,n}\cong \bbG_{a(n)}\times \bbG_a^-$, where $\bbG_{a(n)}$ is
the $n$th Frobenius kernel of $\bbG_a$.

\begin{definition}
\label{def:elementary}
A finite supergroup scheme $E$ over $k$ is   \emph{elementary} if it is isomorphic  to a quotient of some $E^-_{m,n}\times (\bbZ/p)^{\times s}$. \end{definition}

A special role is played by the quotients of $E^-_{m,n}$  by an even
subgroup scheme; these are the \emph{Witt elementary} supergroup
schemes, and described completely in
Theorem~\ref{th:Witt-classification}. Besides the $E^-_{m,n}$
themselves, one has also finite supergroup schemes that we denote
$E^-_{m,n,\mu}$, involving an element $\mu$ in $k^\times
/(k^\times)^{p^{m+n}-1}$.  The Hopf algebra corresponding to
$E^-_{m,n,\mu}$ is described in \eqref{not:Emnmu^-}. Any elementary supergroup scheme is of the form $E\cong E^0\times (\bbZ/p)^{\times s}$ where $E^0$ is isomorphic to either $\bbG_{a(r)}$ or a Witt elementary supergroup scheme.

The group algebra $kE$ of an elementary finite supergroup scheme $E$ is isomorphic to a tensor product of algebras of the form 
\begin{enumerate}[{\quad\rm(i)}]
\item
$k[s]/(s^{p})$
\item
$k[\si]/(\si^2)$, and 
\item
$k[s,\si]/(s^{p^n},\si^2-s^p)$, where $n\ge 1$,
\end{enumerate}
with $|s|$ even and $|\si|$ odd, and no more than one factor of types (ii) and (iii) combined is present. In particular, there is at most one generator of odd degree, and as an ungraded algebra $kE$ is a commutative complete intersection, even though case (iii) is not graded commutative.

Our main detection theorem is proved in Section~\ref{se:role}.

\begin{theorem}
\label{th:detect}
Let $G$ be a finite unipotent supergroup scheme over a field $k$ of positive characteristic $p\ge 3$. Then the following hold.
\begin{enumerate}[\quad\rm(i)]
\item
An element $x\in H^{*,*}(G,k)$ is nilpotent if and only if for every extension field $K$ of $k$ and every elementary  sub-supergroup scheme $E$ of $G_K$, the restriction of $x_K\in H^{*,*}(G_K,K)$ to $H^{*,*}(E,K)$ is nilpotent.
\item
A $kG$-module $M$ is projective if and only if for every extension field $K$ of $k$ and every elementary sub-supergroup scheme $E$ of $G_K$, the
restriction of $M_K$ to $E$ is projective.
\end{enumerate}
\end{theorem}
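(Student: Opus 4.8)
The ``only if'' directions of (i) and (ii) are immediate: restriction of cohomology along an inclusion $E\hookrightarrow G_K$ is a homomorphism of bigraded rings, restriction of modules is exact, and base change along $k\to K$ preserves both nilpotence and projectivity; so only the converses need proof. Since $k\to K$ is faithfully flat, a $kG$-module $M$ is projective if and only if $M_K$ is projective over $KG_K$, a class in $H^{*,*}(G,k)$ is nilpotent if and only if its image in $H^{*,*}(G_K,K)$ is, and an elementary sub-supergroup scheme of $(G_K)_L=G_L$ is also one of $G_L$; hence I may enlarge $k$ and assume it algebraically closed. Granting (ii), I deduce (i) by a support-variety argument: by Drupieski's finite generation theorem $H^{*,*}(G,k)$ is Noetherian, so the Benson--Iyengar--Krause machinery produces, for each closed point $\mathfrak p$ of $\Proj H^{*,*}(G,k)$ (defined over a suitable extension $K/k$), a $KG_K$-module $\kappa(\mathfrak p)$ with cohomological support $\{\mathfrak p\}$; it is not projective, so by (ii) its restriction to some elementary $E\subseteq G_K$ is not projective, and then $\operatorname{res}^{\ast}_{E}\bigl(\supp_{E}(\kappa(\mathfrak p)|_{E})\bigr)\subseteq\supp_{G}\kappa(\mathfrak p)=\{\mathfrak p\}$ forces $\mathfrak p$ into the image of $\Proj H^{*,*}(E,K)$. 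Thus every closed point of $\Proj H^{*,*}(G,k)$ is hit by an elementary sub-supergroup scheme, so every element of the kernel of $H^{*,*}(G,k)\to\prod_{K,E}H^{*,*}(E,K)$ vanishes at all closed points and is nilpotent by the graded Nullstellensatz. It therefore suffices to prove (ii).

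For (ii) I would work with $\pi$-points of $G$: flat homomorphisms $\alpha_K\colon A_K\to KG_K$, defined over field extensions $K/k$ and factoring through a unipotent abelian sub-supergroup scheme, where $A$ ranges over the local algebras $k[s]/(s^{p})$, $k[\sigma]/(\sigma^{2})$ and $k[s,\sigma]/(s^{p^{n}},\sigma^{2}-s^{p})$ that appear in the structure of $kE$ recalled above. The first task is the supergroup analogue of the Friedlander--Pevtsova theorem: $M$ is projective over $kG$ if and only if $\alpha_K^{\ast}(M_K)$ is free over $A_K$ for every $\pi$-point $\alpha_K$ of $G$. This rests on Drupieski's finite generation together with the comparison of $\pi$-support with cohomological support, and is presumably one of the main tasks of the earlier sections. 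Granted it, (ii) reduces to a purely structural claim: every $\pi$-point of $G$ factors, up to the natural equivalence of $\pi$-points, through an elementary sub-supergroup scheme of some $G_K$.

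By construction a $\pi$-point factors through a unipotent abelian sub-supergroup scheme $A\subseteq G_K$, so it is enough to understand the $\pi$-points of finite unipotent abelian supergroup schemes. Decomposing such an $A$ into its identity component and its \'etale part, and invoking the classification of Witt elementary supergroup schemes in Theorem~\ref{th:Witt-classification} together with the explicit description of their group algebras, one sees that the test algebra realising a given $\pi$-point is carried by a sub-supergroup scheme that is elementary in the sense of Definition~\ref{def:elementary}: the purely even contributions reduce to $\bbG_{a(r)}$-type and $(\bbZ/p)^{\times s}$-type points exactly as for finite unipotent group schemes, whereas the presence of an odd generator forces the relevant piece to be $\bbG_a^{-}$, some $E^-_{m,n}$, or a twisted form $E^-_{m,n,\mu}$. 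Composing the $\pi$-point with the inclusion of this elementary sub-supergroup scheme and using the easy direction of $\pi$-point detection over $E$ completes the proof.

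The crux is this last step, and within it the odd part. Quillen's and Chouinard's reductions terminate at elementary abelian $p$-groups, and for finite group schemes at the $\bbG_{a(r)}$'s and $(\bbZ/p)^{\times s}$'s; here a single odd generator subject to the non-graded-commutative relation $\sigma^{2}=s^{p}$ forces the correct test objects to be the Witt vector supergroup schemes $E^-_{m,n}$ and their twisted forms, so the full strength of Theorem~\ref{th:Witt-classification}, and a careful accounting of the algebras in (i)--(iii), must be in hand even to state which sub-supergroup schemes a $\pi$-point may pass through. A secondary, more technical obstacle is establishing the $\pi$-point detection theorem used above in the super setting --- building the $\pi$-support theory and comparing it with cohomological support, the analogue of the Friedlander--Pevtsova and Suslin--Friedlander--Bendel machinery --- which is presumably the bulk of the preparatory work.
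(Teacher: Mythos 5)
Your ``only if'' directions and the general strategy of deducing (i) from (ii) are fine in outline, though the paper gets (i) from (ii) far more cheaply: Lemma~\ref{le:hocolim} and Theorem~\ref{th:projimpliesnilp} turn a nilpotence question for $\xi$ into projectivity of an explicit colimit, with no residue objects $\kappa(\mathfrak p)$, no comparison of supports across field extensions, and no Nullstellensatz. (The paper's actual structure is mixed: in the cases of Hypothesis~\ref{hyp:inductive}(a),(b) projectivity is proved first and nilpotence deduced, while in case (c) nilpotence is proved directly via the super Quillen--Venkov lemma and projectivity deduced by Proposition~\ref{pr:nilpimpliesproj}.) Your support-theoretic reduction is not wrong in principle, but it is much heavier than needed and, more importantly, it only shifts all of the weight onto (ii).

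The genuine gap is in your proof of (ii), which is circular. The ``supergroup analogue of the Friedlander--Pevtsova theorem'' --- projectivity over $kG$ is detected by freeness of $\alpha_K^*(M_K)$ along all $\pi$-points --- is not established anywhere in the earlier sections, and it cannot simply be presumed: already for finite group schemes, the $\pi$-point and one-parameter-subgroup detection theorems (Friedlander--Pevtsova, Suslin--Friedlander--Bendel, Bendel) are themselves proved by exactly the kind of elementary-subgroup detection statement you are asked to prove, via induction on quotients and Steenrod operations. In the present paper the logical order is the same: Theorem~\ref{th:detect} comes first, and the $\pi$-point theory for unipotent supergroup schemes is explicitly deferred to a follow-up paper that uses it as input. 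Likewise your ``purely structural claim'' that every $\pi$-point factors, up to equivalence, through an elementary sub-supergroup scheme does not follow from Theorem~\ref{th:Witt-classification}, which classifies quotients of $E^-_{m,n}$ by even subgroup schemes, not the abelian unipotent sub-supergroup schemes of an arbitrary $G_K$ through which a $\pi$-point factors. What the paper actually does --- and what your outline replaces by an unproven principle --- is: reduce to a surjection $f\colon G\to \bbG_{a(r)}\times(\bbG_a^-)^\epsilon\times(\bbZ/p)^{\times s}$ inducing an isomorphism on $H^{1,*}$; analyse $\ker f^*$ in degree $2$ by Steenrod operations (Theorems~\ref{th:B3.6} and~\ref{th:bp2}); dispose of the Quillen--Venkov-type cases through Hypothesis~\ref{hyp:inductive} and Theorem~\ref{th:detection}; and, in the residual case where the kernel is spanned by $\zeta^2+\gamma x_r$, prove via the cohomological computations of Part~II and the characterisation Theorem~\ref{th:elemcoh} that $G$ is itself elementary, so that Theorem~\ref{th:main} applies and an induction on proper sub-supergroup schemes after field extension finishes the proof. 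None of that content is supplied by your argument.
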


We also prove two versions of (i) for arbitrary coefficients. Theorem~\ref{th:main}(i)  proves the detection of nilpotents  for $H^{*,*}(G,M)$ for any $G$-module $M$ where nilpotents are  understood in the sense of Definition~\ref{de:nilp}. Theorem~\ref{th:nilp}, which generalises a  theorem of Bendel~ \cite{Bendel:2001a} for unipotent group schemes, gives detection of  nilpotents for $H^{*,*}(G, \Lambda)$ with coefficients in a unital $G$-algebra $\Lambda$.  

We also formulate and prove $\bbZ$-graded versions of our theorems, and apply them to finite dimensional subalgebras of the Steenrod algebra over $\bbF_{p}$. The structure of the Steenrod algebra  is well understood and the detection theorem in that case takes on a particularly simple form; see Theorem~\ref{th:Steenrod}.

\subsection*{Looking ahead}
Our results only cover  unipotent supergroup schemes, and it would be interesting to understand what more needs to be done in order to cover the general case. Unlike the case of finite group schemes, for a general finite supergroup scheme it is not true that cohomology modulo nilpotents and projectivity of modules are  detected on unipotent sub-supergroup schemes. Conversations with Chris Drupieski lead us to suspect that there is a mild generalisation of the Witt elementaries that are not unipotent, but which leads to a 
suitable detection family in this context. 
%more appropriate definition of elementary supergroup schemes 

In a different direction, the detection theorems are only the first steps towards developing a theory of support varieties. Again we turn to groups to show us the way: While Chouinard's work highlights the role of elementary abelian groups, Dade~\cite{Dade:1978b} proved that to detect projectivity of a  representation of an elementary abelian $p$-group $E$, one can restrict further to all \emph{cyclic shifted subgroups} of the group algebra $kE$, which then becomes purely a problem in linear algebra. This detection theorem, now   known as ``Dade's lemma", is the foundation for the theory of rank varieties for modules for finite groups pioneered by  Carlson~\cite{Carlson:1983a}, and further developed by Benson, Carlson, and Rickard~\cite{Benson/Carlson/Rickard:1996a}. Their work was absorbed and generalised to the theory of $\pi$-points for finite groups schemes by Friedlander and Pevtsova~\cite{Friedlander/Pevtsova:2005a, Friedlander/Pevtsova:2007a}.  

Theorem~\ref{th:detect} opens up the road to a theory of $\pi$-points for finite unipotent supergroup schemes. We take this up in a follow up paper~\cite{Benson/Iyengar/Krause/Pevtsova:2019a}, where it is used to establish a stratification theorem  for the stable module category, akin to the one in \cite{Benson/Iyengar/Krause/Pevtsova:2018a}.

\subsection*{Structure of the paper}
The strategy of the proof of Theorem~\ref{th:detect} is quite intricate and we found it expedient to divide the paper into two parts. Before delving into a summary of the parts, it would perhaps help to present a roadmap of the proof; it follows the one for finite unipotent group schemes given in \cite{Bendel:2001a}, but a  number of extra complications arise. 

The simplest scenario is that there is a surjective map from $G$ to either $\bbG_a^-\times \bbG_a^-$ or $\bbG_{a(1)}\times \bbG_{a(1)}$, for then the argument in \cite[Theorem 8.1]{Bendel:2001a} applies. Otherwise one reduces to the case where there is a surjective map $f\colon G\to \bbG_{a(r)} \times (\bbG_a^-)^\epsilon \times (\bbZ/p)^s$  with $r,s\ge 0$ and $\epsilon=0$ or $1$, such that  $H^{1,*}(f)$ is an isomorphism. It is easy to tackle the case when   $f$ is itself an isomorphism. When it is not an isomorphism, a standard argument yields that $H^{2,*}(f)$ has a kernel. The situation when this kernel contains an element of odd degree, that is to say, when $H^{2,1}(f)$ is not one-to-one, is dealt with in \cite{Benson/Pevtsova:bp2}.
% and it again can be dealt with using techniques familiar from the group schemes context. 
The difficulty arises when the kernel of $H^{2,*}(f)$ is concentrated in even degrees. Even here there are two cases, as elaborated on further below. The first one allows us to drop to proper subgroups and is easy to handle. The second one leads to elementary supergroup schemes. This is where the major deviation from \cite{Bendel:2001a} occurs, and  requires the bulk of the work. It occupies Part II of this paper. 

Here is a more detailed description of the paper:  Part I, comprising
Sections~\ref{se:prelim} to \ref{se:inductive}, provides background material on finite supergroup schemes and extensions of a number of techniques  used in other contexts.  Section~\ref{se:prelim} starts things off with main definitions,  examples, and basic properties of supergroup schemes. Section~\ref{se:low} records some key facts on low degree cohomology modules. Section~\ref{se:steenrod} describes the action of Steenrod operations on the cohomology of finite supergroup schemes. The central calculation there is Theorem~\ref{th:B3.6} that establishes that a homogeneous  ideal in $H^{*,*}(\bbG_{a(r)} \times \bbG_a^- \times (\bbZ/p)^{\times s},k)$ stable under  the Steenrod operations and containing an element from $H^{2,0}$ must have an element of a  specific form. The proof follows closely the proofs of the analogous result for  $(\bbZ/p)^{\times s}$, due to Serre~\cite{Serre:1965a},  for  $\bbG_{a(r)}$, due to Bendel, Friedlander, and Suslin~\cite{Bendel/Friedlander/Suslin:1997b}, and for $\bbG_{a(r)} \times (\bbZ/p)^{\times s}$ due to Bendel~\cite{Bendel:2001a}, but the conclusion is different. Whereas for finite group schemes, such an ideal always has an element that is  a product of Bocksteins of elements in degree $1$, in the super case we get either a product of Bocksteins or a mysterious element $\zeta^2 - \gamma x_r$ with $|\zeta| =(1,1), |x_r| =(2,0), \gamma \in k$.  This element is responsible for the work we have to do in Part II. 

Part I culminates in Theorem~\ref{th:detection} that asserts that if a finite unipotent supergroup satisfies certain conditions, laid out in Hypothesis~\ref{hyp:inductive},  nilpotence (of cohomology elements) and projecitivty (of modules) are detected on proper sub-supergroup schemes after field extensions.  For finite group schemes (not super ones) the calculation with the Steenrod operations  in Section~\ref{se:steenrod} would  then yield that any unipotent group scheme that is not  isomorphic to $\bbG_{a(r)} \times (\bbZ/p)^{\times s}$ satisfies Hypothesis~\ref{hyp:inductive}.  And this is precisely the argument in Bendel~\cite{Bendel:2001a}. Thus,  up to the end of Part I we are mostly mimicking the techniques existing in the literature.  Life in the super world turns out to be more complicated, all because of the cohomology class $\zeta^2 - \gamma x_r$  that cannot be eliminated with the help of the Steenrod operations. The task of the second part  of the paper is to show that if a finite unipotent supergroup scheme does not satisfy 
Hypothesis~\ref{hyp:inductive}, then, in fact, it must be elementary. 

Part II begins in Section~\ref{se:Elem} with the construction of the
elementary supergroup schemes featuring in the statement of
Theorem~\ref{th:detect}. Their cohomology rings are calculated in
Section~\ref{se:calculations}. These calculations feed into the proof
of Theorem~\ref{th:elemcoh} that is a cohomological criterion for
recognising elementary supergroup schemes.  Theorem~\ref{th:detect} is
proved as Theorem~\ref{th:main}. Its consequences for the Steenrod
algebra are described in
Section~\ref{se:steenrod-algebra}. Appendix~\ref{sec:Witt} provides
background on Dieudonn\'e modules needed to describe elementary
supergroup schemes.

\subsection*{Acknowledgements} 
We gratefully acknowledge the support and hospitality of the Mathematical Sciences Research Institute in Berkeley, California where we were in residence during the semester on ``Group Representation Theory and Applications" in the Spring of 2018.  The American Institute of Mathematics in San Jose, California gave us a fantastic opportunity to carry out part of this project during intensive research periods supported by their ``Research in Squares"  program; our thanks to them for that. Dave Benson thanks Pacific Institute for Mathematical Science for its support during his research visit to the University of  Washington in the Summer of 2016 as a distinguished visitor of the Collaborative Research Group in Geometric and Cohomological Methods  in Algebra. Dave Benson and Julia Pevtsova have enjoyed the hospitality of City University while working on this project  in the summers of 2017 and 2018. We are grateful to Chris Drupieski and Jon Kujawa for useful and informative conversations and their interest in our work.

\part{Recollections} 
\section{Finite supergroup schemes}
\label{se:prelim}

We give a  compressed introduction to the terminology we shall employ  in the paper referring the reader to a number of excellent sources on super vector spaces,  super algebras and super groups schemes, such as, for example, a survey paper by A. Masuoka  \cite{Masuoka:2017a} or \cite{Drupieski/Kujawa:sv}. 

Throughout this manuscript $k$ will be a field of positive characteristic $p\ge 3$. We assume $k$ is perfect since  some of the structural results for supergroup schemes require that condition. It is clear that the  main theorem holds for an arbitrary field $k$ of characteristic $p$ once it is proved for a perfect field of the same characteristic.

An \emph{affine supergroup scheme} over $k$ is a covariant  functor from $\bbZ/2$-graded commutative $k$-algebras  (in the sense that $yx=(-1)^{|x||y|}xy$) to groups, whose underlying functor to sets is representable. If $G$ is a supergroup scheme then its \emph{coordinate ring} $k[G]$ is the representing object. By applying Yoneda's lemma to the group multiplication and inverse maps, it is a $\bbZ/2$-graded commutative Hopf algebra. We denote the comultiplication on $k[G]$  by $\Delta\colon k[G] \to k[G] \otimes k[G]$ and the counit map by $\varepsilon\colon k[G] \to k$  with $I = \ker \varepsilon$ being the augmentation ideal and note that these are  \emph {degree-preserving} (equivalently, even) algebra homomorphisms.  The correspondence between affine supergroup schemes and their coordinate algebras gives a contravariant equivalence of categories between affine supergroup schemes and $\bbZ/2$-graded commutative Hopf algebras. 

A \emph{finite} supergroup scheme $G$ is an affine supergroup scheme whose coordinate ring is finite dimensional. In this case, the dual $kG=\Hom_k(k[G],k)$ is a finite dimensional $\bbZ/2$-graded cocommutative Hopf algebra called the \emph{group ring} of $G$. This gives a covariant equivalence of categories between finite supergroup schemes and finite dimensional $\bbZ/2$-graded (equivalently, ``super") cocommutative Hopf algebras. 

\begin{equation} 
\label{anti-equi}
\left\{\begin{array}{c}  \text{finite super-}\\ \text{group schemes} 
\end{array}\right\} \quad \sim \quad  \left\{\begin{array}{c}  \text{finite dimensional {super-}}\\ \text{{cocommutative} Hopf algebras} 
\end{array}\right\}
\end{equation} 

We employ the notation $V = V_0 \oplus V_1$ for $\bbZ/2$-graded (equivalently, ``super") 
vector spaces, where $V_0$ are the {\it even} degree elements, and $V_1$ are the {\it odd} 
degree elements. {\it A $kG$-module} is a $\bbZ/2$-graded $k$-vector space on which $kG$ acts 
respecting the grading in the usual way. As in the ungraded setting, a $kG$-module has an 
equivalent description as a rational representation of the supergroup $G$ on the category of 
super vector spaces. We consider all modules including infinite 
dimensional ones.  The trivial module $k$ is the trivial one dimensional representation 
concentrated in the even degree. 

If $K$ is a field extension of $k$, and $G$ is an affine supergroup scheme, 
we write $K[G]$ for $K \otimes_k k[G]$, which is a
graded commutative Hopf algebra over $K$. This defines a supergroup scheme over
$K$ denoted $G_K$, and we have a natural isomorphism of Hopf superalgebras
$KG_K\cong K\otimes_k kG$.

For each $kG$-module $M$, we set
\[ 
M_K:=K \otimes_k M\quad\text{and}\quad M^K:=\Hom_k(K,M),
\] 
viewed as $KG_K$-modules.

The \emph{even part} $G_\ev$ of an affine supergroup scheme $G$ is the largest
sub-supergroup scheme whose coordinate ring contains no
odd degree elements (see \cite{Masuoka:2017a}). It may be regarded as an affine group scheme. 
Its coordinate ring $k[G_\ev]$ is the quotient of $k[G]$
by the ideal generated by the odd degree elements. This 
ideal is automatically a Hopf ideal, since the coproduct $\Delta$ applied to an 
odd degree element is necessarily a linear 
combination of tensors $ a \otimes b$ where either $a$ or $b$ is odd. 
An \emph{even subgroup scheme} of $G$ is a subgroup scheme of $G_\ev$.

\begin{example}
Any affine group scheme $G$ may be thought of as an affine supergroup scheme
with $G=G_\ev$.
\end{example}

Another way to look at the assignment $G \mapsto G_\ev$ is that it gives the right adjoint to the inclusion functor 
from the affine group schemes to affine supergroups schemes. 

\begin{definition}
If $G$ is an affine supergroup scheme, let $G^{(1)}$ be the base change of $G$ 
via the Frobenius map $x\mapsto x^p$ on $k$.
Then the \emph{Frobenius map} $F\colon G \to G^{(1)}$ corresponds to
the map of coordinate rings $k[G^{(1)}]\to k[G]$ given by $x \to x^p$.
The $r$th \emph{Frobenius kernel} $G_{(r)}$ of $G$ is defined to be the kernel 
of the iterate $F^r \colon G \to G^{(r)}$. 
\end{definition}

\begin{convention} 
\label{co:zero} 
By $G_{(0)}$ we always mean the trivial group scheme. 
\end{convention}

\begin{definition}
A finite supergroup scheme $G$ over $k$ is said to be \emph{unipotent} if $k$ is the unique irreducible $kG$-module, which may be either in even or odd degree. A supergroup scheme $G$ is \emph{connected} if $k[G]$ is local.

If $G$ is a finite connected supergroup scheme then for some $r\ge 0$ we have $G=G_{(r)}$. The least such value of $r$ is called the \emph{height} of $G$. 
Note that $G$ has height zero if and only if $G$ is the trivial supergroup scheme.
\end{definition}

\begin{lemma}
\label{le:pi0}
Any finite supergroup scheme $G$ is a semidirect product $G^0 \rtimes \pi_0(G)$ with $G^0$ connected and $\pi_0(G)$ the finite group of connected components.
\end{lemma}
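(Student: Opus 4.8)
The plan is to transpose the classical structure theorem for finite group schemes over a perfect field (see e.g.\ Demazure--Gabriel or Waterhouse) to the super setting, where essentially one extra observation is required. Write $A=k[G]$, a finite-dimensional $\bbZ/2$-graded commutative Hopf algebra. Since $p\neq 2$, every odd element $x\in A_1$ satisfies $2x^2=0$, hence $x^2=0$; thus $A_1\subseteq\mathrm{nil}(A)$ and $A_{\mathrm{red}}:=A/\mathrm{nil}(A)$ is a reduced, \emph{purely even}, finite-dimensional commutative $k$-algebra. As $k$ is perfect, $A_{\mathrm{red}}$ is a finite product of finite separable field extensions of $k$, i.e.\ étale.

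First I would check that $\mathrm{nil}(A)$ is a Hopf ideal: over a perfect field the (super) tensor product of reduced algebras is reduced, so $\mathrm{nil}(A\otimes A)=\mathrm{nil}(A)\otimes A+A\otimes\mathrm{nil}(A)$, and since $\Delta,\varepsilon$ and the antipode are algebra maps they carry nilpotents to nilpotents; the ideal conditions follow. Hence $G_{\mathrm{red}}:=\Spec A_{\mathrm{red}}$ is a closed sub-supergroup scheme of $G$, and being reduced and finite over a perfect field it is an ordinary finite étale group scheme --- namely $\pi_0(G)$, the group of connected components. Lifting idempotents and applying Hensel's lemma in the local factors of $A$ identifies $A_{\mathrm{red}}$ with the maximal separable subalgebra $\pi_0(A)\subseteq A$; this subalgebra is moreover a Hopf subalgebra (the image of $\pi_0(A)$ under $\Delta$ is a separable subalgebra of $A\otimes A$, hence lies in $\pi_0(A\otimes A)=\pi_0(A)\otimes\pi_0(A)$, and similarly for $\varepsilon$ and the antipode), and the composite $A\twoheadrightarrow A_{\mathrm{red}}\xrightarrow{\sim}\pi_0(A)$ is a Hopf-algebra retraction of the inclusion. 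Dually this yields a faithfully flat quotient $q\colon G\to\pi_0(G)$ together with a splitting $s\colon\pi_0(G)\to G$, so $\pi_0(G)$ is simultaneously a sub- and a quotient supergroup scheme of $G$, compatibly.

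To finish, set $G^0:=\ker q$, a normal sub-supergroup scheme with $\lvert G^0\rvert\cdot\lvert\pi_0(G)\rvert=\lvert G\rvert$; since $s$ is a section, the multiplication map $G^0\rtimes\pi_0(G)\to G$ (semidirect product for the conjugation action of $s(\pi_0(G))$ on $G^0$) is injective, hence an isomorphism by the order count. It remains to see $G^0$ is connected: applying $\pi_0$ to $G\cong G^0\rtimes\pi_0(G)$ and using that on underlying schemes $\pi_0$ turns this into the product $\pi_0(G^0)\times\pi_0(G)$ forces $\pi_0(G^0)$ to be trivial --- equivalently, $G^0$ is the local factor of $A$ at the augmentation ideal. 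I do not expect a serious obstacle: the argument is the classical one, the only genuinely super input being that odd elements are nilpotent, which is what makes $\pi_0(G)$ an ordinary (even) group scheme. The step that does the real work, exactly as in the non-super case, is the systematic use of perfectness of $k$ to ensure that reduced (resp.\ separable) algebras stay reduced (resp.\ separable) after tensoring.
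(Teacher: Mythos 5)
Your proposal is correct and follows essentially the same route as the paper, which simply cites Drupieski's Lemma~5.3.1: that proof is the classical connected--\'etale splitting over a perfect field that you spell out, with the only genuinely super ingredient being the one you identify, namely that odd elements of $k[G]$ square to zero because $p\neq 2$, so the nilradical absorbs the odd part and the \'etale quotient is an ordinary (even) finite group scheme. No substantive gaps to flag.
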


\begin{proof}
See Lemma~5.3.1 of Drupieski \cite{Drupieski:2013a}. The proof uses the fact that $k$ is perfect and has odd prime characteristic.
\end{proof}

\begin{theorem}
\label{th:tensor-dec}
Let $G$ be a connected finite supergroup scheme.  Then there exist odd degree elements $y_1, \ldots, y_n \in k[G]$ such that we have an isomorphism of $\mathbb Z/2$-graded $k$-algebras
\[ 
k[G] \cong k[G_\ev] \otimes  \Lambda(y_1,\dots,y_n). 
\] 
 In particular, if $G_\ev$ is non-trivial then $G$ has the same height as $G_\ev$.
\end{theorem}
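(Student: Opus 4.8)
The plan is to produce the odd elements $y_1, \dots, y_n$ by splitting off the odd part of the augmentation ideal of $k[G]$ as an exterior algebra, using connectedness (i.e. locality of $k[G]$) in an essential way.

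\smallskip

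First I would set $A = k[G]$, a finite-dimensional $\bbZ/2$-graded commutative local Hopf algebra, with maximal ideal $I = \ker\varepsilon$, and let $B = k[G_\ev] = A/(A_1)$, where $(A_1)$ is the ideal generated by the odd part. Graded commutativity in characteristic $p \ge 3$ forces $a^2 = 0$ for every homogeneous odd $a$, so the subalgebra of $A$ generated by $A_1$ is already spanned by products of odd elements, i.e. $A$ is generated over $B$ by $A_1$. The goal is to choose odd elements $y_1,\dots,y_n \in A$ whose images form a basis of $A_1/(A_1 \cdot I)$, equivalently (since $A_1 \subseteq I$ and $A_1^2 \subseteq A_0$) a basis of the ``odd cotangent space'', and then show the induced map $B \otimes \Lambda(y_1,\dots,y_n) \to A$ of graded $k$-algebras is an isomorphism.

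\smallskip

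The key step is surjectivity followed by a dimension count. Surjectivity: the image contains $B$ and all the $y_i$, hence all products of $y_i$'s with coefficients in $B$; since $A$ is generated by $A_1$ over $B$ and each odd generator can, modulo $A_1 \cdot I$, be written in terms of the $y_i$, a standard Nakayama/filtration argument along powers of $I$ (or along the ideal $(A_1)$) shows this exhausts $A$. For injectivity I would compare dimensions: one needs $\dim_k A = \dim_k B \cdot 2^n$. This is where connectedness enters decisively. By Lemma~\ref{le:pi0} we may work with $G$ connected, so $G = G_{(r)}$ for some $r$; passing to the associated graded (or invoking the structure theory of finite connected supergroup schemes — the super analogue of the fact that a finite connected group scheme has coordinate ring a truncated polynomial algebra) one sees $k[G]$ is, as an ungraded algebra, a complete intersection whose Hilbert series factors as a product of one $(1+t)$ factor for each odd generator and truncated-polynomial factors for the even ones; the even factors assemble precisely into $k[G_\ev]$ after killing the odd generators. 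A cleaner route: show the Hopf ideal $(A_1)$ is generated by a regular sequence of odd elements $y_1, \dots, y_n$ — odd elements square to zero, so ``regular'' here means the Koszul-type complex $\Lambda(y_1,\dots,y_n) \otimes B \to A$ is exact — and this exactness is exactly the flatness of $A$ over $B$, which holds because $A$ is a (finite, hence faithfully flat) Hopf algebra over its Hopf subalgebra $B$, or dually because $G \to G_\ev$ is a quotient map with kernel an infinitesimal supergroup scheme of the form $\bbG_a^{-\times n}$.

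\smallskip

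The main obstacle is establishing that the map is injective, equivalently that $k[G]$ is \emph{free} as a module over $k[G_\ev]$ on the monomial basis of $\Lambda(y_1,\dots,y_n)$. The soft argument — faithful flatness of a finite-dimensional Hopf algebra over a Hopf subalgebra — gives freeness of the right rank, but one still must identify that the complementary module can be taken to be the exterior algebra on odd generators, rather than some twisted version; this is where one uses that an odd primitive-type generator $y$ satisfies $y^2 = 0$ automatically, so no truncation ambiguity arises in the odd direction, and that $k$ is perfect so that the even quotient $k[G_\ev]$ behaves well under the relevant base changes. The final sentence of the statement, that $G$ and $G_\ev$ have the same height when $G_\ev$ is nontrivial, then follows immediately: the height is detected by the $p$-th power map on the maximal ideal, and adjoining an exterior algebra on odd generators (each squaring to zero, a fortiori $p$-th power zero) cannot raise the nilpotency degree beyond that already present in $k[G_\ev]$, while it is bounded below by it since $k[G_\ev]$ is a quotient of $k[G]$.
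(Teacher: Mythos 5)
Your setup and surjectivity step track the paper's argument (choose odd elements spanning the odd part of $I/I^2$, even lifts of generators of $k[G_\ev]$, multiply, and use locality/Nakayama). But the heart of the theorem is injectivity, and there your proposal has a genuine gap. First, a structural point: $k[G_\ev]$ is a \emph{quotient} Hopf algebra of $k[G]$ (dual to the inclusion $G_\ev\subseteq G$), not a Hopf subalgebra, so there is no given map $B=k[G_\ev]\to k[G]$ with which to even define your comparison map $B\otimes\Lambda(y_1,\dots,y_n)\to k[G]$; producing an algebra section of $k[G]\to k[G_\ev]$ is essentially part of what must be proved. Your ``cleaner route'' then invokes faithful flatness of $k[G]$ over ``its Hopf subalgebra $B$'', and dually a ``quotient map $G\to G_\ev$ with kernel $(\bbG_a^-)^{\times n}$'' --- but no such Hopf subalgebra or quotient map exists in general ($G_\ev$ is a subgroup, need not even be normal, as the paper warns), and identifying a putative kernel as a product of $\bbG_a^-$'s is again the structure being established. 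Your first route, the dimension count $\dim_k k[G]=2^n\dim_k k[G_\ev]$ via ``the super analogue of the truncated polynomial structure theorem'', is circular: that structure statement is precisely the consequence of this theorem (cf.\ Remark~\ref{re:masuoka}(3)); it is a theorem of Masuoka, but you would have to cite and use it, at which point there is nothing left to prove.

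The paper avoids all of this as follows: it takes $B$ to be the \emph{subalgebra} of $k[G]$ generated by even lifts $x_i$ of truncated-polynomial generators of $k[G_\ev]$ (not assumed isomorphic to $k[G_\ev]$ at the outset), gets a surjection $f\colon B\otimes\Lambda(y_1,\dots,y_n)\to k[G]$ by supercommutativity and Nakayama, and proves injectivity by a short comultiplication argument: a minimal relation is written $a+by_r=0$ with $a,b$ not involving $y_r,\dots,y_n$; applying $\Delta$ and using $\Delta(b)\equiv b\otimes 1+1\otimes b \pmod{I\otimes I}$ forces the term $b\otimes y_r$ to vanish, so $b=0$, contradicting minimality. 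Only after $f$ is known to be an isomorphism does one conclude $B\cong k[G_\ev]$ (ruling out, e.g., $x_i^{p^{e_i}}$ landing nontrivially in the odd ideal), which is exactly the point your dimension/flatness arguments quietly assume. To repair your proof you would need either this coproduct argument or an explicit appeal to Masuoka's splitting theorem; as written, the injectivity step does not go through. Your closing remark on heights is fine once the tensor decomposition is in hand.
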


\begin{proof} 
Let $I$ be the augmentation ideal of $k[G]$.  Pick odd elements $\{y_1, \ldots, y_n\}$ such 
that their residues give a basis of the odd part of the super vector space $I/I^2$.  
Then the ideal $(y_1, \ldots, y_n)$ is a Hopf ideal, and we have an isomorphism 
$k[G]/(y_1, \ldots, y_n) \cong k[G_\ev]$. Since $k[G_\ev]$ is a connected finite group 
scheme, we can find algebraic generators $x_1^\prime, \ldots, x_m^\prime \in k[G_\ev]$ 
such that $k[G_\ev]$ is a truncated polynomial algebra on these generators (\cite[14.4]{Waterhouse:1979a}). 
Let $x_1, \ldots, x_m \in I$ be even liftings of $x_1^\prime, \ldots, x_m^\prime$ to $k[G]$, and let $B$ be the (even) 
subalgebra of $k[G]$ generated by $x_1, \ldots, x_m$.  By construction $\{x_1, \ldots, x_m\}$ give a basis of 
the even part of $I/I^2$. Moreover, the odd elements $y_1, \ldots, y_n$ square to zero and (super) commute, 
hence, generate a copy of $\Lambda(y_1, \ldots, y_n)$ in $k[G]$. 
We therefore have a surjective map 
\[ 
f\colon B \otimes \Lambda(y_1, \ldots, y_n) \to k[G]
\] 
We wish to show that this map is an isomorphism of algebras. The augmentation ideals are the same on both sides by constructions, and, hence,  it suffices to show that $f$ induces an isomorphism on the associated graded  algebras.  Note that $\gr k[G] \cong \bigoplus I^i/I^{i+1}$ inherits the structure 
of a Hopf algebra.

If $f$ is not an isomorphism, its kernel contains a nonzero polynomial involving both $x_i$ and $y_i$. Choose one which involves the minimal number of the variables $y_i$, let $r$ be the maximal index such that this polynomial involves $y_r$, and write it in the form
\[
a + by_r = 0
\] 
where $a$ and $b$ only involve $B$ and $y_1,\dots, y_{r-1}$. Apply the coproduct map $\Delta$ to obtain
\[ 
\Delta(a) + \Delta(b)(y_r \otimes 1 + 1 \otimes y_r) = 0\,.
\]
Since $\Delta(b) = b \otimes 1 + 1 \otimes b + I \otimes I$ (\cite[I.2]{Jantzen:2003a},  there is a term $b \otimes y_r$ in the sum which must vanish. We conclude that $b=0$ and, hence, $a=0$,   contradicting the minimality of $r$. This proves that $f$  is an isomorphism. In particular,  $B$ does not intersect the ideal $(y_1, \ldots, y_n)$, and  so the projection map $k[G] \to k[G]/(y_1, \ldots, y_n) \cong k[G_\ev]$ induces an isomorphism  $B \cong k[G_\ev]$. 
\end{proof}

\begin{remark}
\label{re:masuoka}
\begin{enumerate}
\item
Masuoka~\cite[Theorem~4.5]{Masuoka:2005a} proves, without the finiteness
hypothesis, that there is counital algebra 
isomorphism $k[G] \cong k[G_\ev] \otimes \Lambda((\Lie G)_{\odd}^\#)$.  
\item Since $k[\pi_0(G)]$ sits in even degree, Lemma~\ref{le:pi0} implies 
that the tensor decomposition of Theorem~\ref{th:tensor-dec} holds for any finite supergroup scheme.
\item The structure of the coordinate ring of an ungraded finite connected 
group scheme is known (\cite[Theorem 14.4]{Waterhouse:1979a}). 
Putting it together with Theorem~\ref{th:tensor-dec}, we conclude that for any 
finite connected supergroup scheme $G$ there exists a $k$-algebra isomorphism
\[
k[G] \cong k[x_1, \ldots, x_n]/(x_1^{p^{i_1}}, \ldots, x_n^{p^{i_n}}) \otimes \Lambda(y_1,\dots,y_m)
\]
where $x_i$ are even and $y_j$ are odd. 
\item The Frobenius map $F\colon k[G^{(1)}] \to k[G]$ kills $k[G^{(1)}]_\odd$ 
since odd elements square to $0$ by supercommutativity. Hence, the image of $F$ 
lands in $k[G_\ev]$, that is, the composite
$k[G/G_{(1)}] \to k[G] \to k[G_\ev]$ is injective. 
\end{enumerate}
\end{remark}

\begin{corollary}\label{co:GevG1}
If $G$ is a finite supergroup scheme then $G=G_\ev G_{(1)}$.  
\end{corollary}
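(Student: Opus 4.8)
The plan is to descend along the quotient by the first Frobenius kernel $G_{(1)}$, which is a \emph{normal} subgroup scheme of $G$ (it is the kernel of the Frobenius homomorphism $F\colon G\to G^{(1)}$). Thus the quotient $q\colon G\to G/G_{(1)}$ exists, and its comorphism is the canonical inclusion of Hopf algebras $k[G/G_{(1)}]\hookrightarrow k[G]$. I claim it suffices to show that the composite $G_\ev\hookrightarrow G\xra{q}G/G_{(1)}$ is surjective: once that is known, normality of $G_{(1)}=\ker q$ gives $G_\ev G_{(1)}=q^{-1}\bigl(q(G_\ev)\bigr)=q^{-1}(G/G_{(1)})=G$, which is exactly the assertion.

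The one piece of actual content is Remark~\ref{re:masuoka}(4): since $F$ annihilates the odd part of $k[G^{(1)}]$, the image of the Frobenius comorphism --- equivalently $k[G/G_{(1)}]$ regarded as a subalgebra of $k[G]$ --- lies inside $k[G_\ev]\subseteq k[G]$, so that the composite
\[
k[G/G_{(1)}]\;\longrightarrow\;k[G]\;\longrightarrow\;k[G_\ev]
\]
is injective. This composite is precisely the comorphism of $G_\ev\hookrightarrow G\xra{q}G/G_{(1)}$. Consequently the scheme-theoretic image of $G_\ev\to G/G_{(1)}$, cut out by the kernel of this comorphism, is all of $G/G_{(1)}$; in other words $G_\ev\to G/G_{(1)}$ is surjective (in fact faithfully flat). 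Together with the previous paragraph this finishes the proof.

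So the corollary is essentially a formal consequence of Remark~\ref{re:masuoka}(4), and that remark is where the hypothesis $p\ge 3$ genuinely enters: under the decomposition $k[G]\cong k[G_\ev]\otimes\Lambda(y_1,\dots,y_n)$ of Theorem~\ref{th:tensor-dec}, any $f\in k[G]$ satisfies $f^{p}=(f_0)^{p}$ with $f_0$ its even component, and this $p$-th power receives no contribution from the exterior generators $y_i$ because a positive-degree monomial in the $y_i$ squares to zero (here $p\ge3$ is used) while $p$ divides $p!$. I anticipate no real obstacle beyond assembling these pieces: the only care needed is the routine passage between injectivity of a comorphism and surjectivity of the corresponding morphism of affine supergroup schemes, together with the standard identity $q^{-1}(q(H))=HN$ when $N=\ker q$ is normal.
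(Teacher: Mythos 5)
Your proof is correct and follows essentially the same route as the paper: both rest on Remark~\ref{re:masuoka}(4) (injectivity of $k[G/G_{(1)}]\to k[G]\to k[G_\ev]$) and the fact that an injective map of Hopf superalgebras is faithfully flat, so that $G_\ev\to G/G_{(1)}$ is surjective and hence $G=G_\ev G_{(1)}$. The only cosmetic difference is that the paper first reduces to connected $G$ via Lemma~\ref{le:pi0}, whereas you invoke the tensor decomposition directly for arbitrary finite $G$ (legitimate by Remark~\ref{re:masuoka}(2)).
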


\begin{proof}
It follows from Lemma~\ref{le:pi0} that
$G_\ev=G^0_\ev\rtimes\pi_0(G)$. So we may assume that $G$ is
connected.
It then follows from Theorem~\ref{th:tensor-dec} (see 
By Remark~\ref{re:masuoka} the composite
$k[G/G_{(1)}] \to k[G] \to k[G_\ev]$ is injective. Since this is an injective  
map of Hopf algebras, is it faithfully flat (see, for example, \cite[Theorem 14.1]{Waterhouse:1979a})
and, therefore, the corresponding map on group schemes  
%see Milne, Basic theory of affine group schemes, Theorem VII.7.4
$G_\ev \to G \to G/G_{(1)}$ is surjective. Hence, $G=G_\ev G_{(1)}$.
\end{proof}

\begin{warning}
The subgroup $G_{(1)}$ is normal in $G$, but $G_\ev$ 
need not be normal.
\end{warning}

\begin{example} The additive (super)group scheme $\mathbb G_a$ is a purely even group scheme, given by the assignment 
\[\mathbb G_a(R) = R_0^+,\]
where $R_0^+$ the additive group on the even part of a superalgebra $R$. We have $k[\mathbb G_a]=k[T]$ with the $T$ 
primitive in even degree. The Frobenius kernels $\mathbb G_{a(r)}$ are purely even connected unipotent supergroup schemes with 
$k[\mathbb G_{a(r)}]=k[T]/T^{p^r}$, and $T$ primitive. 
\end{example}

\begin{example}
We denote by $\bbG_a^-$ the finite supergroup scheme such that $k\bbG_a^-=k[\si]/(\si^2)$ with $\si$ primitive in odd degree. Then $\bbG_a^-$ is connected and unipotent. As a functor,  $\bbG_a^-$  is defined by  $\bbG_a^-(R)  = R_1^+$, the additive group on the  odd part of a superalgebra $R$.  

More generally, let $V$ be a finite-dimensional vector space, and let $\Lambda^*(V)$ 
be the $\bbZ/2$-graded exterior algebra on $V$ where the elements of $V$ are primitive of odd 
degree. With this convention,  $\Lambda^*(V)$ becomes a supercommutative Hopf algebra and, 
hence, is isomorphic to a group algebra of a product of copies of $\bbG_a^-$,
and hence corresponds to a connected unipotent finite supergroup scheme. 

%By a theorem of Kostant, this gives {\emph{all}} connected finite supergroup schemes over $\mathbb C$. 			
\end{example}

\begin{example}\label{eg:t2pm}
Let $W_{1,1}^-$ be the finite supergroup scheme such that $kW_{1,1}^-=k[\si]/(\si^{2p})$
with $\si$ primitive in odd degree. Then $W_{1,1}^-$ has height $1$ and sits in
a nonsplit short exact sequence
\begin{equation}\label{eq:Ga-Ga1} 
1 \to \bbG_{a(1)} \to W_{1,1}^- \to \bbG_a^- \to 1. 
\end{equation}

More generally, let $W_{m,1}^-$ be the finite supergroup scheme with $kW_{m,1}^-=k[\si]/(\si^{2p^m})$ 
where $\si$ is primitive in odd degree
and $m\ge 1$. 
Then $W_{m,1}^-$ has height one, and it sits in a nonsplit short exact sequence
\begin{equation}\label{eq:Ga-Wm1} 
1 \to W_{m,1} \to W_{m,1}^- \to \bbG_a^- \to 1
\end{equation}
where $W_{m,1}$ denotes the Witt vectors of length $m$ and height one as described in Appendix~\ref{sec:Witt},
whose group algebra is $kW_{m,1}=k[s]/(s^{p^m})$, and $s=\si^2$ is
primitive in even degree.
\end{example}

\begin{example}
A \emph{$p$-restricted Lie superalgebra} $\fg = \fg_0 \oplus \fg_1$ is a $\bbZ/2$-graded Lie
algebra with a $p$-restriction map on the even part, and such that the
odd part is a $p$-restricted module over the even part. 
The $p$-restricted enveloping algebra $\mcU^{[p]}(\fg)$ is the group algebra of a connected 
finite supergroup scheme which is unipotent if and only if $\fg$ is unipotent.
\end{example}

\begin{lemma}\label{le:mcV}
Let $G$ be a finite supergroup scheme. Then the 
primitive elements in $kG$ form a $p$-restricted Lie superalgebra $\fg=\Lie(G)$ over $k$
with Lie bracket given by commutator and $p$-restriction map given by the $p$-power map in $kG$.
The natural map $\mcU^{[p]}(\fg) \to kG$ induces an isomorphism $\mcU^{[p]}(\fg)\to kG_{(1)}$.
\end{lemma}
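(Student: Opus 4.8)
The plan is to verify the three assertions in turn, leaning on the tensor-decomposition results already established and on the standard theory of $p$-restricted Lie algebras. First I would check that the set $P(kG)$ of primitive elements is closed under the operations in question. If $x,y \in kG$ are primitive, i.e.\ $\Delta(x) = x \otimes 1 + 1 \otimes x$ and similarly for $y$, then a direct computation with the comultiplication (which is an algebra map, up to the Koszul sign rule) shows that $[x,y] = xy - (-1)^{|x||y|}yx$ is again primitive, and that when $x$ is even, $x^{[p]} = x^p$ is primitive; the latter is the super-analogue of the classical fact that the $p$-th power of a primitive element is primitive in characteristic $p$, using that $\Delta(x)^p = (x \otimes 1 + 1 \otimes x)^p = x^p \otimes 1 + 1 \otimes x^p$ because the cross terms involve binomial coefficients $\binom{p}{i}$, $0 < i < p$, all divisible by $p$, and because $x \otimes 1$ and $1 \otimes x$ commute. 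When $x$ is odd, $x^2$ is even and the restriction structure on $\fg_1$ as a module over $\fg_0$ is exactly the adjoint action $y \mapsto [x^2, y]$ together with $x \mapsto [x,x]/?$---here I would just invoke that the primitives of any $\bbZ/2$-graded cocommutative Hopf algebra over a field of characteristic $p$ form a $p$-restricted Lie superalgebra (a standard fact, see the references on super Hopf algebras cited in Section~\ref{se:prelim}), so the only thing to say is that the bracket and $p$-operation on $kG$ restrict correctly to $P(kG)$.

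Second, the universal property of $\mcU^{[p]}(\fg)$ gives a canonical map $\mcU^{[p]}(\fg) \to kG$ of Hopf algebras, since the inclusion $\fg = P(kG) \hookrightarrow kG$ is a morphism of $p$-restricted Lie superalgebras into (the primitives of) $kG$. The final and main claim is that this factors through $kG_{(1)}$ and induces an isomorphism onto it. For the factorization, recall from Remark~\ref{re:masuoka}(4) and the definition of the Frobenius kernel that $kG_{(1)}$ is the subalgebra of $kG$ annihilated by the ideal of $k[G]$ generated by $p$-th powers and odd elements; concretely, dualizing the description $k[G_{(1)}] \cong k[x_1,\dots,x_n]/(x_i^p) \otimes \Lambda(y_1,\dots,y_m)$ from Remark~\ref{re:masuoka}, one sees $kG_{(1)}$ is generated by primitives, so the image of $\mcU^{[p]}(\fg) \to kG$ lies in $kG_{(1)}$.

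For the isomorphism $\mcU^{[p]}(\fg) \to kG_{(1)}$, the cleanest route is a dimension count combined with surjectivity. By Lemma~\ref{le:pi0} and Theorem~\ref{th:tensor-dec} we may reduce to $G$ connected, and then further decompose: $k[G_{(1)}] \cong k[G_\ev{}_{(1)}] \otimes \Lambda(y_1,\dots,y_m)$ from Theorem~\ref{th:tensor-dec} applied to $G_{(1)}$, so $kG_{(1)} \cong \Lambda(y_1,\dots,y_m)^\# \otimes k(G_\ev)_{(1)}$ as coalgebras, and correspondingly $\fg = \fg_0 \oplus \fg_1$ with $\fg_0 = \Lie(G_\ev)$ and $\dim \fg_1 = m$. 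On the even side, $\mcU^{[p]}(\fg_0) \to k(G_\ev)_{(1)}$ is the classical isomorphism for finite group schemes (\cite{Waterhouse:1979a} or \cite{Jantzen:2003a}), with both sides of dimension $p^{\dim \fg_0}$. On the odd side $\mcU^{[p]}(\fg_1) = \Lambda(\fg_1)$ has dimension $2^m$, matching $kW$ where $W$ is the exterior part. By the PBW theorem for $p$-restricted Lie superalgebras, $\dim_k \mcU^{[p]}(\fg) = p^{\dim \fg_0} \cdot 2^{\dim \fg_1}$, which agrees with $\dim_k kG_{(1)}$; so it suffices to prove surjectivity, and this is clear because $kG_{(1)}$ is generated as an algebra by the images of the $x_i^\#$ and $y_j^\#$, all of which are primitive hence in the image of $\mcU^{[p]}(\fg)$.

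The main obstacle I anticipate is being careful about the PBW dimension count and the compatibility of the $p$-restriction on $\fg_1$ (as a module over $\fg_0$) with the $p$-power map in $kG$---i.e.\ checking that $\mcU^{[p]}(\fg)$ as defined for Lie superalgebras really does have the factor $\Lambda(\fg_1)$ and not some twisted version, and that the comparison map respects the module structure of $\fg_1$ over $\fg_0$. Once the structural decomposition $kG_{(1)} \cong k(G_\ev)_{(1)} \otimes \Lambda(\fg_1)^\#$ (at the level of coalgebras / filtered algebras) is in hand, matching it term-by-term against the PBW form of $\mcU^{[p]}(\fg)$ finishes the proof. Everything else is either a standard Hopf-algebra computation or a citation to the references already invoked in Section~\ref{se:prelim}.
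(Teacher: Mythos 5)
The paper disposes of this lemma by citing Lemma~4.4.2 of Drupieski, so any self-contained argument is necessarily a different route; the problem is that your route, as written, has a hole exactly at the point that carries the content of the lemma. The decisive step is surjectivity of $\mcU^{[p]}(\fg)\to kG_{(1)}$, i.e.\ that $kG_{(1)}$ is generated as an algebra by its primitive elements. You declare this ``clear'' because the dual-basis functionals $x_i^{\#},y_j^{\#}$ are primitive and generate, but the justification does not go through: Theorem~\ref{th:tensor-dec} (and Masuoka's refinement in Remark~\ref{re:masuoka}) gives an isomorphism $k[G_{(1)}]\cong k[(G_{(1)})_{\ev}]\otimes\Lambda(y_1,\dots,y_m)$ only as (counital) algebras, not as Hopf algebras, so dualizing yields a decomposition of $kG_{(1)}$ only as a coalgebra --- which you note, and then nevertheless use to make an algebra-generation claim. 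The multiplication on $kG_{(1)}$ is dual to the coproduct of $k[G_{(1)}]$, about which the tensor decomposition says nothing; knowing that $x_i^{\#}$ and $y_j^{\#}$ are primitive tells you they lie in the image of $\mcU^{[p]}(\fg)$, not that they generate $kG_{(1)}$. Primitive generation of a connected cocommutative Hopf (super)algebra in characteristic $p$ is equivalent to vanishing of the Verschiebung (dual to the Frobenius on $k[G_{(1)}]$ being trivial), and establishing this equivalence is precisely the height-one correspondence --- nontrivial already in the ungraded case (Demazure--Gabriel, Jantzen I.9.6), and in the super case it is exactly Drupieski's Lemma~4.4.2. Your PBW dimension count is fine, but without surjectivity it cannot close the argument.

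Two smaller points. First, your factorization argument is logically reversed: that $kG_{(1)}$ is generated by primitives (even if known) would not show that the image of $\mcU^{[p]}(\fg)$ lies in $kG_{(1)}$. The correct reason is that every primitive of $kG$ vanishes on $k\cdot 1+I^2$, hence on the $p$-th powers of elements of $I$ (and on odd squares) that cut out $k[G_{(1)}]$ inside $k[G]$, so $\fg\subseteq kG_{(1)}$ and the image of the enveloping algebra lands in the subalgebra $kG_{(1)}$. Second, the description of the restricted structure on the odd part in your first paragraph is garbled; it is harmless since you fall back on the standard fact that primitives of a cocommutative Hopf superalgebra form a $p$-restricted Lie superalgebra, but if you want a self-contained proof you should state the $p$-restriction data (the $p$-operation on $\fg_0$ and the squaring map $\fg_1\to\fg_0$, $x\mapsto\tfrac12[x,x]$) correctly.
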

\begin{proof}
See Lemma 4.4.2 of Drupieski \cite{Drupieski:2013a}.
\end{proof}

\begin{example}
Example~\ref{eg:t2pm} has height one, so  is of the form $\mcU^{[p]}(\fg)$. The $p$-restricted Lie superalgebra $\fg$ is generated by an element $\si$
in odd degree with  relation $[\si,\si]^{[p^m]}=0$.
\end{example}

\begin{remark} 
\label{re:lie}
If $G$ is a finite connected supergroup scheme of height $1$ with the corresponding Lie algebra $\fg$, 
then $\fg_0$ is an even (restricted) Lie algebra corresponding to $G_\ev$, that is, 
$\mcU^{[p]}(\fg_0)\cong kG_\ev.$  
\end{remark}

\begin{lemma}
\label{le:purelyodd} 
A finite unipotent supergroup scheme $G$ with $G^0_\ev = 1$ is isomorphic to $ (\mathbb G_a^-)^{\times r} \rtimes (\mathbb Z/p)^{\times s}$. 
\end{lemma}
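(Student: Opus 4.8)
The plan is to reduce to the height-one case and then use the Dieudonné/Lie-theoretic structure theory. By Lemma~\ref{le:pi0} we may write $G = G^0 \rtimes \pi_0(G)$. Since $G$ is unipotent and $p \ge 3$, the component group $\pi_0(G)$ is a finite unipotent group, hence a $p$-group, and since we expect the answer to be a product of $\bbG_a^-$'s extended by $(\bbZ/p)^{\times s}$, the first task is to show $\pi_0(G)$ is elementary abelian, i.e. $(\bbZ/p)^{\times s}$. Actually, I would instead first handle $G^0$ and note that the semidirect product structure on the connected part can be analysed separately; the key point is the hypothesis $G^0_\ev = 1$, which forces $G^0$ to be quite small.

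First I would analyze $G^0$. By Corollary~\ref{co:GevG1} applied to $G^0$ we have $G^0 = (G^0)_\ev (G^0)_{(1)}$; but $(G^0)_\ev = G^0_\ev = 1$ by hypothesis, so $G^0 = (G^0)_{(1)}$, i.e. $G^0$ has height $\le 1$. Hence $G^0 = \mcU^{[p]}(\fg)$ for $\fg = \Lie(G^0)$, a $p$-restricted Lie superalgebra. By Remark~\ref{re:lie}, $\mcU^{[p]}(\fg_0) \cong k(G^0_\ev) = k$, so $\fg_0 = 0$; thus $\fg = \fg_1$ is purely odd. A purely odd $p$-restricted Lie superalgebra has trivial bracket (the bracket of two odd elements lands in $\fg_0 = 0$) and no $p$-restriction datum to impose beyond this, so $\fg$ is just an abelian odd Lie superalgebra, i.e. a vector space $V$ in odd degree with zero bracket. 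Then $\mcU^{[p]}(\fg) = \Lambda^*(V)$, which by the Example following \eqref{eq:Ga-Ga1} is the group algebra of $(\bbG_a^-)^{\times r}$ where $r = \dim_k V$. This identifies $G^0 \cong (\bbG_a^-)^{\times r}$.

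Next I would pin down $\pi_0(G)$ and the semidirect product. Since $G$ is unipotent, so is the quotient $\pi_0(G)$, hence it is a finite $p$-group. To see it is elementary abelian, I would argue that its conjugation action on $G^0 \cong (\bbG_a^-)^{\times r}$, together with unipotence of the whole group, is constrained; more efficiently, one can pass to the associated graded or use that $kG$ must be unipotent as an augmented algebra, forcing $k[\pi_0(G)]$ to be a local ring whose only such cocommutative Hopf algebra structure in odd characteristic $\ge 3$ coming inside a unipotent supergroup is $k(\bbZ/p)^{\times s}$; this is standard (any finite unipotent \'etale group scheme over a perfect field of characteristic $p$ is a constant $p$-group, and here it must be abelian of exponent $p$ — one can cite the analogous statement for finite unipotent group schemes, e.g. via \cite{Waterhouse:1979a}). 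Finally, the extension $1 \to (\bbG_a^-)^{\times r} \to G \to (\bbZ/p)^{\times s} \to 1$ splits by Lemma~\ref{le:pi0}, and the conjugation action of the \'etale part on the infinitesimal connected part $(\bbG_a^-)^{\times r}$ is trivial (an \'etale group scheme acts trivially on a connected infinitesimal one over a perfect field, since $\Aut$ of $(\bbG_a^-)^{\times r}$ as a supergroup is $\GL_r$, which has no unipotent \'etale... — more simply, the action factors through the finite group acting $k$-linearly on $V$, and one shows it must be trivial because any nontrivial action would violate unipotence of $G$, as a non-identity element of $G/G^0$ acting non-identically would, together with the odd generators, generate a non-unipotent subalgebra). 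Hence $G \cong (\bbG_a^-)^{\times r} \rtimes (\bbZ/p)^{\times s}$ as claimed, and in fact the semidirect product is direct, but I would keep the $\rtimes$ notation to match the statement.

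The main obstacle I anticipate is the last step: ruling out a nontrivial action of $\pi_0(G)$ on $\Lie(G^0)$ and, relatedly, showing $\pi_0(G)$ is elementary abelian rather than merely a unipotent \'etale (hence constant) $p$-group. Both come down to a careful use of the unipotence hypothesis — that $k$ is the \emph{unique} irreducible $kG$-module — to constrain how the \'etale part can interact with the odd generators. I would handle this by examining the structure of $kG$ as an augmented algebra: unipotence means the augmentation ideal is nilpotent, and a nontrivial semisimple (over $\bar k$, after the \'etale part becomes constant) action of a nontrivial group element on $V$ would produce a non-nilpotent element, contradicting unipotence; similarly any element of order $p^2$ in $\pi_0(G)$ would survive to give a non-unipotent group algebra summand. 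The rest is bookkeeping with Theorem~\ref{th:tensor-dec} and the examples already recorded.
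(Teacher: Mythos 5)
Your identification of $G^0$ is correct and is essentially the paper's own argument: $G^0_\ev=1$ forces $G^0$ to have height one (via Corollary~\ref{co:GevG1} or Theorem~\ref{th:tensor-dec}), so $G^0=\mcU^{[p]}(\fg)$ with $\fg_0=0$ by Remark~\ref{re:lie}, hence $\mcU^{[p]}(\fg)=\Lambda^*(\fg_1)$ and $G^0\cong(\bbG_a^-)^{\times r}$. At that point the paper simply invokes Lemma~\ref{le:pi0} to write $G\cong G^0\rtimes\pi_0(G)$ and stops: it makes no claim that the action of the \'etale part on $G^0$ is trivial (which is why the statement has $\rtimes$ rather than $\times$), and it does not attempt to extract anything about $\pi_0(G)$ from unipotence beyond its being a finite $p$-group.

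The second half of your proposal is where the genuine problems are, and both of the extra claims you try to establish are false as stated. First, unipotence does \emph{not} force the component group to have exponent $p$ or to be abelian: every constant finite $p$-group is unipotent in characteristic $p$ (for instance $k[\bbZ/p^2]\cong k[t]/(t^{p^2})$ is local with nilpotent augmentation ideal), so your assertion that an element of order $p^2$ ``would give a non-unipotent group algebra summand'' cannot be repaired, and no argument from unipotence alone can yield $(\bbZ/p)^{\times s}$; the paper's proof does not attempt this either. Second, an \'etale group scheme can certainly act nontrivially on a connected infinitesimal one, and such an action does not violate unipotence: $\bbZ/p$ acting on $(\bbG_a^-)^{\times 2}$ through a nontrivial unipotent matrix in $\GL_2$ gives a unipotent supergroup scheme with nontrivial action (the paper itself works with $(\bbG_a^-\times\bbG_a^-)\rtimes(\bbZ/p)^{\times s}$ with nontrivial action in Theorem~\ref{th:forbidden}; more generally any $G^0\rtimes\pi$ with $G^0$ connected unipotent and $\pi$ a $p$-group is unipotent, since on an irreducible the normal connected unipotent part acts trivially and the module then factors through $\pi$). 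So the claims that the semidirect product is direct, and your proposed justifications for them, should be deleted; they are not needed for the semidirect product form the lemma asserts, and they are not part of the paper's proof.
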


\begin{proof} 
The assumption $G^0_\ev = 1$ implies that $G_0$ has height $1$, and, hence, 
corresponds to a Lie superalgebra $\fg$. By Remark~\ref{re:lie}, $\fg_0=0$, 
therefore, $ \mcU^{[p]}(\fg) = \Lambda^*(\fg_1)$, and, hence, $G_0 = 
(\mathbb G_a^-)^{\times \dim \fg_1}$.  The statement follows 
from Lemma~\ref{le:pi0}. 
\end{proof}

For sub-supergroup schemes $H, H^\prime \leqslant G$, the commutator sub-supergroup scheme is defined as in \cite[II.5.4.8]{Demazure/Gabriel:1970a} as a representable functor. We need an analogue of the following standard result in group theory.  

\begin{lemma} 
\label{le:normal}
Let $G$ be a finite supergroup scheme, and $H, H^\prime \unlhd G$ be normal sub-supergroup schemes.  
Then $[H,H^\prime]$ is normal in $G$. 
\end{lemma}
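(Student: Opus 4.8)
The plan is to mimic the classical argument that the commutator of two normal subgroups is normal, transported to the setting of affine supergroup schemes, where ``normal'' and ``commutator'' are understood functor-theoretically as in \cite[II.5.4.8]{Demazure/Gabriel:1970a}. First I would recall that $[H,H']$ is the smallest sub-supergroup scheme of $G$ containing the image of the ``commutator morphism'' $c\colon H\times H'\to G$, $c(h,h')=hh'h^{-1}h'^{-1}$, taken on $R$-points functorially for every $\bbZ/2$-graded commutative $k$-algebra $R$; since $G$ is finite (hence $k[G]$ finite-dimensional), there is no subtlety about the existence or representability of this smallest sub-supergroup scheme — one simply intersects all sub-supergroup schemes containing $\Im c$, which corresponds on coordinate rings to an appropriate quotient Hopf algebra. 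Normality of $[H,H']$ then means: for every $R$ and every $g\in G(R)$, conjugation $\mathrm{int}_g$ carries $[H,H'](R)$ into itself.

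The key computation is the standard commutator identity. Fix $R$, $g\in G(R)$, $h\in H(R)$, $h'\in H'(R)$. Then
\[
g\,[h,h']\,g^{-1} \;=\; [\,ghg^{-1}\,,\,gh'g^{-1}\,].
\]
Because $H\unlhd G$ we have $ghg^{-1}\in H(R)$, and because $H'\unlhd G$ we have $gh'g^{-1}\in H'(R)$; hence the right-hand side lies in the image of $c$, and therefore in $[H,H'](R)$. Thus $\mathrm{int}_g$ sends generators of $[H,H'](R)$ into $[H,H'](R)$. To conclude that $\mathrm{int}_g$ sends all of $[H,H'](R)$ into $[H,H'](R)$, I would argue functorially rather than element-by-element: the morphism $\mathrm{int}_g\circ c\colon H\times H'\to G$ equals $c\circ(\mathrm{int}_g\times\mathrm{int}_g)$ followed by the inclusions $H\hookrightarrow H$, $H'\hookrightarrow H'$ guaranteed by normality, so $\mathrm{int}_g$ carries $\Im c$ into $[H,H']$; since $\mathrm{int}_g$ is an automorphism of $G$ it carries the smallest sub-supergroup scheme containing $\Im c$ to the smallest sub-supergroup scheme containing $\mathrm{int}_g(\Im c)\subseteq[H,H']$, which is contained in $[H,H']$. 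Running the same argument with $g^{-1}$ gives the reverse inclusion, so $\mathrm{int}_g([H,H'])=[H,H']$ for all $g$, i.e. $[H,H']\unlhd G$.

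The one genuine point requiring care — and the step I expect to be the main obstacle — is that ``$g\in G(R)$'' is not literally available for a fixed $R$ in a way that sees all of $G$: one must phrase conjugation as a morphism of supergroup schemes over a base, taking $g$ to be the universal point $\mathrm{id}\in G(k[G])$ (or work over $G\times(\text{everything})$), so that the identity $\mathrm{int}_g\circ c=c\circ(\mathrm{int}_g\times\mathrm{int}_g)$ becomes an honest equality of morphisms of supergroup schemes. The commutator identity itself is a formal consequence of the group axioms and so holds on $R$-points naturally in $R$, hence as morphisms; the sign conventions of the super setting do not intervene because everything is expressed through the group multiplication of $G(R)$, which is an ordinary (ungraded) group law. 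With that universal-point formulation in place, the passage from ``$\mathrm{int}_g$ preserves $\Im c$'' to ``$\mathrm{int}_g$ preserves $[H,H']$'' is exactly the defining minimality property of $[H,H']$, and the proof closes.
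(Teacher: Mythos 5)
Your proposal is correct and follows essentially the same route as the paper: the paper also reduces to a pointwise check on $R$-points and invokes the identity $c(aba^{-1}b^{-1})c^{-1}=(cac^{-1})(cbc^{-1})(cac^{-1})^{-1}(cbc^{-1})^{-1}$ together with normality of $H$ and $H'$. Your extra care about passing from the image of the commutator morphism to the commutator sub-supergroup scheme via its minimality property just makes explicit what the paper compresses into ``it suffices to check pointwise.''
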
 

\begin{proof} 
It suffices to check pointwise that for $a \in H(R), b \in H^\prime(R)$ and $c\in G(R)$, we have that 
$[a,b]^c \in [H(R),H^\prime(R)]$, where the latter commutator is as of discrete groups. This follows from 
the obvious identity
\begin{equation*} 
c(aba^{-1}b^{-1})c^{-1} = cac^{-1}cbc^{-1}ca^{-1}c^{-1}cb^{-1}c^{-1}.
\qedhere
\end{equation*}
\end{proof} 

%%%%%%%%%%%%%%%%%%%%%%%%%Cohomology: introduction%%%%%%%%%%%%%%%
%%%%%%%%%%%%%%%%%%%%%%%%%%%%%%%%%%%%%%%%%%%%%%%%%%%%%

\section{Low degree cohomology} 
\label{se:low}

The cohomology $H^{*,*}(G,k)$ of a finite supergroup scheme $G$ is isomorphic
to $\Ext^{*,*}_{kG}(k,k)$. The first index is homological, and the second
is the internal $\bbZ/2$-grading. Drupieski \cite{Drupieski:2013a,Drupieski:2016a}
has proved that $H^{*,*}(G,k)$ is a finitely generated $k$-algebra, which is
graded commutative in the sense that if $x\in H^{m,\alpha}(G,k)$ and 
$y\in H^{n,\beta}(G,k)$ then 
\[ yx = (-1)^{mn}(-1)^{\alpha\beta} xy. \]

We start by identifying the first cohomology group of $G$. 
\begin{lemma}\label{le:H1}
Let $G$ be a finite supergroup scheme with the group of connected components $\pi$.
Then we have $H^{1,0}(G,k)= \Hom_{\Gr/k}(G,\bbG_a)$ and $H^{1,1}(G,k)=\Hom_{\sGr/k}(G^0,\bbG_a^-)^\pi$. 
Moreover, $\Hom_{\sGr/k}(G,\bbG_a)\cong \Hom_{\sGr/k}(G^0,\bbG_{a})^\pi \times \Hom_{\Gr/k}(\pi, \bbG_a)$. 
\end{lemma}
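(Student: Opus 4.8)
The plan is to compute the three first-cohomology groups by identifying $H^{1,*}(G,k)$ with the module of primitive elements in a suitable dual, which in the super setting is governed by homomorphisms into $\bbG_a$ and $\bbG_a^-$. Recall that for any (super)group scheme $G$ one has $H^{1}(G,k) \cong (I/I^2)^\#$ where $I$ is the augmentation ideal of $k[G]$, and that a linear functional $k[G]\to k$ which kills $k$ and $I^2$ corresponds exactly to a primitive element of $kG$; the grading is respected throughout. By Lemma~\ref{le:mcV} the primitive elements of $kG$ form the Lie superalgebra $\fg=\Lie(G)$, so $H^{1,0}(G,k)\cong (\fg_0)^\#$ relative to the action of $\pi=\pi_0(G)$ and $H^{1,1}(G,k)\cong (\fg_1)^\#$ relative to that action. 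The point is then to recognise these graded pieces as the asserted Hom-groups.

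First I would treat the connected case $G=G^0$. For the even part: a primitive even element of $kG$ of the appropriate type is the same as an algebra map $k[\bbG_a]=k[T]\to k[G]$ sending the primitive $T$ to a primitive element, i.e.\ a homomorphism of supergroup schemes $G\to\bbG_a$; since $\bbG_a$ is purely even and $G$ connected, $\Hom_{\sGr/k}(G,\bbG_a)=\Hom_{\Gr/k}(G,\bbG_a)$, giving $H^{1,0}(G^0,k)=\Hom_{\Gr/k}(G^0,\bbG_a)$. Dually, an odd primitive element of $kG$ corresponds to an algebra map $k\bbG_a^-=k[\si]/(\si^2)\to kG$ carrying $\si$ to an odd primitive element, equivalently (by the anti-equivalence \eqref{anti-equi}, or directly on coordinate rings) a homomorphism $G^0\to\bbG_a^-$; hence $H^{1,1}(G^0,k)=\Hom_{\sGr/k}(G^0,\bbG_a^-)$. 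Next I would incorporate $\pi_0(G)$ via the semidirect-product decomposition $G=G^0\rtimes\pi$ of Lemma~\ref{le:pi0}: the Lyndon–Hochschild–Serre spectral sequence (or a direct five-term exact sequence) gives $H^{1}(G,k)$ as the $\pi$-invariants of $H^1(G^0,k)$ together with a contribution $H^1(\pi,k)$ coming from the quotient. Since $k[\pi]$ sits in even degree, this extra $H^1(\pi,k)=\Hom_{\Gr/k}(\pi,\bbG_a)$ only shows up in the even, internal-degree-$0$ part, which is why $H^{1,1}(G,k)=\Hom_{\sGr/k}(G^0,\bbG_a^-)^\pi$ has no $\pi$-term while the full $\Hom_{\sGr/k}(G,\bbG_a)$ does. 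The splitting $G=G^0\rtimes\pi$ makes the relevant restriction–inflation sequence split, yielding the product decomposition $\Hom_{\sGr/k}(G,\bbG_a)\cong\Hom_{\sGr/k}(G^0,\bbG_a)^\pi\times\Hom_{\Gr/k}(\pi,\bbG_a)$.

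The formula $H^{1,0}(G,k)=\Hom_{\Gr/k}(G,\bbG_a)$ in full (not just on $G^0$) follows because a homomorphism $G\to\bbG_a$ restricts to a $\pi$-invariant homomorphism on $G^0$ and, conversely, such data glue over the semidirect product exactly as in the classical group-scheme case; I would phrase this as the even-degree specialisation of the third assertion, using that $\bbG_a=(\bbG_a)^0$.

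The main obstacle I anticipate is bookkeeping the role of $\pi_0(G)$ correctly — in particular making sure the five-term exact sequence for $1\to G^0\to G\to\pi\to 1$ is exact on the left and splits (which it does, because the sequence of group schemes splits), and checking that the $\pi$-action on $H^1(G^0,k)$ used here is the one intrinsic to the conjugation action appearing in the statement. The purely super part (identifying odd primitives with maps to $\bbG_a^-$) is essentially formal once one has the dictionary between primitive elements and Hopf-algebra maps out of $k[\si]/(\si^2)$, so I would not expect difficulty there; the only subtlety is that $k\bbG_a^-$ is the group ring, not the coordinate ring, so the correspondence is via the covariant equivalence \eqref{anti-equi} rather than directly.
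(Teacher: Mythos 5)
There is a genuine gap at the very first step, and it propagates through the whole argument. You identify $H^{1}(G,k)$ with $(I/I^2)^{\#}$ for $I$ the augmentation ideal of the \emph{coordinate ring} $k[G]$, and then, via the dictionary between such functionals and primitives of $kG$, with (the dual of) $\fg=\Lie(G)$, invoking Lemma~\ref{le:mcV}. This is the wrong Hopf algebra: since $H^{*,*}(G,k)=\Ext^{*,*}_{kG}(k,k)$, the correct statement is $H^{1}(G,k)\cong\Hom_k(I_{kG}/I_{kG}^2,k)$ with $I_{kG}$ the augmentation ideal of the \emph{group algebra} $kG$, and these functionals are the primitive elements of $k[G]$, not of $kG$. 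Primitives of $k[G]$ are exactly homomorphisms $G\to\bbG_a$ (even degree) and $G\to\bbG_a^-$ (odd degree), which is what the lemma asserts; primitives of $kG$ form $\Lie(G)$, which only sees the first Frobenius kernel. Concretely, for $G=\bbG_{a(2)}$ your identification gives $(\fg_0)^{\#}$ of dimension $1$, whereas $H^{1,0}(G,k)=\Hom_{\Gr/k}(\bbG_{a(2)},\bbG_a)$ is $2$-dimensional (spanned by $T$ and $T^p$ as primitives of $k[T]/T^{p^2}$); similarly $\Lie(\bbZ/p)=0$ while $H^1(\bbZ/p,k)\neq 0$. Your second paragraph also repeats the conflation: an algebra map $k[T]\to k[G]$ sending $T$ to a primitive is determined by a primitive of $k[G]$, not a "primitive even element of $kG$".

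The remainder of your plan — the semidirect product $G=G^0\rtimes\pi$ from Lemma~\ref{le:pi0}, the split five-term/inflation-restriction sequence, and the observation that the $\pi$-contribution is purely even so that $H^{1,1}$ carries no $H^1(\pi,k)$ term — is sound and is essentially how the paper handles the last assertion (it cites Bendel's argument). To repair the proof, replace the Lie-algebra step by the degree-one part of the cobar (Hochschild) complex for $kG$, or equivalently the identification $\Ext^1_{kG}(k,k)\cong\mathrm{Prim}(k[G])$: a $1$-cocycle with trivial coefficients is precisely an additive character of $G$, giving $\Hom_{\Gr/k}(G,\bbG_a)$ in internal degree $0$ and $\Hom_{\sGr/k}(G,\bbG_a^-)\cong\Hom_{\sGr/k}(G^0,\bbG_a^-)^{\pi}$ in internal degree $1$; then your semidirect-product bookkeeping goes through unchanged.
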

\begin{proof} 
Identification of $H^{1,*}$ with $\Hom$ follows from the standard cobar resolution  used to compute cohomology $H^{*,*}(G,k)$. 
The last statement is proved as in \cite[Lemma~5.1]{Bendel:2001a}. 
\end{proof} 

\begin{lemma}\label{le:Ga1Ga-}
If $G$ is a non-trivial unipotent finite supergroup scheme 
then there is a non-trivial homomorphism from $G$ to either $\bbG_{a(1)}$ or
$\bbG_a^-$ or $\bbZ/p$.
\end{lemma}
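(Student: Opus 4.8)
The plan is to separate the non-connected and connected cases. If $G$ is not connected, then $\pi := \pi_0(G) \ne 1$, and by Lemma~\ref{le:pi0} there is a surjection $G \to \pi$; thus $\pi$, being a quotient of $G$, is a non-trivial unipotent finite (ordinary) group, which means $k\pi$ is local and hence $\pi$ is a $p$-group. A non-trivial finite $p$-group is nilpotent, so has non-trivial abelianization, and therefore admits a surjection onto $\bbZ/p$; composing with $G \to \pi$ produces the desired homomorphism. (Alternatively, one can read this off Lemma~\ref{le:H1}, since $H^{1,0}(G,k) \supseteq \Hom_{\Gr/k}(\pi,\bbG_a) \ne 0$ in this case.)

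For $G$ connected and non-trivial, the key point I would establish is $H^1(G,k) \ne 0$: the group algebra $kG$ is a finite-dimensional local $k$-algebra with residue field $k$ and non-zero augmentation ideal $\fm$, so $\fm \ne \fm^2$ by Nakayama, and the standard identification $\Ext^1_{kG}(k,k) \cong (\fm/\fm^2)^\ast$ gives $H^1(G,k) \ne 0$. Since $H^1(G,k) = H^{1,0}(G,k) \oplus H^{1,1}(G,k)$, one of these summands is non-zero. If $H^{1,1}(G,k) \ne 0$, then --- because $G^0 = G$ and $\pi$ is trivial --- Lemma~\ref{le:H1} identifies it with $\Hom_{\sGr/k}(G,\bbG_a^-)$, so any non-zero class is a non-trivial homomorphism $G \to \bbG_a^-$, and we are done.

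Otherwise $H^{1,0}(G,k) = \Hom_{\Gr/k}(G,\bbG_a) \ne 0$ by Lemma~\ref{le:H1}, giving a non-trivial homomorphism $\phi\colon G \to \bbG_a$. Here I would use that $G$ is finite and connected: then $k[G]$ is local, so the primitive element of $k[G]$ defining $\phi$ is nilpotent, equivalently $F^N\circ\phi = 0$ for $N \gg 0$, where $F$ is the Frobenius of $\bbG_a$. Taking $N \ge 1$ minimal with $F^N\circ\phi = 0$, the homomorphism $F^{N-1}\circ\phi$ is still non-trivial and factors through $\ker(F\colon\bbG_a\to\bbG_a^{(1)}) = \bbG_{a(1)}$ (note $\bbG_a^{(1)}\cong\bbG_a$, since $\bbG_a$ is defined over $\bbF_p$), so it is a non-trivial homomorphism $G \to \bbG_{a(1)}$.

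The argument is routine once Lemmas~\ref{le:pi0} and~\ref{le:H1} are in hand; the one point I would be careful about is the last reduction from $\bbG_a$ to the finite quotient $\bbG_{a(1)}$, which genuinely requires $G$ to be connected. On a non-connected group such as $\bbZ/p$ the Frobenius acts as the identity on the \'etale part and cannot be iterated away --- which is precisely why $\bbZ/p$ has to be allowed as a target in the statement.
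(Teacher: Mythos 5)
Your proof is correct and follows essentially the same route as the paper's, which argues the contrapositive: no non-trivial map to $\bbZ/p$ forces $\pi_0(G)$ trivial (a $p$-group), none to $\bbG_{a(1)}$ forces none to $\bbG_a$ by the same Frobenius-truncation point you make, and none to $\bbG_a^-$ then gives $\Ext^{1,*}_{kG}(k,k)=0$ via Lemma~\ref{le:H1}, which is impossible for a non-trivial local ring $kG$. You simply run this forward and supply the standard details (Nakayama for $\fm\neq\fm^2$, nilpotence of the primitive element defining $G\to\bbG_a$) that the paper leaves implicit.
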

\begin{proof} Since $G$ is unipotent, the group of connected components $\pi$  is a $p$-group.   If there are no non-trivial maps to $\bbZ/p$, then $\pi$ is trivial and $G$ is connected. For a finite connected supergroup scheme, if there are no non-trivial homomorphisms from $G$ to $\bbG_{a(1)}$ then 
there are also none to $\bbG_a$.  So if there are also no non-trivial homomorphisms from $G$ to $\bbG_a^-$, Lemma~\ref{le:H1} yields $\Ext^{1,*}_{kG}(k,k)=0$. As $kG$ is a local ring this implies  $G$ is trivial.
\end{proof}

If $f\colon G\to G^\prime$ is a group homomorphism then $f^*\colon H^{*,*}(G^\prime,k)\to H^{*,*}(G,k)$ preserves both the homological and the internal degree,  and commutes with the Steenrod operations (to be discussed in Section~\ref{se:steenrod}). If $N$  is a normal sub-supergroup scheme of $G$ then there is the Lyndon-Hochschild--Serre spectral sequence
\[ 
H^{*,*}(G/N,H^{*,*}(N,k)) \Rightarrow H^{*,*}(G,k) 
\]
in which the internal degrees are carried along, and preserved by all the differentials. The spectral 
sequence also gives  the five-term exact sequence: 
\[\xymatrix{1\ar[r] & H^1(G/N,k)\ar[r] & H^1(G,k) \ar[r] & H^1(N,k)^{G/N} \ar[r]^-{d_2} &  H^2(G/N,k) \ar[r] &H^2(G,k).}\]

\begin{lemma} 
\label{le:Lie.split} Let $\mcL^\prime \subset \mcL$ be Lie superalgebras such that $\mcL^\prime$ is odd and central 
in $\mcL$ and $\mcL/\mcL^\prime$ is even. Then $\mcL \simeq \mcL^\prime \times  \mcL/\mcL^\prime$.
\end{lemma}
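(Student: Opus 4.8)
The plan is to read off the claim directly from what the three hypotheses say about the $\bbZ/2$-grading and the bracket. First I would translate the hypotheses: ``$\mcL^\prime$ is odd'' means $\mcL^\prime\subseteq\mcL_1$; ``$\mcL/\mcL^\prime$ is even'' means $(\mcL/\mcL^\prime)_1=0$, i.e.\ $\mcL_1=\mcL^\prime_1=\mcL^\prime$; and ``$\mcL^\prime$ is central'' means $[\mcL^\prime,\mcL]=0$. Combining the first two, $\mcL_1=\mcL^\prime$, so as a super vector space $\mcL=\mcL_0\oplus\mcL^\prime$ with $\mcL_0$ the even part.

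Next I would analyse the bracket on $\mcL$ componentwise. The bracket $[\mcL_0,\mcL_0]\subseteq\mcL_0$ makes $\mcL_0$ an (even) Lie subalgebra of $\mcL$. Both $[\mcL_0,\mcL^\prime]$ and $[\mcL^\prime,\mcL^\prime]$ lie in $[\mcL,\mcL^\prime]=0$ by centrality, so $\mcL^\prime$ is an abelian ideal on which $\mcL_0$ acts trivially, and the only nonzero component of the bracket is the one internal to $\mcL_0$. Hence the inclusion $\mcL_0\hookrightarrow\mcL$ and the projection $\mcL\to\mcL/\mcL^\prime$ compose to a map $\mcL_0\to\mcL/\mcL^\prime$ whose kernel is $\mcL_0\cap\mcL^\prime=\mcL_0\cap\mcL_1=0$ and which is onto since $\mcL_1=\mcL^\prime$; so it is an isomorphism of Lie superalgebras. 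Therefore $\mcL=\mcL_0\oplus\mcL^\prime$ exhibits $\mcL$ as the direct product of the Lie superalgebra $\mcL_0\cong\mcL/\mcL^\prime$ with the abelian (purely odd) Lie superalgebra $\mcL^\prime$, which is the assertion.

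I do not expect a genuine obstacle here: the content is entirely the bookkeeping above, the only point needing a line of justification being the equality $\mcL_1=\mcL^\prime$ that makes the splitting $\mcL_0\xrightarrow{\ \sim\ }\mcL/\mcL^\prime$ surjective. If one wants the statement for $p$-restricted Lie superalgebras, the same argument works verbatim: the $p$-operation of $\mcL$ restricts to the subalgebra $\mcL_0$, and $\mcL^\prime$, being purely odd and central, carries the trivial restricted $\mcL_0$-module structure, so the restricted structure on $\mcL$ is again the product of those on $\mcL_0$ and $\mcL^\prime$.
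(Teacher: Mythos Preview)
Your proof is correct and follows essentially the same approach as the paper: both identify the even part $\mcL_0$ (which the paper calls $\mcL_\ev$) as the complement to $\mcL^\prime$, observe that it maps isomorphically onto $\mcL/\mcL^\prime$, and read off the direct product decomposition. The paper's proof is a terse two-line version of exactly the argument you wrote out; your careful verification that $\mcL_1=\mcL^\prime$ and that all brackets involving $\mcL^\prime$ vanish is precisely what underlies the paper's phrase ``the assumption implies that it is normal and isomorphic to $\mcL/\mcL^\prime$.''
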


\begin{proof}
Let $\mcL_\ev$ be the $p$-restricted Lie sub-superalgebra of even elements in $\mcL$. The assumption 
implies that it is normal and isomorphic to $\mcL/\mcL^\prime$; hence, $\mcL \simeq \mcL^\prime \times \mcL_\ev$.
\end{proof}

\begin{lemma}
\label{le:even} 
If a unipotent finite supergroup scheme $G$ has $H^{1,1}(G,k)=0$, then $G = G_{\rm ev}$. 
\end{lemma}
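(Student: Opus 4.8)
The plan is to reduce to the connected case and then exploit the tensor decomposition of the coordinate ring. First I would dispose of the component group: since $G$ is unipotent, $\pi=\pi_0(G)$ is a $p$-group, and by Lemma~\ref{le:pi0} we have $G=G^0\rtimes\pi$. Now $\Hom_{\Gr/k}(\pi,\bbG_a^-)=0$ because $\pi$ sits in even degree, so $H^{1,1}(G,k)=0$ forces nothing extra on $\pi$; instead, if $\pi$ were nontrivial it would have a nonzero map to $\bbZ/p$ hence to $\bbG_a$, contributing to $H^{1,0}$ but not obstructing $G=G_\ev$ directly. The point is rather that $G_\ev=G^0_\ev\rtimes\pi$, so it suffices to prove $G^0=(G^0)_\ev$, i.e.\ I may assume $G$ is connected.

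So assume $G$ connected. By Theorem~\ref{th:tensor-dec} there are odd elements $y_1,\dots,y_n\in k[G]$ with $k[G]\cong k[G_\ev]\otimes\Lambda(y_1,\dots,y_n)$ as $\bbZ/2$-graded algebras, and $G=G_\ev$ precisely when $n=0$. Suppose $n\ge 1$; I want to produce a nonzero class in $H^{1,1}(G,k)$, contradiction. By Lemma~\ref{le:H1}, $H^{1,1}(G,k)=\Hom_{\sGr/k}(G,\bbG_a^-)^\pi=\Hom_{\sGr/k}(G,\bbG_a^-)$ in the connected case, which dually is the space of odd primitive elements $\xi\in k[G]$ with $\Delta(\xi)=\xi\otimes1+1\otimes\xi$. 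Equivalently, by Lemma~\ref{le:mcV} and Remark~\ref{re:lie}, this is $(\Lie G)_1^\#$-worth of data: the odd part $\fg_1$ of $\fg=\Lie(G)$ is dual to the odd primitives, so $H^{1,1}(G,k)\ne 0$ as soon as $\fg_1\ne 0$. Thus the real task is: if $n\ge1$ then $\fg_1\ne 0$, equivalently there is a nonzero odd primitive in $k[G]$.

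The key step is the following observation about the associated graded Hopf algebra. Let $I$ be the augmentation ideal of $k[G]$; by the proof of Theorem~\ref{th:tensor-dec} the classes of $y_1,\dots,y_n$ form a basis of the odd part of $I/I^2$, and $\gr k[G]=\bigoplus I^i/I^{i+1}$ is a graded Hopf algebra. For any element $z\in I$, $\Delta(z)-z\otimes1-1\otimes z\in I\otimes I$, so the induced coproduct on $I/I^2$ is zero, i.e.\ \emph{every} element of $I/I^2$ is primitive in $\gr k[G]$. In particular each $\bar y_j$ is an odd primitive in $\gr k[G]$. I would then lift this: the odd part of $I/I^2$ being nonzero gives, by a standard argument with the lowest-degree term (exactly as in the proof of Theorem~\ref{th:tensor-dec}), a genuine odd primitive element of $k[G]$ — pick $\xi$ odd of minimal filtration degree among elements with nonzero image in $(I/I^2)_1$; writing $\Delta(\xi)-\xi\otimes1-1\otimes\xi\in I\otimes I$ and comparing lowest-degree bihomogeneous terms, minimality forces this difference to vanish. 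Hence $\fg_1\ne0$, so $H^{1,1}(G,k)\ne0$, a contradiction, and therefore $n=0$ and $G=G_\ev$.

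The main obstacle is the lifting step: passing from the primitivity statement in $\gr k[G]$ (where it is automatic) to an honest primitive element of $k[G]$. One must be careful that the minimal-degree odd element need not itself be primitive on the nose, but the filtration/minimality argument — the same device used twice already in Theorem~\ref{th:tensor-dec}, where a putative relation $a+by_r=0$ is hit with $\Delta$ and the cross term $b\otimes y_r$ is isolated — is exactly what makes it work here. An alternative, perhaps cleaner, route avoiding this is to invoke Masuoka's algebra isomorphism $k[G]\cong k[G_\ev]\otimes\Lambda((\Lie G)_\odd^\#)$ from Remark~\ref{re:masuoka}(1) together with Lemma~\ref{le:mcV}: then $n=\dim(\Lie G)_1$ outright, and $H^{1,1}(G,k)=0$ reads as $(\Lie G)_1=0$, i.e.\ $n=0$, immediately. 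I would present the argument via Lemma~\ref{le:H1} and Lemma~\ref{le:mcV} as the primary line, with the $\gr$ computation as the mechanism identifying $H^{1,1}$ with the odd primitives.
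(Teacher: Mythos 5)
There is a genuine gap in your connected case, and a telltale sign is that the argument never invokes unipotence, whereas the statement fails without it. By Lemma~\ref{le:H1}, for connected $G$ one has $H^{1,1}(G,k)\cong\Hom_{\sGr/k}(G,\bbG_a^-)$, i.e.\ the space of odd \emph{primitives of the coordinate ring} $k[G]$; by contrast $(\Lie G)_1$ consists of the odd primitives of the \emph{group ring} $kG$ (Lemma~\ref{le:mcV}), i.e.\ it records homomorphisms $\bbG_a^-\to G$, not homomorphisms $G\to\bbG_a^-$. These two spaces are not dual to one another, so neither your primary line (``$H^{1,1}(G,k)\ne0$ as soon as $\fg_1\ne0$'') nor your alternative via Remark~\ref{re:masuoka}(1) (``$H^{1,1}(G,k)=0$ reads as $(\Lie G)_1=0$'') is justified: that implication is precisely the content of the lemma, and it is false for non-unipotent $G$. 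Concretely, let $\fg$ be the restricted Lie superalgebra with $\fg_0=kh$, $h^{[p]}=h$, $\fg_1=k\si$, $[h,\si]=\si$, $[\si,\si]=0$, and let $G$ be the connected height-one supergroup scheme with $kG=\mcU^{[p]}(\fg)$, so $G_\ev\cong\mu_p$. Every homomorphism $G\to\bbG_a^-$ kills $\si=[h,\si]$, hence $H^{1,1}(G,k)=0$, yet $G\ne G_\ev$, $(\Lie G)_1\ne0$, and the odd part of $I/I^2\subset k[G]$ is nonzero. The same example breaks your lifting step: the coproduct of the odd generator $y$ of $k[G]$ contains a cross term $y\otimes u$ with $u$ of filtration degree one (dual to the relation $[h,\si]=\si$), which lies in the minimal bidegree, so minimality of the filtration degree of $\xi$ does not force $\Delta(\xi)-\xi\otimes1-1\otimes\xi$ to vanish. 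The device from the proof of Theorem~\ref{th:tensor-dec} applies to a relation $a+by_r=0$ lying in the kernel of an algebra map, not to the coproduct of a single element, and primitivity in $\gr k[G]$ does not lift to primitivity in $k[G]$.

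Unipotence is exactly the missing ingredient, and it is where the paper's proof does its work. Since $G/G_{(1)}$ is even (Corollary~\ref{co:GevG1}), the Lyndon--Hochschild--Serre five-term sequence identifies $H^{1,1}(G,k)$ with $H^{1,1}(G_{(1)},k)^{G/G_{(1)}}$, and unipotence of the quotient upgrades vanishing of invariants to $\Hom_{\sGr/k}(G_{(1)},\bbG_a^-)=0$. Writing $\mcL=\Lie(G_{(1)})$, unipotence of $\mcL$ permits a central series $\mcL_1\subset\mcL_0\subset\mcL$ with $\mcL/\mcL_0$ even and $\mcL_0/\mcL_1\cong\Lie\bbG_a^-$ central and odd (possible whenever $\mcL$ has nonzero odd part), and Lemma~\ref{le:Lie.split} then splits $\Lie\bbG_a^-$ off $\mcL/\mcL_1$ as a direct factor, yielding a surjection $G_{(1)}\to\bbG_a^-$ and a contradiction; hence $G_{(1)}$ is even and $G=G_\ev G_{(1)}=G_\ev$. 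In the example above $h$ is toral, $\mcL$ is not unipotent, and no such central series exists -- a distinction your purely Hopf-filtration argument on $k[G]$ cannot detect. (A smaller point: your reduction to the connected case also needs the observation that $\Hom_{\sGr/k}(G^0,\bbG_a^-)^\pi=0$ forces $\Hom_{\sGr/k}(G^0,\bbG_a^-)=0$ because $\pi$ is a $p$-group acting on a $k$-vector space in characteristic $p$; this is easy but should be said.)
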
 

\begin{proof}  Since $G/G_{(1)}$ is even by Corollary~\ref{co:GevG1}, the Lyndon-Hochschild-Serre 
spectral sequence applied to the supergroup extension $1\to G_{(1)} \to G \to G/G_{(1)} \to 1$ 
implies that $H^{*,1}(G,k) = H^{*,1}(G_{(1)}, k)$. Hence, the assumption together with 
Lemma~\ref{le:H1} imply that there are no non-trivial maps from $G_{(1)}$ to $\bbG_a^-$. 
We need to show that $G_{(1)}$ is purely even. 

Let $\mcL$ be the unipotent Lie superalgebra associated with $G_{(1)}$.    
Since $\mcL$ is unipotent, we can choose a central series 
\[\mcL_1 \subset \mcL_0 \subset \mcL\] such that 
$\mcL/\mcL_0$ is purely even and $\mcL_0/\mcL_1 \simeq  \Lie \bbG_a^-$. 
By Lemma~\ref{le:Lie.split}, we get that $\mcL/\mcL_1$ has 
$\Lie \bbG_a^-$ as a direct factor, so 
there is a surjective map from $G_{(1)}$ to $\bbG_a^-$, a contradiction. 
\end{proof}

The five-term exact sequence can be used in exactly the same way as in the
proof of \cite[Lemma 1.2]{Bendel/Friedlander/Suslin:1997a},
to prove the following analogue.

\begin{lemma}
\label{le:5-term}
Let $f\colon G\to \ov G$ be a surjective homomorphism of unipotent supergroup schemes. If $f^*\colon H^{1,*}(\ov G,k)\to H^{1,*}(G,k)$ is an isomorphism and  $f^*\colon H^{2,*}(\ov G,k)\to H^{2,*}(G,k)$ is injective then $f$ is an isomorphism. \qed
\end{lemma}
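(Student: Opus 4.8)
The plan is to mimic the proof of \cite[Lemma 1.2]{Bendel/Friedlander/Suslin:1997a}, which is the ungraded version of exactly this statement. The essential input is the five-term exact sequence of the Lyndon--Hochschild--Serre spectral sequence, which in the super setting was recorded above and carries the internal $\bbZ/2$-grading along with it. So let $N = \ker f$, a normal sub-supergroup scheme of $G$, with $G/N \cong \ov G$ since $f$ is surjective. The strategy is to show that the hypotheses force $N$ to be trivial; since $kN$ is local (as $G$, hence $N$, is unipotent and connected components contribute a $p$-group — more carefully, $N$ inherits unipotence from $G$), it is enough to show $H^{1,*}(N,k) = 0$, i.e. $\Ext^{1,*}_{kN}(k,k) = 0$, which by the local Hopf algebra argument used in Lemma~\ref{le:Ga1Ga-} forces $N = 1$.

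First I would write down the five-term exact sequence for the extension $1 \to N \to G \to \ov G \to 1$, keeping track of internal degree:
\[
1 \to H^{1,*}(\ov G,k) \xra{f^*} H^{1,*}(G,k) \to H^{1,*}(N,k)^{\ov G} \xra{d_2} H^{2,*}(\ov G,k) \xra{f^*} H^{2,*}(G,k).
\]
By hypothesis the first $f^*$ is an isomorphism, so exactness at $H^{1,*}(G,k)$ shows the map $H^{1,*}(G,k) \to H^{1,*}(N,k)^{\ov G}$ is zero, hence by exactness at $H^{1,*}(N,k)^{\ov G}$ the transgression $d_2\colon H^{1,*}(N,k)^{\ov G} \hookrightarrow H^{2,*}(\ov G,k)$ is injective. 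But the hypothesis that the second $f^*$ is injective means, by exactness at $H^{2,*}(\ov G,k)$, that the image of $d_2$ is zero. Therefore $H^{1,*}(N,k)^{\ov G} = 0$ in every internal degree.

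Next I would promote this to $H^{1,*}(N,k) = 0$ outright. Since $G$ (and hence $\ov G$, and the action on $H^{1,*}(N,k)$) is unipotent, the invariants of a nonzero finite-dimensional rational $\ov G$-module are nonzero; $H^{1,*}(N,k)$ is finite-dimensional by Drupieski's finite generation theorem, so $H^{1,*}(N,k)^{\ov G} = 0$ forces $H^{1,*}(N,k) = 0$ in both internal degrees $0$ and $1$. Thus $\Ext^{1,*}_{kN}(k,k) = 0$. Since $N$ is a finite unipotent supergroup scheme and $kN$ is a (graded) local ring with residue field $k$, vanishing of $\Ext^1$ means the augmentation ideal of $kN$ is zero, so $kN = k$ and $N$ is trivial; equivalently one can quote Lemma~\ref{le:Ga1Ga-}: a non-trivial unipotent $N$ admits a non-trivial map to $\bbG_{a(1)}$, $\bbG_a^-$, or $\bbZ/p$, each of which contributes a non-zero class in $H^{1,*}(N,k)$. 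Hence $N = 1$ and $f$ is an isomorphism.

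The only real subtlety — the "main obstacle" such as it is — is bookkeeping rather than mathematics: one must be sure that the five-term sequence genuinely respects the internal $\bbZ/2$-grading (so that injectivity of $f^*$ in \emph{each} bidegree $(2,\alpha)$ is what is used), and that unipotence of $\ov G$ gives the invariants-are-nonzero statement for the possibly-odd module $H^{1,1}(N,k)$; both are handled by the remarks already in Section~\ref{se:low}. Everything else is a direct transcription of the classical argument, so the lemma is stated with a $\qed$ and no proof is needed, exactly as in the excerpt.
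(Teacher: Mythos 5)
Your argument is correct and is exactly the paper's intended proof: the authors prove this lemma by invoking the five-term exact sequence of the Lyndon--Hochschild--Serre spectral sequence for $1\to N\to G\to \ov G\to 1$ "in exactly the same way as in \cite[Lemma 1.2]{Bendel/Friedlander/Suslin:1997a}", which is the computation you carried out (invariants $H^{1,*}(N,k)^{\ov G}=0$, then unipotence forces $H^{1,*}(N,k)=0$, then $N=1$). No gaps; the grading bookkeeping you flag is indeed the only point to check and is covered by the paper's statement that the spectral sequence preserves internal degree.
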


\begin{remark}
\label{re:injectivity} 
Lemma~\ref{le:H1} implies that the condition that $f^*\colon H^{1,*}(\ov G,k)\to H^{1,*}(G,k)$ 
is an isomorphism guarantees that any homomorphism from $G$ to $\bbG_{a(1)}$, $\bbG_a^-$ 
and $\mathbb Z/p$ factors through $\ov G$.
\end{remark}

%%%%%%%%%%%%%%%%%%%%%Steenrod operations%%%%%%%%%%%%%%%%%%%
%%%%%%%%%%%%%%%%%%%%%%%%%%%%%%%%%%%%%%%%%%%%%%%%%%
\section{Steenrod operations}
\label{se:steenrod}

The Steenrod algebra acts on the cohomology 
of any $\bbZ$-graded cocommutative Hopf algebra, and hence also on the cohomology
of any affine supergroup scheme (\cite[Theorem~11.8]{May:1970a}, \cite{Wilkerson:1981a}). 
We recall how the Steenrod operations act using the re-indexing 
introduced in \cite{Benson/Pevtsova:bp2}. In order to make the indexing work for 
$\bbZ/2$-graded algebra, we index with half-integers. 

For $p$ odd, there are natural operations 
\begin{align*} 
\cP^i\colon H^{s,t}(G,k) &\to H^{s+2i(p-1),pt}(G,k) \\
\beta\cP^i\colon H^{s,t}(G,k) & \to H^{s+1+2i(p-1),pt}(G,k), 
\end{align*}
defined in the following cases: when $t$ is even, then  $i\in \bbZ$, and if $t$ is odd, then $i\in \bbZ+\half$.
Note that since $p$ is odd,
$pt$ is congruent to $t$ mod $2$, so the operations 
preserve internal degree as elements of $\bbZ/2$.

The Steenrod operations satisfy the following properties: 
\begin{enumerate}
\item[\rm (i)] $\cP^i=0$ if either $i<0$ or $i>s/2$,\newline
$\beta\cP^i=0$ if either $i<0$ or $i\ge s/2$;
\item[\rm (ii)] Semi-linearity: $\cP^i(ax) = a^p\cP^i(x)$ for $a \in k$;
\item[\rm (iii)] $\cP^i(x)=x^p$ if $i=s/2$;
\item[\rm (iv)] Cartan formula: \newline $\cP^j(xy) = \sum_{i}\cP^i(x)\cP^{j-i}(y)$,
\newline
$\beta\cP^j(xy)=\sum_{i}(\beta\cP^i(x)\cP^{j-i}(y)+\cP^i(x)\beta\cP^{j-i}(y))$;
\item[\rm (v)] The $\cP^i$ and $\beta\cP^i$ satisfy the Adem relations.
\end{enumerate}

We record its action on $H^{*,*}(\bbG_a^-,k)$ (see \cite[Proposition 3.1]{Benson/Pevtsova:bp2}).

\begin{proposition}
\label{pr:Steenrod-on-Ga^-}
One has $H^{*,*}(\bbG_a^-,k)\cong k[\zeta]$, a polynomial ring on  $\zeta$ in degree $(1,1)$. The action of the Steenrod operations on $H^{*,*}(\bbG_a^-,k)$ are given by $\cP^\half(\zeta)=\zeta^p$, $\beta\cP^\half(\zeta)=0$. \qed
\end{proposition}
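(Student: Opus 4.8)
The plan is to compute $H^{*,*}(\bbG_a^-,k)$ outright from a minimal free resolution of the trivial module, and then to read off the Steenrod action directly from the axioms (i)--(v) listed in Section~\ref{se:steenrod}. Since $k\bbG_a^- = k[\si]/(\si^2)$ is a self-injective local algebra, the trivial module $k$ has the periodic minimal free resolution
\[
\cdots \xra{\ \si\ } k\bbG_a^- \xra{\ \si\ } k\bbG_a^- \xra{\ \si\ } k\bbG_a^- \xra{\ \varepsilon\ } k \to 0,
\]
every differential being multiplication by $\si$. Because $\si$ is primitive of odd internal degree, multiplication by $\si$ raises the internal $\bbZ/2$-degree by $1$, so tracking this through the resolution the free generator in homological degree $n$ sits in internal degree $n\bmod 2$. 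Applying $\Hom_{k\bbG_a^-}(-,k)$ kills all differentials by minimality, whence $H^{n,j}(\bbG_a^-,k)$ is one-dimensional when $j\equiv n\pmod 2$ and is zero otherwise.

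Next I would fix a generator $\zeta\in H^{1,1}(\bbG_a^-,k)$ and argue that $H^{*,*}(\bbG_a^-,k)=k[\zeta]$. Since the resolution above is periodic of period one, the Yoneda product with $\zeta$ is realised by the degree-one shift of the resolution; iterating, $\zeta^n$ generates $H^{n,\,n\bmod 2}(\bbG_a^-,k)$ for every $n$, so the $k$-algebra homomorphism $k[\zeta]\to H^{*,*}(\bbG_a^-,k)$ is an isomorphism. One checks that this is consistent with graded-commutativity: in bidegree $(1,1)$ the sign is $(-1)^{1\cdot 1}(-1)^{1\cdot 1}=1$, so $\zeta$ commutes with itself. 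For the Steenrod action, by the Cartan formula (iv) and semilinearity (ii) it suffices to compute $\cP^i(\zeta)$ and $\beta\cP^i(\zeta)$. Here $s=1$ and $t=1$ is odd, so the relevant operations are indexed by $i\in\bbZ+\half$; by instability (property (i)), $\cP^i(\zeta)=0$ unless $0\le i\le s/2=\half$, leaving only $\cP^\half$, and since $i=s/2$ property (iii) gives $\cP^\half(\zeta)=\zeta^p$. Likewise $\beta\cP^i(\zeta)=0$ unless $0\le i<s/2=\half$, and there is no half-integer in that range, so $\beta\cP^\half(\zeta)=0$.

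The computation has no real obstacle; the only points requiring care are the bookkeeping of the internal $\bbZ/2$-grading along the resolution, and the verification that $H^{*,*}(\bbG_a^-,k)$ is genuinely a polynomial ring rather than a truncated one — both of which follow from the periodicity of the minimal resolution, since that is exactly what makes every power $\zeta^n$ nonzero. Once the ring and the bidegree of $\zeta$ are pinned down, the Steenrod formulas fall out immediately from the axioms.
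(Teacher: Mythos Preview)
Your argument is correct. The paper does not supply its own proof of this proposition; it simply records the result and cites \cite[Proposition~3.1]{Benson/Pevtsova:bp2}. Your direct computation from the periodic minimal resolution of $k$ over $k[\si]/(\si^2)$, together with the deduction of the Steenrod action purely from axioms (i) and (iii), is exactly the elementary self-contained argument one would expect and is presumably what the cited reference contains.
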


Next, we describe the analogue of  Proposition 3.6 of \cite{Bendel:2001a} for $G=\bbG_{a(r)} \times (\bbG_a^-)^\epsilon \times (\bbZ/p)^{\times s}$ with $r,s\ge 0$, $\epsilon=0$ or $1$. If $\epsilon=1$ we have
\[ 
H^{*,*}(G,k)=k[x_1,\dots,x_r] \otimes
\Lambda(\lambda_1,\dots,\lambda_r) \otimes k[\zeta] \otimes
k[z_1,\dots,z_s] \otimes \Lambda(y_1,\dots,y_s) 
\]
while if $\epsilon=0$ the term $k[\zeta]$ is missing. The degrees and action of the Steenrod algebra are as follows. We make the assumption that $\lambda_{r+1} = 0=x_{r+1}$, that is, $\cP^0$ kills $\lambda_r$ and $x_r$. 

\begin{table}[!h]
\begin{tabular}{|c|c|c|c|c|c|c|c|c|}
\hline
& \small degree & $\cP^{0^{\phantom{0}}}$ & $\beta\cP^0$ & $\cP^\frac{1}{2}$ & $\beta \cP^\frac{1}{2}$ & $\cP^1$ & $\cP^i$ & $\beta\cP^i$  \\ 
&&&&&&&$(i\ge 2)$ & $(i\ge 1)$ \\ \hline
$\lambda_i$ & $(1,0)$ & $\lambda_{i+1}$ & $-x_i$& $$&  $$ & $0$ & $0$ & $0$ \\
$y_i$ & $(1,0)$ & $y_i$ & $z_i$ &$$&  $$ & $0$ & $0$ & $0$ \\
$\zeta$ & $(1,1)$ &&& $\zeta^p$ &$0$ &&&\\
$x_i$ & $(2,0)$ &  $x_{i+1}$ & $0$ &&&$x_i^p$ & $0$ & $0$ \\
$z_i$ & $(2,0)$ &  $z_i$ & $0$ &&&$z_i^p$  & $0$ & $0$\\ 
\hline
\end{tabular}
\caption{Steenrod operations}
\label{table:Steenrod}
\end{table}

We recall the following theorem of Serre \cite{Serre:1965a} which  is a prototype for both Proposition 3.6 of \cite{Bendel:2001a} and Theorem~\ref{th:B3.6} and will be used in the proof.  The precise  result we quote is a special case of Proposition 3.2 of \cite{Suslin:2006a};  it differs slightly from Serre's original formulation since we need to consider arbitrary coefficients, not just $\mathbb F_p$. 
 
\begin{theorem}
\label{th:Serre} 
Let  $I$ be a homogeneous ideal in $H^*((\bbZ/p)^{\times s},k)$ stable under the Steenrod  operations. If I contains a nonzero element of degree two, then there exists a finite family $\{u_i\} \subset H^2((\bbZ/p)^{\times s},k)$, each of which is a non-trivial  linear combination of $\{z_j\}$ with coefficients in $\mathbb F_p$ such that the product $\prod u_i \in H^*((\bbZ/p)^{\times s},k)$ lies in $I$.  \qed
\end{theorem}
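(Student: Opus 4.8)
The plan is to reduce the statement to the analogous one for the polynomial subring and then invoke the latter. Write $H:=H^{*}((\bbZ/p)^{\times s},k)=\Lambda(y_1,\dots,y_s)\otimes R$ with $R:=k[z_1,\dots,z_s]$, and note that the subalgebra $R$ is preserved by all Steenrod operations, so $J:=I\cap R$ is a homogeneous ideal of $R$ stable under the Steenrod operations. First I would use the given $0\ne u\in I\cap H^{2}$ to produce a nonzero element of $J$. Writing $u=\omega+\ell$ with $\omega\in\Lambda^{2}\langle y_1,\dots,y_s\rangle$ and $\ell=\sum_i b_i z_i$, the relations $\cP^{1}(y_i)=0$ and $\cP^{1}(z_i)=z_i^{p}$ together with the Cartan formula and semilinearity give $\cP^{1}(u)=\ell^{p}$, so if $\ell\ne0$ then $\ell^{p}\in J\setminus\{0\}$. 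If $\ell=0$, so $u=\omega\ne0$, I would instead apply $\beta$, then $\cP^{1}$, then $\beta$ again, obtaining successively $\beta(\omega)=\sum_{i<j}a_{ij}(z_iy_j-y_iz_j)$, then $\sum_{i<j}a_{ij}(z_i^{p}y_j-y_iz_j^{p})$, then $\beta\cP^{1}\beta(\omega)=\sum_{i<j}a_{ij}(z_i^{p}z_j-z_iz_j^{p})\in J$, which is nonzero because the monomials $z_i^{p}z_j$ and $z_iz_j^{p}$ occurring here are pairwise distinct for $i<j$. Either way $J$ is a nonzero homogeneous Steenrod-stable ideal of $R$.

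The remaining task is purely about $R$: to show that a nonzero homogeneous ideal $J\subseteq R$ stable under the Steenrod operations contains a product of nonzero $\bbF_p$-linear combinations of the $z_i$; applied to $J=I\cap R$ this finishes the proof. I would first pass to $\sqrt J$ (again homogeneous and Steenrod-stable, and such that a power of any product of linear forms lying in $\sqrt J$ already lies in $J$), so we may assume $J$ is radical. The mechanism is the total Steenrod operation, the ring homomorphism $R\to R[t]$ with $z_j\mapsto z_j+z_j^{p}t$, which repackages the Cartan formula together with $\cP^{0}(z_j)=z_j$, $\cP^{1}(z_j)=z_j^{p}$, $\cP^{i}(z_j)=0$ for $i\ge2$. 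Steenrod-stability of $J$ says this homomorphism carries $J$ into $J[t]$, so the zero locus $X\subseteq\mathbb A^{s}_{\ov k}$ of $J$ is a cone which is in addition stable under the Frobenius shearings $v\mapsto v+\tau v^{(1)}$, where $v^{(1)}:=(v_1^{p},\dots,v_s^{p})$ and $\tau\in\ov k$. Iterating these shearings and using that $X$ is closed and conical, one shows each irreducible component of $X$ is a linear subspace whose defining equations may be taken with coefficients in $\bbF_p$; the radical ideal of such a component is then generated by $\bbF_p$-linear forms, and multiplying one such form per component gives a product of $\bbF_p$-linear combinations of the $z_i$ lying in $J$. (For $k=\bbF_p$ this is Serre's original argument; the statement as quoted is a special case of \cite[Prop.~3.2]{Suslin:2006a}.)

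The step I expect to be the main obstacle is the $\bbF_p$-rationality of the components of $X$ when $k$ is strictly larger than $\bbF_p$: a priori $J$ is defined only over $k$, yet the linear forms in the conclusion must have coefficients in $\bbF_p$. Controlling this requires genuinely exploiting the semilinearity $\cP^{i}(ax)=a^{p}\cP^{i}(x)$ of the Steenrod operations to play the Frobenius twists of the components of $X$ off against one another, which is precisely what Suslin's refinement of Serre's theorem accomplishes. Everything else is routine: the two reductions above, the degree bookkeeping (the resulting $u_i$ automatically lie in $H^{2}$ since $|z_j|=(2,0)$, and there is no constraint on the number of factors), and faithfully flat descent to transport the product of linear forms from $J\ov k$ back into $J\subseteq I$.
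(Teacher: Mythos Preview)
The paper does not prove this theorem; it is quoted as a known result (a special case of Proposition~3.2 of \cite{Suslin:2006a}, extending Serre's original theorem over $\bbF_p$ to arbitrary coefficient fields) and marked with a \qed. Your sketch is correct and essentially reconstructs the standard argument. The reduction to the polynomial subring $R=k[z_1,\dots,z_s]$ via $J=I\cap R$ is clean, and your computation producing a nonzero element of $J$ from a given degree-two element of $I$ is valid; just read your standalone Bockstein $\beta$ as $\beta\cP^0$ in the paper's conventions, which coincides with the Bockstein on $H^*((\bbZ/p)^{\times s},k)$ because $\cP^0$ acts as the identity there. The second half---passing to the radical, interpreting Steenrod stability via the total Steenrod operation as invariance of the zero locus under the Frobenius shearings, and then invoking the $\bbF_p$-rationality of the components---is exactly how the cited result is proved, and you correctly flag that the rationality over a general $k$ is where Suslin's refinement of Serre's argument is needed.
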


Let
$f\colon G \to \ov G \cong \bbG_{a(r)} \times (\bbG_a^-)^\epsilon
\times (\bbZ/p)^{\times s}$, ($r,s \geq 0, \epsilon = 0$ or $1$) be a
surjective map of finite unipotent supergroup schemes.  The proof of
Theorem~\ref{th:main} uses in an essential way the description of the
kernel of the induced map on degree $2$ cohomology
\begin{equation}
\label{eq:kernel}
f^*\colon H^{2,*}(\ov G,k)\to H^{2,*}(G,k)
\end{equation}
under the assumption that 
\[
f^*\colon H^{1,*}(\ov G,k) \stackrel{\sim}{\to} H^{1,*}(G,k)
\] 
is an isomorphism. There are two  scenarios:  Theorem~\ref{th:B3.6} deals with the case when the kernel $I = \ker f^*$  has an element of degree $(2,0)$ whereas Theorem~\ref{th:bp2}  considers the case of degree $(2,1)$.  We use extensively the observation  that $I$ is stable under the Steenrod operations. 

The following theorem includes the case $\ov G = (\bbG_a^-)^\epsilon \times (\bbZ/p)^{\times s}$ 
in disguise; it corresponds to $r=0$ per our convention that $\mathbb G_{a(0)} =1$. 
\begin{theorem}
\label{th:B3.6}
Let $\ov G=\bbG_{a(r)} \times (\bbG_a^-)^\epsilon \times (\bbZ/p)^{\times s}$, with $r,s\ge 0,\ \epsilon=0$ or $1$. Let $I\subseteq H^{*,*}(\ov G,k)$ be a homogeneous ideal stable with respect to the action of the Steenrod operations.  Suppose $I$ contains a nonzero element of degree $(2,0)$. 
Then one of the following holds:
\begin{enumerate}[\quad\rm(i)]
\item
 Some element of the form $x_r^n\beta\cP^0(v_1)\dots\beta\cP^0(v_m)$  (with $n$ and $m$ not both zero)  lies in  $I$, 
where $v_1,\dots,v_m$ are nonzero elements of
$H^1((\bbZ/p)^{\times s},\bbF_p)\subseteq H^{1,0}(\ov G,k)$, or
\item
$I\cap H^{2,0}(\ov G,k)$ is one dimensional,  spanned by an element of the form $\zeta^2+\gamma x_r$ with $\gamma \in k$.
\end{enumerate}
\end{theorem}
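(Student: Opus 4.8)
The plan is to follow the strategy of Serre and Bendel (Proposition 3.6 of \cite{Bendel:2001a}), adding the extra bookkeeping forced by the new generators $\zeta$, $\lambda_i$ coming from the $(\bbG_a^-)^\epsilon$-factor and the polynomial generators $x_i$. First I would set up notation: write the given nonzero element of $I$ in degree $(2,0)$ as a $k$-linear combination
\[
w = \alpha\,\zeta^2 + \sum_i \beta_i\, x_i + \sum_j \gamma_j\, z_j + \sum_{i<j}\delta_{ij}\,\lambda_i\lambda_j + \sum_{i,j}\varepsilon_{ij}\,\lambda_i y_j + \sum_{i<j}\theta_{ij}\, y_i y_j
\]
(the $\epsilon=0$ case simply has no $\zeta$, $\lambda_i$ terms). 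The goal is to massage $w$, using that $I$ is Steenrod-stable and an ideal, into one of the two advertised forms. The basic moves available, read off from Table~\ref{table:Steenrod}: applying $\beta\cP^0$ repeatedly pushes $\lambda_i\mapsto -x_i$, $y_i\mapsto z_i$ and kills $\zeta$, $x_i$, $z_i$; applying $\cP^0$ shifts $x_i\mapsto x_{i+1}$, $\lambda_i\mapsto\lambda_{i+1}$, $z_i\mapsto z_i$, $y_i\mapsto y_i$, and fixes everything in the $(\bbZ/p)^{\times s}$-part; applying $\beta\cP^{1/2}$ kills $\zeta$; and multiplying by elements of the ring lets us saturate.

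The key dichotomy is whether or not, after cleaning up, an element in $I\cap H^{2,0}(\ov G,k)$ survives that genuinely involves the $(\bbZ/p)^{\times s}$ or $\bbG_{a(r)}$ directions (giving case (i)) versus being purely a multiple of $\zeta^2$ up to an $x_r$-correction (case (ii)). Concretely I would argue as follows. Apply $\beta\cP^0$ to $w$ enough times: since $\beta\cP^0$ annihilates $\zeta$, $x_i$, $z_i$, after one application every term surviving lies in the subring generated by $x_i, z_j$, and after finitely many applications (together with $\cP^0$-shifts to push $\lambda$-indices and $x$-indices up to $r$, using $\lambda_{r+1}=x_{r+1}=0$) one is reduced to the Serre/Bendel situation: an ideal in $H^*(\bbG_{a(r)}\times(\bbZ/p)^{\times s},k)$, Steenrod-stable, containing a degree-two element. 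If that element is nonzero, Bendel's Proposition 3.6 (or its proof, built on Theorem~\ref{th:Serre}) produces an element of the form $x_r^n\beta\cP^0(v_1)\cdots\beta\cP^0(v_m)$ in $I$ with the $v_j$ nonzero linear combinations of the $z_j$'s over $\bbF_p$ — and that is exactly conclusion (i). (One must check $n$ and $m$ are not both zero, which holds because the element is nonzero of positive degree; and one must trace through that the $\lambda$- and $y$-contributions, after $\beta\cP^0$, land among the $x_i, z_j$ so nothing is lost.)

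The remaining case is when \emph{every} such $\beta\cP^0$-image (after all $\cP^0$-shifts) vanishes. Then I would show $w$ must, up to scalar, be $\zeta^2 + \gamma x_r$. Vanishing of $\beta\cP^0 w$ forces the $z_j$, $\lambda_i\lambda_j$, $\lambda_iy_j$, $y_iy_j$ coefficients to be constrained so that their $\beta\cP^0$-image is zero; combined with $\cP^0$-shifting to reach index $r$ and then using that $\cP^0$ kills $x_r$, $\lambda_r$, one whittles the non-$\zeta^2$ part down to a multiple of $x_r$ alone. The $\zeta^2$-coefficient must then be nonzero (else $w$ is a multiple of $x_r$, whose $\beta\cP^0$-preimages via $\lambda_r$ would reintroduce case (i), contradiction), so we may rescale to get $\zeta^2 + \gamma x_r\in I$. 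Finally, to see $I\cap H^{2,0}$ is one-dimensional, suppose it contained a second, independent element; subtracting an appropriate multiple of $\zeta^2+\gamma x_r$ kills its $\zeta^2$-component, producing a nonzero element with no $\zeta^2$, which by the previous paragraph's analysis lands us back in case (i) — contradiction with our standing assumption that we are not in case (i). Hence (ii).

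The main obstacle I anticipate is the careful combinatorial control in the "otherwise" branch: one has to be sure that the interplay of $\cP^0$ (which shifts $x$- and $\lambda$-indices but fixes the $(\bbZ/p)^{\times s}$-part) with $\beta\cP^0$ (which collapses $\lambda\mapsto x$, $y\mapsto z$) cannot conspire to hide a genuine $(\bbZ/p)^{\times s}$- or $\bbG_{a(r)}$-contribution that \emph{would} give case (i) but is invisible to the particular sequence of operations one applied. In other words, the delicate point is proving the dichotomy is clean — that failing case (i) really does pin $w$ down to $\zeta^2+\gamma x_r$ with nothing else possible — and this is precisely where the argument departs from Bendel's, since the generator $\zeta$ (with $\cP^{1/2}\zeta=\zeta^p$ but $\cP^0\zeta$, $\beta\cP^0\zeta$ undefined/zero in the relevant degree) sits outside the reach of the operations that detect the other generators. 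I would handle this by a direct degree-$(2,0)$ computation: enumerate all monomials of internal degree $0$ and homological degree $2$, apply the full list of admissible low-degree operations, and check that the only Steenrod-stable-ideal-compatible combination escaping case (i) is $k\cdot(\zeta^2+\gamma x_r)$.
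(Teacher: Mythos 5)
Your overall strategy (Serre/Bendel-style manipulation with Steenrod operations, with a dichotomy according to whether a genuine $\bbG_{a(r)}$- or $(\bbZ/p)^{\times s}$-contribution survives) is the right family of argument, but two of your key steps fail as stated. First, the reduction "apply $\beta\cP^0$ enough times so that everything lands in the subring generated by the $x_i,z_j$, then invoke Serre/Bendel" does not work. By the Cartan formula $\beta\cP^0$ is only a twisted derivation over $\cP^0$, so for example $\beta\cP^0(\lambda_i\lambda_j)=\pm x_i\lambda_{j+1}\pm\lambda_{i+1}x_j$ and $\beta\cP^0(\lambda_iy_j)=\pm x_iy_j\pm\lambda_{i+1}z_j$ still involve $\lambda$'s and $y$'s; and, worse, $\beta\cP^0$ raises homological degree, so its iterates never produce the \emph{degree-two} element that Theorem~\ref{th:Serre} (or Bendel's Proposition~3.6) requires as input. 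The degree-preserving cleanup has to be done with $\cP^0$ (which shifts $\lambda_i,x_i$ up to the vanishing index $r+1$, fixes $y_j,z_j$, and kills $\zeta^2$), reducing a class with no $\zeta^2$-term to $\sum d_{ij}^{p^\ell}y_iy_j+\sum e_j^{p^\ell}z_j$, which is again of degree $(2,0)$ and lies in the $(\bbZ/p)^{\times s}$-part; only then can Serre's theorem be applied.

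Second, your "remaining case" dichotomy is not clean: the condition that all $\beta\cP^0$-images of $w$ (after $\cP^0$-shifts) vanish does \emph{not} force $w$ to be a multiple of $\zeta^2+\gamma x_r$. For instance $w=\zeta^2+x_1+z_1$ (with $r\ge 2$, $s\ge 1$) satisfies your condition, since $\beta\cP^0$ kills $\zeta^2$, every $x_i$ and every $z_j$, yet it is not of the advertised form; detecting the $z_j$'s and the lower $x_j$'s requires using the $\cP^0$-images, which are again degree-$(2,0)$ elements of $I$, and this is where the one-dimensionality of $I\cap H^{2,0}(\ov G,k)$ must enter \emph{first}: one splits on whether every degree-$(2,0)$ element of $I$ has nonzero $\zeta^2$-coefficient (then the intersection is one-dimensional, so $\cP^0(u)$ is a multiple of $u$ and hence zero, which is what pins $u$ down to $\zeta^2+\sum a_{i,r}\lambda_i\lambda_r+b_rx_r+\sum c_{r,j}\lambda_ry_j$), or some element has zero $\zeta^2$-coefficient, which is handled separately. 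Finally, the hardest surviving configuration, a nonzero mixed term $\lambda_ry_j$, is not reachable by citing Bendel: one must apply $\beta\cP^0$, then $\beta\cP^1$, then $\cP^p,\cP^{p^2},\dots$ to manufacture an element $x_r^{p^{t-1}}\bigl(\sum_j c_{r,j}^{p^t}z_j\bigr)\in I$, and then use stability of this set under the Frobenius on coefficients to obtain one with coefficients in $\bbF_p$ — this rationality step is needed because case (i) demands $v_i\in H^1((\bbZ/p)^{\times s},\bbF_p)$, and it is absent from your outline. (Also, a multiple of $x_r$ in $I$ is simply case (i) with $n=1$, $m=0$; there is no contradiction to be drawn there, and Steenrod stability gives you images, not "preimages".)
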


\begin{proof}
We follow, for the most part, the notation and proof in Proposition 3.6 of \cite{Bendel:2001a}, with adjustments as appropriate to deal with the extra factor, $(\bbG_a^-)^\epsilon$.

Any nonzero element $u$ in $I\cap H^{2,0}(\ov G,k)$ has the form
\[ 
u=\alpha \zeta^2 +\sum_{1\le i < j \le r}a_{i,j}\lambda_i\lambda_j + \sum_{1\le j \le r} b_j x_j
+ \sum_{1\le i \le r, 1\le j \le s} c_{i,j}\lambda_iy_j + \sum_{1\le
  i<j\le s}d_{i,j}y_iy_j + \sum_{1\le j\le s}e_jz_j  
  \]
for  scalars $\alpha,a_{i,j},b_j,c_{i,j},d_{i,j},e_j$ which are not all zero, and the term $\alpha\zeta^2$ only occurs if $\epsilon=1$.

First suppose that each such $u$ has  $\alpha\ne 0$. In this case $I\cap H^{2,0}(\ov G,k)$ is one dimensional and $\epsilon=1$. Furthermore, $u$ has to be
sent to a multiple of itself by $\cP^0$. The Cartan formula implies   $\zeta^2$ is killed by $\cP^0$, and so is $u$. The condition $\cP^0(u)=0$ forces $u$ to be of the form
\begin{equation}
\label{eq:u1}  
u = \zeta^2 + \sum_{1\le i < r}a_{i,r}\lambda_i\lambda_r + b_r x_r + \sum_{1\le j \le s}c_{r,j}\lambda_r y_j.
\end{equation}

Assume on the other hand that there exists a $u$ with $\alpha=0$.  Repeated application of $\cP^0$ to such a $u$ results in an element of the form 
$\sum_{1\le i<j\le s}d_{i,j}^{p^{\ell}}y_iy_j+\sum_{1\le j \le s}e_j^{p^\ell}z_j$. So if at least one of the $d_{i,j}$ or $e_j$ is nonzero, we may apply
Theorem~\ref{th:Serre}, and this puts us in case (i) with $m>0$ and $n=0$. So we may assume 
\[ 
u= \sum_{1\le i < j \le r}a_{i,j}\lambda_i\lambda_j + \sum_{1\le j \le r} b_j x_j
+ \sum_{1\le i \le r, 1\le j \le s} c_{i,j}\lambda_iy_j. 
\]
Repeatedly applying $\cP^0$ and stopping just before we get zero, we can assume  $u$ has the form
\begin{equation}
\label{eq:u2} 
u = \sum_{1\le i < r}a_{i,r}\lambda_i\lambda_r+ b_r x_r + \sum_{1\le j\le s}c_{r,j}\lambda_ry_j. 
\end{equation}

So we are now in a situation where $u$ has either the form \eqref{eq:u1} or  \eqref{eq:u2}, and in the first case $I\cap H^{2,0}(G,k)$
is one dimensional. In either case, if some $c_{r,j}$ is nonzero, we apply $\beta\cP^0$ to get
\[ 
\beta\cP^0(u)= \sum_{1\le i < r} a_{i,r}^p \lambda_{i+1}x_r -
\sum_{1\le j \le s}c_{r,j}^p x_ry_j \in I. 
\]
Applying $\beta\cP^1$, we get
\[ \beta\cP^1\beta\cP^0(u)= - \sum_{1\le i<r}a_{i,r}^{p^2}x_{i+1}x_r^p -
\sum_{1\le j \le s} c_{r,j}^{p^2}x_r^pz_j \in I. \]
Now apply $\cP^p$ to get
\[ 
\cP^p\beta\cP^1\beta\cP^0(u)= - \sum_{1\le i<r}a_{i,r}^{p^3}x_{i+2}x_r^{p^2} -
\sum_{1\le j \le s} c_{r,j}^{p^3}x_r^{p^2}z_j \in I. 
\]
Successively applying $\cP^{p^2}$, $\cP^{p^3}$, \dots, we eventually conclude that $I$ contains an element of the form $\sum_j
c_{r,j}^{p^t}x_r^{p^{t-1}}z_j = x_r^{p^{t-1}}(\sum_j
c_{r,j}^{p^t}z_j)$.
The set of all such elements in $I$ is stable under the Frobenius map
(raising all the coefficients to the $p$th power), and therefore there
is a nonzero element with coefficients in $\bbF_p$. This puts us in
case (i) with $m=1$.

If every $c_{r,j}=0$ but some $a_{i,r}$ is nonzero, then  
\[ \beta\cP^1 \beta\cP^0(u)=-\sum_{1\le i < r} a_{i,r}^{p^2}x_{i+1}x_r^p. \]
Now we apply $\cP^p$, then $\cP^{p^2}$, and so on, and just before
we get zero, we get a multiple of a power of $x_r$. This gives case
(i) with $m=0$ and $n>1$.

It remains to consider the case when all $c_{r,j}$ and all $a_{i,r}$ are zero. Then, if $u$ has form \eqref{eq:u1} we are in case (ii), and if $u$ has form \eqref{eq:u2} we are in case (i) with $m=0$ and $n=1$.
\end{proof} 

To complete the description of the kernel of \eqref{eq:kernel}, we quote a result from \cite{Benson/Pevtsova:bp2} 
which describes what happens when the kernel of the map $f^*\colon H^{2,*}(\ov G,k) \to H^{2,*}(G,k)$
has an element of degree $(2,1)$.  Note that in this case we necessarily have $\epsilon =1$.

\begin{theorem}
\label{th:bp2}
Let $G$ be a finite unipotent supergroup scheme and $N$ a normal sub-super\-group
scheme with $G/N \cong \bbG_a^- \times \bbG_{a(r)} \times
(\bbZ/p)^{\times s}$. If the inflation  $H^{1,*}(G/N,k)\to H^{1,*}(G,k)$ is
an isomorphism and $H^{2,1}(G/N,k) \to H^{2,1}(G,k)$ is
not injective then there exists a nonzero element $\xi\in
H^{1,1}(G,k)$ such that  $\beta\cP^0(u)\xi^{p^{r+s-1}(p-1)}=0$ for all $u\in H^{1,0}(G,k)$. \qed
\end{theorem}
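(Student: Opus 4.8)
I would proceed as follows. Write $\bar G=G/N$, and recall that
\[
H^{*,*}(\bar G,k)=k[\zeta]\otimes k[x_1,\dots,x_r]\otimes\Lambda(\lambda_1,\dots,\lambda_r)\otimes k[z_1,\dots,z_s]\otimes\Lambda(y_1,\dots,y_s)
\]
with the Steenrod action of Table~\ref{table:Steenrod}; let $\iota\colon H^{*,*}(\bar G,k)\to H^{*,*}(G,k)$ be inflation. Since $\iota$ is an isomorphism in homological degree $1$, the space $H^{1,1}(G,k)$ is one-dimensional, so the element $\xi$ we must produce is forced to be $\iota(\zeta)$ up to a scalar; put $\xi=\iota(\zeta)\neq0$. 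The kernel $\mcI=\ker\iota$ is a homogeneous ideal, stable under the Steenrod operations because $\iota$ is a ring homomorphism commuting with them. The five-term exact sequence of $1\to N\to G\to\bar G\to1$, together with the hypotheses, provides a nonzero class of degree $(2,1)$ in $\mcI$; as $H^{2,1}(\bar G,k)$ is spanned by the $\zeta\lambda_i$ and $\zeta y_j$, it has the form $w=\zeta\bar v$ with $0\neq\bar v=\sum_i a_i\lambda_i+\sum_j b_j y_j\in H^{1,0}(\bar G,k)$. Finally, because $\beta\cP^0$ is additive and semilinear, $k$ is perfect, and $\beta\cP^0(\lambda_i)=-x_i$, $\beta\cP^0(y_j)=z_j$, the assertion to be proved is equivalent to: with $M=p^{r+s-1}(p-1)$, one has $x_i\zeta^M\in\mcI$ and $z_j\zeta^M\in\mcI$ for all $i,j$.

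Next I would read off what the Steenrod operations give on $w$. Using the Cartan formula and Proposition~\ref{pr:Steenrod-on-Ga^-} ($\cP^{\half}(\zeta)=\zeta^p$, $\beta\cP^{\half}(\zeta)=0$, and $\cP^{\half}$ the only nonzero operation on $\zeta$), one obtains
\[
\cP^{\half}(w)=\zeta^p\,\cP^0(\bar v)\in\mcI,\qquad \beta\cP^{\half}(w)=\zeta^p\,\beta\cP^0(\bar v)\in\mcI,
\]
equivalently the relations $\xi^p\cP^0(u_0)=0=\xi^p\beta\cP^0(u_0)$ in $H^{*,*}(G,k)$, where $u_0=\iota(\bar v)$. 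I would then note that once a factor $\zeta^p$ is present no further Steenrod operation is nonzero on the monomials in play, so the Steenrod-stable ideal generated by $w$ controls only a single linear combination of the $x_i,z_j$, and with the exponent $p$ rather than $M$; purely formal manipulation of $\mcI$ will not close the argument.

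The essential extra input is the extension itself. The same five-term sequence forces $H^{1,1}(N,k)^{\bar G}\neq0$, so $N$ admits a $\bar G$-equivariant surjection $N\rra\bbG_a^-$; letting $N'\unlhd G$ be its kernel, $G/N'$ sits in a (generally nonsplit) extension $1\to\bbG_a^-\to G/N'\to\bar G\to1$, through which $\iota$ factors. I would analyse the Lyndon--Hochschild--Serre spectral sequence of this extension. The base class $\zeta$ is a permanent cycle not hit by any differential, hence inflates to a nonzero class in $H^{1,1}(G/N',k)$ whose further image in $H^{1,1}(G,k)$ is $\xi$; meanwhile the degree $(1,1)$ generator $\eta$ of the fibre $\bbG_a^-$ transgresses, and by naturality of transgression under the map of extensions $N\rra\bbG_a^-$ one gets $d_2(\eta)=w$. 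Consequently $\eta,\dots,\eta^{p-1}$ each support the nonzero differential $d_2(\eta^n)=n\,\eta^{n-1}w$, while $\eta^p$ survives $d_2$. Feeding this into a further analysis of $1\to N'\to G\to G/N'\to1$, and exploiting the structure of $H^{*,*}(\bbG_{a(r)}\times(\bbZ/p)^{\times s},k)$ and its Steenrod action -- in particular that $\cP^0$ moves $\lambda_i$ to $\lambda_{i+1}$ and that $\beta\cP^0$ joins $\lambda_i$ with $x_i$ and $y_j$ with $z_j$, so that all $r+s$ generators $x_i,z_j$ can be reached -- one propagates the single relation coming from $w$ to all of them, the exponent $p^{r+s-1}(p-1)$ emerging from the interplay of the $p-1$ powers of $\eta$ that die at $E_2$ with the shifting available in that cohomology ring. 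This yields $x_i\zeta^M,z_j\zeta^M\in\mcI$, completing the argument.

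The step I expect to be the main obstacle is the last one: converting the single relation carried by $w$ into the annihilation of a \emph{fixed} power of $\xi$ against \emph{every} class of the form $\beta\cP^0(u)$, with the precise exponent $p^{r+s-1}(p-1)$. Everything before it is formal, but this step needs genuine structural information about the extensions $1\to\bbG_a^-\to G/N'\to\bar G\to1$ and $1\to N'\to G\to G/N'\to1$, not merely the fact that $\mcI$ is a Steenrod-stable ideal; the details are carried out in \cite{Benson/Pevtsova:bp2}.
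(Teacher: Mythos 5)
There is nothing in this paper to compare against: Theorem~\ref{th:bp2} is stated with a \qed and no argument, because it is imported wholesale from the companion paper \cite{Benson/Pevtsova:bp2}. Judged on its own terms, your proposal is not a proof but a correct framing of the problem plus the easy formal part, and you in effect acknowledge this by deferring "the details" to \cite{Benson/Pevtsova:bp2} at exactly the point where the theorem's content lies. What you do establish is sound: $\xi$ must be the inflation of $\zeta$ since $H^{1,1}(G,k)$ is one-dimensional; the conclusion is equivalent (using perfectness of $k$ and semilinearity of $\beta\cP^0$) to $x_i\zeta^{M},z_j\zeta^{M}\in\mcI$ for all $i,j$ with $M=p^{r+s-1}(p-1)$; and applying $\cP^{\half}$ and $\beta\cP^{\half}$ to the degree-$(2,1)$ kernel class $w=\zeta\bar v$ yields the single relation $\zeta^{p}\beta\cP^0(\bar v)\in\mcI$. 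You are also right that no purely formal manipulation can close the argument: if, say, $\bar v=\lambda_r$, the Steenrod-stable ideal generated by $w$ only ever involves $\lambda_r$, $x_r$ and powers of $\zeta$, so it cannot contain $x_1\zeta^{M}$ or $z_j\zeta^{M}$; genuine structural input about $G$ is required.

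The gap is the final step, which is the actual theorem: passing from one relation, involving one particular linear combination $\bar v$ and the exponent $p$, to the annihilation of $\beta\cP^0(u)\,\xi^{p^{r+s-1}(p-1)}$ for \emph{every} $u\in H^{1,0}(G,k)$, with that \emph{specific} exponent. Your sketch -- choose $N'\unlhd G$ with $G/N'$ an extension of $\ov G$ by $\bbG_a^-$, identify $d_2(\eta)=w$ by naturality of transgression, then "propagate" the relation through the spectral sequences of $1\to\bbG_a^-\to G/N'\to\ov G\to 1$ and $1\to N'\to G\to G/N'\to 1$ -- never explains the mechanism by which the one relation spreads to all $r+s$ generators $x_i,z_j$ (the operations $\cP^0$ and $\beta\cP^0$ move $\lambda_i$ forward to $\lambda_{i+1}$ and down to $x_i$, but they cannot reach the $y_j,z_j$ from the $\lambda$'s, nor reach $x_i$ for $i$ below the support of $\bar v$), nor does it produce the exponent $p^{r+s-1}(p-1)$ beyond an appeal to "interplay". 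That computation is precisely what \cite{Benson/Pevtsova:bp2} supplies (via an explicit analysis of the relevant family of supergroup schemes and their cohomology), and without it the proposal does not prove the statement.
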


 %%%%%%%%%%%%%%%%%%%%%%%%%%%%%%%%%%%%%%%%%%%%%%%%%%
%%%%%%%%%%%%%%%%%%%%%Quillen-Venkov lemma%%%%%%%%%%%%%%%%

 \section{Super Quillen--Venkov} 
 \label{se:qv}
 
We require an analogue of the Quillen--Venkov lemma (\cite{Quillen/Venkov:1972a}). 
The proof in \cite{Quillen/Venkov:1972a}, and its later variants carry over to the present context; 
we adapt a purely representation--theoretic approach due to Kroll ~\cite{Kroll:1984a}.

 \begin{remark}\label{rk:notation}
 If $H\le G$ is a maximal
sub-supergroup scheme with $G=G^0 \rtimes \pi$ unipotent, then there are three
possibilities for $G/H$, namely $\bbG_{a(1)}$, $\bbZ/p$, and
$\bbG_a^-$.
\begin{itemize}
\item
If $G/H\cong \bbG_{a(1)}$ then there is an element $\lambda\in H^{1,0}(G^0,k)^\pi\subseteq H^{1,0}(G,k)$
corresponding to the homomorphism $G \to \bbG_{a(1)}$ as in Lemma
\ref{le:H1}, and an associated element $x=-\beta\cP^0(\lambda)\in
H^{2,0}(G,k)$. 
\item
If $G/H\cong \bbZ/p$ then there is an element $y\in H^{1,0}(\pi,\bbF_p)\subseteq
H^{1,0}(G,k)$ corresponding to the homomorphism $G\to \bbZ/p$ as in
Lemma~\ref{le:H1},
and an associated element $z=\beta\cP^0(y)\in H^{2,0}(G,k)$. 
\item
If $G/H\cong \bbG_a^-$ then there is an element $\zeta\in
H^{1,1}(G,k)\cong\Hom_{\sGr/k}(G,\bbG_a^-)$ corresponding to the homomorphism $G \to \bbG_a^-$ as in
Lemma~\ref{le:H1}.
\end{itemize}
\end{remark}

For $H < G$, we denote by 
\[
\ind_{H}^{G}\colon \Mod H \to \Mod G
\] 
the {\it induction} functor which is the right adjoint to the
restriction functor $\res_{H}^{G}\colon \Mod G \to \Mod H$ (following
the group scheme terminology here, as introduced, for example, in
\cite[I.3]{Jantzen:2003a}).  There is also the coinduction functor
\[
\coind_{H}^{G}\colon \Mod H \to \Mod G
\]
which is left adjoint to the restriction. In the unipotent case induction and coinduction are 
canonically isomorphic (see \cite[I.3]{Jantzen:2003a}) which we use implicitly in the proof below.  
If $H \le G$ is a normal subgroup, then the kernel of the canonical map $\coind_H^G k \to k$ 
is the {\it relative syzygy} $\Omega_{G/H}(k)$. 
The identity map on the trivial representation $k$ induces a map 
\begin{equation}
\label{eq:eta} 
\eta\colon \Omega(k) \to \Omega_{G/H}(k)
\end{equation} 
Similarly, we have a map 
\[ 
\eta'\colon \Omega^{-1}_{G/H}(k) \to \Omega^{-1}(k).
\]
We employ the same notation $\eta'$ for the shifts of this map. 

\begin{lemma}
\label{le:composite}
Let $H\le G$ be a normal sub-supergroup scheme of a finite unipotent
group scheme $G$ with $G/H$ isomorphic to $\bbZ/p$ or
$\bbG_{a(1)}$. Then $z=\beta\cP^0(y)$, respectively
$x=-\beta\cP^0(\lambda)\in H^{2,0}(G,k)$ (cf.\
Remark~\ref{rk:notation}), is represented by the composite
\[\xymatrix@R=3mm{\Omega(k) \ar[r]^-{\eta} &\Omega_{G/H}(k) \ar[r]^{\cong} & \Omega^{-1}_{G/H}(k) \ar[r]^-{\eta'} &
\Omega^{-1}(k)}. \]
\end{lemma}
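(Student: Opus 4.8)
The plan is to reduce the statement to the well-known case $G/H \cong \bbZ/p$ or $\bbG_{a(1)}$ with $G$ elementary abelian (or $\bbG_{a(1)}$) and then transport along the quotient map. First I would recall that the element $z = \beta\cP^0(y)$, respectively $x = -\beta\cP^0(\lambda)$, lives in $H^{2,0}(G/H, k)$ --- indeed $y$ (resp.\ $\lambda$) is pulled back from $H^{1,0}(\bbZ/p, \bbF_p)$ (resp.\ $H^{1,0}(\bbG_{a(1)}, k)$) along the homomorphism $G \to G/H \cong \bbZ/p$ (resp.\ $\bbG_{a(1)}$), and the Steenrod operations commute with inflation, so $z$ (resp.\ $x$) is the inflation of the corresponding generator of $H^{2,0}(\bbZ/p, k)$ (resp.\ $H^{2,0}(\bbG_{a(1)}, k)$). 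Thus it suffices to identify that generator, over the quotient $Q := G/H$, with the Yoneda class of the extension obtained by splicing $0 \to \Omega_Q(k) \to \coind_1^Q k \to k \to 0$ with its inverse $0 \to k \to (\coind_1^Q k)^\vee \to \Omega_Q^{-1}(k) \to 0$ under the identification $\Omega_Q(k) \cong \Omega_Q^{-1}(k)$.

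The key computational input is that for $Q = \bbZ/p$ the group algebra $kQ \cong k[t]/(t^p)$ has the periodic resolution with alternating maps ``multiplication by $t$'' and ``multiplication by $t^{p-1}$'', so $\Omega_Q^2(k) \cong k$ and the degree-two generator of $H^{*,0}(\bbZ/p, k)$ (namely $\beta\cP^0(y) = \beta$ applied to the degree-one class) is exactly the two-fold syzygy map --- this is classical, and the normalization of signs is fixed by the Bockstein formula. The same holds verbatim for $\bbG_{a(1)}$, whose group algebra is again $k[t]/(t^p)$, with $x = -\beta\cP^0(\lambda)$ the corresponding generator; the sign is arranged precisely so that $x$ matches the syzygy map rather than its negative. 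Here one uses that $\Omega_{Q/1}(k) = \Omega_Q(k)$ since $1 \trianglelefteq Q$ with $\coind_1^Q k = kQ$, and that the self-duality $\Omega_Q(k) \cong \Omega_Q^{-1}(k)$ is the one induced by the symmetry of the Nakayama (Frobenius) algebra $kQ$.

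Finally I would push this identification up to $G$: the quotient map $q \colon G \to Q$ gives $\coind_H^G k = q^* (\coind_1^Q k)$ (as $H = \ker q$), hence $\Omega_{G/H}(k) = q^*\Omega_Q(k)$, and the maps $\eta, \eta'$ of \eqref{eq:eta} are the inflations of the corresponding maps over $Q$; since inflation of Yoneda classes is a ring map compatible with syzygies, the composite displayed in the lemma is the inflation of the degree-two generator of $H^{*,0}(Q,k)$, which by Remark~\ref{rk:notation} is exactly $z$, respectively $x$. The main obstacle is bookkeeping the signs and the precise self-duality isomorphism $\Omega_{G/H}(k) \cong \Omega_{G/H}^{-1}(k)$ so that the composite yields $z$ and $x$ on the nose (with the stated sign $x = -\beta\cP^0(\lambda)$) rather than a scalar multiple; everything else is a routine transport of the classical $\bbZ/p$ computation along inflation.
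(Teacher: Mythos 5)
Your proposal follows essentially the same route as the paper: reduce to the cyclic quotient $Q=G/H$, use the classical periodic resolution of $k[t]/(t^p)$ to identify the degree-two generator with the Yoneda class of the spliced sequence $0\to k\to kQ\to kQ\to k\to 0$, and then transport along inflation, noting that $\ind_H^Gk\cong k(G/H)$ so the inflated $4$-term sequence is $0\to k\to \ind_H^Gk\to \ind_H^Gk\to k\to 0$ and its class is $z$ (resp.\ $x$). The one step you state incorrectly is the assertion that $\eta$ and $\eta'$ ``are the inflations of the corresponding maps over $Q$'': the source of $\eta$ is the absolute syzygy $\Omega(k)$ over $kG$, which is certainly not inflated from $Q$ (over $Q$ the relative and absolute syzygies coincide, but over $G$ they do not), so $\eta$ is a comparison map, not an inflation. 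What is actually needed to finish — and what the paper supplies — is a comparison-of-resolutions argument: map the start of a $kG$-projective resolution to the $4$-term sequence $0\to k\to \ind_H^Gk\to \ind_H^Gk\to k\to 0$ and, dually, map that sequence to the start of an injective resolution; this shows that the Yoneda class of the $4$-term sequence, i.e.\ $z$ (resp.\ $x$), is represented in the stable category exactly by the composite $\Omega(k)\xrightarrow{\eta}\Omega_{G/H}(k)\cong\Omega^{-1}_{G/H}(k)\xrightarrow{\eta'}\Omega^{-1}(k)$. Since this comparison argument is routine, I would count your misstatement as a local imprecision rather than a fatal flaw, but as written the final transport step is not justified and should be replaced by the explicit resolution-comparison diagram.
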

\begin{proof} 
We prove this in the case where $G/H \cong \bbZ/p$. The case $G/H\cong
\bbG_{a(1)}$ is proved by replacing $z$ by $x$ everywhere.

The cohomology class $z = \beta\cP^0(y) \in \Ext_G^2(k,k)$ is
represented by the extension
\begin{equation} 
\label{eq:ind}
\xymatrix@R=3mm{ 
0\ar[r] & k \ar[r]  & \ind_H^G k\ar[r]& 
\ind_H^G k \ar[r]& k\ar[r]& 0.
}\end{equation}  
This follows from the fact that $\ind_H^G k \cong kG/H = k\mathbb Z/p$, and 
this sequence is the inflation of the extension
\[\xymatrix@R=3mm{ 
0\ar[r] & k \ar[r]  & k\mathbb Z/p\ar[r]& 
k\mathbb Z/p \ar[r]& k\ar[r]& 0 
}\] 
for $\mathbb Z/p$ representing the corresponding cohomology class 
(see, for example, \cite[I.3.4.2]{Benson:1991a}).
% where it is shown for $\mathbb Z/p$). 

Next consider the following commutative diagram with exact rows.
\begin{equation*} 
\vcenter{\xymatrix@R=3mm{
0\ar[r] & \Omega^2(k) \ar[r]\ar[ddd]_{z} & P_1 \ar[rr]\ar[dr]\ar[ddd] 
&& P_0 \ar[r]\ar[ddd] & k \ar[r]\ar[ddd]^1 & 0 \\
& & & \Omega(k)\ar[ur]\ar[d]  \\
& & & \Omega_{G/H}(k)\ar[dr]\ar[dd]^(.3)\cong \\
0\ar[r] & k \ar[r] \ar[ddd]_1 & \ind_H^G k\ar[ur]\ar[rr]\ar[dr]\ar[ddd] & & 
\ind_H^G k \ar[r]\ar[ddd] & k\ar[r]\ar[ddd]^z & 0 \\
& & & \Omega^{-1}_{G/H}(k) \ar[ur]\ar[d] \\
& & & \Omega^{-1}(k)\ar[dr] \\
0 \ar[r] & k \ar[r] & P_{-1} \ar[rr]\ar[ur] & & P_{-2} \ar[r] & \Omega^{-2}(k) \ar[r] & 0
}}
\end{equation*}
By \eqref{eq:ind} the sequence in the middle row represents $z$.
So comparing with a projective
resolution as in the top row, the comparison map $\Omega^2(k)\to k$
represents $z$. Dually, comparing with an injective resolution as in
the bottom row, the comparison map $k \to \Omega^{-2}(k)$ also
represents $z$. Therefore the vertical composite map in the middle of
the diagram also represents $z$.
\end{proof}

Given $ \xi \in H^{s,t}(G,M)$, for each $n \geq 0$ we write $\xi^n$ for the class $\xi^{\otimes n} \in H^{ns, nt}(G, M^{\otimes n})$. 
\begin{proposition}\label{pr:QuillenVenkov}
Let $H$ be a maximal sub-supergroup scheme of a  finite 
supergroup scheme $G$, and let $M$ be a $G$-module. 
Suppose that $\xi \in H^{*,*}(G,M)$ restricts to zero on $H$. Then
\begin{enumerate}
\item[\rm (i)] if $G/H\cong \bbZ/p$ then $\xi^2$ is divisible by
the element $z=\beta \cP^0(y)\in H^{2,0}(G,k)$,
\item[\rm (ii)] if $G/H\cong \bbG_{a(1)}$ then $\xi^2$ is divisible by
the element $x=-\beta \cP^0(\lambda) \in H^{2,0}(G,k)$,
\item[\rm (iii)] if $G/H\cong \bbG_a^-$ then $\xi$ is divisible by 
the element $\zeta\in H^{1,1}(G,k)$.
\end{enumerate}
\end{proposition}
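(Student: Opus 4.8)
The plan is to treat the three cases uniformly via the relative syzygy maps $\eta$ and $\eta'$ defined in \eqref{eq:eta}, reducing everything to the composition lemma just proved. First I would recall the standard dimension-shifting interpretation: a class $\xi\in H^{s,t}(G,M)$ is a map $\Omega^s(k)\to M$ in the stable category (up to internal degree shift), and saying that $\xi$ restricts to zero on $H$ means precisely that $\res^G_H(\xi)$ factors through a projective $H$-module, equivalently that the composite $\Omega^s(k)\to M$ becomes null on restriction to $H$.

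For cases (i) and (ii), the key point is that $\Omega_{G/H}(k)$ is the kernel of the augmentation $\coind_H^G k\to k$, and when $G/H$ is $\bbZ/p$ or $\bbG_{a(1)}$ this cokernel is one step in a periodic-of-period-$2$ complex, so $\Omega_{G/H}(k)\cong\Omega^{-1}_{G/H}(k)$; Lemma~\ref{le:composite} identifies the composite $\Omega(k)\xra{\eta}\Omega_{G/H}(k)\xra{\cong}\Omega^{-1}_{G/H}(k)\xra{\eta'}\Omega^{-1}(k)$ with $z$ (resp.\ $x$). Now I would argue: since $\res^G_H\xi=0$ in $\underline{\Hom}_H$, the map $\xi\colon\Omega^s(k)\to M$ (in $\StMod G$) kills the image of $\res^G_H$, and one checks — using that $\coind_H^G$ of a projective is projective and the adjunction between $\res$ and $\coind$ — that $\xi$ factors through the quotient $\Omega^s(k)\twoheadrightarrow \Omega^s(k)/(\text{image of }\eta\text{-type map})$, i.e.\ through $\Omega^{s-1}_{G/H}(k)$ after a shift. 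Concretely: the map $\eta\colon\Omega^s(k)\to\Omega^s_{G/H}(k)$ (shift of \eqref{eq:eta}) has the property that a class restricting to zero on $H$ factors as $\Omega^s(k)\xra{\eta}\Omega^s_{G/H}(k)\to M$. Composing with the periodicity iso and $\eta'$ exhibits $\xi$ (tensored with itself, because we need $\Omega^2_{G/H}(k)\cong k$ up to a $2$-fold shift, costing one extra copy of the period-$2$ generator) as $z$ (resp.\ $x$) times a class in $H^{*,*}(G,M^{\otimes 2})$. This is where the ``$\xi^2$'' appears: one copy of $\xi$ to factor through $\Omega^{s}_{G/H}(k)$, the periodicity iso to flip to $\Omega^{-s}_{G/H}$, and the second copy of $\xi$ composed with $\eta'$ to land back in $\Omega^{-s}(k)$; the total composite $\Omega^{2s}(k)\to M^{\otimes 2}$ then visibly factors through the period-$2$ element $z$ or $x$.

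For case (iii), $G/H\cong\bbG_a^-$ and $kG/H=k[\si]/(\si^2)$, which has $\coind_H^G k\cong k[\si]/(\si^2)$ sitting in a short exact sequence $0\to k\to k[\si]/(\si^2)\to k\to 0$ (not period $2$ but period $1$, since $\bbG_a^-$ has cohomology $k[\zeta]$ with $\zeta$ in degree $(1,1)$ by Proposition~\ref{pr:Steenrod-on-Ga^-}). So here $\Omega_{G/H}(k)\cong k$ up to the internal shift recording $|\zeta|=(1,1)$, and the analogue of Lemma~\ref{le:composite} says $\zeta$ itself is represented by $\Omega(k)\xra{\eta}\Omega_{G/H}(k)\cong k$. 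Thus a single copy of $\xi$ suffices: if $\res^G_H\xi=0$ then $\xi\colon\Omega^s(k)\to M$ factors through $\eta\colon\Omega^s(k)\to\Omega^s_{G/H}(k)\cong\Omega^{s-1}(k)$ (internal degree shifted by $(0,1)$ per syzygy), which says exactly that $\xi=\zeta\cdot\xi'$ for some $\xi'\in H^{s-1,t-1}(G,M)$.

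The main obstacle is making the factorization-through-$\eta$ step rigorous in the super setting: one needs that for $H$ normal in $G$, a cohomology class $\xi\in H^{*,*}(G,M)$ whose restriction to $H$ is zero actually factors through the canonical map $\eta\colon\Omega^*(k)\to\Omega^*_{G/H}(k)$, which amounts to an exactness/adjunction statement about $\coind_H^G$ together with the fact that $\res^G_H$ sends the relevant syzygy of $k$ over $G$ onto the syzygy over $H$ (so that restriction-to-zero is detected by the $H$-split map $\coind_H^G k\to k$). In the ungraded group-scheme case this is Kroll's argument; here one must check the signs and the $\bbZ/2$-grading bookkeeping go through, and in particular that in case (iii) the single power $\zeta$ (rather than $\zeta^2$) genuinely appears because the relative syzygy $\Omega_{\bbG_a^-}(k)$ is already $\cong k$ after one shift — this asymmetry between (iii) and (i)–(ii) is the one genuinely new feature and should be emphasized in the writeup, with the period-$1$ versus period-$2$ distinction traced back to the structure of $k[\si]/(\si^2)$ versus $k\bbZ/p$ and $k\bbG_{a(1)}$.
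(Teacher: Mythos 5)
Your overall route is the paper's own: a Kroll-style argument via the relative syzygy maps $\eta$, $\eta'$ and Lemma~\ref{le:composite}, with case (iii) handled by the triangle coming from $0 \to k \to \ind_{H}^{G}k \to k \to 0$ whose connecting map is $\zeta$, so that a class vanishing on $H$ factors through $\zeta$; that part matches the paper exactly. In cases (i) and (ii), however, there is a genuine gap. You establish (modulo the adjunction bookkeeping you defer) only one factorization, namely that $\xi$ factors through $\eta\colon \Omega(k)\to\Omega_{G/H}(k)$, i.e.\ $\xi=\rho\circ\eta$ after shifting. But the element $x$ (resp.\ $z$) is represented, by Lemma~\ref{le:composite}, by the composite $\eta'\circ(\cong)\circ\eta$, so to exhibit $\xi^2$ as a multiple of $x$ you must use the vanishing hypothesis a \emph{second} time, on the other tensor factor of $\xi^2$, in the dual form: $\xi$ also factors as $(1_M\otimes\eta')\circ\rho'$ through $1\otimes\eta'\colon M\otimes\Omega^{-1}_{G/H}(k)\to M\otimes\Omega^{-1}(k)$. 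In the paper this is the factorization \eqref{eq:rho1}, obtained by observing that the adjoint map $\Omega^n(k)\to M\otimes\ind_{H}^{G}k$ of the (stably trivial) restriction factors through a projective and then lifting along the triangle built on $k\to\ind_{H}^{G}k\to\Omega^{-1}_{G/H}(k)$. Your write-up never states or justifies this second factorization.

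Because of that omission, your assembly step does not go through as written: ``the second copy of $\xi$ composed with $\eta'$'' uses no hypothesis on that copy, and merely composing $\xi$ with $\eta'$ yields nothing; with only the $\eta$-factorization one gets $\xi^2=(1\otimes\xi)\circ\xi=(1\otimes\rho)\circ(1\otimes\eta)\circ\xi$, which factors through $1\otimes\eta$ but not visibly through $x$. Likewise ``the periodicity iso to flip to $\Omega^{-s}_{G/H}$'' and ``costing one extra copy of the period-2 generator'' misdescribe the mechanism: the isomorphism used is $\Omega_{G/H}(k)\cong\Omega^{-1}_{G/H}(k)$, inserted \emph{between} the two factorizations, one copy of $\xi$ rewritten via \eqref{eq:rho2} and the other via \eqref{eq:rho1}, with Lemma~\ref{le:composite} identifying the resulting middle composite with $1\otimes x$; it is exactly this splice that forces the square $\xi^2$ (and explains why a single $\xi$ suffices only in case (iii), where the relative syzygy is already $k$ up to internal shift). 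Once you add the $\eta'$-factorization and assemble the two factorizations as in the paper's final diagram, your argument becomes the paper's proof.
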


\begin{proof}
We shall start by proving (ii).
Let $\xi\in H^{n,*}(G,M)$, and choose a map
$\Omega^n(k) \to M$ representing $\xi$; by abuse of notation we call
this map $\xi$. We also use $\xi$ to
denote any shift of this map, as a map from $\Omega^{n+i}(k)$ to
$\Omega^i(M)$ for $i\in\bbZ$.

The  exact sequence $\xymatrix@=3mm{k\ar[r]^-{\varepsilon'}& \ind_{H}^G k \ar[r] & \Omega^{-1}_{G/H}(k)}$ induces a triangle in $\stmod(G)$
\[
M \otimes \Omega(\Omega^{-1}_{G/H}(k))\xrightarrow{1 \otimes \eta'}  M 
\xrightarrow{1 \otimes \varepsilon'} M \otimes \ind_{H}^G k \to
M \otimes \Omega^{-1}_{G/H}(k). 
\]
The assumption that $\xi$ restricts to zero on $H$ means that the
restriction of $\xi\colon \Omega^n(k) \to M$ to $H$
factors through a projective. Hence, so does the adjoint map 
\[ 
\Omega^n(k) \to \ind_H^G M = M \otimes \ind_{H}^G k. 
\]  
This adjoint factors as the composite of $\xi$ with $1 \otimes \varepsilon'$.  The fact that this composition factors through a projective implies that there exists a lifting  $\rho'\colon \Omega^n(k) \to  \Omega(\Omega^{-1}_{G/H}(k))$  making the following diagram commute: 
\[
\xymatrix 
{& \Omega^n(k) \ar@{-->}[dl]_-{\rho'}\ar[d]^-{\xi}\ar[dr]&&\\
M \otimes \Omega(\Omega^{-1}_{G/H}(k))\ar[r]^(0.7){1\otimes \eta'}& 
M \ar[r]^-{1 \otimes \varepsilon'} &M \otimes \ind_{H}^G k \ar[r]&
M \otimes \Omega^{-1}_{G/H}(k).} 
\]
Shifting by $\Omega^{-1}$, we get a commutative diagram 
\begin{equation}
\label{eq:rho1}
\xymatrix 
{& \Omega^{n-1}(k) \ar@{-->}[dl]_-{\rho'}\ar[d]^-{\xi}\\
M \otimes \Omega^{-1}_{G/H}(k)\ar[r]^-{1 \otimes \eta'}& M \otimes \Omega^{-1}(k).
}
\end{equation}

Similarly, we can factor $\xi\colon \Omega(k) \to M \otimes \Omega^{-n+1}(k)$ to obtain a commutative diagram 
\[\xymatrix 
{\Omega(k) \ar[r]^-{\eta} \ar[d]^-{\xi}& \Omega_{G/H}(k) \ar@{-->}[dl]^-{\rho}\\
M \otimes \Omega^{-n+1}(k)}
\]
Tensoring with $M$, we get a commutative diagram
\begin{equation}
\label{eq:rho2}
\xymatrix 
{M \otimes \Omega(k) \ar[r]^-{1 \otimes \eta} \ar[d]^-{1\otimes\xi}& 
M \otimes\Omega_{G/H}(k) \ar@{-->}[dl]^-{1\otimes \rho}\\
M \otimes M \otimes \Omega^{-n+1}(k)}
\end{equation}

Putting \eqref{eq:rho1} and \eqref{eq:rho2} together, we get the following diagram, 
where the composite of the maps in the middle row is $1\otimes x$ by Lemma~\eqref{le:composite}:
\[ \xymatrix{&&& \Omega^{n-1}(k) \ar@{-->}[dl]_-{\rho'}\ar[d]^-{\xi}\\
M \otimes \Omega(k) \ar@/^2.5pc/[rrr]|{1\otimes x}
\ar[r]^-{1 \otimes \eta} \ar[d]^-{1\otimes\xi}& 
M \otimes\Omega_{G/H}(k) \ar[r]^\cong 
\ar@{-->}[dl]^-{1\otimes \rho}& M \otimes 
\Omega^{-1}_{G/H}(k)\ar[r]^-{1 \otimes \eta'}& 
M \otimes \Omega^{-1}(k)\\
M \otimes M \otimes \Omega^{-n+1}(k)}\]
Completing the diagram, we see that $\xi^2=(1\otimes\xi)\circ\xi$ 
factors through $x$ either on the left or on the right. 
\[ \xymatrix{\Omega^{n+1}(k)\ar[d]^-\xi\ar[rrr]^-x&&& 
\Omega^{n-1}(k) \ar@{-->}[dl]_-{\rho'}\ar[d]^-{\xi}\\
M \otimes \Omega(k)\ar[r]^-{1 \otimes \eta} \ar[d]^-{1\otimes\xi}& 
M \otimes\Omega_{G/H}(k) \ar[r]^\cong \ar@{-->}[dl]^-{1\otimes \rho}& 
M \otimes \Omega^{-1}_{G/H}(k)\ar[r]^-{1 \otimes \eta'}& 
M \otimes \Omega^{-1}(k)\ar[d]^-{1\otimes\xi}\\
M \otimes M \otimes \Omega^{-n+1}(k)\ar[rrr]^-{1\otimes1\otimes x} 
&&& M \otimes M \otimes \Omega^{-n-1}(k).}\]

%\begin{changemargin}{-2.3cm}{-1cm} 
%\noindent
%$
%\begin{xy}*!C\xybox{%
%\xymatrix{
%& \Omega^{n+1}(k) \ar[drr]^\xi \ar[ddl]_x & &M \otimes \Omega(\ind_{H}^G k)\ar[d]^\eps \\
%&&M \otimes \ind_{H}^G k \ar[d]  & M \otimes \Omega(k) \ar[drr]^\xi \ar[ddl]^(.65)x \ar[d]^\eta \\
%\Omega^{n-1}(k) \ar[rr]^{\rho'} \ar[drr]_\xi &&M \otimes \Omega^{-1}_{G/H}(k) \ar[d]_{\eta'}\ar[r]^(0.4)\cong &
%M \otimes \Omega_{G/H}(k)\ar[rr]_\rho \ar[d]  && M\otimes M \otimes \Omega^{-n+1}(k)\!\!\!\!\! \ar[ddl]^x \\
%&&M \otimes \Omega^{-1}(k) \ar[drr]_\xi \ar[d]_{\eps'} & M \otimes \ind_{H}^G k \\
%&&M \otimes \Omega^{-1}(\ind_{H}^G k)&& M\otimes M \otimes \Omega^{-n-1}(k)} 
%}
%\end{xy}
%$
%\end{changemargin}
%\vspace{0.2in}
%This is illustrated in the following diagram:
%\[ \xymatrix{\Omega^n(k) \ar[r]^{\rho'} \ar[dd]^x \ar[dr]^\xi &
%  \Omega(J_H) \ar[r]^\rho \ar[d]^{\eta'} \ar@/_2.5ex/[dd]_(.6)x &
%  \Omega^{-n+2}(k) \ar[dd]^x \\
%  & k \ar[d]^\eta \ar[dr]^\xi & \\
%  \Omega^{n-2}(k) \ar[r]_{\rho'} & \Omega^{-1}(J_H) \ar[r]_\rho &
%  \Omega^{-n}(k)} \]

The same argument works for part (i). 
Part (iii) is similar but easier. Namely, we have
a short exact sequence of $kG$-modules
\[ 0 \to k \to \ind_{H}^G k \xrightarrow{\varepsilon} k \to 0. \]
We have failed to distinguish whether $k$ is in even or odd
degree, but the two ends are in opposite degrees. The connecting map for this in
$\stmod(kG)$ is $\zeta$, so we have a triangle
\[ k \to \ind_{H}^G k \xrightarrow{\varepsilon} k \xrightarrow{\zeta} \Omega^{-1}(k). \]
If $\xi \colon k \to \Omega^{-n}(M)$ restricts to the zero class on $H$ then the
composite with $\ind_{H}^G k \xrightarrow{\varepsilon} k$ is zero, and so
$\xi$ factors through $\zeta$.
\end{proof}

%%%%%%%%%%%%%%%%%%%%%%%%%%%%%%%%%%%%%%%%%%%%%%%%%%
%%%%%%%%%%%%%%%%%%%%%Nilpotence and projectivity%%%%%%%%%%%%%%%%
\section{Nilpotence and projectivity}
\label{se:proj}

We introduce the notion of nilpotence for cohomology classes and
discuss its detection. This is closely related to the detection of projectivity.

\begin{definition}  \label{de:nilp}
Let $G$ be a finite supergroup scheme and  $M$ be a $G$-module. We say that a class  $\xi \in H^{j,*}(G, M)$ is  \emph{nilpotent} if there exists $n \geq 1$ such that $\xi^n \in H^{jn,*}(G, M^{\otimes n})$ is zero.  
\end{definition}

In the remainder of the paper we employ the following terminology. 
Let $G$ be a finite supergroup scheme, and let $\mcH$ be a 
\emph{family of subgroups after field extension}, namely a family of 
pairs $(H,K)$ where $K$ is an extension field of $k$ and $H$ is
a sub supergroup scheme of $G_K$. 
Note that the embeddings of $H$ in  $G_K$ 
need not be defined over the ground field $k$. 

We say that 
nilpotence of cohomology elements is \emph{detected} on the family $\mcH$ if 
for any $G$-module $M$ and cohomology class 
$\xi \in H^{*,*}(G,M)$, we have that $\xi$ is nilpotent if and 
only if $\res^{G_K}_H(\xi_K) \in H^{*,*}(H, M_K)$ is nilpotent
for every $(H,K)\in\mcH$.

Similarly, we say that projectivity of modules is \emph{detected} on the 
family $\mcH$ if for any $G$-module $M$, we have that $M$ is 
projective if and only if $\res^{G_K}_H(M_K)$ is
projective as an $H$-module for every $(H,K)\in\mcH$.

In particular,  we say that nilpotence and projectivity are detected 
on proper subgroups of $G$ after field extensions if the family 
$\mcH$ can be taken to be the family of all pairs $(H,K)$ where
$K$ runs over all field extensions of $k$ and $H$ runs over all
proper subgroups of $G_K$. 
In practice, it always suffices to take 
$K$ to be an algebraically closed field of large enough
finite transcendence degree over $k$.

\begin{lemma}\label{le:hocolim} 
Let $G$ be a finite supergroup scheme, $M$ a $G$-module, and fix an element $\xi \in H^{j,*}(G,M)$ with $j>0$.
With $\xi\colon k \to \Omega^{-j}(M)$ denoting also the corresponding map on modules, let $X$ be the colimit
\[ 
X = \colim\{\xymatrix{k \ar[r]^-\xi & \Omega^{-j}(M) 
\ar[r]^-{1 \otimes \xi}& \Omega^{-2j}(M^{\otimes 2}) 
\ar[r]^-{1 \otimes 1 \otimes \xi}& \Omega^{-3j}(M^{\otimes 3}) 
\ar[r] & \cdots \}} 
\] 
Then $\xi$ is nilpotent if and only if $X$ is projective. 
\end{lemma}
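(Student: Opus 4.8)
The plan is to identify $X$ with a homotopy colimit in $\stmod(kG)$ and to exploit the compactness of the trivial module $k$ in the (big) stable module category. First I would recall that since $kG$ is a finite-dimensional self-injective algebra, $\stmod(kG)$ is a compactly generated triangulated category in which the finite-dimensional modules are exactly the compact objects, and in particular $k$ is compact. The diagram defining $X$ is a sequential colimit in $\Mod kG$, but because each term is (up to a projective summand) the syzygy module appearing in the diagram, one checks that $X$, viewed in $\stmod(kG)$, is the homotopy colimit of the sequence $k \xrightarrow{\xi} \Omega^{-j}(M) \xrightarrow{1\otimes\xi}\cdots$; passing from an honest colimit to the homotopy colimit is harmless here because the maps are genuine module maps and $\Mod kG$ has exact filtered colimits, so the usual $1-\shift$ triangle computes $\hocolim$.

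Next I would run the two implications. If $\xi$ is nilpotent, say $\xi^{n}=0$ as a map $k\to\Omega^{-jn}(M^{\otimes n})$ in $\stmod(kG)$, then every composite $k\to\Omega^{-jn'}(M^{\otimes n'})$ for $n'\ge n$ is stably zero, so the colimit system is stably pro-zero; hence $X$ is zero in $\stmod(kG)$, i.e.\ $X$ is projective. Conversely, suppose $X$ is projective, so $X=0$ in $\stmod(kG)$, equivalently $\hocolim$ of the sequence vanishes. Since $k$ is compact, the identity map $k\to k$ (the structure map of the colimit into the first term, composed into $X$) must factor through a finite stage of the diagram; that is, the canonical map $k\to X$ is stably zero, and by compactness it is already stably zero after composing to some $\Omega^{-jn}(M^{\otimes n})$. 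But that composite is precisely $\xi^{n}$, so $\xi^{n}=0$ in $\stmod(kG)$, which is the statement that $\xi$ is nilpotent in the sense of Definition~\ref{de:nilp}.

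The main obstacle, and the step deserving the most care, is the identification of the ordinary module colimit $X$ with the homotopy colimit in $\stmod(kG)$, together with the bookkeeping of the internal $\bbZ/2$-grading and the tensor powers $M^{\otimes n}$ so that the composite $k\to\Omega^{-jn}(M^{\otimes n})$ really represents $\xi^{n}=\xi^{\otimes n}$ in the sense of Definition~\ref{de:nilp}. One has to be slightly careful that the maps $1\otimes\cdots\otimes 1\otimes\xi$ in the diagram are chosen compatibly (associativity of $\otimes$, the signs in the symmetric monoidal structure on super vector spaces) so that the $n$-fold composite is $\xi^{\otimes n}$ up to the standard coherence isomorphisms; once this is pinned down, the compactness argument is formal. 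A minor additional point is to note that "projective'' and "zero in $\stmod(kG)$'' coincide for the possibly-infinite-dimensional module $X$, which holds because $kG$ is self-injective so projective $=$ injective and arbitrary projectives are exactly the zero objects of $\StMod(kG)$.
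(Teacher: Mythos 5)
Your proposal is correct in substance, but it follows a more machinery-heavy route than the paper, so a comparison is worth making. The paper argues entirely at the level of modules: for the forward direction it observes that $\xi^n=0$ forces the composite of \emph{any} $n$ consecutive maps in the system to factor through a projective, whence $X$ is projective; for the converse it uses only that $X$ is projective and $k$ is finite dimensional, so $k\to X$ factors through a finite-dimensional projective, and the standard finitely-presented-versus-filtered-colimit argument then shows that some finite composite $k\to\Omega^{-jn}(M^{\otimes n})$ factors through a projective, i.e.\ $\xi^n=0$. You transplant the same finiteness idea into $\StMod(kG)$: identify $X$ with the homotopy colimit via the triangle coming from $0\to\bigoplus_n X_n\xrightarrow{1-\shift}\bigoplus_n X_n\to\colim X_n\to 0$, then use compactness of $k$, so that $\Hom(k,X)\cong\colim_n\Hom(k,X_n)$ does the bookkeeping in the converse direction. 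This is clean, but it requires invoking that $kG$ is self-injective (Frobenius), that $\StMod(kG)$ is compactly generated with finite-dimensional modules compact, and the hocolim identification --- all standard, but none of it needed in the paper's direct argument. One step of yours needs repair: in the forward direction, knowing that the composites $k\to\Omega^{-jn'}(M^{\otimes n'})$ out of the \emph{first} term are stably zero for $n'\ge n$ does not by itself make the telescope vanish (consider $k\xrightarrow{0}Y\xrightarrow{\id}Y\to\cdots$, whose homotopy colimit is $Y$). What you need, and what the paper records, is that the composite of any $n$ consecutive maps is stably trivial; this holds because such a composite is, up to shift, $1_{M^{\otimes i}}\otimes\xi^{n}$, and tensoring a map that factors through a projective with any module again factors through a projective over the Hopf algebra $kG$. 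With that emendation your pro-zero claim, and hence the vanishing of the homotopy colimit, is justified and the rest of your argument goes through.
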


\begin{proof}
If $\xi^n=0$, then the composite of any $n$  consecutive maps in the system defining $X$ factors through a projective, and so $X$ is
projective. Conversely,  if $X$ is projective, then the  map $k \to X$ factors through a projective. Since $k$ is finite dimensional, it factors through a finite 
dimensional projective, and hence  a finite composite  of maps in the defining system factors through a projective. 
This implies that the corresponding power of $\xi$ is zero.
\end{proof} 

Lemma~\ref{le:hocolim} immediately implies the following  result. 

\begin{theorem}
\label{th:projimpliesnilp}
Let $G$ be a finite supergroup scheme.  If a family $\mcH$ of proper  sub supergroup schemes after field extensions detects projectivity of $G$-modules, then it also detects  nilpotence  of cohomology elements.
\end{theorem}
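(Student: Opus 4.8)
The plan is to deduce the statement directly from Lemma~\ref{le:hocolim}, using that the colimit $X$ appearing there is compatible with restriction and base change.

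First I would dispose of the easy implication: if $\xi \in H^{j,*}(G,M)$ satisfies $\xi^{n} = 0$, then, since restriction along $H \hookrightarrow G_K$ is monoidal and commutes with base change $-\otimes_k K$, we have $\res^{G_K}_H(\xi_K)^{n} = \res^{G_K}_H\bigl((\xi^{\otimes n})_K\bigr) = 0$, so every restriction is nilpotent. For the converse, the case $j=0$ is trivial (a nonzero class in $M^G$ has nonzero $\otimes$-powers over a field), so assume $j>0$ and that $\res^{G_K}_H(\xi_K)$ is nilpotent for every $(H,K)\in\mcH$. Form the module $X$ of Lemma~\ref{le:hocolim}. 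By that lemma it suffices to show that $X$ is projective, and since $\mcH$ detects projectivity of $G$-modules it suffices to show that $\res^{G_K}_H(X_K)$ is projective for each $(H,K)\in\mcH$.

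The key step is to identify $\res^{G_K}_H(X_K)$. Both $-\otimes_k K$ and $\res^{G_K}_H$ are exact, monoidal, and preserve filtered colimits, and each carries a syzygy over $G$ (respectively $G_K$) to the corresponding syzygy over $H$ up to a projective summand, sending $\xi$ to $\xi_K$ and then to $\res^{G_K}_H(\xi_K)$. Applying them term by term to the directed system defining $X$ shows that $\res^{G_K}_H(X_K)$ agrees, up to a projective summand, with the colimit built as in Lemma~\ref{le:hocolim} from the class $\res^{G_K}_H(\xi_K)\in H^{j,*}(H,\res^{G_K}_H(M_K))$. That class is nilpotent by hypothesis, so Lemma~\ref{le:hocolim} applied over $H$ shows that this colimit, and hence $\res^{G_K}_H(X_K)$, is projective.

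Putting this together: $\res^{G_K}_H(X_K)$ is projective for every $(H,K)\in\mcH$, so $X$ is projective because $\mcH$ detects projectivity, and then $\xi$ is nilpotent by Lemma~\ref{le:hocolim} over $G$. The one point needing care is the bookkeeping in the middle paragraph --- checking that passing the directed system of syzygies through $\res^{G_K}_H$ and $-\otimes_k K$ really reproduces the system of Lemma~\ref{le:hocolim} up to projective summands --- but this is routine, since each functor in sight is additive, exact, monoidal and commutes with filtered colimits, and projectivity of a module is unaffected by adding projective summands.
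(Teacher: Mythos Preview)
Your proof is correct and follows essentially the same approach as the paper: form the colimit $X$ of Lemma~\ref{le:hocolim}, observe that base change and restriction carry the defining system for $X$ to the corresponding system for $\res^{G_K}_H(\xi_K)$ (up to projective summands), apply Lemma~\ref{le:hocolim} over $H$ to conclude $\res^{G_K}_H(X_K)$ is projective, and then invoke detection of projectivity. Your treatment is a bit more explicit than the paper's---you spell out the $j=0$ case and the compatibility of the colimit with restriction and base change, whereas the paper leaves these implicit---but the underlying argument is the same.
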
 

\begin{proof}
Let $M$ be a $G$-module and $\xi \in H^{j,*}(G,M)$ an element with $j>0$. Represent it by a map $\xi\colon k \to \Omega^{-j}(M)$, and consider the colimit 
$X = \colim \Omega^{-jn}(M^{\otimes n})$ as in Lemma~\ref{le:hocolim}.

Our assumption is that $\xi_K\da_H$  is nilpotent for each
$(H,K)\in\mcH$. That is, for some $n$ depending on $(H,K)$,   the
element $(\xi_K\da_H)^{\otimes n} \in H^{jn,*}(H, M_K^{\otimes n})$ is
zero.  Equivalently, the map $K \to \cdots \to \Omega^{-jn}(M^{\otimes
  n})$  factors through a projective upon restriction to $H$. Hence,
$X_K\da_H$ is projective. Since we assumed that projectivity  is
detected on the family $\mcH$, we conclude that $X$ is a projective $G$-module.  The statement now follows by Lemma~\ref{le:hocolim}. 
\end{proof} 

We omit the  proof of the following lemma since the proof is  similar to 
\cite[Lemma 3.5]{Benson/Iyengar/Krause/Pevtsova:2018a} if one replaces 
$\pi$-support with the cohomological support. See also \cite{Burke:2012a}. 
 
\begin{lemma}
\label{le:nilp}
Let $G$ be a finite supergroup scheme, and $M$ be a $G$-module. The following are equivalent: 
\begin{enumerate}[\quad\rm(a)]
\item 
$M$ is projective,
\item
any class $\xi \in \Ext_G^{>0, *}(M,M)$ is nilpotent.
\qed
\end{enumerate}
\end{lemma}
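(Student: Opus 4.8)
The plan is to prove the equivalence by establishing (a)$\Rightarrow$(b) as the easy direction and (b)$\Rightarrow$(a) via the colimit/telescope technique already set up in Lemma~\ref{le:hocolim}, adapted from $H^{*,*}(G,k)$-coefficients to $\Ext_G^{*,*}(M,M)$-coefficients. For (a)$\Rightarrow$(b): if $M$ is projective then $\Ext_G^{>0,*}(M,M)=0$, since $M$ has a projective resolution $\cdots\to 0\to M\to M\to 0$ (or, working in $\stmod(kG)$, $M\cong 0$), so every positive-degree self-extension class vanishes and is in particular nilpotent — trivially.

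For (b)$\Rightarrow$(a): I would argue the contrapositive, or argue directly. Suppose $M$ is not projective. In $\stmod(kG)$ this means $M\not\cong 0$, so the identity class $\id_M\in\underline{\Hom}_G(M,M)$ is nonzero. The key observation is that if every $\xi\in\Ext_G^{>0,*}(M,M)$ is nilpotent, one wants to derive a contradiction with the nonvanishing of $\id_M$. Here one uses a Tate-type or periodicity argument: consider the ring $\underline{\Ext}_G^{*,*}(M,M)$ (self-extensions in the stable category, a $\bbZ$-graded ring in the homological index) together with its action on $\underline{\Hom}_G(M,N)$ for all $N$. Since $kG$ is a finite-dimensional Hopf algebra, $\stmod(kG)$ is a tensor-triangulated category with a Tate cohomology theory; the standard fact (as in \cite[Lemma 3.5]{Benson/Iyengar/Krause/Pevtsova:2018a}, referenced in the statement) is that $M$ is projective iff $\widehat{\Ext}_G^{*,*}(M,M)=0$ iff $\Ext_G^{>0,*}(M,M)$ contains no non-nilpotent element. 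Concretely: if $M$ is not projective, pick a minimal thick (or localizing) subcategory argument, or use that $\id_M$ is detected by some $\pi$-point / closed point in the cohomological support $\supp_G(M)$, which is nonempty; at such a point the corresponding Carlson-type $L_\zeta$ construction produces a non-nilpotent self-map of $M$, i.e. a non-nilpotent class in $\Ext_G^{>0,*}(M,M)$.

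More self-containedly, here is the route I would actually write: assume $M$ is not projective, so $\supp_G(M)\neq\varnothing$ (using finite generation of $H^{*,*}(G,k)$ from Drupieski, so the theory of cohomological support varieties is available). Pick a homogeneous $\zeta\in H^{d,*}(G,k)$ with $d>0$ that is not nilpotent on $M$ — such $\zeta$ exists precisely because $\supp_G(M)$ is a nonempty closed subvariety of $\Proj H^{*,*}(G,k)$, so there is a $\zeta$ not vanishing identically on it. Then multiplication by $\zeta$, viewed as an element of $\Ext_G^{d,*}(M,M)$ via the module-algebra action $H^{*,*}(G,k)\to\Ext_G^{*,*}(M,M)$, is not nilpotent: if $\zeta^n$ annihilated $M$ then $\supp_G(M)\subseteq\supp_G(kG/\!/\zeta^n) = V(\zeta^n)=V(\zeta)$ would force $\zeta$ to vanish on $\supp_G(M)$, a contradiction. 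This non-nilpotent element of $\Ext_G^{>0,*}(M,M)$ contradicts (b), completing the proof.

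The main obstacle I anticipate is not the logical structure but making sure the support-variety machinery over finite supergroup schemes is legitimately available at this point in the paper: one needs finite generation of $H^{*,*}(G,k)$ (cited from Drupieski), the module structure of $\Ext_G^{*,*}(M,M)$ over it, and the fact that $\supp_G(M)=\varnothing$ iff $M$ is projective — the latter is exactly what makes this a nontrivial input and is presumably why the authors defer to \cite[Lemma 3.5]{Benson/Iyengar/Krause/Pevtsova:2018a} and \cite{Burke:2012a} rather than reproving it. So in practice the "proof" is a two-line reduction: (a)$\Rightarrow$(b) is immediate since projectivity kills all positive $\Ext$, and (b)$\Rightarrow$(a) follows from the cited characterization of projectivity in terms of the non-vanishing of a non-nilpotent self-extension class, which is formal once one has the graded-commutative finitely generated cohomology ring $H^{*,*}(G,k)$ acting on $\Ext_G^{*,*}(M,M)$.
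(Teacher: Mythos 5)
Your easy direction is fine, and you have correctly located where the difficulty sits, but your justification of (b)$\Rightarrow$(a) has a genuine gap. The lemma must hold for arbitrary, typically infinite dimensional modules (it is applied to $\Lambda=\End_k(N)$ in Proposition~\ref{pr:nilpimpliesproj}), and for such $M$ the assertion ``$M$ not projective $\Rightarrow$ $\supp_G(M)\neq\varnothing$'' is \emph{not} a formal consequence of finite generation of $H^{*,*}(G,k)$: that the annihilator-type cohomological support of an infinite dimensional module detects projectivity is itself a detection theorem. The machinery you invoke to supply it --- $\pi$-points, closed points of the support, $L_\zeta$-constructions, Tate/stratification-style detection --- is precisely what this paper and its sequel are in the process of establishing for finite supergroup schemes, so using it here would be circular; and the cited results (\cite[Lemma~3.5]{Benson/Iyengar/Krause/Pevtsova:2018a}, \cite{Burke:2012a}) are statements about finite group schemes, respectively commutative noetherian rings. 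The paper's point is that their \emph{proof technique} transfers (with cohomological support in place of $\pi$-support), not that their statements cover supergroup schemes; so ``defer to the citations'' does not close the argument as you have set it up, and your closing claim that the implication is ``formal once one has the finitely generated cohomology ring acting'' is exactly the step that is missing.

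What fills the gap non-circularly is a Koszul-object (Carlson module) argument, which incidentally also vindicates your observation that only the central classes are needed. Choose homogeneous generators $\zeta_1,\dots,\zeta_r$ of the augmentation ideal of $H^{*,*}(G,k)$ (finite generation is Drupieski's theorem). By (b) each image of $\zeta_i$ in $\Ext^{*,*}_G(M,M)$ is nilpotent, say $\zeta_i^{n_i}\cdot\id_M=0$; since $\Ext^{j,*}_G(M,M)\cong\underline{\Hom}(\Omega^jM,M)$ in $\StMod(kG)$, tensoring the defining triangle of the Carlson module $C_i$ of $\zeta_i^{n_i}$ with $M$ exhibits $M$ as a stable direct summand of $C_i\otimes M$, and iterating, of $C\otimes M$ with $C=C_1\otimes\cdots\otimes C_r$ finite dimensional. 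Each $\zeta_i^{2n_i}$ annihilates $C$, so for every simple $kG$-module $S$ the finitely generated $H^{*,*}(G,k)$-module $\Ext^{*,*}_G(S,C)$ is killed by a power of the augmentation ideal, hence is finite dimensional; therefore $C$ has finite injective dimension, hence is projective because $kG$ is self-injective, and then $C\otimes M$ and finally $M$ are projective. This (or Burke's closely related argument) is the content hiding behind the citation; your write-up asserts its conclusion rather than proving it.
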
 

Here is a partial converse to Theorem~\ref{th:projimpliesnilp}.

\begin{proposition}
\label{pr:nilpimpliesproj}  
Let $G$ be a finite supergroup scheme.  Suppose that nilpotence in cohomology of $G$-modules is detected  on a family $\mcH$ of proper subgroups of $G$ without field extension (i.e., each pair $(H,K)\in\mcH$ has $K=k$).
Then projectivity of modules is also detected on $\mcH$.
\end{proposition}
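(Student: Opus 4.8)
The plan is to reduce the detection of projectivity to the detection of nilpotence via the characterization in Lemma~\ref{le:nilp}. Let $M$ be a $G$-module such that $\res^G_H(M)$ is projective for every $(H,k)\in\mcH$; we must show $M$ is projective. By Lemma~\ref{le:nilp} it suffices to prove that every class $\xi\in\Ext_G^{>0,*}(M,M)$ is nilpotent. The idea is to recognize $\Ext_G^{*,*}(M,M)$ as the cohomology $H^{*,*}(G,N)$ for a suitable $G$-module $N$ — concretely $N=\fHom_k(M,M)$ (that is, $M^\#\otimes M$ with diagonal $G$-action), so that $\Ext_G^{n,*}(M,M)\cong H^{n,*}(G,\fHom_k(M,M))$ — and then apply the hypothesis that nilpotence in $H^{*,*}(G,-)$ is detected on $\mcH$.

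First I would make precise the identification $\Ext_G^{*,*}(M,M)\cong H^{*,*}(G,\fHom_k(M,M))$ as graded rings (or at least compatibly with taking powers of elements), so that an element $\xi$ is nilpotent in the sense of Definition~\ref{de:nilp} on one side exactly when it is on the other; here one uses that $\fHom_k(M,M)^{\otimes n}$ receives a natural composition map to $\fHom_k(M,M)$, matching Yoneda composition, so $\xi^n=0$ in $\Ext$ corresponds to $\xi^{\otimes n}$ mapping to zero. Next, restriction along $H\le G$ is compatible with these identifications: for $(H,k)\in\mcH$ the restriction of $\xi\in\Ext_G^{>0,*}(M,M)$ corresponds to a class in $H^{*,*}(H,\fHom_k(M,M)\!\downarrow_H)$, and since $M\!\downarrow_H$ is projective, $\fHom_k(M,M)\!\downarrow_H=(M\!\downarrow_H)^\#\otimes(M\!\downarrow_H)$ is projective as an $H$-module, so $H^{>0,*}(H,\fHom_k(M,M)\!\downarrow_H)=0$. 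In particular $\res^G_H(\xi)$ is zero, hence a fortiori nilpotent, for every $(H,k)\in\mcH$.

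Now I would invoke the hypothesis: nilpotence of cohomology elements is detected on $\mcH$, applied to the $G$-module $\fHom_k(M,M)$ and the class $\xi\in H^{>0,*}(G,\fHom_k(M,M))$. Since all restrictions to members of $\mcH$ are nilpotent, $\xi$ itself is nilpotent. As $\xi$ was an arbitrary element of $\Ext_G^{>0,*}(M,M)$, Lemma~\ref{le:nilp} gives that $M$ is projective. Conversely, if $M$ is projective then all its restrictions are projective, so the detection criterion is an equivalence, as required. The only subtlety — and the step I expect to demand the most care — is the compatibility bookkeeping: one must check that the ring isomorphism $\Ext_G^{*,*}(M,M)\cong H^{*,*}(G,\fHom_k(M,M))$ intertwines ``$\xi^n=0$ in $\Ext$'' with ``$\xi^{\otimes n}\mapsto 0$ in $H^{*,*}(G,\fHom_k(M,M)^{\otimes n})$'', so that Definition~\ref{de:nilp}'s notion of nilpotence really does transport across, and that the same is true after restriction to each $H$; once this is set up the argument is immediate. (One should also note the harmless point that $\Ext_G^{0,*}(M,M)$ may contain non-nilpotent elements, which is why the detection criterion and Lemma~\ref{le:nilp} are both phrased in positive cohomological degree.)
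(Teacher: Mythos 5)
Your proposal is correct and follows essentially the same route as the paper's proof: pass to the coefficient module $\End_k(M)$, note its restriction to each $H\in\mcH$ is projective so all positive-degree classes restrict to zero (hence nilpotently), invoke the nilpotence-detection hypothesis, and conclude via Lemma~\ref{le:nilp}. The extra bookkeeping you flag about transporting nilpotence across $\Ext_G^{*,*}(M,M)\cong H^{*,*}(G,\End_k(M))$ is exactly what the paper leaves implicit.
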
 

\begin{proof}
Let $N$ be a $G$-module such that $N\da_H$  is projective for all $H\in\mcH$. Then $\Lambda = \End_k(N)$ is projective upon restriction to each $H\in\mcH$ so that for any cohomology class $\xi \in H^{*,*}(G, \Lambda)$, we have $\xi\da_H=0$.  Since nilpotency is detected on $\mcH$ we deduce that all elements $\xi \in H^{>0,*}(G,\Lambda) \cong \Ext^{>0,*}_G(N,N)$ are nilpotent. 
Now apply Lemma~\ref{le:nilp}.
\end{proof} 

\begin{remark} 
The full converse to Theorem~\ref{th:projimpliesnilp} 
is trickier. The argument above fails if we have to extend scalars. 
A deeper reason might be that it is not true that nilpotency of all 
elements in $H^*(G,M)$ implies that $M$ is projective. We refer 
the reader to a cautionary example described in Proposition~5.1 
of \cite{Benson/Carlson:JPAA}: take $G$ to be the Klein group 
$\bbZ/2 \times \bbZ/2$, $p=2$, and $M$ be an infinite 
dimensional module represented by the following diagram: 
\[\xymatrix@=3mm{&& \circ\ar@{-}[ddl]\ar@{=}[ddr] && \circ\ar@{-}[ddl]\ar@{=}[ddr] && \circ\ar@{-}[ddl]\ar@{=}[ddr] &&\circ\ar@{-}[ddl]\\
\cdots &&&&&&&&\\
&\bullet&&\bullet&&\bullet&&\bullet&} \]
As computed in \cite[Proposition 5.1]{Benson/Carlson:JPAA}, 
all cohomology classes in $H^*(G,M)$ 
are nilpotent (of unbounded degree) whereas the module $M$ is not projective. 

\end{remark}

%%%%%%%%%%%%%%%%%%%Inductive detection theorem%%%%%%%%%%%%%%%%%%%%
%%%%%%%%%%%%%%%%%%%%%%%%%%%%%%%%%%%%%%%%%%%%%%%%%%%%%

\section{Inductive detection theorem}\label{se:inductive}

We finish the first part of the paper with the inductive detection
theorem. The point of Theorem~\ref{th:detection} is to cover the cases
of the detection that are straightforward, leaving the task of showing
that the finite unipotent supergroup schemes not covered by
Hypotheses~\ref{hyp:inductive} are precisely the elementary supergroup
schemes from Definition~\ref{def:elementary}; see
Theorem~\ref{th:main}. It is in the preparation work for that theorem
that the degree $2$ cohomology element of Theorem~\ref{th:B3.6}
becomes relevant.

We separate out the hypotheses  since these will appear again in Section~\ref{se:role}.

\begin{hyp}\label{hyp:inductive}
The finite supergroup scheme $G$ is unipotent and satisfies at least one of the following:
\begin{enumerate}
\item[\rm (a)] There is a surjective map $G\to \bbG_{a(1)} \times \bbG_{a(1)}$.
\item[\rm (b)] There is a surjective map $G \to \bbG_a^- \times \bbG_a^-$.
\item[\rm (c)] There are nonzero elements 
\begin{align*} 
\lambda_1,\ldots \lambda_n&\in H^{1,0}(G^0,k)^\pi \subseteq H^{1,0}(G,k)\\
y_1,\dots,y_s&\in  H^{1,0}(\pi,\bbF_p)\subseteq H^{1,0}(G,k) \\
\zeta_1,\dots,\zeta_m&\in H^{1,1}(G,k)\cong \Hom(G, \bbG^-_a) 
\end{align*}
such that $\prod\beta\cP^0(\lambda_i)\prod\beta\cP^0(y_j)\prod \zeta_\ell =0$.
\end{enumerate}
\end{hyp}

\begin{theorem}\label{th:detection}
If Hypothesis~\ref{hyp:inductive} hold for $G$,  then
\begin{enumerate}
\item[\rm (i)] nilpotence of elements of $H^{*,*}(G,M)$ and
\item[\rm (ii)] projectivity of $kG$-modules 
\end{enumerate}
are detected on proper sub-supergroup schemes after field extension.
\end{theorem}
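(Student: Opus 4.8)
The plan is to prove part (ii) --- detection of projectivity --- and then to deduce part (i) for free from Theorem~\ref{th:projimpliesnilp}, applied to the family $\mcH$ of all pairs $(H,K)$ with $K$ a field extension of $k$ and $H$ a proper sub-supergroup scheme of $G_K$. So I would take a $kG$-module $M$ whose restriction $M_K\da_H$ is projective for every such pair, and argue that $M$ itself is projective. Since projectivity is insensitive to base change, and since by Lemma~\ref{le:nilp} it is enough to show that $\Ext_G^{>0,*}(M,M)=0$ (there being then no nonzero classes to be non-nilpotent), the task in each of the three cases of Hypothesis~\ref{hyp:inductive} is to exhibit enough proper subgroups of a suitable $G_K$ to kill positive $\Ext$.

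For case (c) I would use the Quillen--Venkov package of Section~\ref{se:qv}. Each nonzero $\lambda_i$, $y_j$, $\zeta_\ell$ corresponds, as in Remark~\ref{rk:notation}, to a surjection of $G$ onto $\bbG_{a(1)}$, $\bbZ/p$, $\bbG_a^-$ respectively, with proper kernel $H_i$, $H_j'$, $H_\ell''$, and by hypothesis $M$ restricts to a projective module on each. The key point is that this forces $x_i=-\beta\cP^0(\lambda_i)$, $z_j=\beta\cP^0(y_j)$ and $\zeta_\ell$ each to act invertibly on $M$ in $\StMod(G)$: tensoring $M$ with the $4$-term exact sequence $0\to k\to\ind_{H}^{G}k\to\ind_{H}^{G}k\to k\to 0$ representing $x_i$ (the inflation of the $2$-periodic piece of the resolution of $k$ over $k[s]/(s^p)$), resp.\ $z_j$ (the sequence \eqref{eq:ind}), makes the two middle terms projective, so that $x_i,z_j$ give stable isomorphisms $\Omega^2 M\xrightarrow{\ \sim\ }M$ (compare Lemma~\ref{le:composite}); and tensoring $M$ with the inflation of $0\to k\to k\bbG_a^-\to k\to 0$ --- whose outer terms sit in opposite internal degrees and whose connecting map is $\zeta_\ell$ --- has projective middle term $\ind_{H_\ell''}^{G}(M\da_{H_\ell''})$, so $\zeta_\ell$ too is a stable isomorphism up to a parity shift. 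Hence $\prod_i x_i\cdot\prod_j z_j\cdot\prod_\ell \zeta_\ell$ acts invertibly on $M$ in $\StMod(G)$; but this product is $(-1)^n\prod_i\beta\cP^0(\lambda_i)\prod_j\beta\cP^0(y_j)\prod_\ell\zeta_\ell$, which vanishes by Hypothesis~\ref{hyp:inductive}(c), so the zero map is invertible on $M$, whence $\id_M=0$ in $\StMod(G)$ and $M$ is projective.

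For cases (a) and (b) I would follow the template of \cite[Theorem~8.1]{Bendel:2001a}. Let $f\colon G\to Q$ be the given surjection, $Q=\bbG_{a(1)}\times\bbG_{a(1)}$ in case (a) and $Q=\bbG_a^-\times\bbG_a^-$ in case (b), and $N=\ker f$, a proper sub-supergroup scheme. Extending scalars, assume $k$ algebraically closed. Since $M\da_N$ is projective, the Lyndon--Hochschild--Serre spectral sequence of $1\to N\to G\to Q\to1$ collapses and yields $\Ext_G^{*,*}(M,M)\cong H^{*,*}(Q,A)$ with $A=\Hom_N(M,M)$ viewed as a $Q$-module; and for each ``cyclic shifted'' sub-supergroup scheme $C\le Q$ --- one generated by a primitive element $as+bt$, resp.\ $a\si_1+b\si_2$, hence isomorphic to $\bbG_{a(1)}$, resp.\ $\bbG_a^-$ --- the preimage $f^{-1}(C)$ is a proper sub-supergroup scheme of $G$ containing $N$, so the same collapse gives $H^{>0,*}(C,A\da_C)=0$, i.e.\ $A\da_C$ is projective over $C$. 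By Dade's Lemma for $\bbG_{a(1)}^{\times2}$ in case (a), and by its analogue for $(\bbG_a^-)^{\times2}$ in case (b), it follows that $A$ is projective over $Q$; hence $H^{>0,*}(Q,A)=0$, so $\Ext_G^{>0,*}(M,M)=0$ and $M$ is projective.

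The only genuinely external input is Dade's Lemma for the two elementary quotients, and the case of $(\bbG_a^-)^{\times2}$ is the point I expect to require the most care: its group algebra $\Lambda(\si_1,\si_2)$ is not graded-commutative, and one must check that restriction to the one-parameter family $\langle a\si_1+b\si_2\rangle$ of sub-supergroup schemes already detects projectivity --- I would prove this either by a direct argument in the spirit of Dade's original proof (as in \cite{Dade:1978b}) or by reducing to the restricted Lie superalgebra setting. Case (c), by contrast, is clean once the Quillen--Venkov machinery is in place; there the only subtlety is the bookkeeping needed to identify the distinguished class of Hypothesis~\ref{hyp:inductive}(c) with the composite of the stable self-equivalences coming from the maximal subgroups, including tracking the internal $\bbZ/2$-degrees.
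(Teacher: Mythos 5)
Your case (c) is correct, and is only a mild reordering of the paper's argument: you make the classes $-\beta\cP^0(\lambda_i)$, $\beta\cP^0(y_j)$, $\zeta_\ell$ act as stable isomorphisms on $M$ (via Lemma~\ref{le:composite}, the tensor identity, and the fact that a module induced from a projective is projective), so the vanishing of their product forces $M$ to be zero in the stable category; the paper instead proves nilpotence directly from Proposition~\ref{pr:QuillenVenkov} and then gets projectivity from Proposition~\ref{pr:nilpimpliesproj}, but both directions are legitimate and neither needs field extensions.

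The genuine gap is in cases (a) and (b). The theorem is about arbitrary $kG$-modules, and it must be: Theorem~\ref{th:projimpliesnilp} applies part (ii) to the infinite-dimensional colimit of Lemma~\ref{le:hocolim}, so you cannot quietly assume $M$ (hence $A=\Hom_N(M,M)$) is finite dimensional. For infinitely generated modules, ``Dade's Lemma'' in the form you invoke is false: already for $k[x,y]/(x^p,y^p)$ over an algebraically closed field there are non-projective infinite-dimensional modules whose restriction to every cyclic shifted subgroup defined over that field is free (for instance $k(t)[s]/(s^2)$ with $x$ acting as $s$ and $y$ as $ts$). Detection of projectivity of such modules requires generic cyclic shifted subgroups over extensions of large transcendence degree, and the tool that makes that work is precisely the Kronecker quiver lemma \cite[Lemma~4.1]{Benson/Carlson/Rickard:1996a}, which is the ingredient the paper imports by following case 3(b) of \cite[Theorem~8.1]{Bendel:2001a}. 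So your step ``$A\da_C$ projective for all $C\le Q$ over the algebraically closed field implies $A$ projective over $Q$'' fails, and the super analogue for $Q=\bbG_a^-\times\bbG_a^-$ that you defer cannot be obtained ``in the spirit of Dade's original proof'', since that argument is intrinsically finite dimensional. If you try to repair this by allowing transcendental extensions $K$, note the further wrinkle that $\Hom_N(M,M)\otimes_k K$ and $\Hom_{N_K}(M_K,M_K)$ differ for infinite-dimensional $M$, which is one reason the Bendel-style argument works directly with $M$ and the group $G$ rather than with the coefficient module $A$ over the quotient $Q$.
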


\begin{proof}
  The argument that if $G$ satisfies either condition (a) or (b), then
  projectivity of modules is detected on proper sub-supergroup schemes
  goes exactly as in the case 3(b) of the proof of
  \cite[Theorem~8.1]{Bendel:2001a}; so we will not reproduce it
  here. The main ingredient of the proof is the Kronecker quiver
  lemma, see \cite[Lemma 4.1]{Benson/Carlson/Rickard:1996a}.  Once we
  know detection of projectivity, the detection of nilpotents is
  implied by Theorem~\ref{th:projimpliesnilp}.

We now show that (c) implies detection of nilpotents in $H^{*,*}(G,M)$ on sub-supergroup schemes, without any field extensions. Let $\xi \in H^{n,*}(G,M)$ be  a cohomology class which restricts nilpotently to all proper subgroups of $G$, and let 
$\prod\beta\cP^0(\lambda_i)\prod\beta\cP^0(y_j)\prod \zeta_\ell =0$.  Each of the elements 
$\lambda_i$, $y_j$, $\zeta_\ell$ corresponds to a map from $G$ to $\mathbb Z/p$, $\bbG_{a(1)}$ or $\bbG_a^-$, 
with $\xi$ restricting nilpotently to the kernel of the corresponding map. 
Proposition~\ref{pr:QuillenVenkov} implies that $\xi^{2i+2j+\ell}$ is then divisible by 
$\prod\beta\cP^0(\lambda_i)\prod\beta\cP^0(y_j)\prod \zeta_\ell$, and is therefore zero.

Finally, since the case (c) does not involve field extensions, Proposition~\ref{pr:nilpimpliesproj} implies 
that we also have detection of projectivity in this case. 
 \end{proof}

%%%%%%%%%%%%%%%%%%%%%%%%%%%%%%%%%%%%%%%%%%%%%%%%%%%%%
%%%%%%%%%%%%%%%%%%%%%%%%%%%%%%%%%%%%%%%%%%%%%%%%%%%%%
\part{The detection theorem} 
%%%%%%%%%%%%%%%%%%%%%%%Witt elementaries%%%%%%%%%%%%%%%
%%%%%%%%%%%%%%%%%%%%%%%%%%%%%%%%%%%%%%%%%%%%%%%%%%%%%

\section{Witt elementary supergroup schemes}
\label{se:Elem}

In this section we introduce a family of Witt elementary supergroup schemes that play an essential role in our main detection theorem. 

\begin{notation} 
\label{not:diag}
We shall make an extensive use of diagrams to depict many of the unipotent connected 
supergroup schemes to be introduced in this section.  In these diagrams, $\circ$ denotes a 
composition factor isomorphic to $\bbG_{a(1)}$
and $\bul$ denotes a composition factor isomorphic to $\bbG_a^-$. A
single bond represents an extension of $\bbG_{a(1)}$ by
$\bbG_{a(1)}$ to make $\bbG_{a(2)}$ and the double bond represents an
extension of $\bbG_{a(1)}$ by $\bbG_{a(1)}$ to make $W_{2,1}$. The dashed
link denotes an extension of $\bbG_a^-$ by $\bbG_{a(1)}$ to make the
supergroup scheme $W_{1,1}^-$ discussed in Example~\ref{eg:t2pm}. 
\[
\xymatrix@=5mm{\circ \ar@{-}[d]& \circ \ar@{=}[d]& \bul \ar@{--}[d]\\
\circ & \circ & \circ \\
\mathbb G_{a(2)} & W_{2,1} & W_{1,1}^- 
}
\]
\end{notation}

\begin{example}
Let $\fg$ be the $p$-restricted Lie superalgebra described in Example~5.3.3 of Drupieski and Kujawa \cite{Drupieski/Kujawa:sv}. This is generated by
an odd degree element $\si$ and an  even degree element $s$ satisfying $[\si,\si]=2s^{[p]}$. This is unipotent if and only if some $s^{[p^m]}$ is zero.
If $n$ is minimal with this property then $\fg$ has a basis consisting of
$\si,s,s^{[p]},\dots,s^{[p^{m-1}]}$. The restricted enveloping algebra of $\fg$
is the group algebra of the finite supergroup scheme denoted $E_{m,1}^-$ with 
\[ kE_{m,1}^-=k[s,\si]/(s^{p^m},\si^2-s^p) \] where $s$ and $\si$ are
primitive. Note that $(E_{m,1}^-)_\ev\cong W_{m,1}$, the first
Frobenius kernel of length $m$ Witt vectors as introduced in
Appendix~\ref{sec:Witt}, so we have a short exact sequence
\begin{equation} 
\label{eq:Em1-} 
1 \to W_{m,1} \to E_{m,1}^- \to \bbG_a^- \to 1. 
\end{equation}
For $m \geq 2$, there are also short exact sequences
\[ 1 \to W_{m-1, 1}^- \to E_{m,1}^- \to \bbG_{a(1)} \to 1, \]
where $kW_{m-1,1}^- = k[\si]/\si^{2p^{m-1}}$ (see Example~\ref{eg:t2pm}), and 
\[ 1 \to W_{m-1,1} \to E_{m,1}^- \to \bbG_{a(1)} \times \bbG_a^-  \to 1 \]
where the group algebra of $W_{m-1,1}$ is generated by $s^p=\si^2$.
Using Notation~\ref{not:diag}, $E^-_{m,1}$ is represented with the following diagram.`
\[ E^-_{m,1} \colon \quad 
\vcenter{\xymatrix@=5mm{&&&& \bul \ar@{--}[d] & \circ\ar@{=}[dl] \\ &&&& \circ \ar@{=}[dl] \\
  &&&\circ \ar@{=}[dl] \\ &&\circ \ar@{=}[dl] \\ &\circ }} \]
\end{example}

As another example, we draw a diagram for $W_{m,1}^-$ of Example~\ref{eg:t2pm}. 
\[ \xymatrix@=5mm{&&&& \bul \ar@{--}[d] \\ &&&& \circ \ar@{=}[dl] \\
  &&&\circ \ar@{=}[dl] \\ &&\circ \ar@{=}[dl] \\ &\circ \ar@{=}[dl] \\
  \circ}  \]

%The following lemma shows that up to isomorphism, the only
%nonsplit short exact sequence of the form $1 \to (\bbG_{a(1)})^{\times n} \to G \to \bbG_a^- \to 1$
%is for $n=1$ as given by Example~\ref{eg:t2pm}.

\begin{lemma}\label{le:Ga^-Ga1Ga1}
If $G$ is a finite supergroup scheme which sits in a short exact sequence
\[ 1 \to \bbG_{a(1)} \times \bbG_{a(1)} \to G \to \bbG_a^- \to 1 \]
then there is a non-trivial homomorphism $G \to \bbG_{a(1)}$.
\end{lemma}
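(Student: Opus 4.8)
\emph{Proof plan.} The plan is to reduce to the observation that any non-trivial homomorphism out of a height-one group scheme into $\bbG_a$ automatically has image in $\bbG_{a(1)}$, and to produce such a homomorphism from the five-term exact sequence.

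\emph{Step 1: $G$ has height one.} Write $N=\bbG_{a(1)}\times\bbG_{a(1)}$. First, $G$ is connected: the connected group scheme $N$ maps trivially to the \'etale quotient $\pi_0(G)$, so $\pi_0(G)$ is a quotient of $G/N\cong\bbG_a^-$, which is connected. Since $N$ has height one, $N=N_{(1)}\subseteq G_{(1)}$, and $G_{(1)}$ is normal in $G$, so $G_{(1)}/N$ is a sub-supergroup scheme of $G/N\cong\bbG_a^-$. As $\dim_k k[\bbG_a^-]=2$, the only sub-supergroup schemes of $\bbG_a^-$ are the trivial one and $\bbG_a^-$ itself. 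If $G_{(1)}/N$ were trivial then $G/G_{(1)}\cong\bbG_a^-$; but $k[G/G_{(1)}]$ has no odd elements by Corollary~\ref{co:GevG1} (see also Remark~\ref{re:masuoka}), whereas $k[\bbG_a^-]$ does, a contradiction. Hence $G_{(1)}=G$.

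\emph{Step 2: a non-trivial homomorphism $G\to\bbG_a$.} I would feed the extension $1\to N\to G\to\bbG_a^-\to1$ into the five-term exact sequence of the Lyndon--Hochschild--Serre spectral sequence and read it off in internal degree $0$. Since $H^{*,*}(\bbG_a^-,k)=k[\zeta]$ with $|\zeta|=(1,1)$ (Proposition~\ref{pr:Steenrod-on-Ga^-}), both $H^{1,0}(\bbG_a^-,k)$ and $H^{2,0}(\bbG_a^-,k)$ vanish, so the sequence gives an isomorphism $H^{1,0}(G,k)\cong H^{1,0}(N,k)^{\bbG_a^-}$. Here $H^{1,0}(N,k)=\Hom_{\Gr/k}(N,\bbG_a)$ is two-dimensional (Lemma~\ref{le:H1}; one dimension for each factor of $N$), and since $\bbG_a^-$ is unipotent it has a non-zero fixed vector on this finite-dimensional module; hence $H^{1,0}(G,k)=\Hom_{\Gr/k}(G,\bbG_a)\neq0$. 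This yields a non-trivial homomorphism $f\colon G\to\bbG_a$.

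\emph{Step 3 and the main obstacle.} The image of $f$ is a quotient group scheme of $G$, hence has height $\le1$ by Step~1, so it is contained in $(\bbG_a)_{(1)}=\bbG_{a(1)}$; thus $f$ corestricts to a non-trivial (in fact surjective) homomorphism $G\to\bbG_{a(1)}$. The heart of the argument is Step~1: one has to rule out the possibility that the homomorphism produced in Step~2 is forced to ``use height two'', and the point is that $G$ itself has no room for this, since all of its odd structure comes from the height-one quotient $\bbG_a^-$. An alternative to the whole argument is to note that $G$ has height one (Step~1), so $kG=\mcU^{[p]}(\Lie G)$ by Lemma~\ref{le:mcV}; a dimension count forces $\dim(\Lie G)_0=2$ and $\dim(\Lie G)_1=1$ with $(\Lie G)_0$ abelian of trivial $p$-restriction, unipotence gives $[\Lie G,\Lie G]\subseteq(\Lie G)_0$ of dimension at most one, and any non-zero linear functional on $\Lie G$ annihilating $(\Lie G)_1+[\Lie G,\Lie G]$ is a restricted Lie superalgebra morphism $\Lie G\to\Lie\bbG_{a(1)}$, inducing the required homomorphism.
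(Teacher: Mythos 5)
Your Step~2 contains a genuine error. The internal grading is $\bbZ/2$, and $\zeta\in H^{*,*}(\bbG_a^-,k)=k[\zeta]$ has bidegree $(1,1)$, so $\zeta^2$ is a \emph{nonzero} element of $H^{2,0}(\bbG_a^-,k)$: the claim that $H^{2,0}(\bbG_a^-,k)=0$ is false. Consequently the five-term sequence only gives $H^{1,0}(G,k)\cong\Ker\bigl(d_2\colon H^{1,0}(N,k)^{\bbG_a^-}\to H^{2,0}(\bbG_a^-,k)\bigr)$, not the isomorphism you assert, and the transgression really can be nonzero: for $G=\bbG_{a(1)}\times W_{1,1}^-$ (which fits the stated extension via \eqref{eq:Ga-Ga1}) one has $d_2(\lambda)=\zeta^2$ as in Proposition~\ref{pr:HWm-}, and indeed $\Hom_{\Gr/k}(G,\bbG_a)$ is one-dimensional while $H^{1,0}(N,k)^{\bbG_a^-}$ is two-dimensional. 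So your claimed isomorphism fails, and nonvanishing of $H^{1,0}(G,k)$ does not follow from the sequence alone; to salvage this route you would need to know that the fixed-point space is two-dimensional (e.g.\ that $N$ is central, which comes from $[\fg_1,\fg_0]=0$) so that the kernel of a map to the one-dimensional space $k\zeta^2$ is forced to be nonzero --- but at that point you are already running the Lie-algebra argument. This class $\zeta^2$ is exactly the phenomenon the paper is organised around, so its omission is not a minor slip.

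Your Step~1 (height one, via connectedness and the evenness of $G/G_{(1)}$) and the corestriction remark in Step~3 are fine, and the ``alternative'' you sketch at the end of Step~3 is correct and is essentially the paper's own proof: height one gives $kG\cong\mcU^{[p]}(\fg)$ by Lemma~\ref{le:mcV}, a dimension count gives $\dim\fg_0=2$, $\dim\fg_1=1$ with $\fg_0$ abelian of trivial restriction, $[\fg,\fg]\subseteq\fg_0$ is at most one-dimensional, and a nonzero functional killing $\fg_1+[\fg,\fg]$ yields the homomorphism to $\bbG_{a(1)}$. If you promote that sketch to the main argument (and drop or repair Step~2), the proof is complete.
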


\begin{proof}
By Corollary~\ref{co:GevG1},  the height of $G$ is one so it is of the form $\mcU^{[p]}(\fg)$
with $\fg=\Lie(G)$. Then $\fg$ has a two dimensional even part with
trivial commutator and $p$-restriction map, and a 
one dimensional odd part. There is therefore a non-trivial homomorphism
from $\fg$ to the one dimensional trivial Lie algebra $\Lie(\bbG_{a(1)})$,
and this induces a non-trivial homomorphism from $G$ to $\bbG_{a(1)}$.
\end{proof}

Next we classify all extensions of $\bbG_a^- $ by $W_{m,1}$ complementing examples 
\eqref{eq:Ga-Wm1} and \eqref{eq:Em1-}. 

\begin{lemma} 
\label{lem:extensions} 

Let $G$ be a finite supergroup scheme fitting in an extension
\[
1 \to W_{m,1} \to G \to \bbG_a^- \to 1. 
\]
Then \[kG \cong k[s, \sigma]/(s^{p^m}, \sigma^2-s^{p^j})
\]  for some $0 \leq
 j \leq m-1$, where $\sigma$ is odd, $s$ is even, and both are primitive. 
 Hence, $G$ can be represented by the following picture:
 \[\xymatrix@=5mm{&&&&&\overset{s}{\circ}\ar@{=}[dl] \\
 &&&& \circ \ar@{=}[dl] \\
 && \overset{_{\phantom{3}}\sigma_{\phantom{3}}}{\bul} \ar@{--}[d] &\circ \ar@{=}[dl] \\ 
 && \circ \scriptstyle s^{[p]^j}\!\!\!\!\!\!\!\!\!\!\!
 \ar@{=}[dl] \\
 &\circ \ar@{=}[dl] \\
 \underset{s^{[p]^{m-1}}}{\circ}}  \]
 \end{lemma}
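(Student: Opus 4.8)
The plan is to exploit the classification of connected supergroup schemes of height one via their $p$-restricted Lie superalgebras, as in Lemma~\ref{le:mcV} and Remark~\ref{re:lie}. First I would observe that by Corollary~\ref{co:GevG1} the height of $G$ equals the height of $G_\ev$; since $W_{m,1}$ has height one and is the even part (the extension by $\bbG_a^-$ adds only odd-degree structure), $G$ has height one, so $kG \cong \mcU^{[p]}(\fg)$ where $\fg = \Lie(G)$. By Remark~\ref{re:lie}, $\fg_0 = \Lie(W_{m,1})$, which (see Appendix~\ref{sec:Witt}) is the abelian restricted Lie algebra spanned by $s, s^{[p]}, \dots, s^{[p^{m-1}]}$ with $s^{[p^m]} = 0$, all central and with trivial bracket. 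The odd part $\fg_1$ is one-dimensional, spanned by an element $\sigma$ lifting the generator of $\Lie \bbG_a^-$.

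Next I would pin down the remaining structural data. Since $W_{m,1}$ is central in $G$ — this is part of the hypothesis that $G$ is an extension of $\bbG_a^-$ by $W_{m,1}$, or can be arranged: actually one must check $\fg_0$ is central in $\fg$; since $\fg/\fg_0$ is one-dimensional odd, $[\fg_1,\fg_0] \subseteq \fg_1$ forces $[\sigma, \fg_0] = 0$ by degree reasons once we know $\fg_0 \to \fg_0$ adjoint action is trivial, which follows because $W_{m,1}$ being normal and the quotient being $\bbG_a^-$ with its trivial conjugation action — so $[\sigma, s^{[p]^i}] = 0$ for all $i$. The only remaining bracket is $[\sigma, \sigma] \in \fg_0$, an even element of the one-dimensional space of "new" generators, i.e., $[\sigma,\sigma] = \sum_i c_i s^{[p]^i}$ for scalars $c_i \in k$. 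Here I would argue that after replacing $\sigma$ by $\sigma + (\text{even correction})$ — but $\sigma$ is odd so no even correction is available — instead one uses the structure of the Dieudonné module / the fact that $\sigma^2$ is a primitive element, hence lies in the restricted Lie algebra, and its $p$-th power divided structure constrains it. The cleanest route: $\sigma^2 = \frac12[\sigma,\sigma]$ is primitive and even, so it equals $\sum_i c_i s^{[p]^i}$; now apply the $p$-restriction/Frobenius and use that $W_{m,1}$'s Dieudonné module is cyclic to show that, up to an automorphism of $W_{m,1}$ (rescaling by a unit, which over a perfect field can absorb $p$-th powers), one can take $\sigma^2 = s^{[p]^j}$ for a single $j$, or $\sigma^2 = 0$ (the case $j=m$, giving $W_{m,1}^-$... but that has $kG = k[\si]/\si^{2p^m}$, which doesn't match — so actually I'd need $0 \le j \le m-1$ as stated, and the $j = $ "infinity" case $\sigma^2 = 0$ corresponds to the split-ish extension $W_{m,1} \times \bbG_a^-$, which perhaps is being excluded or is $j$ interpreted suitably; I would need to check whether $\sigma^2 = 0$ is genuinely possible and whether the statement's range should include it).

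The main obstacle I anticipate is precisely this normalization step: showing that the scalar vector $(c_0, \dots, c_{m-1})$ encoding $[\sigma,\sigma]$ can be reduced, by choosing a different identification $W_{m,1} \cong (W_{m,1})_\ev$ (equivalently an automorphism of the even part) and rescaling $\sigma$ by a unit of $k$, to the canonical form $\sigma^2 = s^{p^j}$. This is essentially a small computation with the Dieudonné module of $W_{m,1}$ — which is $k[F]/(F^m)$ as a module over the (twisted) polynomial ring, cyclic with generator $s$ — together with the fact that $k$ is perfect so that every unit is a $p^j$-th power for the relevant $j$, letting one clear the leading coefficient. Once $\sigma^2 = s^{p^j}$ is established, the presentation $kG \cong k[s,\sigma]/(s^{p^m}, \sigma^2 - s^{p^j})$ follows immediately from the structure of $\mcU^{[p]}(\fg)$: it has $k$-basis $\{s^a \sigma^\epsilon : 0 \le a < p^m, \epsilon \in \{0,1\}\}$, matching the dimension $2p^m = |W_{m,1}| \cdot |\bbG_a^-|$, and both $s, \sigma$ are primitive by construction. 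The diagram is then just a pictorial rendering of this presentation, with the dashed link recording $\sigma^2 = s^{p^j}$ sitting $j$ steps up the $W_{m,1}$-chain, so I would close by remarking that the picture is read off directly from the relations.

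\begin{proof}[Proof sketch]
Since $W_{m,1}$ has height one, Corollary~\ref{co:GevG1} and Theorem~\ref{th:tensor-dec} show that $G$ has height one; write $kG \cong \mcU^{[p]}(\fg)$ with $\fg = \Lie(G)$ as in Lemma~\ref{le:mcV}. By Remark~\ref{re:lie}, $\fg_0 = \Lie(W_{m,1})$ is the abelian restricted Lie algebra with basis $s, s^{[p]}, \dots, s^{[p]^{m-1}}$, all central with zero bracket and $s^{[p]^m} = 0$. The odd part $\fg_1$ is one-dimensional (it maps isomorphically to $\Lie \bbG_a^-$), spanned by a primitive element $\sigma$. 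Normality of $W_{m,1}$ with quotient $\bbG_a^-$ forces $[\sigma, \fg_0] = 0$, so the only structure constant left is $\sigma^2 = \tfrac12[\sigma,\sigma] = \sum_{i=0}^{m-1} c_i s^{[p]^i} \in \fg_0$. Using that the Dieudonné module of $W_{m,1}$ is cyclic (Appendix~\ref{sec:Witt}) and that $k$ is perfect, one may choose the identification $W_{m,1} \cong (G)_\ev$ and rescale $\sigma$ by a unit so that $\sigma^2 = s^{[p]^j}$ for a single index $0 \le j \le m-1$ (or $\sigma^2 = 0$). Then $\mcU^{[p]}(\fg)$ has $k$-basis $\{s^a\sigma^\epsilon\}$ with $0 \le a < p^m$, $\epsilon \in \{0,1\}$, giving the presentation $kG \cong k[s,\sigma]/(s^{p^m}, \sigma^2 - s^{p^j})$ with $s, \sigma$ primitive. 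The displayed picture is the direct pictorial translation of these relations.
\end{proof}
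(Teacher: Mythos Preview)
Your approach is essentially the paper's: reduce to height one (via Theorem~\ref{th:tensor-dec} and Lemma~\ref{le:mcV}), write $\tfrac12[\sigma,\sigma]=\sum_{i=0}^{m-1} a_i\, s^{[p]^i}$, and then normalise. The paper's normalisation is more direct than the Dieudonn\'e-module manoeuvre you sketch: let $j$ be minimal with $a_j\ne 0$ and replace $s$ by the new primitive generator $s'=\sum_{i\ge j} a_i^{1/p^j}\, s^{[p]^{i-j}}$ (perfectness of $k$ supplies the $p^j$-th roots); then $(s')^{p^j}=\sigma^2$ and $s'$ still generates $kW_{m,1}$, giving the presentation immediately. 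No rescaling of $\sigma$ and no separate centrality argument are needed for this step. Your observation that the case $\sigma^2=0$ (the split extension $W_{m,1}\times\bbG_a^-$) is not covered by the displayed range $0\le j\le m-1$ is correct; the paper's proof likewise tacitly assumes some $a_i\ne 0$.
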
 

\begin{proof} 
By assumption, $G_\ev = W_{m,1}$. Hence, $G$ has height 1 by 
Theorem~\ref{th:tensor-dec}. By Lemma~\ref{le:mcV}, there is a Lie superalgebra 
$\fg$ such that $\mcU^{[p]}(\fg) \cong kG$. Let $\sigma$ be a lifting to $\fg$ of the generator of $k\bbG_a^-$, 
and let $s$ be an algebraic generator of $kW_{m,1}$, that  is, $s, s^{[p]}, \ldots, s^{[p]^{m-1}}$ be a 
basis of the Lie algebra corresponding to $W_{m,1}$. Then we have 
\[
\half[\sigma,\sigma] = \sum\limits_0^{m-1} a_i s^{[p]^{i}}.
\] Let $j$ be the minimal index such that $a_j\not = 0$ and set 
$s^\prime = \sum\limits_j^{m-1} a_i s^{[p]^{i-j}}$. The generators $\sigma, s^\prime$ 
give the asserted presentation of $\mcU^{[p]}(\fg) \cong kG$.
\end{proof} 

\begin{construction}[$E_{m,n}^-$]
\label{not:Emn^-}
There is a homomorphism  $E_{m,1}^-\to \bbG_{a(1)}$ given by factoring out the ideal of $kE_{m,1}^-$ generated by $\si$. There is also a surjective
map $\bbG_{a(n)} \to \bbG_{a(1)}$ given by the $(n-1)$st power of the Frobenius map. We define $E_{m,n}^-$ to be the kernel of the map from the product to $\bbG_{a(1)}$, so that there is a short exact sequence
\[ 
1\to E_{m,n}^- \to E_{m,1}^- \times \bbG_{a(n)} \to \bbG_{a(1)} \to 1. 
\]
Its group ring is given by
\[ 
kE_{m,n}^-=k[s_1,\dots,s_{n-1},s_n,\si]/(s_1^p, \dots,s_{n-1}^p,s_n^{p^m},\si^2-s_n^p) 
\]
where $s_1,\dots,s_n$ are in even degree and $\si$ is in odd degree. The comultiplication is given by
\begin{align*}
\Delta(s_i)&=S_{i-1}(s_1\otimes 1,\dots,s_i\otimes 1,\ 1\otimes s_1,\dots,
             1\otimes s_i) \qquad (i\ge 1) \\
\Delta(\si)&=\si\otimes 1 + 1\otimes \si
\end{align*}
where the $S_i$ are as defined in Appendix~\ref{sec:Witt}, and come from the  comultiplication in $\Dist(\bbG_a)$.
\[ 
E^-_{m,n} \colon \quad 
\vcenter{\xymatrix@=5mm{&&&& \overset{_{\phantom{3}}\sigma_{\phantom{3}}}{\bul} \ar@{--}[d] & \overset{s_n}{\circ}\ar@{=}[dl]\ar@{-}[dr] \\ 
  &&&& \circ \ar@{=}[dl]& & \circ\scriptstyle s_{n-1}\!\!\!\!\!\!\!\!\!\!\ar@{-}[dr] \\
  &&&\circ \ar@{=}[dl]&&&&\circ \ar@{-}[dr]\\ 
  &&\circ \ar@{=}[dl] &&&&&&\circ\scriptstyle s_1\!\!\!\!\!\\\ &\underset{s_n^{p^{m-1}}}{\circ}\\
  }} 
  \] 
We define 
  \[E_{m,n}\colon = (E_{m,n})^-_\ev\] 
  and observe that there is an isomorphism
   \[ E_{m,n} \cong W_{m,n}/W_{m-1.n-1}.\] 
\end{construction}

\begin{definition}\label{defn:Witt}
A finite supergroup scheme is \emph{Witt elementary} if it is isomorphic
to a quotient of $E_{m,n}^-$ by an even subgroup scheme. 
\end{definition}

\begin{remark}
\label{rem:E1n}
For $m=1$, $E^-_{m,n}$ splits as a direct product: 
\[E_{1,n}^- \cong \mathbb G_{a(n)} \times \mathbb G_a^-\] 
\end{remark}

\begin{lemma}
\label{le:Ga-split1} 
Let $G$ be a finite supergroup scheme with the connected component $G^0$ 
and the group of connected components $\pi = \pi(G)$ which is a $p$-group. 
If $G^0$ is an extension 
\[ 1 \to \bbG_{a(1)} \to G^0 \to \bbG_a^- \to 1 \]
then $G = G^0 \times \pi$.
\end{lemma}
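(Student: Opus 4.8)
The plan is to invoke Lemma~\ref{le:pi0} to write $G=G^0\rtimes\pi$ and then to prove that the conjugation action of $\pi$ on $G^0$ is trivial; a semidirect product with trivial action is a direct product. Since $G^0$ is normal in $G$, each element of $\pi$ acts on $G^0$ by an automorphism of supergroup schemes, which gives a homomorphism $\pi\to\operatorname{Aut}(G^0)$, and the whole argument reduces to understanding how $\operatorname{Aut}(G^0)$ acts on the Lie superalgebra $\fg=\Lie(G^0)$.

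First I would record that $G^0$ has height one: since $\bbG_a^-$ has no nontrivial even sub-supergroup schemes one gets $(G^0)_\ev=\bbG_{a(1)}$, which has height one, so $G^0$ has height one by Theorem~\ref{th:tensor-dec}. By Lemma~\ref{le:mcV} we may then write $kG^0=\mcU^{[p]}(\fg)$, where $\fg$ is the space of primitive elements of $kG^0$ and $\fg$ generates $kG^0$ as a $k$-algebra. Comparing dimensions in the extension $1\to\bbG_{a(1)}\to G^0\to\bbG_a^-\to 1$ gives $\dim_k kG^0=2p$, whereas $\dim_k\mcU^{[p]}(\fg)=p^{\dim_k\fg_0}2^{\dim_k\fg_1}$; since $p$ is an odd prime this forces $\dim_k\fg_0=\dim_k\fg_1=1$.

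Now an automorphism of the supergroup scheme $G^0$ is an even Hopf-algebra automorphism of $kG^0$, hence it preserves the space of primitive elements and respects the parity, so it restricts to an element of $\GL_k(\fg_0)\times\GL_k(\fg_1)$; as $\fg$ generates $kG^0$ the automorphism is determined by this restriction, so we obtain an injection $\operatorname{Aut}(G^0)\hookrightarrow\GL_k(\fg_0)\times\GL_k(\fg_1)\cong k^\times\times k^\times$. Composing with $\pi\to\operatorname{Aut}(G^0)$ produces a homomorphism from the finite $p$-group $\pi$ into $k^\times\times k^\times$, and since $k$ has characteristic $p$ this target has no nontrivial element of $p$-power order, so the homomorphism is trivial. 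Hence every element of $\pi$ acts as the identity on $\fg$, and therefore, $\fg$ being a generating set, as the identity on $G^0$; the conjugation action is trivial and $G=G^0\times\pi$. The only step needing a little care is this last implication, namely the injectivity of $\operatorname{Aut}(G^0)\hookrightarrow\GL_k(\fg_0)\times\GL_k(\fg_1)$, which rests on $kG^0=\mcU^{[p]}(\fg)$ being generated by its primitives and on $\fg_0,\fg_1$ being one-dimensional, both of which come from $G^0$ having height one. I do not expect a serious obstacle.
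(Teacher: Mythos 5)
Your proof is correct and follows essentially the same route as the paper: reduce to height one so that $kG^0=\mcU^{[p]}(\fg)$ with $\fg_0$ and $\fg_1$ each one-dimensional, observe that the $p$-group $\pi$ must act trivially on these lines (hence on all of $G^0$) since $k^\times$ has no nontrivial $p$-power torsion in characteristic $p$, and conclude via Lemma~\ref{le:pi0}. Your added details (the dimension count and the injection $\operatorname{Aut}(G^0)\hookrightarrow\GL_k(\fg_0)\times\GL_k(\fg_1)$) just make explicit what the paper's two-line argument leaves implicit.
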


\begin{proof}
Since $G^0$ has height, it corresponds to a 2-dimensional Lie superalgebra $\fg = \fg_0 \oplus \fg_1$ by 
Lemma~\ref{le:mcV}. 
Each part is 1-dimensional and must be stabilised by $\pi$.  Since $\pi$ is a $p$-group, it centralises both 
$\bbG_a^-$ and $\bbG_{a(1)}$; hence, centralises $G$.
\end{proof}

\begin{lemma}
\label{le:Ga-split2} 
If $G^0 = \bbG_{a(r)} \times \bbG_a^-$, and $\pi(G)$ is a $p$-group, then the subgroup $\bbG_a^-$ is centralised by $\pi(G)$.
\end{lemma}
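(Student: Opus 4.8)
The plan is to reduce the statement to the height-one case and then argue via the associated Lie superalgebra, exactly as in the proofs of Lemmas~\ref{le:Ga-split1} and \ref{le:Ga^-Ga1Ga1}. First I would observe that the claim is really about how $\pi(G)$ acts on the composition factor $\bbG_a^-$ sitting inside $G^0$, so I can pass to the quotient $G^0/\bbG_{a(r)} \cong \bbG_a^-$ of $G^0$. Since $\bbG_{a(r)}$ is a characteristic subgroup of $G^0 = \bbG_{a(r)}\times\bbG_a^-$ (it is the even Frobenius kernel, or alternatively the even part of the connected component), it is normalised by $\pi(G)$, and we get an induced action of $\pi(G)$ on $\bbG_a^-$.

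Next I would identify the automorphism group of $\bbG_a^-$ as a supergroup scheme: since $k\bbG_a^- = k[\si]/(\si^2)$ with $\si$ primitive of odd degree, a Hopf algebra automorphism must send $\si$ to a scalar multiple $c\si$ with $c\in k^\times$ (the only primitive elements are scalar multiples of $\si$). Hence $\operatorname{Aut}(\bbG_a^-) \cong \bbG_m = k^\times$, which has no $p$-torsion since $k$ has characteristic $p$. Because $\pi(G)$ is a $p$-group, any homomorphism $\pi(G) \to \operatorname{Aut}(\bbG_a^-) \cong k^\times$ is trivial. Therefore $\pi(G)$ centralises $\bbG_a^-$ inside $G$, which is the assertion. (One should be slightly careful that ``centralises'' here means the commutator sub-supergroup scheme $[\pi(G), \bbG_a^-]$ is trivial; this follows because the action is trivial at the level of functor-points, using Lemma~\ref{le:normal} and the pointwise description of commutators.)

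Alternatively, and perhaps more cleanly given the tools already set up in the paper, I would note that $G^0$ has height one whenever $r\le 1$, in which case Lemma~\ref{le:mcV} applies directly and the argument is identical to Lemma~\ref{le:Ga-split1}: $\fg = \operatorname{Lie}(G^0)$ has its odd part $\fg_1$ one-dimensional, stable under $\pi(G)$, and a $p$-group acting $k$-linearly on a one-dimensional space over a field of characteristic $p$ acts trivially. For general $r$, I would instead reduce to $\bbG_a^-$ directly as in the first paragraph rather than invoke the full Lie superalgebra of $G^0$, since $\bbG_{a(r)}$ need not have height one.

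The main obstacle is a bookkeeping one rather than a conceptual one: making precise that an action of the constant group scheme $\pi(G)$ on $\bbG_a^-$ which is trivial on $k$-points (equivalently, induces the trivial map $\pi(G)\to\operatorname{Aut}(\bbG_a^-)$) genuinely yields that $[\pi(G),\bbG_a^-]=1$ as sub-supergroup schemes of $G$. This is handled by the same pointwise criterion used in the proof of Lemma~\ref{le:normal}: for every $\bbZ/2$-graded commutative $K$-algebra $R$ and every $a\in\bbG_a^-(R)$, $c\in\pi(G)(R)$, the relation $cac^{-1}=a$ holds in $G(R)$ because conjugation by $c$ realises the automorphism attached to the image of $c$ in $\operatorname{Aut}(\bbG_a^-)$, which is the identity. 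I expect this to be a short remark rather than a serious difficulty.
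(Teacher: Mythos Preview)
Your argument via $\operatorname{Aut}(\bbG_a^-)\cong\bbG_m$ is sound, but there is a small gap in the presentation. Showing that $\pi$ acts trivially on the \emph{quotient} $G^0/\bbG_{a(r)}\cong\bbG_a^-$ does not by itself give that $\pi$ centralises the \emph{subgroup} $\bbG_a^-\subset G^0$; you first need to know that this subgroup is $\pi$-stable. This is true, and the cleanest way to see it is exactly the computation you gesture at: any Hopf superalgebra automorphism of $kG^0=k\bbG_{a(r)}\otimes k[\si]/(\si^2)$ takes $\si$ to an odd primitive, and the odd primitives of this tensor product are precisely $k\si$ (the even factor contributes only even primitives). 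So $\si\mapsto c_\gamma\si$ with $c_\gamma\in k^\times$, the map $\gamma\mapsto c_\gamma$ is a character of a $p$-group into $k^\times$, hence trivial. This dispenses with the detour through the quotient altogether.

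The paper's proof is different and shorter: it observes that $G_{(1)}=(G^0)_{(1)}=\bbG_{a(1)}\times\bbG_a^-$ and applies Lemma~\ref{le:Ga-split1} to the subgroup $G_{(1)}\rtimes\pi$, concluding that $\pi$ centralises all of $G_{(1)}$ and in particular $\bbG_a^-$. You almost saw this in your second paragraph, where you handle $r\le 1$ via Lemma~\ref{le:mcV}; the missing step is simply that passing to the first Frobenius kernel reduces the general case to $r=1$. Your direct approach has the advantage of not relying on Lemma~\ref{le:Ga-split1}, while the paper's route reuses the height-one machinery already in place and avoids having to identify the odd primitives in $kG^0$ explicitly.
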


\begin{proof} 
We must have that $G_{(1)} = \bbG_{a(1)} \times \bbG_a^-$ is centralised by $\pi$. Now apply Lemma~\ref{le:Ga-split1}. 
\end{proof}

\begin{construction}[\bf $E_{m,n,\mu}^-$]\label{not:Emnmu^-}
The group algebra of $E_{m+1,n+1}^-$ is described in 
Construction~\ref{not:Emn^-} except that we shift the indexing on the even generators $s_i$ 
down by $1$. With that shift, it has the form
\[kE_{m+1,n+1}^- = k[s_0,s_1,\dots,s_{n-1},s_n,\si]/(s_0^p, s_1^p,\dots, s_{n-1}^p,
s_n^{p^{m+1}},\si^2-s_n^p).\] 
 Let $k\bbG_{a(1)} = k[s]/s^p$ with $s$ primitive in even degree. 
For $\mu\in k$, define the supergroup scheme $E^-_{m,n,\mu}$ to be the quotient of 
$E_{m+1,n+1}^-$ given by the embedding $\bbG_{a(1)} \to E^-_{m,n,\mu}$ which sends 
$s$ to $s_0 - ms_n^{p^m}$. 
Thus, there is a short exact
sequence
\[ 
1 \to \bbG_{a(1)} \to E_{m+1,n+1}^- \to E_{m,n,\mu}^- \to 1. 
\]
In the language of Dieudonn\'e modules  introduced in Appendix~\ref{sec:Witt}, $E^-_{m,n,\mu}$ 
is quotient of $E_{m+1,n+1}^-$
by the subgroup scheme of $(E_{m+1,n+1}^-)_\ev\cong \psi(D_k/(V^{m+1},F^{n+1},p))$ given by applying $\psi$ to the
submodule of $D_k/(V^{m+1},F^{n+1},p)$ spanned by $F^n-\mu V^m$.  Explicitly, the group ring $kE_{m,n,\mu}^-$ is given by
\[ 
kE_{m,n,\mu}^- = k[s_1,\dots,s_{n-1},s_n,\si]/(s_1^p,\dots,s_{n-1}^p,
s_n^{p^{m+1}},\si^2-s_n^p) 
\]
where $s_1,\dots,s_n$ are in even degree and $\si$ is in odd degree. The comultiplication is given by 
\begin{align*}
\Delta(s_i)&=S_{i}(\mu s_n^{p^{m}}\otimes 1,s_1\otimes 1,\dots,s_i\otimes  1,\ 
1\otimes \mu s_n^{p^{m}},1\otimes s_1,\dots,1\otimes s_i) \\
\Delta(\si)&=\si\otimes 1 + 1\otimes \si.
\end{align*}
\[ 
E^-_{4,3,\mu} \colon \qquad 
\vcenter{\xymatrix@=5mm{\overset{_{\phantom{3}}\sigma_{\phantom{3}}}{\bul} \ar@{--}[d] & \overset{s_3}{\circ}\ar@{=}[dl]\ar@{-}[dr] \\ 
  \circ \ar@{=}[d]& & \circ\scriptstyle s_2\!\!\!\!\!\ar@{-}[dd] \\
  \circ \ar@{=}[d]& &  \\
  \circ \ar@{=}[dr]&&\circ\scriptstyle s_1\!\!\!\!\! \ar@{-}[dl]^\mu \\ 
  &\underset{s_3^{p^4}}{\circ}}} \]
  We define 
  \[
  E_{m,n,\mu} :=  (E^-_{m,n,\mu})_\ev.
   \]
\end{construction}

\begin{lemma}
\label{lem:elemext} 
Let $G$ be a finite unipotent supergroup scheme.  
\begin{enumerate}[\quad\rm(1)]
\item If for some $n\ge 2$, there is an extension 
\[
1 \to E_{m,n} \to G \to \bbG_a^- \to 1,
\]
 then $kG \cong k[s_1,\dots,s_{n-1},s_n,\si]/(s_1^p, \dots,s_{n-1}^p,s_n^{p^m},\si^2-s_n^{p^i} - \alpha s_1)$ 
 for some $1 \leq i \leq m-1$ and $\alpha \in k$,  where $s_1,\dots,s_n$ are in even degree, $\si$ is in odd 
 degree, and  comultiplication is given by the formulas in \eqref{not:Emn^-}. 
Hence, $G$ can be represented as follows: 
\[
G \colon \quad 
\vcenter{\xymatrix@=5mm{
&&&&  & \overset{s_n}{\circ}\ar@{=}[dl]\ar@{-}[dr] \\
&&&\overset{\sigma}{\bul}\ar@{-}[d]\ar@{-}[ddrrrrr]
& \circ \ar@{=}[dl]& & \circ\scriptstyle s_{n-1}\!\!\!\!\!\!\!\!\!\!\ar@{-}[dr] \\
&&
&\overset{s_n^{p^{i}}}\circ \ar@{=}[dl]&&&&\circ \ar@{-}[dr]\\
  &&\circ \ar@{=}[dl] &&&&&&\circ\scriptstyle s_1\!\!\!\!\!\\\
  &\underset{s_n^{p^{m-1}}}{\circ}\\
  }}
  \]  
\item 
If $G$ fits in the extension 
\[
1 \to E_{m,n, \mu} \to G \to \bbG_a^- \to 1, 
\]
then $kG = k[s_1,\dots,s_{n-1},s_n,\si]/(s_1^p,\dots,s_{n-1}^p,
s_n^{p^{m+1}},\si^2-s_n^{p^i})$ for some $1 \leq i \leq m-1$ and degrees and comultiplication 
as in \eqref{not:Emnmu^-}. 
\end{enumerate}
\end{lemma}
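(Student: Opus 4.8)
The plan is to follow the template of the proof of Lemma~\ref{lem:extensions}, taking extra care of the fact that for $n\ge 2$ the supergroup scheme $G$ has height $n$ rather than height one. First I would pin down the coarse structure. Since $E_{m,n}$ and $\bbG_a^-$ are connected, so is $G$; moreover $G_\ev=E_{m,n}$, because $G_\ev/E_{m,n}$ embeds into $G/E_{m,n}\cong\bbG_a^-$, whose only even subgroup scheme is trivial. By Theorem~\ref{th:tensor-dec} this forces $\height(G)=n$ and $k[G]\cong k[E_{m,n}]\otimes\Lambda(y)$ with a single odd generator $y$; dualizing, $kE_{m,n}$ is a Hopf subalgebra of $kG$ over which $kG$ is free of rank two. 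The primitive elements $\fg=\Lie(G)$ form an extension $0\to\Lie(E_{m,n})\to\fg\to\Lie(\bbG_a^-)\to0$ of $p$-restricted Lie superalgebras, and since $\Lie(E_{m,n})$ is purely even while $\dim\fg_1=1$, the odd part $\fg_1$ maps isomorphically onto $\Lie(\bbG_a^-)_1$. Fixing an odd primitive $\si\in\fg_1$ lifting the generator of $k\bbG_a^-$, the set $\{1,\si\}$ is a free $kE_{m,n}$-basis of $kG$.

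Next I would reduce the classification to a single Lie-algebraic datum. The conjugation action of the quotient $\bbG_a^-$ on the (abelian) kernel $E_{m,n}$ is trivial: it is a homomorphism from the connected height-one supergroup scheme $\bbG_a^-$, whose Lie algebra is purely odd, into $\mathrm{Aut}(E_{m,n})$, which has no odd tangent vectors since an odd derivation of the purely even Hopf algebra $k[E_{m,n}]$ is zero. Equivalently $\si$ commutes with $kE_{m,n}$, so $kG\cong kE_{m,n}[\si]/(\si^2-w)$ as a Hopf superalgebra with $\si$ primitive, where $w=\si^2$; a short computation with $\Delta(\si)=\si\otimes1+1\otimes\si$ shows $w$ is an even primitive, hence $w\in\Lie(E_{m,n})$. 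Thus $G$ is determined, up to the automorphisms of $E_{m,n}$ together with rescalings of $\si$, by the single primitive $w$ (the degenerate case $w=0$, giving $G\cong E_{m,n}\times\bbG_a^-$, being excluded).

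It remains to normalize $w$. Using the description of $E_{m,n}$ in Construction~\ref{not:Emn^-} and the Dieudonn\'e-module machinery of Appendix~\ref{sec:Witt}, one computes $\Lie(E_{m,n})=\langle s_1,s_n^{p},s_n^{p^2},\dots,s_n^{p^{m-1}}\rangle$ with $s_1^{[p]}=0$ and $(s_n^{p^j})^{[p]}=s_n^{p^{j+1}}$, so $w=\alpha s_1+\sum_{j=1}^{m-1}\beta_j s_n^{p^j}$. As $k$ is perfect, a suitable automorphism of $E_{m,n}$ (a Teichm\"uller rescaling along the chain $s_n,\dots,s_1$, which preserves the Witt comultiplication formulas of Construction~\ref{not:Emn^-}, followed if necessary by the automorphism interchanging the two chains meeting at $s_n$, which moves $s_1$ into $s_n^{p^{m-1}}$) together with a rescaling of $\si$ brings $w$ to the form $s_n^{p^i}+\alpha s_1$ with $1\le i\le m-1$; retaining the standard generators $s_1,\dots,s_n$ of the embedded $kE_{m,n}$ then yields the presentation and comultiplication claimed in (1). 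Part~(2) is the same argument carried out over $E_{m,n,\mu}$, where $\Lie(E_{m,n,\mu})=\langle s_1,s_n^{p},\dots,s_n^{p^{m}}\rangle$; here the $\mu$-twisted comultiplication identifies $s_1$ with a multiple of $s_n^{p^m}$ (the relation $F^n=\mu V^m$ on Dieudonn\'e modules), so the $\alpha s_1$ summand is absorbed and one is left with $\si^2=s_n^{p^i}$, $1\le i\le m-1$.

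The step I expect to be the main obstacle is this last normalization: one needs to know precisely which changes of generators are realized by genuine Hopf automorphisms of $E_{m,n}$ and $E_{m,n,\mu}$ — not merely automorphisms of the underlying truncated polynomial algebras — and that these act transitively enough on the primitive elements. This is cleanest to verify by translating the problem into the category of Dieudonn\'e modules of Appendix~\ref{sec:Witt}, where these supergroup schemes, their extensions, and their automorphism groups are all described by explicit matrices over $k[F,V]$.
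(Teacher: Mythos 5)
Your overall skeleton (reduce to an odd primitive $\si$ with $\si^2=w\in\Lie(E_{m,n})$, then normalise $w$) is the same as the paper's, which passes to the Frobenius kernel $G_{(1)}$, writes $kG_{(1)}=\mcU^{[p]}(\fg)$ via Lemma~\ref{le:mcV}, and identifies $\tfrac12[\si,\si]$ in the span of $s_1,s_n^{[p]},\dots,s_n^{[p]^{m-1}}$. Two of your intermediate assertions are weaker than they look. First, you assert the exactness of $0\to\Lie(E_{m,n})\to\fg\to\Lie(\bbG_a^-)\to 0$; the functor $\Lie$ is not right exact on extensions of group schemes (Frobenius $\bbG_{a(2)}\to\bbG_{a(1)}$ kills tangent vectors), so the existence of a \emph{primitive} odd lift of the generator of $\Lie(\bbG_a^-)$ needs an argument. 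Knowing $\dim\fg_1=1$ does not by itself rule out that the unique odd primitive dies in $k\bbG_a^-$; the paper avoids this by using $G=G_\ev G_{(1)}$ (Corollary~\ref{co:GevG1}) to see that $G_{(1)}\to\bbG_a^-$ is surjective and then working entirely inside the restricted enveloping algebra, where surjectivity on primitives is automatic. This is fixable, but it is missing from your write-up.

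The genuine gap is the normalisation of $w=\alpha s_1+\sum_{j\ge 1}\beta_j s_n^{p^j}$. The tools you allow yourself --- Teichm\"uller-type rescalings of the chain $s_n,\dots,s_1$, a rescaling of $\si$, and an ``automorphism interchanging the two chains meeting at $s_n$'' --- cannot do the job. A diagonal rescaling only multiplies each $\beta_j$ by a unit, so it cannot collapse two or more nonzero terms $\beta_j s_n^{p^j}$ into a single power $s_n^{p^i}$; what is needed is the substitution $s_n\mapsto\sum_{j\ge i}\gamma_j s_n^{p^{j-i}}$ (an additive polynomial in $s_n$), exactly the trick of Lemma~\ref{lem:extensions}, which the paper invokes and which corresponds to acting on the cyclic Dieudonn\'e module $D_k/(V^m,F^n,p)$ by a unit $\sum_j\gamma_j' V^{j-i}$ --- and one must then check (as the paper asserts) that the comultiplication formulas of Construction~\ref{not:Emn^-} are unchanged. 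Worse, the chain-swapping automorphism you invoke to move $s_1$ into $s_n^{p^{m-1}}$ does not exist: any automorphism of the cyclic module $D_k/(V^m,F^n,p)$ sends the generator $e$ to $\lambda e+(\text{radical})$, hence preserves each of the socle lines $kF^{n-1}e$ and $kV^{m-1}e$ separately, so the $F$-chain and the $V$-chain are not interchangeable (and for $m\ne n$ they even have different lengths). Note also that the paper does not try to remove the $\alpha s_1$ summand in case (1) --- it stays in the presentation --- while in case (2) your claimed identification $\Lie(E_{m,n,\mu})=\langle s_1,s_n^p,\dots,s_n^{p^m}\rangle$ has the wrong dimension ($s_1$ is not primitive for the $\mu$-twisted comultiplication); the correct statement is that the relation $F^n=\mu V^m$ removes one parameter, which is why no $\alpha s_1$ term appears there.
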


\begin{proof}
We handle only the first case; the second one is similar.  We have $(E_{m,n})_{(1)} = W_{m-1,1} \times \bbG_{a(1)}$, and hence $G_{(1)}$ fits into a short exact sequence:
\[ 
1 \to W_{m-1,1} \times \bbG_{a(1)} \to G_{(1)} \to \bbG_a^- \to 1. 
\]

Let $\fg = \Lie(G)$, so that by Lemma~\ref{le:mcV} we have
$kG_{(1)}\cong \mcU^{[p]}(\fg)$. Let $\si$ be a lift to $\fg$ of a generator
for $\Lie(\bbG_a^-)$. Then $\si$ has odd degree, and
$\frac{1}{2}[\si,\si]$ is some
element of $\Lie(W_{m-1,1}\times \bbG_{a(1)})$, which is the linear span of the
elements 
\[ s_n^{[p]}, s_n^{[p]^2},\dots,s_n^{[p]^{m-1}}, s_1\in
\Lie(E_{m,r}). \]
Arguing exactly as in the proof of Lemma~\ref{lem:extensions}, we can change the generator 
$s_n$ so that $\frac{1}{2}[\si,\si] = s_n^{[p]^i} + \alpha s_1$ without changing the comultiplication 
on $kE_{m,n}$. 
\end{proof} 

\begin{remark}
The finite supergroup schemes $E_{m,n}^-$ and $E_{m,n,\mu}^-$
also appear in the work of Drupieski and
Kujawa~\cite{Drupieski/Kujawa:ga},
where they are denoted $\bbM_{n;m}$ and $\bbM_{n+1;m,-\mu}$ 
respectively.

We also record the structure of the coordinate rings $k[E^-_{m,n}]$ and $k[E^-_{m,n,\mu}]$.
For $k[E^-_{m,n}]$ we have generators $w,x_1,\dots,x_{p^m-1},y$ with
$y$ odd and the remaining generators even. We have relations
$w^{p^{n-1}}=x_1$, $x_ix_j=\binom{i+j}{j}x_{i+j}$, $y^2=0$; which
implies that as an algebra it is a truncated polynomial ring generated by
$w,x_p,x_{p^2},\dots,x_{p^{m-1}},y$ with relations 
$w^{p^n},x_p^p,x_{p^2}^p,\dots,x_{p^{m-1}}^p,y^2$.

For the coalgebra structure, the elements
$w$ and $y$ are primitive, while 
\[ \Delta(x_\ell)=\sum_{i+j=\ell}x_i\otimes x_j +
  \sum_{i+j+p=\ell}x_iy\otimes x_jy. \]
The antipode negates $w$ and $y$, and sends $x_i$ to $(-1)^ix_i$.

The coordinate ring $k[E^-_{m,n,\mu}]$ is the subalgebra of $k[E^-_{m+1,n+1}]$
generated by the elements $w-\mu x_{p^m},x_p,x_{p^2},\dots,x_{p^{m-1}},y$
with the restriction of the comultiplication and antipode.
\end{remark}

\begin{theorem}\label{th:Witt-classification}
Every Witt elementary supergroup scheme is isomorphic to one of the following:
\begin{enumerate}
\item[\rm (i)] $\bbG_a^-$,
\item[\rm (ii)]  $E_{m,n}^-$ with $m,n\ge 1$,
\item[\rm (iii)] $E_{m,n,\mu}^-$ with $m,n\ge 1$ and $0\ne \mu\in k$. 
\end{enumerate}
The only isomorphisms between these are given by
$E_{m,n,\mu}^-\cong E_{m,n,\mu'}^-$ if and only if $\mu/\mu'=a^{p^{m+n}-1}$ for some $a\in k$.

Note that $E_{1,n}^-$ is isomorphic to $\bbG_{a(n)} \times \bbG_a^-$ for $n\ge 1$.
\end{theorem}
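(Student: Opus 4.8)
The strategy is Dieudonn\'e-theoretic. By definition a Witt elementary supergroup scheme has the form $G = E_{m,n}^-/\psi(N)$, where $\psi(N)$ is an even subgroup scheme of $E_{m,n}^-$, that is, $N$ is a $D_k$-submodule of the Dieudonn\'e module $M := D_k/(V^m,F^n,p)$ of $E_{m,n}=(E_{m,n}^-)_\ev$ (Appendix~\ref{sec:Witt}). Applying the quotient to $1\to E_{m,n}\to E_{m,n}^-\to\bbG_a^-\to 1$ gives $1\to\psi(M/N)\to G\to\bbG_a^-\to 1$. So the plan has three steps: (a) classify the submodules $N\subseteq M$ and the quotients $M/N$; (b) for each resulting even group scheme $\psi(M/N)$, use the extension classifications of Lemmas~\ref{lem:extensions} and \ref{lem:elemext} to pin down $G$; (c) determine the isomorphisms among the supergroup schemes obtained.

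For (a): as a $k$-vector space $M$ has basis $1,F,\dots,F^{n-1},V,\dots,V^{m-1}$, with $F$ and $V$ acting $\varphi$-semilinearly, where $\varphi$ is the Frobenius of $k$. If $N$ contains an element with unit coefficient on $1$, then applying powers of $F$ and $V$ shows $N=M$ and $G\cong\bbG_a^-$. Otherwise $N$ lies in $W:=\langle F,\dots,F^{n-1}\rangle\oplus\langle V,\dots,V^{m-1}\rangle$, on which $F$ is a single $\varphi$-semilinear nilpotent Jordan block on the first summand killing the second, and $V$ the same on the second. Comparing the dimensions of $N\cap W_F$, $N\cap W_V$ and of the two projections of $N$ forces $N$ to be either \emph{split}, $N=\langle F^a,\dots,F^{n-1}\rangle\oplus\langle V^b,\dots,V^{m-1}\rangle$, or of \emph{graph type}, $N=\langle F^{a-1}+\mu V^{b-1}\rangle\oplus\langle F^a,\dots,F^{n-1}\rangle\oplus\langle V^b,\dots,V^{m-1}\rangle$ with $\mu\in k^\times$ and $a,b\ge 2$ (a graph of an isomorphism of Jordan blocks in the two variables forces those blocks to have length one). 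Correspondingly $M/N$ is $D_k/(V^b,F^a,p)$ in the split case, so $\psi(M/N)$ is $E_{b,a}$ — with $W_{b,1}$ and $\bbG_{a(a)}$ covering the boundary cases $a=1$ and $b=1$ — and it is the cyclic module $D_k/(F^{a-1}+\mu V^{b-1},p)$ in the graph case, which is the Dieudonn\'e module of $E_{b-1,a-1,-\mu}$, so $\psi(M/N)\cong E_{b-1,a-1,-\mu}$ with $-\mu\in k^\times$.

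For (b): in the split case $G$ is an extension of $\bbG_a^-$ by $E_{b,a}$, and Lemmas~\ref{lem:elemext}(1) and \ref{lem:extensions} describe the ring $kG$ up to a choice of parameters; the extra input pinning those down is that here $\sigma^2$ is the image of the generator $s_n^p$ of $kE_{m,n}^-$ — hence the $p$-th power of a primitive even element that is a minimal algebra generator of $kG_\ev$ — which forces $G$ to be isomorphic to some $E_{m',n'}^-$ (with $E_{1,n}^-\cong\bbG_{a(n)}\times\bbG_a^-$ by Remark~\ref{rem:E1n}) or to $\bbG_a^-$. In the graph case Lemma~\ref{lem:elemext}(2) identifies $G$ with $E_{m',n',\mu'}^-$ for some $\mu'\in k^\times$. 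Together with step (a), this shows every Witt elementary supergroup scheme appears in the list (i)--(iii).

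For (c): an isomorphism between two members of the list restricts to an isomorphism of even parts compatible with the projections onto $\bbG_a^-$, so it preserves the order of $G_\ev$ and the isomorphism class of its Dieudonn\'e module; this separates $\bbG_a^-$, the $E_{m,n}^-$ and the $E_{m,n,\mu}^-$ from one another and recovers $m$ and $n$ (as the lengths of the $V$- and $F$-strings of the module). For an isomorphism $E_{m,n,\mu}^-\cong E_{m,n,\mu'}^-$, the map must send the unique odd primitive $\sigma$ to a scalar multiple $\lambda\sigma$; the relation $\sigma^2=s_n^p$ together with the Witt comultiplication of Construction~\ref{not:Emnmu^-} then forces the induced change of even generators to be a $\varphi$-power twist of $\lambda$, and computing its effect on the twisting parameter yields $\mu'/\mu\in(k^\times)^{p^{m+n}-1}$; the converse is obtained by running this rescaling backwards. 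I expect step (c) — and specifically extracting exactly the exponent $p^{m+n}-1$ from the interaction of the ($p$-power) rescaling of the Witt-vector generators with the twisted comultiplication — to be the main technical obstacle; the remainder is careful bookkeeping with Dieudonn\'e modules and the extension lemmas already in hand.
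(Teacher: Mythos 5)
Your steps (a) and (b) follow the same Dieudonn\'e-theoretic route as the paper: an even subgroup scheme of $E^-_{m,n}$ is $\psi(N)$ for a submodule $N$ of $D_k/(V^m,F^n,p)$, and your classification of such $N$ (split versus graph type, with the Goursat-type observation that the graph part must have length one) is in effect a re-proof of the classification of cyclic quotients that the paper simply quotes as Theorem~\ref{th:Koch}; that part is sound. In step (b), however, the stated justification contains an error: $s_n$ is \emph{not} primitive in $kE^-_{m,n}$ for $n\ge 2$ (its coproduct involves the lower $s_i$); only $s_n^p$ becomes primitive, because $s_i^p=0$ for $i<n$. More importantly, knowing merely that $\bar\sigma^2$ is the $p$-th power of \emph{some} minimal algebra generator of $kG_\ev$ does not by itself select the parameter values in Lemmas~\ref{lem:extensions} and \ref{lem:elemext} (a minimal generator whose coefficient on the long Witt generator vanishes can still have a nonzero $p$-th power), so the identification of the split-case quotients with $E^-_{m',n'}$ and the graph-case quotients with $E^-_{m',n',\mu}$ needs an actual computation of the quotient map (e.g.\ by tracking it through the Dieudonn\'e description, as the constructions in \ref{not:Emnmu^-} do). This is repairable, but it is not yet an argument.

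The genuine gap is step (c), which you yourself flag as ``the main technical obstacle'': the assertion $E^-_{m,n,\mu}\cong E^-_{m,n,\mu'}$ iff $\mu/\mu'=a^{p^{m+n}-1}$ is part of the theorem and is not proved in your proposal. Your sketch assumes the isomorphism acts diagonally ($\sigma\mapsto\lambda\sigma$ together with a ``twist'' of the even generators), but a Hopf isomorphism need not act diagonally on the $s_i$; one must argue, as in the proof of Theorem~\ref{th:Koch}, that modulo radical endomorphisms it is given by a scalar, and extracting the exponent from the twisted comultiplication of Construction~\ref{not:Emnmu^-} by hand is exactly the computation you have deferred. The efficient closure of this gap is the paper's: restrict any isomorphism to even parts, where the ``only if'' direction is literally the last statement of Theorem~\ref{th:Koch} (proved in Appendix~\ref{sec:Witt} by the two-line semilinear computation with $(F^n-\mu V^m)a=b(F^n-\mu'V^m)$), and get the ``if'' direction by transporting the corresponding module automorphism of the Dieudonn\'e module of $E_{m+1,n+1}$ across the defining quotient presentation of $E^-_{m,n,\mu}$, rescaling $\sigma$ compatibly (possible since $p$ is odd). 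Without either this reduction or a completed Hopf-algebra computation, the isomorphism classification — and hence the theorem — is not established.
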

\begin{proof}
The quotient of $E_{m,n}^-$ by its entire even part is covered in part (i).
The quotient by a proper subgroup of $(E_{m,n}^-)_\ev$ uses
Theorem~\ref{th:Koch}, and gives parts (ii) and (iii).
\end{proof}

We recall Definition~\ref{def:elementary} from the Introduction: a finite supergroup scheme is {\it elementary} if it is isomorphic to a 
quotient of $E_{m,n}^- \times (\bbZ/p)^{\times s}$.
\begin{remark}
An elementary finite supergroup scheme is isomorphic to one of the following:
\begin{enumerate}
\item[\rm (i)] $\bbG_{a(n)} \times (\bbZ/p)^{\times s}$ with $n,s\ge 0$,
\item[\rm (ii)] $\bbG_{a(n)} \times \bbG_a^- \times (\bbZ/p)^{\times s}$ with $n,s\ge 0$,
\item[\rm (iii)] $E_{m,n}^- \times (\bbZ/p)^{\times s}$ with $m\ge 1$, $n\ge 2$, $s\ge 0$, or
\item[\rm (iv)] $E_{m,n,\mu}^-\times (\bbZ/p)^{\times s}$ with $m,n\ge 1$, $0\ne\mu\in k$
and $s\ge 0$.
\end{enumerate}
\end{remark}

\begin{definition}
The \emph{rank} of an elementary finite supergroup scheme is defined
to be $n+s$ in case (i), and $n+s+1$ in cases (ii)--(iv) of the above
remark.  
\end{definition}

%%%%%%%%%%%%%%%%%%%%Section  Cohomological calculations%%%%%%%%%%%%%%%%%%%
%%%%%%%%%%%%%%%%%%%%%%%%%%%%%%%%%%%%%%%%%%%%%%%%%%%%

\section{Cohomological calculations}
\label{se:calculations} 

This section is dedicated to computing the cohomology rings of the supergroup schemes introduced in Section~\ref{se:Elem}, and other preparatory results for use in the sequel.

\begin{proposition}
\label{pr:Ga1Ga1Zps}
If $G$ is a semidirect product $(\bbG_{a(1)} \times \bbG_{a(1)}) \rtimes
(\bbZ/p)^{\times s}$ with non-trivial action then there is an element 
$0\ne y\in H^1((\bbZ/p)^{\times s},k)\subseteq H^{1,0}(G,k)$ 
whose product with $0\ne\lambda \in H^1(\bbG_{a(1)},k)\subseteq H^{1,0}(G,k)$
is zero in $H^{2,0}(G,k)$.
\end{proposition}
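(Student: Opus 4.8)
The plan is to realise both $\lambda$ and $y$ as inflations from a quotient $Q=G/N$ of $G$ with $Q\cong\bbG_{a(1)}\times(\bbZ/p)^{\times s}$, chosen so that the product $\lambda\cdot y$ becomes (a scalar multiple of) the transgression class governing the kernel of $\operatorname{infl}\colon H^2(Q,k)\to H^2(G,k)$, and hence vanishes.

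Write $G=V\rtimes\pi$ with $V=\bbG_{a(1)}\times\bbG_{a(1)}$ and $\pi=(\bbZ/p)^{\times s}$. Since $V$ is a vector group, $\operatorname{Aut}(V)=\GL(U)$ where $U:=H^{1,0}(V,k)=\Hom_{\Gr/k}(V,\bbG_a)$ is $2$-dimensional over $k$ (Lemma~\ref{le:H1}); the image of $\pi$ in $\GL(U)$ is a $p$-group, hence consists of unipotent matrices, so $U^\pi\neq0$, and as the action is non-trivial, $\dim_kU^\pi=1$. Fix $0\neq\lambda_0\in U^\pi$, i.e.\ a $\pi$-invariant surjection $\lambda_0\colon V\twoheadrightarrow\bbG_{a(1)}$, and set $N=\ker\lambda_0$. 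Then $N$ is connected unipotent of height one with $kN$ of dimension $p$, so $N\cong\bbG_{a(1)}$; it is $\pi$-stable since $\lambda_0$ is $\pi$-invariant, hence normal in $G$; and it is central in $G$ because $V$ is abelian while the conjugation action $G\to\operatorname{Aut}(N)=\bbG_m$ is trivial ($G$ is unipotent). Put $Q=G/N$. One checks $Q=(V/N)\rtimes\pi$ with $V/N\cong\bbG_{a(1)}$, and $\pi$ acts trivially on $V/N$ because it acts trivially on $H^1(V/N,k)=k\lambda_0\subseteq H^1(V,k)$ and $\operatorname{Aut}(\bbG_{a(1)})\hookrightarrow\GL(H^1(\bbG_{a(1)},k))$; thus $Q\cong\bbG_{a(1)}\times(\bbZ/p)^{\times s}$. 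Let $\bar\lambda\in H^1(V/N,k)$ be a generator; its image $\lambda\in H^{1,0}(G,k)$ under inflation is the required nonzero class (it is the pullback of a generator of $H^1(\bbG_{a(1)},k)$ along $G\twoheadrightarrow Q\twoheadrightarrow V/N=\bbG_{a(1)}$).

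Now apply the five-term exact sequence of the Lyndon--Hochschild--Serre spectral sequence for $1\to N\to G\to Q\to1$. Since $N$ is central, $H^1(N,k)^Q=H^1(N,k)=k\mu$ is one-dimensional, and the sequence gives $\ker\bigl(\operatorname{infl}\colon H^2(Q,k)\to H^2(G,k)\bigr)=k\cdot d_2(\mu)$. I would then pin down $d_2(\mu)$. It is nonzero: otherwise $\mu$ would be the restriction to $N$ of some $\nu\in H^{1,0}(G,k)=\Hom_{\Gr/k}(G,\bbG_a)$, but $\nu|_V$ is then a $\pi$-invariant element of $U$, hence a scalar multiple of $\lambda_0$, which vanishes on $N=\ker\lambda_0$ — so $\nu|_N=0\neq\mu$, a contradiction. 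It lies in the cross term of the Künneth decomposition $H^2(Q,k)=H^2(V/N,k)\oplus\bigl(H^1(V/N,k)\otimes H^1(\pi,k)\bigr)\oplus H^2(\pi,k)$: by naturality of transgression, the restriction of $d_2(\mu)$ to $V/N\subseteq Q$ is the transgression for $1\to N\to V\to V/N\to1$, which splits ($V$ abelian, $\lambda_0$ split surjective), and its restriction to $\pi\subseteq Q$ is the transgression for $1\to N\to N\times\pi\to\pi\to1$, which also splits; since these restrictions detect the outer summands, $d_2(\mu)\in H^1(V/N,k)\otimes H^1(\pi,k)=k\bar\lambda\otimes H^1((\bbZ/p)^{\times s},k)$. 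Hence $d_2(\mu)=\bar\lambda\cdot\operatorname{infl}_\pi^Q(\phi)$ for a necessarily nonzero $\phi\in H^1((\bbZ/p)^{\times s},k)$.

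Applying $\operatorname{infl}_Q^G$ and using $d_2(\mu)\in\ker(\operatorname{infl}_Q^G)$ then yields $\lambda\cdot y=0$ in $H^{2,0}(G,k)$, where $y:=\operatorname{infl}_\pi^G(\phi)$ lies in $H^1((\bbZ/p)^{\times s},k)\subseteq H^{1,0}(G,k)$ and is nonzero because $\operatorname{infl}\colon H^1(\pi,k)\to H^1(G,k)$ is split injective (a splitting is given by restriction along the complement $\pi\subseteq G$). This produces the desired $y$ and $\lambda$. The only step with genuine content — and the place I expect the work — is the computation of $d_2(\mu)$: its nonvanishing and its membership in the cross term of the Künneth decomposition; the rest is structural bookkeeping about $V\rtimes\pi$. (Alternatively one could run the spectral sequence for $1\to V\to G\to\pi\to1$ and use that $H^1(V,k)$ is a non-split self-extension of trivial $\pi$-modules classified by the action homomorphism, but the route above keeps all coefficients trivial and avoids module-extension computations.)
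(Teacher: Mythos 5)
Your argument is correct, but it follows a genuinely different route from the paper's, whose proof is essentially a one-liner: since the product of two classes in $H^{1,0}(G,k)\cong\Ext^{1}_{kG}(k,k)$ is exactly the obstruction to splicing the two corresponding self-extensions of $k$ into a uniserial module of length three, the paper simply exhibits such a module, namely the three-dimensional representation of $G$ by upper unitriangular matrices in which the $(2,3)$-entry is the $\pi$-invariant character $V\to\bbG_{a(1)}$ (your $\lambda_0$, extended to $G$) and the $(1,2)$-entry is the additive character of $(\bbZ/p)^{\times s}$ encoding the non-trivial action; the semidirect product relations are precisely the commutator relations of the unitriangular group, so this is a representation, and its two subquotient extensions realise $\lambda$ and $y$, giving $y\lambda=0$. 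You instead realise the vanishing product as a transgression: you factor out the central subgroup $N=\ker\lambda_0\cong\bbG_{a(1)}$, use the five-term sequence of $1\to N\to G\to Q\to 1$ to see that $d_2(\mu)$ dies under inflation, and locate $d_2(\mu)$ in the K\"unneth cross term $H^1(V/N,k)\otimes H^1(\pi,k)$ by naturality together with the splitness of the restricted extensions over $V/N$ and over $\pi$ (the splitness over $V/N$ is the one place where it matters that $V$ is $\bbG_{a(1)}\times\bbG_{a(1)}$ rather than, say, $\bbG_{a(2)}$, and you rightly flag it). Both proofs end up with the same $\lambda$, namely the $\pi$-invariant surjection $G\to\bbG_{a(1)}$. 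The paper's approach buys brevity and explicitness: no spectral sequence, no need to verify that $N$ is central or that the auxiliary extensions split, and $y$, $\lambda$ are read off directly from the action. Your approach buys a structural explanation: the relation $y\lambda=0$ is identified as the transgression of a central $\bbG_{a(1)}$, i.e.\ as a generator of the kernel of inflation from $Q$, which is closer in spirit to how such degree-two relations are exploited elsewhere in the paper (for instance in the proof of Theorem~\ref{th:elemcoh}).
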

\begin{proof}
The non-triviality of the product of a pair of elements in $H^1(G,k)\cong \Ext^1_{kG}(k,k)$ 
is the obstruction to 
producing a three dimensional module using these two extensions. So
the proposition follows from the fact that $G$ has a representation of
the form 
\begin{equation*} 
\begin{pmatrix}
1 & (\bbZ/p)^{\times s} & \bbG_{a(1)} \\ 0 & 1 & \bbG_{a(1)} \\ 0 & 0 & 1 
\end{pmatrix}. 
\qedhere
\end{equation*}
\end{proof}

We next discuss cohomology of abelian connected unipotent finite group schemes.
Recall from Appendix~\ref{sec:Witt} that as an augmented algebra, $kG$ is isomorphic to
a tensor product of algebras of the form $kW_{m,1}= k[s]/(s^{p^m})$.
Since cohomology of a finite group scheme $G$ in general only depends on the algebra structure of $kG$,
not on the comultiplication, we get the following description of the cohomology ring.

\begin{theorem}\label{th:HA}
Let $G$ be an abelian connected unipotent finite group scheme.
The cohomology ring $H^*(G,k)$ is a tensor product
of algebras of the form 
\[ H^*(W_{m,1},k) = k[x_m] \otimes \Lambda(\lambda_m) \]
where $\lambda_m$ has degree one and $x_m$ has degree two.

The surjective map
$W_{m,1} \to W_{m-1,1}$ induces an inflation map
\[ H^*(W_{m-1,1},k) \to H^*(W_{m,1},k) \]
sending $x_{m-1}$ to zero and $\lambda_{m-1}$ to $\lambda_m$.
On the other hand, the injective map $W_{m-1,1} \to W_{m,1}$ induces a restriction map
\[ H^*(W_{m,1},k) \to H^*(W_{m-1,1},k) \]
sending $x_m$ to $x_{m-1}$ and $\lambda_m$ to zero.
\end{theorem}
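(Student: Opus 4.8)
The plan is to reduce everything to the one‑generator case and then compute over a truncated polynomial ring, reading off the maps from the minimal periodic resolution.

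\emph{Reduction to $W_{m,1}$.} By the structural result recalled from Appendix~\ref{sec:Witt}, $kG\cong\bigotimes_i kW_{m_i,1}$ as augmented $k$‑algebras. Since $H^*(G,k)=\Ext_{kG}(k,k)$ depends, as a graded ring, only on $kG$ as an augmented algebra (the point already used in the paragraph preceding the statement), and since $\Ext$ over a finite tensor product of finite‑dimensional augmented $k$‑algebras is the graded tensor product of the factors (K\"unneth; everything here is purely even, so no signs intervene), we are reduced to computing $H^*(W_{m,1},k)=\Ext_{k[s]/(s^{p^m})}(k,k)$.

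\emph{The ring $H^*(W_{m,1},k)$.} Over $A=k[s]/(s^{p^m})$, with $p^m\ge 3$, the trivial module has the standard minimal $2$‑periodic free resolution with $P_i=A$ and differentials alternately $\cdot\, s$ and $\cdot\, s^{p^m-1}$, so $\dim_k\Ext^i_A(k,k)=1$ for every $i\ge 0$. Let $\lambda_m\in H^1$ and $x_m\in H^2$ be the classes dual to the generators of $P_1$ and $P_2$. Graded commutativity of $H^{*,*}$ (\S\ref{se:low}) together with $p$ odd forces $\lambda_m^2=0$, so $H^2=k\,x_m$; and $x_m$ is not nilpotent — classically, since $k[s]/(s^{p^m})\cong k\bbZ/p^m$ as augmented algebras (so $H^*(W_{m,1},k)\cong H^*(\bbZ/p^m,k)$), or alternatively because $A$ is a hypersurface and $x_m$ is the associated Eisenbud/Tate periodicity operator. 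Hence $H^*(W_{m,1},k)=k[x_m]\otimes\Lambda(\lambda_m)$.

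\emph{The two maps, and the main obstacle.} The surjection $W_{m,1}\to W_{m-1,1}$ induces a surjective Hopf map $kW_{m,1}\twoheadrightarrow kW_{m-1,1}$ carrying the primitive $s$ to a scalar multiple of the primitive $\bar s$; normalise to $s\mapsto\bar s$. Inflation is the map on $\Ext$ along this algebra surjection, and lifting the periodic resolution of $k$ over $kW_{m,1}$ to the one over $kW_{m-1,1}$ gives a chain map covering $\id_k$ that equals the surjection in degrees $\le 1$ and is $0$ in degrees $\ge 2$ (the degree‑$2$ differential $\cdot\, s^{p^m-1}$ maps to $\bar s^{p^m-1}=0$, as $p^m-1\ge p^{m-1}$); this yields $\lambda_{m-1}\mapsto\lambda_m$ and $x_{m-1}\mapsto 0$ (the vanishing of the $x$‑part can also be read off the five‑term exact sequence for $1\to\bbG_{a(1)}\to W_{m,1}\to W_{m-1,1}\to 1$). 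For restriction, the injection $W_{m-1,1}\hookrightarrow W_{m,1}$ induces an injective Hopf map $kW_{m-1,1}\hookrightarrow kW_{m,1}$; matching up primitives and using injectivity forces its image to be $k[s^p]/(s^{p^m})$, i.e.\ $\bar s\mapsto s^p$ after normalising. Then $kW_{m,1}$ is free of rank $p$ over $kW_{m-1,1}$, so the periodic $kW_{m,1}$‑resolution $P_\bullet$ of $k$ is also a (non‑minimal) $kW_{m-1,1}$‑free resolution, and the restriction map is the map on cohomology of the inclusion $\Hom_{kW_{m,1}}(P_\bullet,k)\subseteq\Hom_{kW_{m-1,1}}(P_\bullet,k)$; a direct computation of the homology of the larger complex shows that the degree‑$1$ generator becomes a coboundary while the degree‑$2$ generator survives, giving $\lambda_m\mapsto 0$ and $x_m\mapsto x_{m-1}$. (That $\lambda_m\mapsto 0$ also follows since $\iota^*(\lambda_m)\in\Hom(W_{m-1,1},\bbG_{a(1)})$ is the composite $W_{m-1,1}\hookrightarrow W_{m,1}\to\bbG_{a(1)}$, which vanishes because iterated Verschiebung followed by iterated truncation kills $W_{m-1,1}$.) The only non‑formal input is the claim in the middle paragraph that $x_m$ is a genuine polynomial generator — and that is classical; everything else is routine K\"unneth, the explicit periodic resolution, and the bookkeeping that extracts the two algebra maps $kW_{m,1}\leftrightarrow kW_{m-1,1}$ from the Dieudonn\'e description in Appendix~\ref{sec:Witt}.
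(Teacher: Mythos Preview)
Your proof is correct and follows the same approach the paper has in mind: reduce to a single factor via the tensor decomposition of $kG$ (Lemma~\ref{le:algebra}) and K\"unneth, then use the classical $2$-periodic resolution over $k[s]/(s^{p^m})$ to read off both the ring structure and the inflation/restriction maps. The paper's proof is in fact just a one-line citation (``well known from the cohomology theory of finite groups; see Cartan--Eilenberg, Chapter~XII''), so you have simply unpacked what the paper leaves implicit, and your explicit chain-level computations of the inflation and restriction maps are accurate.
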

\begin{proof}
The cohomology of the algebra $k[s]/(s^{p^m})$ and the restriction and
inflation maps are well known from the cohomology theory of finite groups. 
See for example Chapter XII of Cartan and Eilenberg \cite{Cartan/Eilenberg:1956a}.
\end{proof}

\begin{proposition}\label{pr:HWm-}
The cohomology of the supergroup scheme $W_{m,1}^-$ of Example~\ref{eg:t2pm}
is given by 
\[ H^{*,*}(W_{m,1}^-,k) = k[x_m,\zeta_m]/(\zeta_m^2) \]
with $|x_m|=(2,0)$ and $|\zeta_m|=(1,1)$.

For $m\ge 2$ the surjective map $W_{m,1}^-\to W_{m-1,1}^-$ induces an inflation map
\[ H^{*,*}(W_{m-1,1}^-,k) \to H^{*,*}(W_{m,1}^-,k) \]
sending $x_{m-1}$ to zero and $\zeta_{m-1}$ to $\zeta_m$.
\end{proposition}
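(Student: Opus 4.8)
The plan is to compute $H^{*,*}(W_{m,1}^-,k)=\Ext^{*,*}_{kW_{m,1}^-}(k,k)$ directly, using that the cohomology of a finite supergroup scheme depends only on the group ring as a $\bbZ/2$-graded algebra, not on its coalgebra structure. By Example~\ref{eg:t2pm} one has $kW_{m,1}^-=k[\si]/(\si^{2p^m})$ with $\si$ of internal degree $1$: forgetting the grading this is a truncated polynomial ring in one variable of length $N:=2p^m$, and $N\ge 3$ since $p\ge 3$.

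First I would take the standard $2$-periodic minimal free resolution of $k$,
\[
\cdots \xra{\cdot\si} kW_{m,1}^- \xra{\cdot\si^{N-1}} kW_{m,1}^- \xra{\cdot\si} kW_{m,1}^- \xra{\varepsilon} k \to 0,
\]
and track internal degrees: because both differentials $\cdot\si$ and $\cdot\si^{N-1}$ have odd internal degree, the free generator in homological degree $n$ acquires internal parity $n\bmod 2$, so $H^{n,*}(W_{m,1}^-,k)$ is one-dimensional, concentrated in internal degree $n\bmod 2$. Write $\zeta_m\in H^{1,1}$ and $x_m\in H^{2,0}$ for the nonzero classes, $x_m$ being the periodicity class from $P_{\bullet+2}\cong P_\bullet$. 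By $2$-periodicity, multiplication by $x_m$ is an isomorphism $H^n\to H^{n+2}$ for every $n\ge 0$; since $H^0=k$ and $H^1=k\zeta_m$, the set $\{x_m^j,\ x_m^j\zeta_m:j\ge 0\}$ is a $k$-basis, and in particular $x_m$ is not nilpotent. Finally $\zeta_m^2\in H^{2,0}$ vanishes: lifting $\zeta_m$ to a comparison map $P_{\bullet+1}\to P_\bullet$ forces multiplication by $\si^{N-2}$ at the stage computing $\zeta_m\cdot\zeta_m$, and $\si^{N-2}$ acts as zero on the trivial module $k$ because $N-2\ge 1$. Hence $k[x_m,\zeta_m]/(\zeta_m^2)\to H^{*,*}(W_{m,1}^-,k)$ is an isomorphism, which is the assertion. (Alternatively one may simply quote the standard computation of $\Ext$ over a truncated polynomial ring in one variable, as in \cite{Cartan/Eilenberg:1956a} and Theorem~\ref{th:HA}; the role of $N\ge 3$ is exactly that the degree-one generator squares to zero, the polynomial case $N=2$ being the one that occurs for $\bbG_a^-$ itself.)

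For the inflation statement, with $m\ge 2$, I would identify the surjection $W_{m,1}^-\to W_{m-1,1}^-$ with the Hopf algebra quotient
\[
q\colon k[\si]/(\si^{2p^m})\lra k[\si]/(\si^{2p^{m-1}}),\qquad \si\mapsto\si,
\]
whose kernel is generated by the primitive even element $\si^{2p^{m-1}}=(\si^2)^{p^{m-1}}$ and is therefore a Hopf ideal. Inflation is restriction of scalars along $q$ on $\Ext^{*,*}(k,k)$, which I would compute with a comparison chain map $g\colon P_\bullet\to\ov P_\bullet$ over $kW_{m,1}^-$ lifting $\id_k$, where $\ov P_\bullet$ is the periodic resolution for $kW_{m-1,1}^-$. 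The maps $g_0,g_1$ can be taken to be the evident identifications, but in homological degree $2$ the compatibility condition involves the image of $\si^{2p^m-1}$ in $kW_{m-1,1}^-$, which is zero since $2p^m-1\ge 2p^{m-1}$; so $g_2$ may be chosen to land in the annihilator ideal of $\si^{2p^{m-1}-1}$, in particular $g_2=0$, and then $g_n=0$ for all $n\ge 2$ inductively. Consequently inflation annihilates $H^{\ge 2}$ --- in particular $x_{m-1}\mapsto 0$ --- while on $H^1$ it is the identification carrying $\zeta_{m-1}$ to $\zeta_m$ (equivalently, via Lemma~\ref{le:H1}, $\zeta_{m-1}$ is the surjection $W_{m-1,1}^-\to\bbG_a^-$ and $\zeta_m$ its composite with $q$).

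The computation is routine; the only points that need attention are the bookkeeping of the internal $\bbZ/2$-grading through the periodic resolution (which is what pins down the bidegrees of $x_m,\zeta_m$ and the relation $\zeta_m^2=0$ rather than a polynomial generator), and the observation that $\si^{2p^m-1}$ dies in $kW_{m-1,1}^-$, which is precisely what forces the inflation of $x_{m-1}$ to vanish.
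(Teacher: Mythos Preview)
Your proof is correct and takes a genuinely different route from the paper. The paper argues via the Lyndon--Hochschild--Serre spectral sequence for the extension $1\to W_{m,1}\to W_{m,1}^-\to\bbG_a^-\to 1$: the $E_2$ page is $k[\zeta_m]\otimes k[x_m]\otimes\Lambda(\lambda_m)$ with $\lambda_m$ an exterior class on the fibre, and the only nonzero differential is $d_2(\lambda_m)=\zeta_m^2$, which collapses to $k[x_m,\zeta_m]/(\zeta_m^2)$; the inflation claim is then read off Theorem~\ref{th:HA}. You instead work directly with the $2$-periodic minimal resolution over the truncated polynomial ring $k[\si]/(\si^{2p^m})$, tracking the internal parity by hand. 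Your approach is more elementary and self-contained --- the relation $\zeta_m^2=0$ drops out of a single chain-map computation, and the inflation statement from an explicit comparison map --- while the paper's spectral-sequence argument pays off later: the same $E_2$ page reappears in Theorem~\ref{th:HEmn^-} for $E_{m,1}^-$, where the point is precisely that $d_2$ vanishes there, so the two computations are set up to be compared. Either argument is perfectly adequate for the proposition on its own.
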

\begin{proof}
The $E_2$ page of the spectral sequence 
\[ H^{*,*}(\bbG_a^-,H^{*,*}(W_{m,1},k)) \Rightarrow H^{*,*}(G,k) \]
has a polynomial generator $\zeta_m$ on the base in degree $(1,1)$, an exterior generator
$\lambda_m$ on the fibre in degree $(1,0)$ and a polynomial generator $x_m$ on the fibre
in degree $(2,0)$. The only differential is $d_2$, and this is determined by
$d_2(\lambda_m)=\zeta_m^2$, $d_2(x_m)=0$.
The inflation maps follow from Theorem~\ref{th:HA}.
\end{proof}

\begin{proposition}\label{pr:Ga^-Gar}
If $G$ is a nonsplit extension
\[ 1 \to \bbG_{a(r)} \to G \to \bbG_a^- \to 1 \]
with $r\ge 1$ then the inflation of $\zeta\in H^{1,1}(\bbG_a^-,k)$ to $G$
squares to zero in $H^{2,0}(G,k)$.
\end{proposition}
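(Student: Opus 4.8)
The plan is to reduce everything to the first Frobenius kernel $G_{(1)}$, identify it with $W_{1,1}^-$, and then exploit that $G$ is a very rigid combination of $G_{(1)}$ with its even part. First I would apply the Frobenius kernel functor to the given sequence. Since $G\to\bbG_a^-$ is surjective and $\bbG_a^-$ is connected of height one, there is an induced map $G_{(1)}\to\bbG_a^-$; it is nonzero because $G_{(1)}$ has a nonzero odd part mapping onto $\bbG_a^-$ while $\ker(G\to\bbG_a^-)=\bbG_{a(r)}$ is purely even, hence it is surjective, as $\bbG_a^-$ has no proper nontrivial sub-supergroup scheme. Its kernel is $(\bbG_{a(r)})_{(1)}=\bbG_{a(1)}$, so $G_{(1)}$ sits in a sequence $1\to\bbG_{a(1)}\to G_{(1)}\to\bbG_a^-\to 1$, and this is nonsplit, since a splitting composed with $G_{(1)}\hookrightarrow G$ would split the original sequence. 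By the two-dimensional Lie superalgebra computation in the proof of Lemma~\ref{lem:extensions} (taking $m=1$), the only nonsplit extension of $\bbG_a^-$ by $\bbG_{a(1)}$ is $W_{1,1}^-$, so $G_{(1)}\cong W_{1,1}^-$ and the three relevant copies of $\bbG_{a(1)}$ all coincide.

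Next I would show $\bbG_{a(r)}$ is central in $G$. One has $G_\ev=\bbG_{a(r)}$: the even normal subgroup scheme $\bbG_{a(r)}$ lies in $G_\ev$, whereas $G_\ev/\bbG_{a(r)}$, being an even sub-supergroup scheme of $G/\bbG_{a(r)}=\bbG_a^-$, is trivial. By Corollary~\ref{co:GevG1}, $G=G_\ev G_{(1)}=\bbG_{a(r)}\cdot W_{1,1}^-$, so it suffices to check that $\bbG_{a(r)}$ centralises $W_{1,1}^-$. The conjugation homomorphism $\bbG_{a(r)}\to\operatorname{Aut}(W_{1,1}^-)$, followed by restriction to $(W_{1,1}^-)_\ev\cong\bbG_{a(1)}$, maps into $\operatorname{Aut}(\bbG_{a(1)})\cong\bbG_m$, hence is trivial as $\bbG_{a(r)}$ is connected unipotent; so its image consists of automorphisms of $W_{1,1}^-$ fixing the even part pointwise. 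A short computation with the odd primitive generator $\si$ of $kW_{1,1}^-$ shows these form $\mu_2\cong\bbZ/2$ (here $p$ is odd), and a homomorphism from the connected $\bbG_{a(r)}$ to the étale $\bbZ/2$ is trivial. Hence $\bbG_{a(r)}$ centralises $W_{1,1}^-$, and being abelian it is central in $G$.

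Centrality then identifies $kG$ with the group algebra of the central product $\bbG_{a(r)}\times_{\bbG_{a(1)}}W_{1,1}^-$, which as an augmented $k$-algebra is $(k\bbG_{a(1)})^{\otimes(r-1)}\otimes kW_{1,1}^-$; by the Künneth formula and Proposition~\ref{pr:HWm-} one obtains
\[ H^{*,*}(G,k)\;\cong\;H^{*}(\bbG_{a(1)},k)^{\otimes(r-1)}\otimes k[x_1,\zeta_1]/(\zeta_1^{2}),\qquad |\zeta_1|=(1,1), \]
so $H^{1,1}(G,k)$ is one-dimensional. The inflation of $\zeta$ restricts on $G_{(1)}=W_{1,1}^-$ to the inflation of $\zeta$ along $W_{1,1}^-\to\bbG_a^-$, which is the generator $\zeta_1$ by Proposition~\ref{pr:HWm-}; hence the inflation of $\zeta$ to $G$ is $\zeta_1$ up to a nonzero scalar, and its square is $\zeta_1^{2}=0$. (Alternatively, having centrality, one can run the Lyndon--Hochschild--Serre spectral sequence of $1\to\bbG_{a(r)}\to G\to\bbG_a^-\to 1$ with trivial coefficients and use naturality along $G_{(1)}\hookrightarrow G$ together with $d_2(\lambda_1)=\zeta^{2}$ for $W_{1,1}^-$ to see $\zeta^{2}$ is killed.)

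The centrality step is the crux. For a general extension $1\to\bbG_{a(r)}\to G\to\bbG_a^-\to 1$ the subgroup $\bbG_{a(r)}$ need not be central, since $\bbG_a^-$ can act nontrivially on an even group scheme; and without centrality $\zeta^{2}$ would in fact survive in cohomology. It is precisely the nonsplit hypothesis, channelled through the rigidity of $\operatorname{Aut}(W_{1,1}^-)$, that excludes this, and getting that rigidity right is the delicate point.
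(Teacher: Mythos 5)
Your proof is correct and follows essentially the same route as the paper's: identify $G_{(1)}\cong W_{1,1}^-$ from the nonsplit hypothesis, deduce that as an augmented algebra $kG\cong k\bbG_{a(r-1)}\otimes kW_{1,1}^-$, and conclude from the case $m=1$ of Proposition~\ref{pr:HWm-}; the only difference is that the paper asserts the algebra structure of $kG$ outright, whereas you justify it by proving centrality of $\bbG_{a(r)}$, which is a reasonable way to fill in that step.

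Two side claims in your write-up are inaccurate, though neither invalidates the argument. First, the automorphisms of $W_{1,1}^-$ fixing $(W_{1,1}^-)_\ev$ pointwise do not form $\mu_2$ as a supergroup functor: over a superalgebra $R$ there are also odd automorphisms $\si\mapsto \si+b\si^2$ with $b\in R_1$ (note $(\si+b\si^2)^2=\si^2$). What is true, and all you need, is that the purely even $\bbG_{a(r)}$ maps into the even part of this automorphism supergroup, which is indeed $\mu_2$, hence acts trivially. Second, your closing diagnosis is mistaken: in \emph{any} extension $1\to\bbG_{a(r)}\to G\to\bbG_a^-\to 1$ the kernel is automatically central, because conjugation factors through a homomorphism $\bbG_a^-\to\operatorname{Aut}(\bbG_{a(r)})$, and this automorphism supergroup is purely even (the even primitive $T$ must be sent to an even primitive $\sum a_iT^{p^i}$ with $a_i\in R_0$), so any homomorphism from the purely odd connected $\bbG_a^-$ to it is trivial. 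Thus centrality is not where the nonsplit hypothesis enters; it enters only in forcing $G_{(1)}\cong W_{1,1}^-$ rather than $\bbG_{a(1)}\times\bbG_a^-$. In the split case the extension is still central, $kG\cong k\bbG_{a(r)}\otimes k[\si]/(\si^2)$, and $\zeta^2$ survives — so "rigidity of $\operatorname{Aut}(W_{1,1}^-)$" is not the delicate point; the vanishing of $\zeta^2$ comes entirely from $\si^2\neq 0$ in $kG_{(1)}$.
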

\begin{proof}
By Corollary~\ref{co:GevG1}, we have a nonsplit extension
\[ 1 \to \bbG_{a(1)} \to G_{(1)} \to \bbG_a^- \to 1. \]
Hence, $G_{(1)}\cong W_{1,1}^-$ by Lemma~\ref{le:Ga^-Ga1Ga1}.
This implies that ignoring the comultiplication, we have
$kG \cong k\bbG_{a(r-1)} \otimes kW_{1,1}^-$. The result follows
from the case $m=1$ of Proposition~\ref{pr:HWm-}.
\end{proof}

\begin{lemma} 
\label{le:W2_2}
If $G$ is an extension 
\[1 \to W_{2,2} \to G \to \bbG_a^- \to 1,\] 
then there exists a surjective map $G \to W_{2,1}$.  
\end{lemma}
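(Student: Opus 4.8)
The plan is to show that the desired surjection can be taken to be the first Frobenius quotient $G\twoheadrightarrow G/G_{(1)}$, by proving $G/G_{(1)}\cong W_{2,1}$; equivalently, to show that the Frobenius quotient $W_{2,2}\twoheadrightarrow W_{2,2}/(W_{2,2})_{(1)}\cong W_{2,1}$ extends over $G$.

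First I would record the structure of $G$. Since $W_{2,2}$ is the kernel of $G\to\bbG_a^-$ it is a normal \emph{even} sub-supergroup scheme, so $W_{2,2}\subseteq G_\ev$; conversely, as $(\bbG_a^-)_\ev=1$ and any homomorphism from an even supergroup scheme to $\bbG_a^-$ is trivial, the map $G_\ev\to\bbG_a^-$ is trivial, so $G_\ev\subseteq W_{2,2}$. Hence $G_\ev=W_{2,2}$, and $G$ is connected, being an extension of the connected group schemes $W_{2,2}$ and $\bbG_a^-$. Theorem~\ref{th:tensor-dec} then gives $k[G]\cong k[W_{2,2}]\otimes\Lambda(y)$ as $\bbZ/2$-graded algebras with a \emph{single} odd generator $y$ (since $\dim_k k[G]=2\dim_k k[W_{2,2}]$). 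I will use two consequences: the Hopf quotient $q\colon k[G]\to k[G_\ev]=k[W_{2,2}]$ by the odd elements restricts to an algebra isomorphism $k[G]_0\xrightarrow{\ \sim\ }k[W_{2,2}]$ and annihilates $k[G]_1$; and $k[G]_1\cdot k[G]_1=0$, because $k[G]_1=k[W_{2,2}]\,y$ with $y^2=0$.

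Next I would identify $k[G/G_{(1)}]$ with the image of the Frobenius $F^*\colon k[G^{(1)}]\to k[G]$, which (as $k$ is perfect) is the $k$-subalgebra $B\subseteq k[G]$ generated by all $p$th powers. Odd elements of $k[G]$ square to zero, so $B$ is generated by $\{c^p:c\in k[G]_0\}$, and under $k[G]_0\cong k[W_{2,2}]$ it is exactly the Frobenius subalgebra of $k[W_{2,2}]$, i.e.\ $k\bigl[W_{2,2}/(W_{2,2})_{(1)}\bigr]$. By the Witt-vector computations in Appendix~\ref{sec:Witt} one has $W_{2,2}/(W_{2,2})_{(1)}\cong W_{2,1}$ — the Frobenius quotient of a truncated Witt group is again one because the addition polynomials have $\bbF_p$ coefficients, hence are fixed by $c\mapsto c^p$ — so $B\cong k[W_{2,1}]$ as algebras. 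To upgrade this to a Hopf algebra isomorphism I must check that $\Delta_{k[G]}$ restricted to $B$ agrees with the coproduct of $k[W_{2,2}]$ restricted to its Frobenius subalgebra. For $c\in k[G]_0$, using that $q$ is a Hopf map annihilating $k[G]_1$ and that $\Delta_{k[G]}(c)$ is even, I can write $\Delta_{k[G]}(c)=\Delta_{k[W_{2,2}]}(c)+R_c$ with $R_c\in k[G]_1\otimes k[G]_1$; then $R_c^2=0$ (as $k[G]_1\cdot k[G]_1=0$), and since $\Delta_{k[W_{2,2}]}(c)$ and $R_c$ are even, hence commute in $k[G]\otimes k[G]$, the characteristic-$p$ identity $(u+v)^p=u^p+v^p$ yields
\[
\Delta_{k[G]}(c^p)=\Delta_{k[G]}(c)^p=\Delta_{k[W_{2,2}]}(c)^p+R_c^p=\Delta_{k[W_{2,2}]}(c^p),
\]
using $R_c^p=0$. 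As $\Delta$ is multiplicative and $B$ is generated by such $c^p$, this gives $\Delta_{k[G]}|_B=\Delta_{k[W_{2,2}]}|_B$, so $k[G/G_{(1)}]=B\cong k[W_{2,1}]$ as Hopf algebras and $G/G_{(1)}\cong W_{2,1}$; the quotient map $G\to G/G_{(1)}\cong W_{2,1}$ is then the required surjection.

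The only nonformal step, and the one I expect to be the main obstacle to phrase cleanly, is the vanishing $R_c^p=0$: the coproduct on $k[G]$ genuinely differs from that on $k[W_{2,2}]$ by odd correction terms, but these become invisible after applying Frobenius precisely because the odd part of $k[G]$ squares to zero — which in turn uses that $G/G_\ev\cong\bbG_a^-$ has odd rank one. (For an extension by $W_{2,2}$ of a purely odd group of odd rank $r$ one would only obtain $R_c^{\,r+1}=0$, and would need a correspondingly weaker conclusion.) All remaining ingredients — the tensor decomposition of $k[G]$, the Frobenius description of $k[G/G_{(1)}]$, and the elementary facts about Frobenius kernels of Witt vectors — are standard or already established in the paper.
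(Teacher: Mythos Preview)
Your proof is correct and reaches the same target as the paper: you show $G/G_{(1)}\cong W_{2,1}$ and take the Frobenius quotient as the desired surjection. The paper's argument is a two-line application of Corollary~\ref{co:GevG1}: from $G=G_{(1)}G_\ev$ one gets immediately that $G/G_{(1)}\cong G_\ev/(G_\ev\cap G_{(1)})=W_{2,2}/W_{2,1}\cong W_{2,1}$, after noting (by taking Frobenius kernels in the given extension) that $G_{(1)}\cap W_{2,2}=W_{2,1}$.

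Your route is the coordinate-ring unpacking of exactly this: the tensor decomposition of Theorem~\ref{th:tensor-dec} and your $R_c^p=0$ computation are precisely what make the proof of Corollary~\ref{co:GevG1} work (the Frobenius image of $k[G]$ lands in $k[G_\ev]$ because odd elements have zero $p$th power). So you have essentially re-derived that corollary in this special case, with the bonus of an explicit check that the Hopf structure matches. This is more labor than needed---you could simply cite Corollary~\ref{co:GevG1} and be done---but nothing is wrong, and your remark about needing odd rank one for $R_c^2=0$ is a nice observation about where the argument would weaken in greater generality.
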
 

\begin{proof}
Since $G = G_{(1)}G_\ev$ by Corollary~\ref{co:GevG1},  
taking the first Frobenius kernels, we get an extension
\[1 \to W_{2,1} \to G_{(1)} \to \bbG_a^- \to 1.\] 
Hence, $G/G_{(1)} \cong W_{2,2}/W_{2,1}\cong W_{2,1}$. 
\end{proof} 

\begin{lemma}
\label{le:elempi} 
Let $G$ be a unipotent finite supergroup scheme,  and $f\colon G \to \ov G = \bbG_{a(r)} \times \bbG_a^- \times (\bbZ/p)^{\times s}$  a surjective map of supergroup schemes. Assume that 
\begin{itemize} 
\item[(a)] $f^*\colon H^{1,*}(\ov G,k) \to H^{1,*}(G,k)$ is an isomorphism, and
\item[(b)] $f^*$ is one-to-one restricted to $H^{2}((\bbZ/p)^{\times s},k) \subset  H^{2,0}(\ov G,k)$ 
\end{itemize} 
Then $\pi_0(G)$, the group of connected components of $G$, is isomorphic to $(\bbZ/p)^{\times s}$. 
\end{lemma}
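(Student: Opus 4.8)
The plan is to pin down the finite $p$-group $\pi := \pi_0(G)$ (note $G$ is unipotent, so $\pi$ is a $p$-group, and $G\cong G^0\rtimes\pi$ by Lemma~\ref{le:pi0}) by playing the two hypotheses against its Frattini quotient $\pi/\Phi(\pi)$. The composite $G\xrightarrow{f}\ov G\to\pi_0(\ov G)=(\bbZ/p)^{\times s}$ kills $G^0$, its target being étale, so it factors as $G\xrightarrow{q}\pi\xrightarrow{\bar q}(\bbZ/p)^{\times s}$ with $q$ the canonical quotient and $\bar q=\pi_0(f)$ (which is surjective since $f$ is, a surjection of group schemes inducing a surjection on component groups). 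Write $\iota\colon\pi\hookrightarrow G$ for the inclusion, which splits $q$, and $\bar p\colon\ov G\to(\bbZ/p)^{\times s}$, $\bar\iota\colon(\bbZ/p)^{\times s}\hookrightarrow\ov G$ for the projection and inclusion of the direct factor, so that $\bar\iota$ splits $\bar p$; then $q^{*}$ and $\bar p^{\,*}$ are split monomorphisms on cohomology in every degree. It suffices to prove (A) that $\bar q$ induces an isomorphism $\pi/\Phi(\pi)\xrightarrow{\ \sim\ }(\bbZ/p)^{\times s}$ (equivalently $\ker\bar q=\Phi(\pi)$), and (B) that $\Phi(\pi)=1$; together these give $\pi\cong(\bbZ/p)^{\times s}$.

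For (A), by Lemma~\ref{le:H1} there are direct sum decompositions $H^{1,0}(\ov G,k)=\ker(\bar\iota^{\,*})\oplus\Im(\bar p^{\,*})$ and $H^{1,0}(G,k)=\ker(\iota^{*})\oplus\Im(q^{*})$, the second summands being identified with $\Hom_{\Gr/k}((\bbZ/p)^{\times s},\bbG_a)$ and $\Hom_{\Gr/k}(\pi,\bbG_a)$ via $\bar p^{\,*}$ and $q^{*}$. I claim $f^{*}$ sends each summand of the source into the corresponding summand of the target. Indeed $f^{*}\Im(\bar p^{\,*})\subseteq\Im(q^{*})$ because $\bar p f=\bar q q$; and $f^{*}\ker(\bar\iota^{\,*})\subseteq\ker(\iota^{*})$ because, $\ov G$ being abelian, a class in $\ker(\bar\iota^{\,*})$ is inflated from the quotient $\ov G\to\ov G^{0}=\bbG_{a(r)}\times\bbG_a^{-}$, and the composite $\pi\xrightarrow{\iota}G\xrightarrow{f}\ov G\to\ov G^{0}$ is trivial since there are no nonzero homomorphisms from an étale group scheme to $\ov G^{0}$. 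As $f^{*}$ is an isomorphism by hypothesis~(a), it restricts to an isomorphism on each summand; on the second summand this reads $\bar q^{*}\colon\Hom_{\Gr/k}((\bbZ/p)^{\times s},\bbG_a)\xrightarrow{\ \sim\ }\Hom_{\Gr/k}(\pi,\bbG_a)$. Since these are the $k$-linear duals of the $\bbF_p$-vector spaces $(\bbZ/p)^{\times s}$ and $\pi/\Phi(\pi)$, dualizing yields (A); in particular $\ker\bar q=\Phi(\pi)$.

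For (B), the inclusion $H^{2}((\bbZ/p)^{\times s},k)\subseteq H^{2,0}(\ov G,k)$ is inflation along $\bar p$, so by the factorisation above $f^{*}$ restricted to it equals $q^{*}\circ\bar q^{*}$ with $\bar q\colon\pi\to\pi/\Phi(\pi)$ (using (A)); as $q^{*}$ is injective, hypothesis~(b) asserts precisely that the inflation $H^{2}(\pi/\Phi(\pi),k)\to H^{2}(\pi,k)$ is injective. Now I run the five-term exact sequence of $1\to\Phi(\pi)\to\pi\to\pi/\Phi(\pi)\to1$: the map $H^{1}(\pi/\Phi(\pi),k)\to H^{1}(\pi,k)$ is an isomorphism (both compute $\Hom(\pi,(k,+))$, since $\Phi(\pi)$ is the common kernel of all such homomorphisms), so the kernel of the $H^{2}$-inflation is identified with $H^{1}(\Phi(\pi),k)^{\pi/\Phi(\pi)}$. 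If $\Phi(\pi)\neq1$ this is nonzero: $H^{1}(\Phi(\pi),k)\cong\Hom_{\bbF_p}(\Phi(\pi)/\Phi(\Phi(\pi)),k)$ is a nonzero module over the $p$-group $\pi/\Phi(\pi)$ in characteristic $p$, and such a module has nonzero fixed points. This contradicts (b); hence $\Phi(\pi)=1$, so $\pi$ is elementary abelian, and combined with (A) we conclude $\pi\cong(\bbZ/p)^{\times s}$.

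The step I expect to require the most care is the claim in the second paragraph that $f^{*}$ respects the $H^{1}$-decomposition of Lemma~\ref{le:H1}; this rests on $\ov G$ being abelian (so that $\ker(\bar\iota^{\,*})$ consists exactly of the classes inflated from $\ov G^{0}$) together with the vanishing of homomorphisms $\pi\to\ov G^{0}$. The remaining inputs—the five-term exact sequence and the standard facts about finite $p$-groups in characteristic $p$—are routine.
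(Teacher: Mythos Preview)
Your proof is correct and follows essentially the same approach as the paper's: both identify the Frattini quotient $\pi/\Phi(\pi)$ with $(\bbZ/p)^{\times s}$ via hypothesis~(a), then use the five-term sequence (the paper via Lemma~\ref{le:5-term}) together with hypothesis~(b) to force $\Phi(\pi)=1$. Your write-up is more explicit in justifying step~(A), carefully checking that $f^*$ respects the decomposition of $H^{1,0}$ from Lemma~\ref{le:H1}, whereas the paper records the identification $\ov\pi=(\bbZ/p)^{\times s}$ in its commutative diagram without further comment.
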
 

\begin{proof}
Set $\pi = \pi_0(G)$ and let $\ov \pi$ be the Frattini quotient for $\pi$, that is, the maximal 
quotient isomorphic to an elementary abelian $p$-group. Then the map $f$ factors through $G^0 \rtimes \ov\pi$ 
and we have a commutative diagram 
\[\xymatrix{G^0 \rtimes \pi \ar[r]\ar@{->>}[d] &\pi \ar@{->>}[d]\\
G^0 \rtimes \ov\pi \ar[r]\ar@{->>}[d] &\ov \pi \ar@{=}[d]\\
\ov G \ar[r] &(\mathbb Z/p)^{\times s}
}\]
If $\pi \to \ov \pi$ is not an isomorphism, Lemma~\ref{le:5-term} implies that there 
exists an element $u$ in $H^{2}(\ov \pi,k) = H^2((\mathbb Z/p)^{\times s},k)$ which pulls back to 
zero in $H^2(\pi,k)$ and, hence, in $H^{2,0}(G,k)$. Inflating the class $u$ to 
$H^{2,0}(\ov G,k)$,  we get an element in $\Ker f^* \cap H^{2}((\bbZ/p)^{\times s},k)$ 
contradicting assumption (b). Hence, $\pi \cong (\mathbb Z/p)^{\times s}$. 
\end{proof}

The result below is a denouement of the preceding developments. It's import is that, in the situation of Theorem~\ref{th:B3.6}(ii), various finite 
(super)group schemes cannot be quotients of $G$, $G^0$ and  $G^0_{\rm ev}$. Theorem~\ref{th:forbidden} together with Theorem~\ref{th:bp2} 
are the major inputs in the proof of the detection Theorem~\ref{th:main}. 

\begin{theorem}
\label{th:forbidden} 
Let $G$ be a unipotent finite supergroup scheme,  and $f\colon G \to \ov G = \bbG_{a(r)} \times \bbG_a^- \times (\bbZ/p)^{\times s}$ 
a surjective map of supergroup schemes.
Assume that 
\begin{itemize} 
\item[(a)] $f^*\colon H^{1,*}(\ov G,k) \to H^{1,*}(G,k)$ is an isomorphism, and
\item[(b)] $I = \Ker \{f^*\colon H^{2,*}(\overline G, k) \to H^{2,*}(G,k)\}$ 
is one dimensional, spanned by an element of the form 
$\zeta^2+\gamma x_r$ with $0\neq \gamma \in k$.
\end{itemize} 
Then the following statements hold.
\begin{enumerate}[\quad\rm(I)]
\item $G$ cannot have as a quotient the following supergroup schemes:
\begin{enumerate}[\quad\rm(i)]
\item
	$(\bbG_{a(1)} \times \bbG_{a(1)}) \rtimes (\bbZ/p)^{\times s}$,
\item
	$(\bbG_a^- \times \bbG_a^-) \rtimes (\bbZ/p)^{\times s}$.
\end{enumerate}
\item 
The restriction $f_0 = f{\downarrow_{G^0}}\colon G^0 \to \ov G^0$ satisfies  the following cohomological conditions:  
\begin{enumerate}[\quad\rm($a_0$)] 
\item
$f_0^*\colon H^{1,*}(\ov G^0,k) \to H^{1,*}(G^0,k)$ is an isomorphism,
 \item
  $\Ker f^*_0 \cap H^{2,0}(\ov G^0,k)$ is one dimensional, spanned by $\zeta^2+\gamma x_r$.
\end{enumerate} 
\item
The following connected supergroup schemes cannot be quotients of $G^0$:
\begin{enumerate}[\quad\rm(i)]
\item
$H$ given by a nonsplit extension  $1 \to \bbG_{a(r)} \to H \to \bbG_a^- \to 1$, 
\item
$W_{m,1}^-$,
\item
$W_{2,1}$.
\end{enumerate}
\item
$G^0_{\rm ev}$ cannot have $W_{2,2}$ as a quotient.  
\end{enumerate} 
\end{theorem}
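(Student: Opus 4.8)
The plan is to argue by contradiction: assuming $W_{2,2}$ is a quotient of $G^0_{\rm ev}$, I will manufacture a surjection $G^0 \twoheadrightarrow W_{2,1}$, which is forbidden by part~(III)(iii). By part~(II) I may work with $f_0 = f{\downarrow_{G^0}}\colon G^0 \to \ov G^0$ satisfying ($a_0$) and ($b_0$). First I record three structural facts about $G^0$. Since $H^{1,1}(\ov G^0,k)=H^{1,1}(\bbG_a^-,k)$ is one-dimensional, ($a_0$) gives $\dim_k H^{1,1}(G^0,k)=1$; by Lemma~\ref{le:H1} the corresponding nonzero class is a homomorphism $\phi\colon G^0\to\bbG_a^-$, which is surjective because $\bbG_a^-$ has no proper nontrivial quotients. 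As $G^0_{\rm ev}$ is purely even, $\Hom_{\sGr/k}(G^0_{\rm ev},\bbG_a^-)=0$, so $G^0_{\rm ev}\subseteq N:=\ker\phi$; being even it therefore coincides with $N_{\rm ev}$, which is characteristic in the normal subgroup $N\unlhd G^0$, whence $G^0_{\rm ev}\unlhd G^0$. Finally, by Theorem~\ref{th:tensor-dec} (its coordinate ring is an exterior algebra on odd primitives) $G^0/G^0_{\rm ev}\cong(\bbG_a^-)^{\times m}$ for some $m$; since $\phi$ factors through this quotient and $\Hom_{\sGr/k}((\bbG_a^-)^{\times m},\bbG_a^-)$ inflates injectively into $H^{1,1}(G^0,k)$, we get $1\le m\le 1$, so $G^0/G^0_{\rm ev}\cong\bbG_a^-$ and $N=G^0_{\rm ev}$.

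\textbf{Transporting the quotient.} Let $q\colon G^0_{\rm ev}\twoheadrightarrow W_{2,2}$ be the hypothetical surjection and put $M=\ker q$, a normal subgroup scheme of $G^0_{\rm ev}$. I claim $M$ is in fact normal in $G^0$. By Corollary~\ref{co:GevG1} we have $G^0=G^0_{\rm ev}\cdot G^0_{(1)}$, and $G^0_{\rm ev}$ clearly normalises $M$, so it suffices to check that $G^0_{(1)}=\mcU^{[p]}(\fg)$ (with $\fg=\Lie G^0$) normalises $M$ under the conjugation action of $G^0$ on the even group scheme $M$. The even part $\fg_0=\Lie(G^0_{\rm ev})$ acts by inner derivations and preserves $M$ since $M\unlhd G^0_{\rm ev}$, while $\fg_1$ acts trivially on $M$ because an odd-degree derivation of the purely even coordinate ring $k[M]$ is necessarily zero. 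Hence $G^0_{(1)}$, and therefore $G^0$, normalises $M$.

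\textbf{Conclusion.} Now $Q:=G^0/M$ is a finite unipotent supergroup scheme, and the short exact sequence $1\to G^0_{\rm ev}/M\to G^0/M\to G^0/G^0_{\rm ev}\to 1$ reads $1\to W_{2,2}\to Q\to\bbG_a^-\to 1$. By Lemma~\ref{le:W2_2} there is a surjection $Q\twoheadrightarrow W_{2,1}$, and composing with $G^0\twoheadrightarrow Q$ yields a surjection $G^0\twoheadrightarrow W_{2,1}$, contradicting part~(III)(iii). This establishes (IV).

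\textbf{Expected difficulty.} The main obstacle is the normality claim in the second step: making rigorous that the conjugation action of $G^0$ on its even part is realised by inner automorphisms — so that a subgroup normal in $G^0_{\rm ev}$ is automatically normal in $G^0$ — which relies on the vanishing of odd derivations of even algebras together with the identification $G^0/G^0_{\rm ev}\cong\bbG_a^-$. The remaining inputs, namely that $G^0_{\rm ev}$ is normal and that the quotient is precisely $\bbG_a^-$, are routine consequences of Theorem~\ref{th:tensor-dec}, Corollary~\ref{co:GevG1}, and the one-dimensionality of $H^{1,1}(G^0,k)$ forced by~($a_0$).
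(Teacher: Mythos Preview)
Your argument is correct and matches the paper's one-line proof, which simply cites Lemma~\ref{le:W2_2} and part~(III)(iii); you have supplied the details the paper omits. Your normality argument for $M$ works --- the underlying reason is that $\underline{\operatorname{Aut}}(G^0_{\rm ev})$ is purely even, so the conjugation map $G^0_{(1)}\to\underline{\operatorname{Aut}}(G^0_{\rm ev})$ has $\fg_1$ in its kernel at the Lie-algebra level and hence, via the height-one correspondence of Lemma~\ref{le:mcV}, factors through $\operatorname{Inn}(G^0_{\rm ev})_{(1)}$. Two minor points: the derivation you should check vanishes is one of $k[G^0_{\rm ev}]$ rather than of $k[M]$ (same conclusion, since both are even), and the identification $G^0/G^0_{\rm ev}\cong(\bbG_a^-)^{\times m}$ is Lemma~\ref{le:purelyodd} rather than Theorem~\ref{th:tensor-dec}.

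That said, a shorter route sidesteps the normality of $M$ entirely and mirrors the \emph{proof} of Lemma~\ref{le:W2_2}: by Corollary~\ref{co:GevG1} the inclusion $G^0_{\rm ev}\hookrightarrow G^0$ induces an isomorphism $G^0_{\rm ev}/(G^0_{\rm ev})_{(1)}\cong G^0/G^0_{(1)}$, and the assumed surjection $G^0_{\rm ev}\twoheadrightarrow W_{2,2}$ descends modulo first Frobenius kernels to a surjection onto $W_{2,2}/W_{2,1}\cong W_{2,1}$. Composing with $G^0\twoheadrightarrow G^0/G^0_{(1)}$ gives the forbidden surjection $G^0\twoheadrightarrow W_{2,1}$ directly, with no need to form $Q$ or to establish $M\unlhd G^0$.
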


\begin{proof} 
(I).  Let $\rho\colon G \to H$ be a surjective map of unipotent group schemes, 
and suppose that $H$ surjects further on a group scheme $H^\prime$ which is 
isomorphic to $\bbG_{a(1)}$, $\bbG_a^-$ or $\mathbb Z/p$. By Remark~\ref{re:injectivity}, 
we have a commutative diagram 

\begin{equation}
\label{eq:comm1} 
 \xymatrix{ G \ar@{->>}[r]^-f\ar@{->>}[d]_-\rho& \ov G \ar@{->>}[d]^-{\ov \rho}\\
H \ar@{->>}[r]^{\chi}& H^\prime
}
\end{equation} 

Lemma~\ref{le:H1} implies that $\ov \rho: \ov G \to H^\prime$ induces an injective map 
on $H^{1,*}$.  Moreover, the explicit calculation of cohomology for $\ov G$ further implies 
that the map $H^{*,*}(H^\prime, k) \to H^{*,*}(\ov G, k)$ is injective. Since $H^\prime = \bbG_{a(1)}, 
\bbG_a^-$ or $\mathbb Z/p$, we have that $H^{1,*}(H^\prime, k)$ is a $1$-dimensional vector space. 
Let $\alpha \in H^{1,*}(H^\prime, k)$ be a linear generator. Then the assumption $(a)$ together with 
the commutativity of \eqref{eq:comm1} imply that 
\[ 0 \not = (\ov \rho \circ f)^*(\alpha) = (\chi \circ \rho)^*(\alpha) \in H^{1,*}(G,k).
\]
In Case (I.i), assume that there is a surjective map $G \to H$ where $H=(\bbG_{a(1)} \times \bbG_{a(1)}) \rtimes (\bbZ/p)^{\times s}$. 
There are maps $\chi: H \to H^\prime$ 
with $H^\prime = \bbG_{a(1)}, \mathbb Z/p$.  By Proposition~\ref{pr:Ga1Ga1Zps}, taking for 
$\alpha$ the elements $y$ and $\lambda$, we obtain a relation 

\[f^*(\ov \rho^*(y) \ov \rho^*(\lambda)) = \rho^*(\chi^*(y)\chi^*(\lambda)) =0. \] 

Hence, $0 \not =  \ov \rho^*(y) \ov \rho^*(\lambda)$ is in $I$ which contradicts the assumption $(b)$ completing 
the proof in that case.

In Case (I.ii), we assume  there is a surjective map $G \to H$ where $H=(\bbG_a^- \times \bbG_a^-) \rtimes (\bbZ/p)^{\times s}$.
Cohomology of $H$ is computed explicitly in \cite{Benson/Pevtsova:bp2}; there exist non trivial elements $\lambda_1 \in H^{1,0}(H,k)$ and 
$\zeta \in H^{1,1}(H,k)$ such that $\lambda_1\zeta =0$. Arguing as in (I.i), we get a contradiction with the assumption $(b)$ again. 

(II.$a_0$). Let $\pi = \pi_0(G)$ be the group of connected components of $G$. 
By Lemma~\ref{le:elempi}, we have $\pi \cong (\bbZ/p)^{\times s}$, which is the same as $\ov \pi = \pi_0(\ov G)$. 
The map $f\colon G \to \ov G$ induces a commutative diagram of five-term sequences: 
\[
  \xymatrix{ H^{1,*}(\pi,k) \ar@{=}[d] \ar[r] & H^{1,*}(G,k) \ar[r] &
    H^{1,*}(G^0,k)^\pi \ar[r]& H^{2,*}(\pi,k) \ar[r] \ar@{=}[d]&H^{2,*}(G,k) \\
    H^{1,*}(\ov\pi,k) \ar[r] & H^{1,*}(\ov G,k) \ar[r]\ar[u]_{f^*} &
    H^{1,*}(\ov G^0,k) \ar[u]_{f_0^*}\ar[r]^0& H^{2,*}(\ov\pi,k)
    \ar[r] &  H^{2,*}(\ov G,k) \ar[u]_{f^*}\\
  }
\]
Since $f^*$ is an iso on $H^{1,*}$, we conclude that it induces an isomorphism $H^{1,*}(\ov G^0,k) \cong H^{1,*}(G^0,k)^\pi$.  
It remains to show that $\pi$ acts trivially on $H^{1,*}(G^0,k)$. 
 
By Lemma~\ref{le:H1}, 
We have $H^{1,*}(G^0,k) \cong H^{1,0}(G^0,k) \oplus H^{1,1}(G^0,k)$ with $H^{1,0}(G^0,k)  
\cong \Hom(G^0, \mathbb G_a)$, $H^{1,1}(G^0,k) \cong \Hom(G^0, \mathbb G_a^-)$ by 
Lemma~\ref{le:H1}, and the action of $ \pi$ fixing the even and odd parts. 
 
%$H^{1,*}(G,k) \cong \Hom(G^0, \mathbb G_a)^\pi \oplus \Hom(G^0, \mathbb G_a^-)^\pi \oplus \Hom(\pi, \mathbb G_a)$. 
The assumption that $f^*$  is an isomorphism on $H^1$ together with Lemma~\ref{le:H1} imply that 
\begin{align} 
\dim \Hom(G^0, \mathbb G_a^-)^\pi =1 \\
\Hom(G^0, \mathbb G_a)^\pi = \Hom(\mathbb G_{a(r)}, \mathbb G_{a(r)}).
\end{align}  
Hence, to show that $\pi$ acts trivially on $H^{1,*}(G_0,k)$, 
we need to show the same two equalities for $\Hom(G^0, \mathbb G_a^-)$ and $\Hom(G^0, \mathbb G_a)$. 

We first show that $\dim_k\Hom(G^0, \mathbb G_a^-) = 1$. Suppose $\dim _k \Hom(G^0, \mathbb G_a^-) \geq 2$. 
Since $\pi$ is a $p$-group, there exists a two-dimensional $\pi$-invariant subspace of $\Hom(G^0, \mathbb G_a^-)$ and, 
hence, a $\pi$-invariant quotient of the form $\mathbb G_a^-\times \mathbb G_a^-$. But this implies that $G$ 
has a quotient of the form $H = (\bbG_a^- \times \bbG_a^-) \rtimes (\bbZ/p)^{\times s}$ which is disallowed by (I.ii). Hence, 
$\dim_k\Hom(G^0, \mathbb G_a^-) = 1$. 

We now consider $\Hom(G^0, \mathbb G_a)$. First, since $G^0$ is
finite, there exists a number $n$ such that 
$\Hom(G^0, \mathbb G_a) = \Hom(G^0, \mathbb G_{a(n)})$. Pick the maximal $n$ so that the map $ G^0 \to \mathbb G_{a(n)}$ is surjective. 
The standard projection $\mathbb G_{a(n)}) \to \mathbb G_{a(1)}$ 
induces a map on Hom spaces $\Hom(G^0, \mathbb G_{a(n)}) \to \Hom(G^0, \mathbb G_{a(1)})$; the action of $\pi$ descends 
along this map since the Frobenius map is $\pi$-equivariant. If $\dim_k\Hom(G^0, \mathbb G_{a(1)}) >1$, then arguing just as 
in the case of $\mathbb G_a^-$ we deduce a contradiction with (I.i).  Hence, $\Hom(G^0, \mathbb G_{a(1)})=1$. Therefore, 
\[\Hom(G^0, \mathbb G_{a(n)}) \cong \Hom(\mathbb G_{a(n)}, \mathbb G_{a(n)})
\]
It remains to show that $n=r$. Note that $\Hom(\mathbb G_{a(n)}, \mathbb G_{a(n)}) \simeq \mathbb G_a^{\times n}$ as a group scheme, with the action of $\pi$ preserving the  group scheme structure.  Since $\mathbb G_a^{\times n}$ is connected, the action of $\pi$ must be trivial, hence, $r=n$.

(II.$b_0$). The projection $f\colon G \to \ov G$ 
induces a map on spectral sequences making the following diagram commute: 
\begin{equation}\label{eq:spectral}\xymatrix{& H^{\text{{\bf *},*}}(\pi, H^{\text{{\bf *},*}}(G^0,k)) \ar@{=>}[r] & H^{\text{{\bf *},*}}(G,k) \\
H^{\text{{\bf *},*}}(\mathbb Z/p^{\times s},k) \otimes H^{\text{{\bf *},*}}(\ov G^0, k) 
 \ar[r]^-\sim &  H^{\text{{\bf *},*}}(\ov \pi, H^{\text{{\bf *},*}}(\ov G^0,k))
\ar[u] \ar@{=>}[r] & H^{\text{{\bf *},*}}(\ov G,k)\ar[u]^-{f^*}
}\end{equation} 
%\marginpar{Julia: You meant to add something like ``bottom sequence collapses at 2nd page''.}
Here, the star for the internal degree is preserved by the spectral sequence. The bottom sequence collapses at 
the $E_2$ page giving an isomorphism $H^{\text{{\bf *},*}}(\mathbb Z/p^{\times s},k) \otimes H^{\text{{\bf *},*}}(\ov G^0, k) 
\cong H^{\text{{\bf *},*}}(\ov G,k)$. 
Since $\zeta^2 + \gamma x_r \in H^{2,0}(\ov G^0, k) = H^{2,0}(\bbG_{a(r)} \times \bbG^-_a,k)$, 
we conclude that it belongs to the kernel of $f_0^*$.  It remains to show that this class generates 
the kernel of $f^{*}_0$  on $H^{2,0}$. 

Let
\begin{equation} 
\label{eq:filtration}
\xymatrix{H^{2,0}(G,k)& F^1H^{2,0}(G,k)\ar@{_(->}[l]\ar[d]^{\equiv}&  F^0H^{2,0}(G,k)\ar@{_(->}[l]\ar[d]^{\equiv}\\
& F^1H^{2,0}(\ov G,k) & F^0H^{2,0}(\ov G,k)\ar@{_(->}[l]
}
\end{equation} 
be the filtration on $H^{2,0}$ with subquotients giving the $E_\infty$ term of the spectral sequences. 

We consider another diagram induced by $f$: 
\begin{equation}
\label{eq:diagram} 
\xymatrix{H^{2,0}(\ov G^0,k) \ar[r]^{f_0^*}\ar@{^(->}[d]^\rho & H^{2,0}(G^0,k)^\pi \\
H^{2,0}(\ov G,k)\ar@<1ex>[u] \ar[r]^{f^*} & H^{2,0}(G,k)\ar[u]^i 
}
\end{equation} 
The left vertical map induced by the embedding $\ov G^0 < \ov G$ splits since 
\[H^{2,0}(\ov G,k) \cong H^2(\pi,k) \oplus H^1(\pi, H^{1,0}(\ov G^0,k)) \oplus H^{2,0}(\ov G^0,k).\]
The left vertical map $\rho\colon H^{2,0}(\ov G^0,k)\hookrightarrow H^{2,0}(\ov G,k)$ is 
the identification of $H^{2,0}(\ov G^0,k)$ with the last direct summand. 

The right vertical map $i\colon H^{2,0}(G,k) \to H^{2,0}(G^0,k)^\pi$ is the edge homomorphism 
of the top row spectral sequence in \eqref{eq:spectral}, hence, 
\begin{equation} 
\label{eq:keri}
\Ker i = F^1H^{2,0}(G,k). 
\end{equation} 
 
Let $\alpha \in H^{2,0}(\ov G^0,k)$ be a class in the kernel of $f^*_0$.  Then $f_0^*(\alpha) = i f^* \rho(\alpha) =0$ implies that $f^* \rho(\alpha) \in \Ker i = F^1H^{2,0}(G,k)$.  Since $F^1H^{2,0}(G,k) \cong F^1H^{2,0}(\ov G,k)$  by \eqref{eq:filtration}, there exists  $\beta \in F^1H^{2,0}(\ov G,k) = H^2(\pi,k) \oplus H^1(\pi, H^{1,0}(\ov G^0,k))$, such that $f^*(\rho(\alpha)) =  f^*(\beta)$, that is, 
\[
f^*(\rho(\alpha) - \beta) =0.
\] 
Assumption (b) now implies that $\rho(\alpha) - \beta$ is a multiple of $\zeta^2 + \gamma x_r$ and, hence, $\rho(\alpha) - \beta \in \Im \rho$. 
Therefore, $\beta \in \Im \rho$.  This implies that $\beta = 0$ since $\Im \rho \cap F^1H^{2,0}(\ov G,k) = 0$. We conclude that 
$f^*(\rho(\alpha)) = 0$, and, hence,  $\alpha$ is a multiple of $\zeta^2 + \gamma x_r$. Hence the kernel is one-dimensional. 

(III).  We apply the same argument as in Case (I) but to $f_0\colon G^0 \to \ov G^0$.  Once again, we have a 
commutative diagram of surjective maps: 
\[
\xymatrix{ G^0 \ar@{->>}[r]^-{f_0}\ar@{->>}[d]_-\rho& \ov G^0 \ar@{->>}[d]^-{\ov \rho}\\ H \ar@{->>}[r]^{\chi}& H^\prime
}
\]

For (III.i),  Proposition~\ref{pr:Ga^-Gar} gives an element $\zeta \in H^{1,1}(H^\prime, k)$ such that 
$\chi^*(\zeta)^2 =0$. Hence, commutativity of the diagram above implies that $0 \not = (\ov \rho^*(\zeta))^2$ 
is in the kernel of $f_0^*$ contradicting the assumption II($b_0$), and completing the proof in this case. 

Case (III.ii) follows from Proposition~\ref{pr:HWm-} in a similar fashion taking $H^\prime = \bbG_a^1$ and $\alpha = \zeta_m$. 
 
If $G^0$ has a quotient $W_{2,1}$, then $\beta\cP^0(\lambda_2)$, 
where $\lambda_2$ is a degree $(1,0)$ cohomology generator of $H^{*,*}(W_{2,1},k)$,
is in the kernel of $f_0^*$, contradicting II($b_0$).  

Finally, Case (IV) follows from Lemma~\ref{le:W2_2} and case (II.iii). 
\end{proof}

\begin{corollary}\label{co:Aeven}
Let $G$ be a unipotent finite supergroup scheme satisfying 
the assumptions of Theorem~\ref{th:forbidden}. Let $A = G/[G_{\rm ev}, G_{\rm ev}]$. 
Then $A^0_{\rm ev}$ is isomorphic to a quotient of $E_{m,n} = (E_{m,n}^-)_{\rm ev}$ for some $m,n >0$. 
\end{corollary}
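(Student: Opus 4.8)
The plan is to reduce to the case where $A^0_{\rm ev}$ is abelian, reinterpret the assertion as a cyclicity statement for a Dieudonn\'e module, and then eliminate the non‑cyclic possibilities using the list of forbidden quotients in Theorem~\ref{th:forbidden}.

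First I would pin down the shape of $A$. By Theorem~\ref{th:forbidden}(II) one has $\dim_k\Hom_{\sGr/k}(G^0,\bbG_a^-)=\dim_k H^{1,1}(G^0,k)=1$, so there is a surjection $G^0\tto\bbG_a^-$, unique up to scalar; its kernel $N$ is unipotent, and the five–term exact sequence of $1\to N\to G^0\to\bbG_a^-\to1$ together with $H^{*,*}(\bbG_a^-,k)=k[\zeta]$ (Proposition~\ref{pr:Steenrod-on-Ga^-}, so $H^{2,1}(\bbG_a^-,k)=0$) forces $H^{1,1}(N,k)^{\bbG_a^-}=0$, hence $H^{1,1}(N,k)=0$ since a nonzero representation of the unipotent group scheme $\bbG_a^-$ has nonzero invariants; by Lemma~\ref{le:even}, $N$ is even, and comparing even parts, $N=G^0_{\rm ev}$. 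Thus $G^0_{\rm ev}\unlhd G^0$, $G_{\rm ev}=G^0_{\rm ev}\rtimes\pi_0(G)\unlhd G$, $[G_{\rm ev},G_{\rm ev}]\unlhd G$, the quotient $A_{\rm ev}=G_{\rm ev}/[G_{\rm ev},G_{\rm ev}]$ is abelian, $A^0_{\rm ev}=G^0_{\rm ev}/[G_{\rm ev},G_{\rm ev}]$ is abelian connected unipotent, and $1\to A^0_{\rm ev}\to A^0\to\bbG_a^-\to1$ is exact. Since the $\pi_0(G)$–action on the $\bbG_{a(r)}\times\bbG_a^-$–part of $\overline G$ is trivial, $f$ kills $[G_{\rm ev},G_{\rm ev}]$ and induces a surjection $\overline f_0\colon A^0\tto\bbG_{a(r)}\times\bbG_a^-$ which is an isomorphism on $H^{1,*}$; if it is moreover injective on $H^{2,*}$ then $\overline f_0$ is an isomorphism by Lemma~\ref{le:5-term}, whence $A^0_{\rm ev}\cong\bbG_{a(r)}=E_{1,r}$ (a trivial quotient of $E_{1,1}$ when $r=0$) and the claim holds. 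Finally, being a quotient of $G^0$ (resp.\ of $G^0_{\rm ev}$), the group $A^0$ has no quotient isomorphic to a non‑split extension of $\bbG_a^-$ by $\bbG_{a(r)}$, to $W_{m,1}^-$, or to $W_{2,1}$, and $A^0_{\rm ev}$ has no quotient isomorphic to $W_{2,2}$.

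So assume $\overline f_0$ is not injective on $H^{2,*}$ and pass to Dieudonn\'e modules (Appendix~\ref{sec:Witt}): $A^0_{\rm ev}$ corresponds to a finite module $M$ over the Dieudonn\'e ring with $F$ and $V$ acting nilpotently, and by Construction~\ref{not:Emn^-} one has $E_{m,n}\leftrightarrow D_k/(V^m,F^n,p)$. Every cyclic such module is a quotient of $D_k/(V^m,F^n,p)$ for $m,n\ge1$ large enough that $V^m=0=F^n$ on it, so $A^0_{\rm ev}$ is a quotient of some $E_{m,n}$ exactly when $M$ is cyclic. Since $F,V$ are nilpotent, Nakayama's lemma says $M$ is cyclic iff $\dim_k M/(FM+VM)\le1$; dualising through the Dieudonn\'e correspondence, this is the same as saying $A^0_{\rm ev}$ contains no sub‑supergroup scheme isomorphic to $\bbG_{a(1)}\times\bbG_{a(1)}$. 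Thus it remains to show $\bbG_{a(1)}\times\bbG_{a(1)}\not\le A^0_{\rm ev}$.

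Suppose it is. Then $\bbG_{a(1)}\times\bbG_{a(1)}\le(A^0_{\rm ev})_{(1)}=(A^0_{(1)})_{\rm ev}$, and taking first Frobenius kernels in $1\to A^0_{\rm ev}\to A^0\to\bbG_a^-\to1$ gives $1\to(A^0_{(1)})_{\rm ev}\to A^0_{(1)}\to\bbG_a^-\to1$ with $A^0_{(1)}$ of height one, so $A^0_{(1)}=\mcU^{[p]}(\fg)$ for $\fg=\fg_0\oplus\fg_1$ with $\fg_0=\Lie((A^0_{(1)})_{\rm ev})$ an ideal, $\fg_1=k\si$ and $[\si,\si]=2v\in\fg_0$. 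Using that the conjugation action of $\bbG_a^-=A^0/A^0_{\rm ev}$ on $(A^0_{\rm ev})_{(1)}$ is unipotent, I would cut down to an invariant rank–two quotient $(\bbG_{a(1)})^{\times2}$ (keeping the image of $v$ nonzero when $v\ne0$), producing a surjection of $A^0_{(1)}$ onto an extension $P$ with $1\to(\bbG_{a(1)})^{\times2}\to P\to\bbG_a^-\to1$; by Lemma~\ref{le:Ga^-Ga1Ga1} there is a nontrivial map $P\to\bbG_{a(1)}$, and the height–one analysis of $P$ gives $P\cong(\bbG_{a(1)})^{\times2}\times\bbG_a^-$ or $P\cong W_{1,1}^-\times\bbG_{a(1)}$. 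The final, and hardest, step is to propagate such a quotient from the Frobenius kernel $A^0_{(1)}$ up to $A^0$ itself — using $A^0=A^0_{\rm ev}A^0_{(1)}$ (Corollary~\ref{co:GevG1}), the normality of $A^0_{\rm ev}$, and the counts $\dim_k\Hom(A^0,\bbG_a)=r$, $\dim_k\Hom(A^0,\bbG_a^-)=1$ — so as to realise a surjection of $A^0$ onto a non‑split extension of $\bbG_a^-$ by some $\bbG_{a(r')}$, or onto $W_{m,1}^-$, or onto $W_{2,1}$, or a surjection of $A^0_{\rm ev}$ onto $W_{2,2}$, each of which is excluded by Theorem~\ref{th:forbidden}. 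This contradiction gives $\bbG_{a(1)}\times\bbG_{a(1)}\not\le A^0_{\rm ev}$, hence $A^0_{\rm ev}$ is a quotient of some $E_{m,n}$ with $m,n>0$. I expect essentially all of the work to sit in that last step: a quotient of the Frobenius kernel need not lift to a quotient of the whole group, and it is precisely here that the peculiar appearance of $W_{2,1}$ and $W_{2,2}$ in Theorem~\ref{th:forbidden} gets used.
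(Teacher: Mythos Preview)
Your reduction to Dieudonn\'e modules is right in spirit, but the dictionary is wrong at the key step. The equivalence $\psi$ of Appendix~\ref{sec:Witt} is covariant (it comes from $\Hom_\fA(W_{m,n},-)$), so quotients of $M$ correspond to quotients of $A^0_\ev$, not to subgroups; in particular $\dim_k M/(FM+VM)=\dim_k\Hom_{\Gr/k}(A^0_\ev,\bbG_{a(1)})$, and cyclicity of $M$ is equivalent to $A^0_\ev$ having no \emph{quotient} isomorphic to $\bbG_{a(1)}\times\bbG_{a(1)}$. Your final paragraph therefore chases the wrong object, and the delicate lifting from $A^0_{(1)}$ to $A^0$ that you flag as ``the hardest step'' is simply not needed. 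There is also a second gap: cyclicity alone is not enough. A cyclic $M$ with $V^m=F^n=0$ is a quotient of $D_k/(V^m,F^n)$, but to land in $D_k/(V^m,F^n,p)$ you must also know that $p=FV$ annihilates $M$; for instance $W_{2,2}=\psi(D_k/(V^2,F^2))$ is cyclic yet is not a quotient of any $E_{m,n}$. Your sentence ``every cyclic such module is a quotient of $D_k/(V^m,F^n,p)$'' is false as stated.

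Once these are corrected the proof collapses to the paper's short argument: two independent homomorphisms $A^0_\ev\to\bbG_{a(1)}$ would produce a quotient of $G$ of the form $(\bbG_{a(1)}\times\bbG_{a(1)})\rtimes(\bbZ/p)^{\times s}$, forbidden by Theorem~\ref{th:forbidden}(I), so $\dim_k\Hom_{\Gr/k}(A^0_\ev,\bbG_{a(1)})=1$ and $M$ is cyclic; Theorem~\ref{th:forbidden}(IV) then excludes $W_{2,2}$ as a quotient of $A^0_\ev$, forcing $pM=0$; now Lemma~\ref{ab:quotients} gives the conclusion directly. No height-one Lie-superalgebra analysis is required.
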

\begin{proof} 
First we claim that
$\dim_k\Hom_{\Gr/k}(A_\ev,\bbG_{a(1)})=1$. This is because if this
dimension is two or greater then $G$, and, hence, $G^0$, has a quotient which is a nonsplit
extension of $\bbG_a^-$ by $\bbG_{a(1)}$, which is not allowed by 
Theorem~\ref{th:forbidden}. 

Next, we claim that $\dim_k\Hom_{\Gr/k}(A^0_\ev,\bbG_{a(1)})=1$. This
is because if this dimension is two or greater then $G$ has a quotient
which is a semidirect product
$(\bbG_{a(1)}\times\bbG_{a(1)})\rtimes(\bbZ/p)^{\times s}$ with non-trivial
action. This is  once again disallowed by Theorem~\ref{th:forbidden}. 

By Theorem~\ref{th:forbidden}(III), $A^0_\ev$ does not have $W_{2,2}$ as a quotient.  
Together with the condition $\dim_k\Hom_{\Gr/k}(A^0_\ev,\bbG_{a(1)})=1$ this allows us to  
apply Lemma~\ref{ab:quotients}, concluding that 
$A^0_\ev$ is isomorphic to a quotient of the group scheme $E_{m,n}$.
\end{proof} 

Now for the promised computation of cohomology of Witt elementary supergroup schemes. 

\begin{theorem}\label{th:HEmn^-}
The cohomology of the group $E_{m,n}^-$ (as defined in \eqref{not:Emn^-}) 
$(m\ge 2,\ n \ge 1)$ is given by
\[ H^{*,*}(E_{m,n}^-,k) = 
k[x_{m,1},\dots,x_{m,n},\zeta_m] \otimes 
\Lambda(\lambda_{m,1},\dots,\lambda_{m,n}) \]
with $|x_{m,i}|=(2,0)$, $|\zeta_m|=(1,1)$ and
$|\lambda_{m,i}|=(1,0)$. 

For $m\ge 3$, the surjective map $E_{m,n}^-\to E_{m-1,n}^-$ induces an inflation map
\[ H^{*,*}(E_{m-1,n}^-,k)\to H^{*,*}(E_{m,n}^-,k) \]
sending $x_{m-1,i}$ to $x_{m,i}$ $(1\le i\le n-1)$, $x_{m-1,n}$ to zero, 
$\zeta_{m-1}$ to $\zeta_m$ and $\lambda_{m-1,i}$ to $\lambda_{m,i}$
$(1\le i \le n)$.

The surjective map $E_{2,n}^- \to E_{1,n}^- = \bbG_{a(n)} \times \bbG_a^-$
induces an inflation map
sending $x_i$ to $x_{2,i}$ $(1\le i \le n-1)$, $x_n$ to $\zeta^2$,
$\zeta_2$ to $\zeta$ and $\lambda_i$ to $\lambda_{2,i}$. In
particular, the kernel of 
\[ H^{2,0}(\bbG_{a(n)} \times \bbG_a^-,k) \to
H^{2,0}(E_{2,n}^-,k) \] 
is one dimensional, spanned by $\zeta^2-x_n$.
\end{theorem}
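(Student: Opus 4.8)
The first task is the ring isomorphism, and the plan is to reduce it to $E_{m,1}^-$ by Künneth. Inspecting the presentation in Construction~\ref{not:Emn^-}, one sees that as a $\bbZ/2$-graded $k$-algebra (forgetting the comultiplication) $kE_{m,n}^-\cong kE_{m,1}^-\otimes (k\bbG_{a(1)})^{\otimes(n-1)}$, the last $n-1$ factors being the $k[s_i]/(s_i^p)$ for $1\le i\le n-1$ and the first being $k[s_n,\sigma]/(s_n^{p^m},\sigma^2-s_n^p)$. Since the cohomology ring of a finite supergroup scheme with trivial coefficients depends only on the algebra structure of the group ring, the Künneth isomorphism gives $H^{*,*}(E_{m,n}^-,k)\cong H^{*,*}(E_{m,1}^-,k)\otimes H^{*,*}(\bbG_{a(1)},k)^{\otimes(n-1)}$; the $n-1$ copies of $k[x]\otimes\Lambda(\lambda)$ supply the generators $x_{m,i},\lambda_{m,i}$ $(1\le i\le n-1)$ and $H^{*,*}(E_{m,1}^-,k)$ supplies $x_{m,n},\zeta_m,\lambda_{m,n}$. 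So everything reduces to computing $H^{*,*}(E_{m,1}^-,k)$.

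\textbf{The case $n=1$.} For this I would run the Lyndon--Hochschild--Serre spectral sequence of the extension \eqref{eq:Em1-}, $1\to W_{m,1}\to E_{m,1}^-\to\bbG_a^-\to 1$. Because $kE_{m,1}^-$ is commutative as an ungraded algebra, $[\sigma,-]$ vanishes on the even part of $\Lie(E_{m,1}^-)$, so $W_{m,1}$ is central in $E_{m,1}^-$ and $\bbG_a^-$ acts trivially on $H^{*,*}(W_{m,1},k)$; hence $E_2=k[\zeta_m]\otimes H^{*,*}(W_{m,1},k)=k[\zeta_m]\otimes k[x_m]\otimes\Lambda(\lambda_m)$ with $\zeta_m$ a base class of degree $(1,1)$ and $x_m,\lambda_m$ fibre classes of degrees $(2,0),(1,0)$. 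The base class $\zeta_m$ is a permanent cycle, $d_2$ is a derivation, and by bidegree the only differentials that can possibly be nonzero are $d_2(\lambda_m)=c\,\zeta_m^2$ and $d_2(x_m)=c'\,\zeta_m^2\lambda_m$ (once these vanish, $E_2=E_\infty$). That $c=0$: the surjection $E_{m,1}^-\to\bbG_{a(1)}$ of Construction~\ref{not:Emn^-} (quotient by $\sigma$, which also kills $s^p=\sigma^2$), composed with $W_{m,1}\hookrightarrow E_{m,1}^-$, is the standard surjection $W_{m,1}\to W_{1,1}$, so by Theorem~\ref{th:HA} the generator of $H^{1,0}(\bbG_{a(1)},k)$ inflates to a class of $H^{1,0}(E_{m,1}^-,k)$ restricting to $\lambda_m$ on $W_{m,1}$; thus $\lambda_m$ is a permanent cycle. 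Granting also $c'=0$ (see below), the only extension issue is $\lambda_m^2=0$ ($p\ne 2$), and a Poincar\'e series count yields $H^{*,*}(E_{m,1}^-,k)=k[x_m,\zeta_m]\otimes\Lambda(\lambda_m)$. I would also record here the relation $\beta\cP^0(\lambda_m)=-\zeta_m^2$ in $H^{2,0}(E_{m,1}^-,k)$: its restriction to $W_{m,1}$ vanishes (as for $\bbZ/p^m$ with $m\ge 2$), so it is a multiple of $\zeta_m^2$, and the coefficient is pinned down in low degrees directly from the defining relation $\sigma^2=s^p$, i.e. $[\sigma,\sigma]=2s^{[p]}$ (cf.\ Section~\ref{se:low}); this is exactly the $\zeta^2-\gamma x$ phenomenon of the introduction.

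\textbf{The inflation maps.} For $m\ge 3$ the surjection $E_{m,n}^-\to E_{m-1,n}^-$ is, on group rings, the quotient by the Hopf ideal generated by $s_n^{p^{m-1}}$; under the tensor decomposition above it is the identity on the $n-1$ factors $k[s_i]/(s_i^p)$ and the surjection $E_{m,1}^-\to E_{m-1,1}^-$ on the remaining factor, and the latter fits into a morphism of extensions \eqref{eq:Em1-} whose fibre component is the standard surjection $W_{m,1}\to W_{m-1,1}$ and whose base component is the identity on $\bbG_a^-$. Comparing the (degenerate) spectral sequences and invoking Theorem~\ref{th:HA} (giving $x_{m-1}\mapsto 0$, $\lambda_{m-1}\mapsto\lambda_m$, $\zeta_{m-1}\mapsto\zeta_m$) yields the stated inflation. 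For $m=2$ the surjection $E_{2,n}^-\to E_{1,n}^-=\bbG_{a(n)}\times\bbG_a^-$ (from Section~\ref{se:Elem}) is handled the same way: it induces the identity on the $\bbG_a^-$-quotients (so $\zeta\mapsto\zeta_2$) and the inclusions $\bbG_{a(1)}\hookrightarrow\bbG_{a(n)}$ give $x_i\mapsto x_{2,i}$, $\lambda_i\mapsto\lambda_{2,i}$ for $i<n$ and $\lambda_n\mapsto\lambda_{2,n}$; the only new point is the image of $x_n=-\beta\cP^0(\lambda_n)$, which by naturality of the Steenrod operations is $-\beta\cP^0(\lambda_{2,n})=\zeta_2^2$ using the relation recorded in the previous paragraph (transported by Künneth). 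Consequently $H^{2,0}(\bbG_{a(n)}\times\bbG_a^-,k)\to H^{2,0}(E_{2,n}^-,k)$ sends both $x_n$ and $\zeta^2$ to $\zeta_2^2$ and is injective on the complement $\langle x_1,\dots,x_{n-1},\zeta^2,\lambda_i\lambda_j\rangle$, so its kernel is precisely the line spanned by $\zeta^2-x_n$.

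\textbf{Main obstacle.} The genuinely non-formal step is the vanishing of $d_2(x_m)$ in the second paragraph, i.e.\ that (a lift of) the fibre class $x_m$ and $\zeta_m^2$ remain linearly independent in $H^{2,0}(E_{m,1}^-,k)$. I would deduce it from $\dim_k\Ext^2_{kE_{m,1}^-}(k,k)=3$, read off the minimal free resolution of $k$ over the complete intersection $kE_{m,1}^-=k[s,\sigma]/(s^{p^m},\sigma^2-s^p)$ (three minimal relations: $[s,\sigma]$, $\sigma^2-s^p$, $s^{p^m}$), or alternatively by upper semicontinuity of $\dim_k\Ext^2$ along the flat degeneration $\sigma^2-ts^p$ to $kW_{m,1}\otimes k\bbG_a^-$ at $t=0$ together with the obvious lower bound. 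Once this and the coefficient in $\beta\cP^0(\lambda_m)=-\zeta_m^2$ are settled, the rest---the Künneth reduction, the vanishing of $d_2(\lambda_m)$, and the naturality arguments for the inflations---is routine bookkeeping.
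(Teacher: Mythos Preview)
Your approach is essentially the paper's: reduce to $n=1$ via the algebra decomposition $kE_{m,n}^-\cong k\bbG_{a(n-1)}\otimes kE_{m,1}^-$ and K\"unneth, then run the LHS spectral sequence for $1\to W_{m,1}\to E_{m,1}^-\to\bbG_a^-\to 1$, and read off the inflation maps from Theorem~\ref{th:HA}. The paper simply asserts that ``all the differentials are zero'' with no further comment; your arguments for this (the permanent--cycle argument for $\lambda_m$ via the surjection $E_{m,1}^-\to\bbG_{a(1)}$, and the dimension count $\dim_k\Ext^2_{kE_{m,1}^-}(k,k)=3$ from the complete intersection structure) are valid and genuinely fill in what the paper leaves implicit. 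Your handling of the $m=2$ inflation via naturality of $\beta\cP^0$ is also more explicit than the paper's one--line appeal to Theorem~\ref{th:HA}, though one should note that the identity $\beta\cP^0(\lambda_m)=-\zeta_m^2$ you invoke is stated in the paper only as a remark \emph{after} this theorem, so it would be cleaner to argue the image of $x_n$ directly from Theorem~\ref{th:HA} (the fibre map $\bbG_{a(1)}\to W_{2,1}$ sends $x_2\mapsto x_1$ and so forth) rather than via Steenrod operations.
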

\begin{proof}
Again, we use the fact that the cohomology only depends on the algebra
structure of the group algebra and not on the comultplication. The
algebra structure is described in Definition~\ref{defn:Witt}, and is a
tensor product $k\bbG_{a(n-1)} \otimes kE_{m,1}^-$. The first factor gives
the generators
$\lambda_{m,1},\dots,\lambda_{m,n-1},x_{m,1}\dots,x_{m,n-1}$, so
we need to compute $H^{*,*}(E_{m,1}^-,k)$. We do this using the
spectral sequence
\[ H^{*,*}(\bbG_a^-,H^{*,*}(W_{m,1},k))\Rightarrow
H^{*,*}(E_{m,1}^-,k). \]
This has the same $E_2$ page as the spectral sequence in the proof of
Proposition~\ref{pr:HWm-}, but all the differentials are zero. This
accounts for the generators $x_{m,n}$, $\zeta_m$ and $\lambda_{m,n}$.
The inflation maps again follow from Theorem~\ref{th:HA}.
\end{proof}

\begin{theorem}\label{th:HEmnmu^-}
The cohomology of the group $E_{m,n,\mu}^-$ of 
\eqref{not:Emnmu^-} is given by
\[ H^{*,*}(E_{m,n,\mu}^-,k) = 
k[x_{m,1,\mu},\dots,x_{m,n,\mu},\zeta_{m,\mu}] \otimes 
\Lambda(\lambda_{m,1,\mu},\dots,\lambda_{m,n,\mu}) \]
with $|x_{m,i,\mu}|=(2,0)$, $|\zeta_{m,\mu}|=(1,1)$ and
$|\lambda_{m,i,\mu}|=(1,0)$. 

The surjective map $E_{m+1,n+1}^- \to E_{m,n,\mu}^-$ induces an inflation
map
\[ H^{*,*}(E_{m,n,\mu}^-,k) \to H^{*,*}(E_{m,n}^-,k) \]
sending each element to the corresponding element without the
subscript $\mu$, except that it sends $x_{m,n,\mu}$ to zero.
\end{theorem}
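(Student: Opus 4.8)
The plan is to follow the template of Theorem~\ref{th:HEmn^-}: identify the group algebra with one already understood in order to get the ring structure, and then read off the comparison map from a single Lyndon--Hochschild--Serre spectral sequence.

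\emph{Ring structure.} By Construction~\ref{not:Emnmu^-}, $E^-_{m,n,\mu}$ is the quotient of $E^-_{m+1,n+1}$ by the central copy of $\bbG_{a(1)}$ whose augmentation ideal in $kE^-_{m+1,n+1}$ is generated by $s_0-ms_n^{p^m}$. Since $(s_0-ms_n^{p^m})^p=s_0^p-m^ps_n^{p^{m+1}}$ already vanishes in $kE^-_{m+1,n+1}$, passing to this quotient of augmented $\bbZ/2$-graded algebras merely deletes the generator $s_0$, so that
\[
kE^-_{m,n,\mu}\;\cong\;k[s_1,\dots,s_{n-1}]/(s_1^p,\dots,s_{n-1}^p)\otimes k[s_n,\si]/(s_n^{p^{m+1}},\si^2-s_n^p)
\]
as an algebra, which is precisely $kE^-_{m+1,n}$ (compare Construction~\ref{not:Emn^-}). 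As recalled before Theorem~\ref{th:HA}, $\Ext^{*,*}_{kG}(k,k)$ depends only on the $\bbZ/2$-graded algebra structure of $kG$, so Theorem~\ref{th:HEmn^-} applied to $E^-_{m+1,n}$ (legitimate since $m+1\ge2$ and $n\ge1$) gives the asserted polynomial-tensor-exterior description of $H^{*,*}(E^-_{m,n,\mu},k)$; I relabel the generators with a subscript $\mu$, which records their degrees.

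\emph{The inflation map.} It is the edge homomorphism of the Lyndon--Hochschild--Serre spectral sequence of the relevant central $\bbG_{a(1)}$-extension, namely $1\to\bbG_{a(1)}\to E^-_{m,n,\mu}\to E^-_{m,n}\to1$, whose group-algebra incarnation is the quotient $kE^-_{m,n,\mu}\twoheadrightarrow kE^-_{m,n,\mu}/(s_n^{p^m})\cong kE^-_{m,n}$ obtained by composing $kE^-_{m+1,n+1}\to kE^-_{m,n,\mu}$ with a further projection. Since $\bbG_{a(1)}$ is central it acts trivially on $H^{*,*}(\bbG_{a(1)},k)$, and the K\"unneth formula (exactly as in Proposition~\ref{pr:HWm-} and Theorem~\ref{th:HEmn^-}) gives $E_2=H^{*,*}(E^-_{m,n},k)\otimes k[x]\otimes\Lambda(\lambda)$ with $|x|=(2,0)$, $|\lambda|=(1,0)$ the generators of $H^{*,*}(\bbG_{a(1)},k)$. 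Comparing the Poincar\'e series of this $E_2$-page with that of $H^{*,*}(E^-_{m,n,\mu},k)$ from the first part, exactly one copy of $H^{*,*}(\bbG_{a(1)},k)$ must cancel; the only differentials that can be nonzero are the transgressions of $\lambda$ and of $x$, and the Poincar\'e-series count forces $d_2(\lambda)\in H^{2,0}(E^-_{m,n},k)$ to be a non-zero-divisor (so that it kills a Koszul complex) and then $d_3(x)=0$. One finds $d_2(\lambda)$ is a unit multiple of the ``last'' even polynomial generator $x_{m,n}$, with the single exception $m=1$, where instead $d_2(\lambda)=x_{m,n}-\zeta_m^2$ up to a unit, in accordance with the final clause of Theorem~\ref{th:HEmn^-}. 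Hence on $E_\infty^{*,0}=H^{*,*}(E^-_{m,n},k)/(d_2(\lambda))$ the class $x_{m,n}$ becomes a boundary; reading off the edge map shows that every generator other than $x_{m,n}$ passes to its $\mu$-subscripted counterpart, $x_{m,n}$ maps to $0$, and the new degree-$(2,0)$ polynomial generator coming from the fibre is $x_{m,n,\mu}$. Alternatively, these formulae can be obtained by hand via naturality of $\Ext$ under the algebra surjection $kE^-_{m+1,n+1}\to kE^-_{m,n,\mu}$, the K\"unneth formula, and Theorem~\ref{th:HA}.

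\emph{Main obstacle.} The one step demanding genuine work is the transgression $d_2(\lambda)$: showing it is $x_{m,n}$ (up to the $m=1$ correction) and not some other class of $H^{2,0}(E^-_{m,n},k)$. The normal $\bbG_{a(1)}$ in the extension above is generated by $s_n^{p^m}$, so passing from $E^-_{m,n}$ to $E^-_{m,n,\mu}$ merely enlarges the truncation of $s_n$ from $s_n^{p^m}=0$ to $s_n^{p^{m+1}}=0$; on the $kE^-_{m,1}$-tensor-factor this is the surjection $E^-_{m+1,1}\to E^-_{m,1}$ for $m\ge2$ (respectively $E^-_{2,1}\to\bbG_{a(1)}\times\bbG_a^-$ for $m=1$), and the inflation for that surjection — together with the computation of the kernel of $H^{2,0}(E^-_{m,1},k)\to H^{2,0}(E^-_{m+1,1},k)$ — is already contained in Theorem~\ref{th:HEmn^-}, identifying $d_2(\lambda)$ with $x_{m,n}$ (and explaining the $\zeta_m^2$ term when $m=1$ via the relation $\si^2=s_n^p$ becoming nontrivial). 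Everything else is bookkeeping with the K\"unneth formula and the already-established computations.
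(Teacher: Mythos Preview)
Your computation of the ring structure is correct and is exactly the paper's approach: identify $kE^-_{m,n,\mu}$ as an augmented $\bbZ/2$-graded algebra with $kE^-_{m+1,n}$ and invoke Theorem~\ref{th:HEmn^-}. The paper's proof is literally ``essentially the same as for Theorem~\ref{th:HEmn^-}'', so on this point you agree.

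For the inflation map, however, you have the wrong short exact sequence and hence the wrong direction. The theorem concerns the surjection $E^-_{m+1,n+1}\twoheadrightarrow E^-_{m,n,\mu}$, whose inflation is a map with \emph{source} $H^{*,*}(E^-_{m,n,\mu},k)$. You instead run the LHS spectral sequence for $1\to\bbG_{a(1)}\to E^-_{m,n,\mu}\to E^-_{m,n}\to 1$; the edge homomorphism there is the inflation $H^{*,*}(E^-_{m,n},k)\to H^{*,*}(E^-_{m,n,\mu},k)$, which runs the other way. Your description ``every generator other than $x_{m,n}$ passes to its $\mu$-subscripted counterpart, $x_{m,n}$ maps to $0$'' is a correct computation of \emph{that} map, but it is not the map the theorem is about. (Part of the confusion is a visible typo in the displayed target of the theorem, which should read $H^{*,*}(E^-_{m+1,n+1},k)$ rather than $H^{*,*}(E^-_{m,n},k)$; but the stated surjection and the stated source of the inflation are unambiguous.) Your spectral-sequence work on $d_2(\lambda)$ is therefore aimed at a different question and does not establish the second half of the theorem.

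The ``alternative'' you sketch in your last line---follow the algebra surjection $kE^-_{m+1,n+1}\to kE^-_{m,n,\mu}$, use the K\"unneth decomposition, and read off the effect on generators via Theorem~\ref{th:HA}---is exactly the paper's method (mirroring the last sentence of the proof of Theorem~\ref{th:HEmn^-}), and it is the route you should take. Concretely, writing $u=s_0-\mu s_n^{p^m}$ gives an algebra isomorphism $kE^-_{m+1,n+1}\cong k[u]/(u^p)\otimes kE^-_{m,n,\mu}$ compatible with the quotient, so the inflation is the K\"unneth inclusion and the only remaining work is matching the two sets of generators; that matching is where Theorem~\ref{th:HA} is used.
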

\begin{proof}
The proof is essentially the same as for Theorem~\ref{th:HEmn^-}.
\end{proof}

\begin{remark}
The computation in Theorem~\ref{th:HEmn^-} also appears in  Proposition~3.2.1\,(1) and (3) and Lemma~3.2.4 of Drupieski and Kujawa~\cite{Drupieski/Kujawa:ga}. Similarly, Theorem~\ref{th:HEmnmu^-} should be compared with Proposition~3.2.1\,(4) and (5) of~\cite{Drupieski/Kujawa:ga} and Lemma~3.1.1\,(3) and Remark~2.2.3\,(1)  of~\cite{Drupieski/Kujawa:cs}. 
\end{remark}

\begin{remark}  
We tabulate the action of the Steenrod operations on  $H^{*,*}(E_{m,n}^-,k)$, for use in the proof of Theorem~\ref{th:elemcoh}.   
The table for $H^{*,*}(E_{m,n, \mu}^-,k)$ looks exactly the same after adding $\mu$ to all the indices; cf.~Table~\ref{table:Steenrod}.

\begin{center}
\begin{tabular}{|c|c|c|c|c|c|c|c|c|c|}
\hline
 \small degree & &$\cP^{0^{\phantom{0}}}$ & $\beta\cP^0$ & $\cP^\frac{1}{2}$ & $\beta \cP^\frac{1}{2}$ & $\cP^1$ & $\cP^i$ & $\beta\cP^i$  &\\ 
&&&&&&&$(i\ge 2)$ & $(i\ge 1)$& \\ \hline
 $\lambda_{m,i}$ & $(1,0)$ & $\lambda_{m, i+1}$ & $-x_{m,i}$& $$&  $$ & $0$ & $0$ & $0$ & $1\leq i <n$\\
$\lambda_{m,n}$ & $(1,0)$ & $0$ & $-\zeta_m^2$& $$&  $$ & $0$ & $0$ & $0$ &\\
$\zeta_m$ & $(1,1)$ &$$ & $$ & $\zeta_m^p$ & $0$ & $$ & $$  & $$&\\
$x_{m,i}$ & $(2,0)$ &  $x_{m,i+1}$ & $0$ & $$ & $$ &$x_i^p$ & $0$ & $0$ &$1\leq i <n$ \\
$x_{m,n}$ & $(2,0)$ &  $0$ & $0$ & $$ & $$ &$0$ & $0$ & $0$ &\\
\hline
\end{tabular}
\end{center}
\end{remark}

%%%%%%%%%%%%%%%%%%%%%Cohomological characterisation of elementary supergroups%%%%%%%%%%%%%%%%%%%
%%%%%%%%%%%%%%%%%%%%%%%%%%%%%%%%%%%%%%%%%%%%%%%%%%%%%%

\section{Cohomological characterisation of elementary supergroups}
\label{se:coh}
The purpose of this section is to show that elementary supergroups as
introduced in Definition~\ref{def:elementary} can be characterised
cohomologically.  Recall that for
$\ov G = \mathbb G_{a(r)} \times \mathbb G_a^- \times (\mathbb
Z/p)^{\times s}$, we employ the following notation for the standard
generators in cohomology:
\[
H^{*,*}(\ov G,k) = k[x_1,\dots,x_r] \otimes
\Lambda(\lambda_1,\dots,\lambda_r) \otimes k[\zeta] \otimes
k[z_1,\dots,z_s] \otimes \Lambda(y_1,\dots,y_s) 
\]
Theorems~\ref{th:HEmn^-} and \ref{th:HEmnmu^-} show that if $G$ is  an elementary supergroup scheme equipped with a surjection $G \to \ov G$ which induces an isomorphism on $H^{1,*}$, then either $f$ is an isomorphism or $\Ker f^*$ falls under the case  (ii) of Theorem~\ref{th:B3.6}. Theorem~\ref{th:elemcoh} proves a partial converse to this statement,  and is the key step in the proof of Theorem~\ref{th:detect}.

\begin{lemma}\label{le:abelian}
Let $1 \to Z \xrightarrow{f} G \xrightarrow{\psi} A \to 1$ be a central extension of 
group schemes with $Z\cong \bbG_{a(1)}$ and $A$ abelian. If the connecting homomorphism 
$d_2\colon H^1(Z,k) \to H^2(A,k)$ is zero then $G$ is abelian.
\end{lemma}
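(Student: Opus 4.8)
The plan is to extract from the vanishing of $d_2$ a homomorphism $G \to \bbG_a$ that restricts to an embedding of $Z$, and then to use it to embed $G$ into an abelian group scheme.

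First I would invoke the five-term exact sequence of the Lyndon--Hochschild--Serre spectral sequence attached to $1 \to Z \to G \xrightarrow{\psi} A \to 1$, namely
\[
0 \to H^1(A,k) \to H^1(G,k) \xrightarrow{\res} H^1(Z,k)^A \xrightarrow{d_2} H^2(A,k).
\]
Because the extension is central, $A$ acts trivially on $H^1(Z,k)$, so $H^1(Z,k)^A = H^1(Z,k)$, and the hypothesis $d_2 = 0$ says precisely that $\res\colon H^1(G,k) \to H^1(Z,k)$ is surjective.

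Next I would identify $H^1(Z,k)$. Since $Z \cong \bbG_{a(1)}$ is connected and purely even, Lemma~\ref{le:H1} gives $H^1(Z,k) = H^{1,0}(Z,k) = \Hom_{\Gr/k}(\bbG_{a(1)},\bbG_a)$; this space is one-dimensional and its nonzero elements are closed immersions $\bbG_{a(1)} \hookrightarrow \bbG_a$. Let $\chi_0$ be the canonical inclusion. By the surjectivity just established, $\chi_0$ lifts to a homomorphism $\tilde\chi\colon G \to \bbG_a$ with $\tilde\chi|_Z = \chi_0$.

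Finally I would look at $(\tilde\chi,\psi)\colon G \to \bbG_a \times A$. For any commutative $k$-algebra $R$, an element of its kernel is a $g \in G(R)$ with $\psi(g)=0$ and $\tilde\chi(g)=0$; the first condition forces $g \in Z(R)$, and then $\tilde\chi(g) = \chi_0(g) = 0$ forces $g = 0$ since $\chi_0$ is a monomorphism. Hence $(\tilde\chi,\psi)$ has trivial kernel, so $G(R)$ embeds as a subgroup of $(\bbG_a \times A)(R) = R \times A(R)$, which is abelian because $A$ is. Therefore $G(R)$ is abelian for every $R$, that is, $G$ is abelian. The argument is short and I do not foresee a genuine obstacle; the only points requiring care are that the five-term sequence yields \emph{surjectivity} of the restriction on $H^1$, and that the lifted character, paired with $\psi$, is injective precisely because $\chi_0$ embeds $Z$ into $\bbG_a$.
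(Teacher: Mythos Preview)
Your proof is correct and follows essentially the same approach as the paper's own proof: both use the five-term exact sequence and the vanishing of $d_2$ to lift a nonzero class in $H^1(Z,k)$ to $H^1(G,k)$, interpret this via Lemma~\ref{le:H1} as a homomorphism $G\to\bbG_a$ restricting nontrivially to $Z$, and then observe that pairing it with $\psi$ embeds $G$ into the abelian group scheme $\bbG_a\times A$. Your version is simply more explicit about why the restriction to $Z$ is a monomorphism and hence why the resulting map to $\bbG_a\times A$ has trivial kernel.
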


\begin{proof}
The five-term sequence of the central extension shows that there is
an element $\tilde u\in H^{1}(G,k)$ whose restriction is $f^*(\tilde u)=u\in H^{1}(Z,k)$.
Applying Lemma~\ref{le:H1}, we see that there is a homomorphism
$\phi\colon G \to \bbG_a$ whose composite with $Z \to G$ is nonzero.
Then $(\psi,\phi)\colon G \to A \times \bbG_a$ is an embedding, and $G$ is
a subgroup scheme of an abelian group scheme, hence abelian.
\end{proof}

The following proposition, which is the key observation necessary for the proof of Theorem~\ref{th:elemcoh}, 
gives a cohomological criterion to establish that certain extensions of abelian finite group schemes are abelian themselves. 

\begin{proposition}\label{pr:abelian}
Let $1\to Z \to G \to A \to 1$ be a central extension of group
schemes with $Z\cong \bbG_{a(1)}$ and $A$ abelian. 
The following are equivalent:
\begin{enumerate}
\item[\rm (i)]
$G$ is abelian.
\item[\rm (ii)] There exists an abelian finite group scheme $A^\prime$
  and a surjective map $A^\prime \to A$ such that the composition
  $\xymatrix{H^1(Z,k) \ar[r]^-{d_2} & H^2(A,k) \ar[r]& H^2(A^\prime,
    k)}$ is zero.  The induced map in cohomology sends
  $d_2(\lambda) \in H^2(A,k)$ to zero in $H^2(A',k)$ for all
  $0\ne \lambda \in H^1(Z,k)$.
\end{enumerate}
\end{proposition}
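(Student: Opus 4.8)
The plan is to reduce both implications to Lemma~\ref{le:abelian}, which settles the case that the connecting map $d_2$ itself vanishes. Note first that $H^1(Z,k) = H^{1,0}(\bbG_{a(1)},k)$ is one-dimensional, so the two phrasings in condition~(ii) (``the composite is zero'' and ``$d_2(\lambda)\mapsto 0$ for all $0\ne\lambda$'') coincide. For the implication (i)~$\Rightarrow$~(ii) I would simply take $A' = G$ with the surjection $\psi\colon G\to A$ coming from the given extension: the five-term exact sequence attached to $1\to Z\to G\xrightarrow{\psi} A\to 1$ contains the segment $H^1(Z,k)\xrightarrow{d_2} H^2(A,k)\xrightarrow{\psi^*} H^2(G,k)$, so exactness gives $\psi^*\circ d_2 = 0$, and $G$ is abelian by hypothesis.

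For (ii)~$\Rightarrow$~(i), given $\pi\colon A'\to A$ as in~(ii), the idea is to pull the extension back along $\pi$. Put $G' := G\times_A A'$, with its two projections to $G$ and to $A'$, both group homomorphisms. Since $\pi$ is a surjection of finite group schemes it is faithfully flat, hence so is its base change $G'\to G$; in particular that map is surjective. Similarly $G'\to A'$ is the base change of $\psi$ along $\pi$, so its kernel is the copy of $Z$ sitting in $G'$ as $\{(z,1):z\in Z\}$, and we obtain a short exact sequence $1\to Z\to G'\to A'\to 1$. This extension is again central: for $(g,a')\in G'(R)$ one has $(g,a')(z,1)(g,a')^{-1} = (gzg^{-1},1) = (z,1)$, using centrality of $Z$ in $G$.

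Next I would invoke naturality of the Lyndon--Hochschild--Serre five-term sequence along the morphism of extensions $(\id_Z,\ G'\to G,\ \pi)$: this gives a commutative square whose left edge is the identity of $H^1(Z,k)$, whose right edge is $\pi^*\colon H^2(A,k)\to H^2(A',k)$, and whose horizontal edges are the connecting maps $d_2$ (for $G\to A$) and $d_2'$ (for $G'\to A'$). Hence $d_2' = \pi^*\circ d_2 = 0$ by~(ii). Since $A'$ is abelian and $Z\cong\bbG_{a(1)}$ is central in $G'$, Lemma~\ref{le:abelian} applies to $1\to Z\to G'\to A'\to 1$ and shows $G'$ is abelian; as $G$ is a quotient of $G'$ via the surjection $G'\to G$, it follows that $G$ is abelian.

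The conceptual content is all in Lemma~\ref{le:abelian}; the work in this proof is the scheme-theoretic bookkeeping for the pullback --- that $G'\to G$ is faithfully flat and hence surjective, that $\ker(G'\to A')$ is exactly $Z$, that the pulled-back extension remains central, and, most importantly, that its connecting homomorphism equals $\pi^*\circ d_2$ by naturality of the spectral sequence in the base. I expect this last point to be the step requiring the most care, though none of it is deep.
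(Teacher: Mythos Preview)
Your proof is correct and follows essentially the same route as the paper: both directions reduce to Lemma~\ref{le:abelian}, with (i)~$\Rightarrow$~(ii) handled by taking $A'=G$, and (ii)~$\Rightarrow$~(i) by forming the pullback $G\times_A A'$ and invoking naturality of the connecting map. You are more explicit than the paper about the bookkeeping (surjectivity of the pullback projection, centrality, and the identity $d_2' = \pi^*\circ d_2$), but the argument is the same.
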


\begin{proof} 
(i) $\Rightarrow$ (ii): Take $A'=G$ and use the five-term sequence.

(ii) $\Rightarrow$ (i): Let $1\to A'' \to A' \to A \to 1$ be the short exact sequence 
given by the surjection $A^\prime \to A$.  
Form the pullback $X$ of $G \to A$ and $A'\to A$:
\[ \xymatrix{&&1\ar[d] & 1 \ar[d] \\
&& Z\ar@{=}[r]\ar[d] & Z\ar[d] \\
1\ar[r] & A'' \ar[r] \ar@{=}[d] & X\ar[r]\ar[d]  & G \ar[r] \ar[d] & 1 \\
1 \ar[r] & A'' \ar[r] &A' \ar[r]\ar[d] & A \ar[r]\ar[d] & 1 \\
&& 1 & 1} \]
If $d_2(\lambda)$ goes to zero in $H^2(A',k)$ then the sequence
\[ 1 \to Z \to X \to A' \to 1 \]
satisfies the conditions of Lemma~\ref{le:abelian}, and so
$X$ is abelian. Since $G$ is a quotient of $X$, it follows that $G$ is abelian.
\end{proof}

\begin{theorem}
\label{th:elemcoh} 
Let $G$ be a unipotent finite supergroup scheme, 
and $f\colon G \to \ov G = \bbG_{a(r)} \times \bbG_a^- \times (\bbZ/p)^{\times s}$ 
a surjective map of supergroup schemes.
Assume that 
\begin{enumerate} [\quad\rm(1)] 
\item $f^*\colon H^{1,*}(\ov G,k) \to H^{1,*}(G,k)$ is an isomorphism, 
\item $\Ker(f^*)\cap H^{2,0}(\ov G,k)$ is one dimensional, spanned by an 
element of the form $\zeta^2+\gamma x_r$ with $0\ne \gamma\in k$.
\item  $\Ker(f^*)\cap H^{2,1}(\ov G,k) =0$,
\item There does not exist $i \in \bbZ_{\geq 0}$ and $y \in H^1((\bbZ/p)^{\times s}, \bbF_p) \subset H^{1,0}(\ov G,k)$ such that 
$\zeta^{2p^i}\beta\cP^0(y)$ or $\zeta^{2p^i+2}$ lie in $\Ker \{f^*:H^{*,*}(\ov G,k) \to H^{*,*}(G,k)\}$.
\sloppy{

}

\end{enumerate} 
Then $G$ is isomorphic to 
$E^-_{m,r}\times (\bbZ/p)^{\times s}$ or $E^-_{m, r+1, \mu}$ for some $m \geq 1$, $\mu \in k$.  

\end{theorem}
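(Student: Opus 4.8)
The plan is to build $G$ up from its connected component and its component group, and then analyse the connected part using the extension-by-$\bbG_a^-$ structure lemmas. Here is the skeleton.

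\textbf{Step 1: Splitting off the \'etale part.} By Lemma~\ref{le:elempi}, assumptions (1) and (2) force $\pi=\pi_0(G)\cong(\bbZ/p)^{\times s}$. By Theorem~\ref{th:forbidden}(II), the restriction $f_0\colon G^0\to\ov G^0=\bbG_{a(r)}\times\bbG_a^-$ again satisfies hypotheses ($a_0$) and ($b_0$), and moreover the analysis of the $\pi$-action on $H^{1,*}(G^0,k)$ carried out in the proof of Theorem~\ref{th:forbidden}(II.$a_0$) shows $\pi$ acts trivially on $H^{1,*}(G^0,k)$. I expect that, exactly as in Lemmas~\ref{le:Ga-split1} and \ref{le:Ga-split2}, triviality of the $\pi$-action on the relevant one-dimensional Hom-spaces forces $\pi$ to centralise $G^0$, so that $G\cong G^0\times(\bbZ/p)^{\times s}$. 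Then it suffices to identify $G^0$ as either $E^-_{m,r}$ or $E^-_{m,r+1,\mu}$ (absorbing the $s$ into the product); note when $r=0$ or the odd part degenerates we land in $\bbG_{a(n)}$ or $\bbG_a^-$, which are the $m=1$ / degenerate cases already in Theorem~\ref{th:Witt-classification}.

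\textbf{Step 2: Structure of $G^0$.} Since $G^0$ is unipotent connected, assumption (1) together with Lemma~\ref{le:H1} says $\Hom(G^0,\bbG_a^-)$ is one-dimensional, giving a surjection $\chi\colon G^0\to\bbG_a^-$ with kernel $N$ a connected unipotent group scheme. Assumption (3) ($\Ker f^*\cap H^{2,1}=0$) together with Theorem~\ref{th:bp2} rules out the bad degree-$(2,1)$ kernel scenario, so we are genuinely in the situation of Theorem~\ref{th:B3.6}(ii)/\ref{th:forbidden}. I would now show $N$ is abelian: apply Corollary~\ref{co:Aeven} to see that $A^0_\ev=(G^0/[G^0_\ev,G^0_\ev])_\ev$ is a quotient of some $E_{m,n}$, and combine with Theorem~\ref{th:forbidden}(III)(ii),(iii) (no $W^-_{m,1}$, no $W_{2,1}$ quotients) and Proposition~\ref{pr:abelian} — this is where the cohomological criterion for abelianness of a central extension does its work — to conclude $N\cong W_{m,1}\times\bbG_{a(r-1)}$-type, i.e. $N$ is abelian connected unipotent and in fact $N=G^0_{(1)}\cdot(\text{Frobenius twists})$ with the precise shape dictated by $r$ and $m$.

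\textbf{Step 3: Identifying the extension.} With $N$ abelian of the known form, $G^0$ sits in $1\to N\to G^0\to\bbG_a^-\to 1$, and Lemmas~\ref{lem:extensions}, \ref{lem:elemext} classify such extensions: passing to the height-one Lie superalgebra $\fg=\Lie(G^0)$ via Lemma~\ref{le:mcV}, one writes $\tfrac12[\si,\si]=\sum a_i s^{[p]^i}+\alpha s_1$ and normalises the even generators to bring $kG^0$ into one of the standard presentations $k[s_1,\dots,s_n,\si]/(\dots,\si^2-s_n^{p^i}-\alpha s_1)$ or $k[\dots]/(\dots,\si^2-s_n^{p^i})$. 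Then assumptions (2) and (4) pin down which case occurs: condition (4) is precisely what excludes the presentations whose cohomology (computed in Theorems~\ref{th:HEmn^-}, \ref{th:HEmnmu^-} and the Steenrod table) would contain an element $\zeta^{2p^i}\beta\cP^0(y)$ or $\zeta^{2p^i+2}$ in $\Ker f^*$, leaving exactly $E^-_{m,r}$ (when $\alpha=0$, $i$ giving the right Frobenius length matched against $r$ by condition (2)) and $E^-_{m,r+1,\mu}$ (when the $\mu$-twist is present); finally Theorem~\ref{th:Witt-classification} organises the possible $\mu$ up to $p^{m+n}-1$st powers.

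\textbf{Main obstacle.} The delicate point is Step 2 — proving $N$ is abelian and has the exact shape forced by $r$. Corollary~\ref{co:Aeven} and Theorem~\ref{th:forbidden} control $A^0_\ev=G^0_\ev/[G^0_\ev,G^0_\ev]$ as a quotient of $E_{m,n}$, but lifting this to the statement that $G^0_\ev$ (hence $N$) is itself abelian requires Proposition~\ref{pr:abelian} applied with the correct choice of $A'$ and a careful check that $d_2(\lambda)$ dies there; simultaneously one must match the Frobenius height of $N$ against the index $r$ using assumption (2), i.e. that $\Ker f^*$ on $H^{2,0}$ is spanned by $\zeta^2+\gamma x_r$ with $x_r$ the \emph{last} polynomial generator — this is what tethers the abstract classification to the specific parameter $r$ in the conclusion. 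Keeping the bookkeeping of which $x_i$ is killed consistent between the cohomology computations and the group-scheme presentation is the part that needs genuine care rather than routine calculation.
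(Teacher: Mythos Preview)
Your Step 1 contains a genuine gap: you cannot split off $\pi\cong(\bbZ/p)^{\times s}$ as a direct factor at the outset. Knowing that $\pi$ acts trivially on $H^{1,*}(G^0,k)$ (which is what Theorem~\ref{th:forbidden}(II.$a_0$) gives) does \emph{not} imply $\pi$ centralises $G^0$; a unipotent $p$-group can act nontrivially on $G^0$ while acting trivially on the abelianisation. Lemmas~\ref{le:Ga-split1} and \ref{le:Ga-split2} only handle very small $G^0$ and do not apply here. The paper avoids this by never attempting an early splitting: it works with the full $G$, sets $A=G/[G_\ev,G_\ev]$, and identifies $A$ as $E^-_{m,r}\times(\bbZ/p)^{\times s}$ (or the $\mu$-variant) directly. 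The direct product with $\pi$ falls out only \emph{after} one knows $A_\ev$ is abelian, since then $\pi\subseteq A_\ev$ automatically centralises $A^0_\ev$.

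Your Steps 2--3 also have the order of assembly inverted relative to what is needed. Proposition~\ref{pr:abelian} applies to a central extension $1\to\bbG_{a(1)}\to G'\to A\to 1$ with $A$ already abelian; so you must \emph{first} identify $A=G/[G_\ev,G_\ev]$ completely (via Corollary~\ref{co:Aeven} for $A^0_\ev$, then Lemmas~\ref{lem:extensions}, \ref{lem:elemext} together with the forbidden-quotient list in Theorem~\ref{th:forbidden}(III) to pin down $A^0$, then the abelianness argument for the $\pi$-action). Only then, assuming $[G_\ev,G_\ev]\neq 1$, do you pick a maximal $N\lhd[G_\ev,G_\ev]$ with quotient $\bbG_{a(1)}$, form the central extension $1\to\bbG_{a(1)}\to G/N\to A\to 1$, and analyse $d_2(\lambda)\in H^{2,0}(A,k)$. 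The crucial computation you gloss over is showing $d_2(\lambda)=x_{m,r}+\alpha\zeta_m^2$: this is a Steenrod-operation argument on $H^{*,*}(A,k)$ (not on $H^{*,*}(\ov G,k)$), and assumption (4) is used exactly here to kill the cross-terms involving $\lambda_{m,i}$ and $y_j$. After restricting to $A_\ev$ (where $\zeta_m$ dies) you get $d_2^\ev(\lambda)=x_{m,r}$, which inflates to zero in $H^2(E_{m+1,r},k)$, and then Proposition~\ref{pr:abelian} gives the contradiction. Your ``Main obstacle'' paragraph correctly locates the difficulty, but the proof as sketched tries to deduce abelianness of $N=G^0_\ev$ before knowing $A$, which makes the application of Proposition~\ref{pr:abelian} circular.
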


\begin{proof}  
The proof has three essential reduction steps: 
\begin{enumerate}
\item[Step (1)] The first step is to show that $G_\ev$ is normal in $G$, and $G/G_\ev \cong \mathbb G_a^-$. 
\item[Step (2)] Let $A = G/[G_\ev,G_\ev]$. The second step is to show that $A$ is isomorphic to either 
$E^-_{m,r}\times (\bbZ/p)^{\times s}$ or $E^-_{m, r+1, \mu}$ for some $m \geq 1$, $\mu \in k$.  
\item[Step (3)] Finally, we show that $G \cong A$.
\end{enumerate} 

\vspace{0.1in} 

By Lemma~\ref{le:elempi}, $\pi \cong (\mathbb Z/p)^{\times s}$. 

Let $\psi\colon \ov G \to \bbG_a^-$ be the projection map,  and let $H = \Ker\{\psi \circ f\colon G \to \ov G \to \bbG_a^-\}$. We now show that $H = G_\ev$, proving Step (1). 

We have the five-term sequence associated  with the extension $1 \to H \to G \to \bbG_a^-\to 1$ of which we only need the odd internal degree part:
\[ 
\xymatrix{0 \ar[r]& H^{1,1}(\bbG_a^-,k)\ar[r]^{\sim}_{(\psi \circ f)^*}& 
H^{1,1}(G,k)\ar[r]^-{\rm res}_-{=0}& H^{1,1}(H,k)^{\bbG_a^-} 
\ar[r]^-{d_2}_-{=0}&H^{2,1}(\bbG_a^-,k)\ar@{^(->}[r]_-{f^*}& H^{2,1}(G,k).}
\]
The first map is an isomorphism since $f^*$ is an isomorphism by assumption (i), 
and $\psi^*$ is an isomorphism on $H^{1,1}$ since we know cohomology of $\ov G$ and $\bbG_a^-$ explicitly. 
Assumption (iii) implies that the last map is an embedding. Hence,  
$H^{1,1}(H,k)^{\bbG_a^-}=0$ and, therefore, $H^{1,1}(H,k)=0$ since $\bbG_a^-$ is unipotent. We conclude that $H$  
is even by Lemma~\ref{le:even}.  Since $G/H \cong \bbG_a^-$, $H$ is the largest even 
subgroup scheme; hence, $H = G_\ev$ which proves the claim. 
 
Assumption $(1)$ implies that $r$ is maximal such that there is a surjective $\pi$-invariant map $G^0 \to \bbG_{a(r)}$ 
since any $\pi$-invariant surjection $G^0 \to \bbG_{a(s)}$ induces an embedding in cohomology 
$H^1(\bbG_{a(s)},k)\hookrightarrow H^{1,0}(G,k)$ by Lemma~\ref{le:H1}.  
We claim that $r$ is also maximal subject to the existence of a $\pi$-invariant surjective map $G^0_\ev \to \bbG_{a(r)}$. 
Suppose, to the contrary, that there is a $\pi$-invariant surjective map $G^0_\ev \to \bbG_{a(r+1)}$, and let $N$ be the kernel.
Since $G^0_\ev = G^0 \cap G_\ev$, we have that $G^0/G^0_\ev \cong G/G_\ev \cong \bbG_a^-$. 
We have a commutative diagram of $\pi$-invariant homomorphisms: 
\[\vcenter{\xymatrix{ & N \ar[d]\ar@{=}[r] &N\ar[d] && \\
1 \ar[r] & G^0_\ev \ar[r] \ar[d]& G^0 \ar[r] \ar[d] & \bbG_a^- \ar[r]\ar@{=}[d] & 1 \\
1 \ar[r] & \bbG_{a(r+1)} \ar[r] & \overline{G}^0 = G^0/N \ar[r] & \bbG_a^- \ar[r] & 1 \\
}}\]
If the extension on the bottom row splits,  then it $\pi$-splits by Lemma~\ref{le:Ga-split2}. 
Hence, there is a $\pi$-invariant surjective map $G^0 \to \bbG_{a(r+1)}$ which contradicts 
maximality of $r$.  On the other hand, if the map does not split, then the inflation of 
$\zeta\in H^{1,1}(\bbG_a^-,k)$ to $\overline{G}^0$  is a non trivial cohomology class in 
$H^{1,1}(\ov G^0,k)$ which squares to zero in $H^{2,0}(G,k)$ by Proposition~\ref{pr:Ga^-Gar}. 
Inflating $\zeta$ further to $G^0$ via the projection $G^0 \to \overline G^0$, we get a non trivial 
$\pi$-invariant cohomology class in $H^{1,1}(G^0,k)$ which squares to $0$.  Hence $\zeta^2$ is in 
the kernel of the map $f^*: H^{2,0}(\ov G,k) \to H^{2,0}(G,k)$ which contradicts assumption
(2). We therefore conclude that $r$ is maximal such that there is a surjective map $G^0_\ev \to \bbG_{a(r)}$ as claimed.

\vspace{0.1in}

Since $G_\ev \unlhd G$ is a normal subgroup scheme, Lemma~\ref{le:normal} implies that $[G_\ev,G_\ev] \unlhd G$. 
Let $A=G/[G_\ev,G_\ev]$, so that $A_\ev$ is the abelianisation of
$G_\ev$. 
\begin{claim} We have that $A$ is isomorphic to $E_{m,r}^- \times (\bbZ/p)^{\times s}$ or 
$E_{m, r+1, \mu}^- \times (\bbZ/p)^{\times s}$ for some $m \geq 1$. \end{claim}

\begin{proof}[Proof of the Claim] Corollary~\ref{co:Aeven} implies that 
$A^0_\ev$ is isomorphic to a quotient of
%$\psi(D_k/(V^m,F^n,p))\cong$ 
$E_{m,n}$ for some $m,n\ge 1$.
By Theorem~\ref{th:Witt-classification}, this implies that $A^0_\ev$ is isomorphic either to
$E_{m,n}$ or to $E_{m,n,\mu}$ for some $0\ne \mu\in k$.  
We divide into two cases according to these two possibilities.
Looking at homomorphisms from these to $\bbG_{a(r)}$, we see that in the first
case $n=r$, while in the second case $n=r+1$.

{\it Case I: $A^0_\ev \cong E_{m,r}$.} This case splits further into two subcases. 
\begin{enumerate} 
\item $r=1$. Since $E_{m,1} = W_{m,1}$, we have that $A^0 = A^0_{(1)}$ fits in the extension 
\[ 1 \to W_{m,1} \to A^0 \to \bbG_a^- \to 1, \]
and, hence, is described by Lemma~\ref{lem:extensions}. The cohomological restriction in the assumption (2)
implies that the only allowed possibility is $A^0 \cong E_{m,1}^-$ since in any other 
case $A^0$, and, hence, $G^0$, will have a quotient isomorphic to $W_{2,1}$ or $W_{1,1}^-$ which are
disallowed by Theorem~\ref{th:forbidden}.III(iii). Hence, $A \cong E_{m,1}^- \times (\bbZ/p)^{\times s}$. In terms of 
the diagrams for the possibilities 
for $A^0$ as in Lemma~\ref{lem:extensions}, the only way to attach the node $\sigma$ to avoid quotients 
isomorphic to $W_{2,1}$ and $W_{1,1}^-$ is to the node marked with $s^{[p]}$. 

\item $r>1$.  In this case, $kA^0$ is described by Lemma~\ref{lem:elemext}(1). The coefficient $\alpha$ in 
the relation $\sigma^2 - s^{[p]^i} -\alpha s_1$ must be zero since $G^0$, and, hence, $A^0$, has a 
quotient isomorphic to $\bbG_{a(r)}$.  The parameter $i$ must be $1$ since for $i>1$ there will be a 
quotient isomorphic to $W_{2,1}$.  In terms of the picture in Lemma~\ref{lem:elemext}, the node $\sigma$ 
can be connected {\it only} to the node $s^{[p]}$, for otherwise the top two nodes on the left arm will 
form a quotient isomorphic $W_{2,1}$. Hence, $A^0 \cong E_{m,r}^-$. Since $A_\ev$ is abelian, the 
group of connected components of $A$ acts trivially on $A^0_\ev$. Since $(\bbZ/p)^{\times s}$ is a $p$-group, it also acts 
trivially on the quotient $A^0/A^0_\ev \cong \mathbb G_a^-$. Therefore, 
$A \cong E_{m,r}^- \times (\bbZ/p)^{\times s}$.
\end{enumerate}

{\it Case II: $A^0_\ev \cong E_{m,r+1, \mu}$.}
 This case is similar.  The possibilities for $kA^0$ are given by Lemma~\ref{lem:elemext}(ii). 
 All of them but one are disallowed by Theorem~\ref{th:forbidden}.III(iii). We conclude 
 that $A^0 \cong E_{m,r+1, \mu}^-$, and, therefore, we can identify $A$ with
$E^-_{m,r+1,\mu}\times (\bbZ/p)^{\times s}$.
\end{proof}

Now that we have identified $A=G/[G_\ev,G_\ev]$, it remains to show
that $G  = A$,  that is $[G_\ev,G_\ev] =1$. We prove this by contradiction. Assume that 
$G \not = A$.  

Note that $[G_\ev,G_\ev] \subseteq [G,G] \subseteq G^0$ since the 
group of connected components of $G$ is abelian.  Hence, 
$[G_\ev,G_\ev]$ is a connected unipotent finite group scheme. 
Therefore, there exists a maximal
proper subgroup $N$ of $[G_\ev,G_\ev]$ such that $[G_\ev,G_\ev]/N\cong \bbG_{a(1)}$ 
giving rise to a central extension
\begin{equation}
\label{eq:central} 
1 \to \bbG_{a(1)} \to G/N \to A \to 1.
\end{equation}  
Let $\psi\colon G \to A$, $\phi: A \to \ov G$ be the projection maps; we factor $\psi$ as $\xymatrix{ G  \ar[r]^-{\psi_2} & G/N \ar[r]^-{\psi_1} &A}$. 
The map  $f\colon G \to \overline G$ then factors as follows: 
\[\xymatrix{ 
f\colon G  \ar[r]^-{\psi_2} & G/N \ar[r]^-{\psi_1} & A \ar[r]^-{\phi} & \overline G = \bbG_{a(r)} \times \bbG_a^- \times (\bbZ/p)^{\times s} 
}
\]
Since $\phi$, $\psi$ are surjective, the induced maps on $H^1$ are injective. Since the composition 
\[
\xymatrix{f\colon H^{1,*}(\ov G,k) \ar@{^(->}[r]^-{\phi^*} & H^{1,*}(A,k) \ar@{^(->}[r]^-{\psi^*} & H^{1,*}(G,k)}
\] 
is an isomorphism, we conclude that
 \[\xymatrix{\psi^*\colon H^{1,*}(A,k) \ar[r]^-\sim &H^{1,*}(G,k)}\]
is also an isomorphism. 

We again consider two cases:  $A \cong E_{m,r}^- \times (\bbZ/p)^{\times s}$ and $ A \cong E_{m, r+1, \mu}^- \times (\bbZ/p)^{\times s}$.  

\noindent 
{\it Case I:} $A \cong E_{m,r}^- \times (\bbZ/p)^{\times s}$. Assume that $m \geq 2$.  By Theorem~\ref{th:HEmn^-}, 
\begin{enumerate}
\item $H^{*,*}(A,k) \cong k[x_{m,1},\dots,x_{m,n},\zeta_m] \otimes 
\Lambda(\lambda_{m,1},\dots,\lambda_{m,n}) \otimes k[z_1, \ldots, z_s] \otimes \Lambda(y_1, \ldots, y_s)$, 
\item $\phi^*: H^{1,*}(\ov G, k) \to H^{1,*}(A,k)$ is an isomorphism, 
\item $\Ker \{\phi^*: H^{2,*}(\ov G, k) \to H^{2,*}(A,k)\} = k\langle \zeta^2 + \gamma x_r\rangle$.
\end{enumerate} 
In particular, $\phi^*$ and $f^*$ have the same kernel on $H^{2,*}(\ov G,k)$. Therefore, 
$\psi_1^*\colon H^{2,*}(A,k) \to H^{2,*}(G/N,k)$ is one-to-one restricted to 
$\phi^*(H^{2,*}(\ov G,k))$. 

Consider the five-term exact sequence induced by \eqref{eq:central}:
\[
\xymatrix{H^{1,*}(A,k) \ar[r] & H^{1,*}(G/N,k) \ar[r] & H^{1,*}(\bbG_{a(1)},k) \ar[r]^-{d_2}& H^{2,*}(A,k) \ar[r]^{\psi_1^*}& H^{2,*}(G/N,k) } 
\]
Let $\lambda \in H^{1,0}(\bbG_{a(1)},k)$ be any linear generator. Since $G_\ev/N$ is 
non-abelian,  $d_2(\lambda) \in H^{2,0}(A,k)$ is a nonzero 
element in the kernel of $\psi_1^*$ by Lemma~\ref{le:abelian}, and, hence, a nonzero 
element in the kernel of $\psi^*$. By Theorem~\ref{th:HEmn^-}, the only 
linear generator of $H^{2,0}(A,k)$ which is {\it not} in the image of $\phi^*$, is $x_{m,r}$. Hence, 
replacing $\lambda$ by a nonzero multiple if necessary, we may assume that
\[d_2(\lambda) = x_{m,r} + u\]
where $u \in \phi^*(H^{2,*}(\ov G,k))$.

\begin{claim} 
\label{cl:u} 
$u = \alpha \zeta^2$ for some $\alpha \in k$. 
\end{claim}
\begin{proof}
We prove the claim by consecutive application of Steenrod operations, similarly to Theorem~\ref{th:B3.6}.  
Since any element in $\Ker \{\psi^*:  H^{2,*}(A,k) \to H^{2,*}(G,k)\}$ must have the form $ax_{m,r} + v$ 
with $ a \not = 0$ and $ v \in \phi^*(H^{2,0}(\ov G,k))$, we have 
\[ 
\dim_k \Ker \{\psi^*\colon  H^{2,*}(A,k) \to H^{2,*}(G,k)\} = 1.
\] 
Let 
\[ u = \alpha \zeta_m^2 +\sum_{1\le i < j \le r}a_{i,j}\lambda_{m,i}\lambda_{m,j} + \sum_{1\le j <r} b_j x_{m,j}
+ \sum_{1\le i \le r, 1\le j \le s} c_{i,j}\lambda_{m,i}y_j + \sum_{1\le
  i<j\le s}d_{i,j}y_iy_j + \sum_{1\le j\le s}e_jz_j  \]
for some constants $\alpha,a_{i,j},b_j,c_{i,j},d_{i,j},e_j\in k$ which are
not all zero.  Since $\Ker \psi^*$ is stable under the Steenrod operations and $\cP^0(x_{m,r})=0$, 
we conclude that $\cP^0(u) =0$, which forces $u$ to be of the form
\begin{equation*}\label{eq:u3} 
u = \alpha\zeta_m^2 + \lambda_{m,r}(\sum_{1\le i < r}a_{i}\lambda_{m,i})  + \lambda_{m,r} (\sum_{1\le j \le s}c_{j}y_j).
\end{equation*}
Proposition~\ref{pr:Steenrod-on-Ga^-}  together with the Cartan formula imply (by induction) that 
\begin{equation*}
\cP^i(\zeta_m^{2i}) = \zeta_m^{2pi} 
\end{equation*}
and all other Steenrod operations vanish on $\zeta_m^{2i}$. 

Since $\beta\cP^0(\lambda_{m,r}) = \zeta_m^2$, applying $\beta\cP^0$ to \eqref{eq:u3}, we get 
\begin{equation*}\label{eq:u4} 
\zeta_m^2(\sum_{1\le i < r}a^p_{i}\lambda_{m,i+1})  + \zeta_m^2 (\sum_{1\le j \le s}c^p_{j}z_j).
\end{equation*}
Applying $\beta\cP^1$, we get 
\begin{equation*}\label{eq:u5} 
\zeta_m^{2p}(\sum_{1\le i < r-1}a^{p^2}_{i}\lambda_{m,i+2})  + \zeta_m^{2p+2}  + \zeta_m^{2p} (\sum_{1\le j \le s}c^{p^2}_{j}z_j).
\end{equation*}
If there is a nonzero coefficient $c_j$, then applying $\cP^p, \cP^{p^2}, \ldots $ and then taking invariants 
under the Frobenius map as in the proof of Theorem~\ref{th:B3.6}, we eventually get that the kernel of the map
$\psi^*\colon H^{2,*}(A,k) \to H^{2,*}(G,k)$ contains an element $\zeta_m^{2p^i}\beta\cP^0(y)$ with 
$y \in H^1((\bbZ/p)^{\times s},\mathbb F_p) \subset H^{1,0}(\ov G, k) \cong H^{1,0}(A, k)$.  This means that $f^*$ vanishes on 
$\zeta^{2p^i}\beta\cP^0(y)$, contradicting assumption (4). Hence, we can assume  
that all coefficients $c_j$ are zero.

Suppose there is a coefficient $a_i \not = 0$. Then \eqref{eq:u5} has the form
\begin{equation}\label{eq:u6} 
\zeta_m^{2p}(\sum_{1\le i < r-1}a^{p^2}_{i}\lambda_{m,i+2})  + \zeta_m^{2p+2}.
\end{equation}
 Applying $\cP^p, \cP^{p^2}, \ldots $ and stopping right before everything annihilates, we conclude that $\zeta_m^{2p^i +2} \in \Ker \psi^*$, 
 once again contradicting assumption (4). 

Hence, all coefficients, except for possibly $\alpha$, are zero. This proves the claim.
\end{proof}

Since $G/G_\ev \cong A/A_\ev \cong \bbG_a^-$, the extension \eqref{eq:central} restricts to an extension on the even subgroup schemes.
\begin{equation}
\label{eq:centralev} 
\xymatrix{1 \ar[r] & \bbG_{a(1)} \ar[r] \ar@{=}[d]& G/N\ar[r]& A\ar[r]&  1
\\
1 \ar[r] & \bbG_{a(1)} \ar[r] & G_\ev/N \ar@{^(->}[u]\ar[r] & A_\ev \ar@{^(->}[u]\ar[r] & 1
}
\end{equation}   
This gives rise to a commutative diagram of the corresponding 5-term sequences: 
\[\xymatrix{H^{1,*}(A,k) \ar[r]\ar[d] & H^{1,*}(G/N,k) \ar[r] \ar[d] & H^{1,*}(\bbG_{a(1)},k) \ar@{=}[d]\ar[r]^-{d_2}& H^{2,*}(A,k) \ar[d]^{\res} \ar[r]^{\psi_1^*}& H^{2,*}(G/N,k)\ar[d]  \\ 
H^{1,*}(A_\ev,k) \ar[r] & H^{1,*}(G_\ev/N,k) \ar[r] & H^{1,*}(\bbG_{a(1)},k) \ar[r]^-{d^{\ev}_2}& H^{2,*}(A_\ev,k) \ar[r]^-{\psi_{1,\ev}^*}& H^{2,*}(G_\ev/N,k) } 
\]
By Claim~\ref{cl:u}, $d_2(\lambda) = x_{m,r} + \alpha \zeta^2$. Since $\zeta$ goes to $0$ under the restriction map $H^{*,*}(A,k) \to H^*(A_\ev,k)$, we get that $d_2^\ev(\lambda) = x_{m,r} \in H^2(A_\ev,k) = H^2(E_{m,r},k)$. 

Consider the standard surjection map: $E_{m+1, r} \times (\bbZ/p)^{\times s} \to E_{m,r} \times (\bbZ/p)^{\times s} $. By Theorem~\ref{th:HEmn^-}, $d_2^\ev(\lambda) = x_{m,r}$ vanishes when inflated to $H^2(E_{m+1, r},k)$. Proposition~\ref{pr:abelian} now implies that $G_\ev/N$ is abelian. This contradicts 
the choice of $N$, and completes the proof that $G = A$ in this case.  

It remains to consider the case $m=1$, that is, when $A \cong E_{1,r}^- \times (\bbZ/p)^{\times s} = \ov G$.  In this case $d_2(\lambda) =  \zeta^2 + \gamma x_r$, and, hence, $d_2^{\ev}(\lambda) = \gamma x_r$. Considering the surjective map $E_{2,r} \times (\bbZ/p)^{\times s} \to\mathbb G_{a(r)} \times (\bbZ/p)^{\times s}$, we conclude by Proposition~\ref{pr:abelian} that $G_\ev/N$ is abelian, getting a contradiction again. Hence, $G = A$ in the case $m=1$. 

{\it Case II:} $A \cong E_{m,r+1,\mu}^- \times (\bbZ/p)^{\times s}$. The 
proof is very similar, replacing $x_{m,r}$ with $x_{m, r+1, \mu}$ from Theorem~\ref{th:HEmnmu^-}.  
The  corresponding abelian cover which plays the role of $A^\prime$ in Proposition~\ref{pr:abelian} 
in this case is the canonical map $E_{m,r+1} \to E_{m,r+1,\mu}$.
\end{proof}

%%%%%%%%%%%%%%%%%%%%Section  MAIN THEOREM%%%%%%%%%%%%%%%%%%%
%%%%%%%%%%%%%%%%%%%%%%%%%%%%%%%%%%%%%%%%%%%%%%%%%%%%

\section{The main detection theorem}
\label{se:role}
The proof of the main detection Theorem~\ref{th:detect} effectively splits into two parts. 
The first part covers the case when $G$ satisfies Hypothesis~\ref{hyp:inductive}.  
The techniques needed to deal with this case are mostly adaptations of what was done 
for finite group schemes (without the grading) and are summarised in Part I of the paper. The only, 
but significant, exception is Theorem~\ref{th:bp2} which requires 
extensive new calculations for cohomology of supergroup schemes done in \cite{Benson/Pevtsova:bp2}. 
In the ungraded case the only group schemes which fail Hypotheses~\ref{hyp:inductive} are 
the elementary ones, that is, finite groups schemes isomorphic to $\bbG_{a(r)} \times (\bbZ/p)^{\times s}$, 
which form the detection family. Hence, the inductive detection Theorem~\ref{th:detection} gives the full 
detection theorem in the ungraded case. 

In the super case however we have to deal with case (ii) of Theorem~\ref{th:B3.6} when 
the kernel of the map on cohomology induced by $f\colon G \to \ov G$ has an element of the form $\zeta^2 - \gamma x_r$. The new 
technology developed in Part II culminating in the cohomological characterization of the elementary supergroup 
schemes in Theorem~\ref{th:elemcoh} is what we need to deal with this case.

Theorem~\ref{th:detect} is an immediate consequence of the following theorem.  
We employ terminology of a {\it detection family} introduced in the beginning of Section~\ref{se:proj}.

\begin{theorem}\label{th:main}
Suppose that $G$ is a finite unipotent supergroup scheme which 
is not isomorphic to a quotient of some $E^-_{m,n}\times (\bbZ/p)^{\times s}$.
Then
\begin{enumerate}
\item[\rm (i)] nilpotence of elements in cohomology of modules and
\item[\rm (ii)] projectivity of $kG$-modules 
\end{enumerate}
are detected on proper sub-supergroup schemes after field extension.
\end{theorem}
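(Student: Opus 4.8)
The plan is to reduce Theorem~\ref{th:main} to Theorem~\ref{th:detection}: it suffices to prove that a finite unipotent supergroup scheme $G$ which is \emph{not} isomorphic to a quotient of any $E^-_{m,n}\times(\bbZ/p)^{\times s}$ must satisfy Hypothesis~\ref{hyp:inductive}, since that hypothesis yields both conclusions (i) and (ii) at once. I would argue the contrapositive (as outlined in the introduction): assuming that none of (a), (b), (c) of Hypothesis~\ref{hyp:inductive} holds for the unipotent $G$, I would show $G$ is elementary. First I would run the reduction of \cite{Bendel:2001a}: using Lemma~\ref{le:H1} and the structure of $\Hom(G,\bbG_a)$ and $\Hom(G^0,\bbG_a^-)^\pi$, the failure of (a) collapses the connected even part of the target to a single $\bbG_{a(r)}$ and the failure of (b) forces $\epsilon\le 1$, so that one obtains a surjection $f\colon G\twoheadrightarrow \ov G:=\bbG_{a(r)}\times(\bbG_a^-)^\epsilon\times(\bbZ/p)^{\times s}$ with $\epsilon\in\{0,1\}$ for which $f^*$ is an isomorphism on $H^{1,*}$. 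If $H^{2,*}(f)$ is injective, then Lemma~\ref{le:5-term} makes $f$ an isomorphism, so $G\cong\ov G$ is elementary and there is nothing more to do. Hence the Steenrod-stable homogeneous ideal $J:=\ker f^*\subseteq H^{*,*}(\ov G,k)$ contains a nonzero element of degree $2$.

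Next I would split on the internal degree of that element, aiming in each case either to produce an instance of Hypothesis~\ref{hyp:inductive}(c) (a contradiction) or to pin down $G$. If $J\cap H^{2,1}(\ov G,k)\ne 0$, then $\epsilon=1$ and $r+s\ge 1$ (by Proposition~\ref{pr:Steenrod-on-Ga^-}, $H^{2,1}(\bbG_a^-,k)=0$), so Theorem~\ref{th:bp2} applied with $N=\ker f$ gives a nonzero $\xi\in H^{1,1}(G,k)$ with $\beta\cP^0(u)\,\xi^{p^{r+s-1}(p-1)}=0$ for all $u\in H^{1,0}(G,k)$; choosing a nonzero $u$ inside $H^{1,0}(G^0,k)^\pi$ or inside $H^1(\pi,\bbF_p)$ — one of these is nonzero since $\dim_k H^{1,0}(\ov G,k)=r+s$ — exhibits a relation of the shape in (c). So $J$ is concentrated in internal degree $0$, and Theorem~\ref{th:B3.6} applies to it. In case (i) there, pulling the element $x_r^n\prod_i\beta\cP^0(v_i)$ back along $f$ — using that $f^*$ commutes with the Steenrod operations, that $f^*(x_r)=-\beta\cP^0(f^*\lambda_r)$ (Table~\ref{table:Steenrod}, Remark~\ref{rk:notation}), and that $f^*\lambda_r\in H^{1,0}(G^0,k)^\pi$ and $f^*v_i\in H^1(\pi,\bbF_p)$ are classes of surjections of $G$ onto $\bbG_{a(1)}$ and $\bbZ/p$ respectively — again yields an instance of (c). Therefore case (ii) holds: $J\cap H^{2,0}(\ov G,k)$ is one-dimensional, spanned by $\zeta^2+\gamma x_r$ with $\gamma\in k$ (so $\epsilon=1$), and if $\gamma=0$ then $f^*(\zeta)^2=0$ with $f^*\zeta\ne0$, once more an instance of (c). This leaves only $\gamma\ne0$ (and then $r\ge 1$).

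Finally, in the case $\gamma\ne0$ I would apply Theorem~\ref{th:elemcoh}. Its hypotheses (1)--(3) hold by the previous steps (hypothesis (3) using that $J\cap H^{2,1}(\ov G,k)=0$), and hypothesis (4) holds too: any witness $\zeta^{2p^i}\beta\cP^0(y)$ or $\zeta^{2p^i+2}$ lying in $\ker f^*$ would pull back under $f$ to $f^*(\zeta)^{2p^i}\beta\cP^0(f^*y)=0$, respectively $f^*(\zeta)^{2p^i+2}=0$, with $f^*\zeta\ne0$ and $f^*y\in H^1(\pi,\bbF_p)$ — yet another instance of Hypothesis~\ref{hyp:inductive}(c), against our standing assumption. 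Theorem~\ref{th:elemcoh} then gives $G\cong E^-_{m,r}\times(\bbZ/p)^{\times s}$ or $G\cong E^-_{m,r+1,\mu}$ for some $m\ge1$ and $\mu\in k$; both are elementary — the first by Definition~\ref{def:elementary}, the second because $E^-_{m,r+1,\mu}$ is a quotient of $E^-_{m+1,r+2}$ by Construction~\ref{not:Emnmu^-}. This contradicts the hypothesis that $G$ is not elementary, so Hypothesis~\ref{hyp:inductive} must hold for $G$ and Theorem~\ref{th:detection} then delivers both (i) and (ii).

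I expect the main obstacle to be entirely contained in the imported Part~II results — principally Theorem~\ref{th:elemcoh}, which rests on the forbidden-quotient analysis of Theorem~\ref{th:forbidden} — since these dispose of the single case ($\gamma\ne0$) that the Bendel-style argument cannot reach, and their proofs occupy the bulk of the paper; everything else above is bookkeeping around Theorems~\ref{th:B3.6}, \ref{th:bp2}, and \ref{th:detection}. The one point in the assembly that genuinely needs care is the recognition that hypothesis~(4) of Theorem~\ref{th:elemcoh} says precisely that no relation of the form in Hypothesis~\ref{hyp:inductive}(c) can be manufactured by pulling the relevant cohomology classes back along $f$ — which is exactly why, once (c) is assumed to fail, Theorem~\ref{th:elemcoh} becomes applicable and the dichotomy closes up.
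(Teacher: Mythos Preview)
Your proposal is correct and follows essentially the same route as the paper's proof, reorganised as a contrapositive: assume Hypothesis~\ref{hyp:inductive} fails and show $G$ is elementary, invoking Theorems~\ref{th:bp2}, \ref{th:B3.6}, and \ref{th:elemcoh} in exactly the same sequence. One small step you glossed over: before constructing $f$ you need to observe that the failure of (c) forces $\pi=\pi_0(G)$ to be elementary abelian (via Serre's Theorem~\ref{th:Serre}, which the paper invokes at the outset), since otherwise the target of $f$ would not have the form $\bbG_{a(r)}\times(\bbG_a^-)^\epsilon\times(\bbZ/p)^{\times s}$; this is harmless but should be stated.
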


\begin{proof}
Let $G=G^0\rtimes \pi$ with $G^0$ connected and $\pi$ finite.
Since $G$ is unipotent, so is $G^0$, and $\pi$ is a finite $p$-group.
If $\pi$ is not elementary abelian, then by Theorem~\ref{th:Serre}, $G$ satisfies
case (c) of Hypothesis~\ref{hyp:inductive}, and we are done. So we
now assume that $\pi = (\bbZ/p)^{\times s}$ is elementary abelian.
By Lemma~\ref{le:H1},
\begin{align*} 
H^{1,0}(G,k) &\cong \Hom_{\sGr/k}(G^0,\bbG_a)^\pi \times \Hom(\pi,\bbG_a) \\
H^{1,1}(G,k) &\cong \Hom_{\sGr/k}(G^0,\bbG_a^-)^\pi.
\end{align*}
We examine
the dimensions 
\begin{align*}
\delta&=\dim_k\Hom_{\sGr/k}(G^0,\bbG_{a(1)})^\pi \\
\epsilon&=\dim_k \Hom_{\sGr/k}(G^0,\bbG_a^-)^\pi.
\end{align*}
Since $\pi$ is unipotent, if $\delta=0$ then we have $\Hom_{\sGr/k}(G^0,\bbG_{a(1)})=0$, and if
$\epsilon=0$ then $\Hom_{\sGr/k}(G^0,\bbG_a^-)=0$. Thus, $\delta=\epsilon =0$, 
then $G^0$ is trivial by Lemma~\ref{le:Ga1Ga-}, hence, $G\cong (\bbZ/p)^{\times s}$, 
and we are done. We may therefore
assume that one of them is nonzero.
If either $\delta$ or $\epsilon$ is greater than one then we are in case
(a) or (b) of Hypothesis~\ref{hyp:inductive}, and we are done by Theorem~\ref{th:detection}. 
So each is either zero or one, and they are not both zero.

The action of the Frobenius map $F\colon \bbG_a\to \bbG_a$
induces a map 
\[ 
F\colon\Hom_{\sGr/k}(G^0,\bbG_a) \to \Hom_{\sGr/k}(G^0,\bbG_a) 
\]
which commutes with the action of $\pi$.  A $\pi$-invariant map $G^0\to \bbG_a$ lands in $\bbG_{a(1)}\le \bbG_a$ if and
only if it is in the kernel of $F$. So there exists $r\ge 0$ and a surjective map 
\[ 
\xi\in\Hom_{\sGr/k}(G^0,\bbG_{a(r)})^\pi 
\] 
such that $\xi,F(\xi),\dots,F^{r-1}(\xi)$ is a $k$-basis for
$\Hom_{\Gr/k}(G^0,\bbG_a)^\pi$. The map $\xi$ extends to a surjective map
\[ 
f\colon G \to \overline G\, \cong\, \bbG_{a(r)} \times (\bbG_a^-)^\epsilon \times (\bbZ/p)^{\times s} 
\]
and  $f^*\colon H^{1,*}(\ov G,k) \to H^{1,*}(G,k)$. This construction accounts both for the case 
$\delta =0$ (with $r=0$ so that $\ov G = \mathbb G_a^- \times (\mathbb Z/p)^{\times s}$) and 
$\epsilon = 0$ (with $\ov G = \mathbb G_{a(r)} \times (\mathbb Z/p)^{\times s}$, $r \geq 1$).

If $f$ is an isomorphism then $G\cong \ov G$ is isomorphic to a quotient of
$E_{1,r}^- \times (\bbZ/p)^{\times s}$ contradicting the assumption of the theorem. 
Otherwise, by Lemma~\ref{le:5-term}, 
\[ 
f^*\colon H^{2,*}(\ov G,k)\to H^{2,*}(G,k) 
\] 
is not injective. If the kernel contains an element of degree $(2,1)$, then 
by Theorem~\ref{th:bp2} we are in case (c) of
Hypothesis~\ref{hyp:inductive}, so we are done by   Theorem~\ref{th:detection}. 
Therefore, we may assume that the kernel contains an element of degree $(2,0)$ and we have two 
cases according to Theorem~\ref{th:B3.6}. In the first case, it
 contains an element of the form
\[ x_r^n\beta\cP^0(v_1)\dots\beta\cP^0(v_m), \]
which again puts us in case (c) of Hypothesis~\ref{hyp:inductive}, and we again apply 
Theorem~\ref{th:detection}. In the second case, the kernel is generated by $\zeta^2 + \gamma x_r$. 
If $\gamma =0$, then we can apply Theorem~\ref{th:detection} once again, since Hypothesis~\ref{hyp:inductive} 
is satisfied by the image of $\zeta^2$. 

The upshot of this is that we may assume that we are in case
(ii) of Theorem~\ref{th:B3.6} with $\gamma \not = 0$ and that $f^*$ induces an isomorphism on $H^{2,1}$.  
Hence, $G$ satisfies the hypotheses (1), (2) and (3) of Theorem~\ref{th:elemcoh}. 
If it fails hypothesis (4) of Theorem~\ref{th:elemcoh}, then we are in case (c) of
Hypothesis~\ref{hyp:inductive} one last time. Otherwise, 
$G$ is isomorphic to a quotient of  $E^-_{m,r}\times (\bbZ/p)^{\times s}$ for some $m \geq 2$ 
by Theorem~\ref{th:elemcoh}. 
\end{proof}

There is another notion of nilpotency for elements of
$H^{*,*}(G,\Lambda)$ where $\Lambda $ is a unital $G$-algebra. Namely,
$\xi \in H^{i,*}(G,\Lambda)$ is nilpotent if for some $n>0$, the image
of $\xi^{\otimes n} \in H^{in,*}(G,\Lambda^{\otimes n})$ in
$H^{in,*}(G,\Lambda)$ is zero. The following analogue of
Theorem~\ref{th:main} for this notion of nilpotents has both a weaker
hypothesis and a weaker conclusion.

\begin{theorem}\label{th:nilp}
Let $G$ be a finite unipotent supergroup scheme over
a field $k$, and $\Lambda$ be unital $G$-algebra. Then 
an element $x\in H^{i,*}(G,\Lambda)$ is nilpotent, that is $x^n \in H^{in,*}(G,\Lambda)$ is zero for some $n>0$, 
if and only if for every extension field $K$ of $k$ and every elementary 
sub-supergroup scheme $E$ of $G_K$, the restriction of 
$x_K\in H^{*,*}(G_K,\Lambda_K)$ to $H^{*,*}(E,\Lambda_K)$ 
is nilpotent, that is some power of $x_K$ vanishes in $H^{*,*}(E,\Lambda_K)$.
\end{theorem}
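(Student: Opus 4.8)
The plan is to reduce the assertion, by induction on $\dim_k kG$, to the module‑projectivity detection already established in Theorem~\ref{th:detection}. Necessity is immediate: if $x^n=0$ then $(x_K)^n=0$, so its restriction to any $E$ vanishes. For sufficiency I would first dispose of the case $i=0$: there $x\in\Lambda^G$, restriction to any elementary $E\le G_K$ (such $E$ always exist, e.g.\ one of $\bbG_{a(1)}$, $\bbZ/p$ or $\bbG_a^-$) is an injective ring map $\Lambda^G\hookrightarrow(\Lambda_K)^E$, so $x^n=0$ follows by faithful flatness of $k\to K$. If $G$ is itself elementary, take $K=k$ and $E=G$: the hypothesis for this one pair is precisely that $x$ is nilpotent. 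So from now on assume $i\ge 1$ and that $G$ is not a quotient of any $E^-_{m,n}\times(\bbZ/p)^{\times s}$.

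The key structural input is that the proof of Theorem~\ref{th:main} actually establishes a dichotomy making no reference to $\Lambda$: a finite unipotent supergroup scheme is either elementary or satisfies Hypothesis~\ref{hyp:inductive}. Hence our $G$ satisfies Hypothesis~\ref{hyp:inductive}, and Theorem~\ref{th:detection} gives that projectivity of $kG$‑modules is detected on proper sub‑supergroup schemes of $G$ after field extension.

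Next I would translate the algebra‑nilpotence of $x$ into a projectivity statement, mimicking Lemma~\ref{le:hocolim}. Represent $x$ by $\xi\colon k\to\Omega^{-i}(\Lambda)$, write $\mu\colon\Lambda\otimes\Lambda\to\Lambda$ for the multiplication, and let $m_\xi\colon\Lambda\xrightarrow{\xi\otimes1}\Omega^{-i}(\Lambda)\otimes\Lambda\cong\Omega^{-i}(\Lambda\otimes\Lambda)\xrightarrow{\Omega^{-i}(\mu)}\Omega^{-i}(\Lambda)$ be ``multiplication by $x$''. Set $X=\colim\bigl\{\Lambda\to\Omega^{-i}(\Lambda)\to\Omega^{-2i}(\Lambda)\to\cdots\bigr\}$ with transition maps the shifts of $m_\xi$. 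Associativity of $\Lambda$ identifies the $n$‑fold composite with multiplication by $x^n$ up to stable equivalence; and if $x^n=0$ this composite factors through a projective, because then $x^n\otimes1_\Lambda$ does and a tensor product of a projective $kG$‑module with any module is projective. Conversely, if $X$ is projective then, since a map from the finite‑dimensional module $k$ into a filtered colimit factors through a finite stage, some power of $x$ factors through a projective, i.e.\ $x$ is nilpotent. Thus $x$ is nilpotent if and only if $X$ is projective, and the same equivalence holds after base change and restriction, so $X_K\da_H$ is projective if and only if $x_K\da_H$ is nilpotent, for every pair $(H,K)$.

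Finally, combining these: $X$ is projective if and only if $X_K\da_H$ is projective for every field extension $K$ of $k$ and every proper sub‑supergroup scheme $H\le G_K$, which in turn holds if and only if $x_K\da_H$ is nilpotent for all such $(H,K)$. Each such $H$ is a finite unipotent supergroup scheme over $K$ with $\dim_K kH<\dim_k kG$, and the hypothesis of the theorem transfers to it: an elementary $E\le H_{K'}$ is also an elementary sub‑supergroup scheme of $G_{K'}$, so $x_{K'}$ restricts nilpotently to $E$ by assumption. By the induction hypothesis, $x_K\da_H$ is nilpotent for every $(H,K)$; therefore $X$ is projective and $x$ is nilpotent. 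The main obstacle I anticipate is organisational rather than conceptual: one must cleanly extract the ``elementary, or else Hypothesis~\ref{hyp:inductive}'' dichotomy from the proof of Theorem~\ref{th:main}, and one must check carefully that the colimit $X$ records the \emph{algebra}‑nilpotence of $x$ (rather than the a priori stronger nilpotence of Definition~\ref{de:nilp}) and that its formation commutes with base change and restriction up to projective summands.
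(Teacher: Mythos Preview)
Your approach is correct and takes a genuinely different route from the paper's. The paper proves Theorem~\ref{th:nilp} by establishing directly an analogue of Theorem~\ref{th:detection} for algebra-nilpotence: case (c) of Hypothesis~\ref{hyp:inductive} is handled by observing that the conclusion of Proposition~\ref{pr:QuillenVenkov} for $M=\Lambda$ descends from $\Lambda^{\otimes 2}$ to $\Lambda$ via the multiplication, while for cases (a) and (b) the paper simply cites Bendel's direct nilpotence-detection argument \cite[Theorem~6.1]{Bendel:2001a} and \cite[Theorem~2.5]{Bendel/Friedlander/Suslin:1997b}. You instead convert algebra-nilpotence into projectivity of the telescope $X$ and then invoke the projectivity part of Theorem~\ref{th:detection}, which the paper has already proved in full. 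Your route buys uniformity---no separate nilpotence argument is needed in cases (a) and (b), and no external reference is required---at the cost of verifying the properties of $X$ you flag at the end. These checks are indeed routine: the dichotomy ``elementary or satisfies Hypothesis~\ref{hyp:inductive}'' is exactly what the case analysis in the proof of Theorem~\ref{th:main} establishes; and for the converse direction of your $X$-criterion, the clean argument uses compactness of $k$ in $\StMod(kG)$ together with the unit $\eta\colon k\to\Lambda$, so that the image of $[\eta]$ in $\colim_n H^{ni}(G,\Lambda)$ (transition maps given by multiplication by $x$) vanishes precisely when some $x^N=0$.
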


\begin{proof} First, we claim that the analogue of Theorem~\ref{th:detection} holds for $H^{*,*}(G, \Lambda)$ with this notion of nilpotency. 
Indeed, If we take $M = \Lambda$  in Proposition~\ref{pr:QuillenVenkov} then the conclusion clearly holds for 
$\xi^2 \in H^{*,*}(G,\Lambda)$. Hence, if $G$ satisfies Hypothesis~\ref{hyp:inductive}(c),  the proof of Theorem~\ref{th:detection} 
carries over to this case. 

If we assume that Hypotheses~\ref{hyp:inductive} (a) or (b) hold, then the proof is identical to that of Case II(b) in \cite[Theorem 6.1]{Bendel:2001a} 
(see also \cite[Theorem 2.5]{Bendel/Friedlander/Suslin:1997b}) so we will not reproduce it here.  

With these observations, the proof of the analogue of Theorem~\ref{th:main} 
is again identical to the one we give above.
\end{proof}

In \cite{Benson/Iyengar/Krause/Pevtsova:2017b}, we show that projectivity for modules of finite 
group schemes is detected on the family of elementary subgroup schemes after {\it coextension} 
of scalars. In the following theorem we state that this also holds for finite unipotent supergroup schemes. 

\begin{theorem}
\label{th:coext}
Let $G$ be a finite unipotent supergroup scheme over
a field $k$ of positive characteristic $p>2$, and $M$ be a $kG$-module. 
Then the following hold.
\begin{enumerate}
\item[\rm (i)] An element $x\in H^{*,*}(G,M)$ is nilpotent if and only
if for every extension field $K$ of $k$ and every elementary 
sub-supergroup scheme $E$ of $G_K$, the restriction of $x_K\in H^{*,*}(G_K,M^K)$
to $H^{*,*}(E,M^K)$ is nilpotent.
\item[\rm (ii)]  A $kG$-module
$M$ is projective if and only if for every extension field $K$ of $k$
and every elementary sub-supergroup scheme $E$ of $G_K$, the
restriction of $M^K$ to $E$ is projective.
\end{enumerate}
\end{theorem}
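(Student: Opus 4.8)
The plan is to deduce Theorem~\ref{th:coext} from the already-established detection theorem (Theorem~\ref{th:detect}, i.e.\ Theorem~\ref{th:main}) by comparing extension and coextension of scalars, following the argument of \cite{Benson/Iyengar/Krause/Pevtsova:2017b} for finite group schemes. The formal backbone consists of three observations. (1) The algebra $kG$ and, for every field extension $K/k$, the algebra $KG_K=K\otimes_k kG$ are finite-dimensional self-injective algebras, so over each of them projectivity and injectivity coincide for modules of arbitrary dimension. (2) For a sub-supergroup scheme $H\le G$ the Hopf algebra $kG$ is free over $kH$ (and likewise after any base change), so $\res^G_H$ sends projectives to projectives. (3) For any $K/k$, the functor $M\mapsto M^K=\Hom_k(K,M)$ is the coinduction functor $\Hom_{kG}(KG_K,-)$ along the free extension $kG\hookrightarrow KG_K$; it is exact, preserves injectives, and $M$ is a direct summand of $\res_k M^K$ as a $kG$-module. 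Combining (1)--(3), $M$ is projective over $kG$ if and only if $M^K$ is projective over $KG_K$, for every extension field $K/k$.

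With these in hand, the ``only if'' direction of part (ii) is immediate: if $M$ is projective then $M^K$ is projective over $KG_K$ and, restricting along $KE\hookrightarrow KG_K$, so is $M^K\!\da_E$ for every sub-supergroup scheme $E\le G_K$. For the ``if'' direction the key input, which is the technical heart of the matter and is the analogue for supergroup schemes of the comparison established in \cite{Benson/Iyengar/Krause/Pevtsova:2017b}, is the following statement: for any field extension $L/k$, any finite sub-supergroup scheme $E'\le G_L$ and any $kG$-module $M$, the $LE'$-module $M_L\!\da_{E'}$ is projective if and only if the $LE'$-module $M^L\!\da_{E'}$ is projective. One proves this by descending $E'$ to a finitely generated subfield and reducing to the case where $E'$ is elementary, where by Theorem~\ref{th:Witt-classification} and the structure of $kE'$ recalled in the Introduction the algebra $LE'$ is a tensor product of copies of $k[s]/(s^{p^n})$, $k[\sigma]/(\sigma^2)$ and $k[s,\sigma]/(s^{p^n},\sigma^2-s^p)$; over such a Frobenius algebra projectivity of a module is equivalent to a collection of rank conditions on the actions of the generators (a Dade-type criterion), and for such conditions the identities $\Im(\theta\text{ acting on }\Hom_k(K,N))=\Hom_k(K,\Im\theta_N)$ and $\Ker(\theta\text{ acting on }\Hom_k(K,N))=\Hom_k(K,\Ker\theta_N)$ show that they pass between $N_L$ and $N^L$. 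Granting this, if $M^K\!\da_E$ is projective for every extension field $K/k$ and every elementary $E\le G_K$, then $M_L\!\da_{E'}$ is projective for every $L/k$ and every elementary $E'\le G_L$ (apply the comparison with $L=K$, $E'=E$), whence $M$ is projective by Theorem~\ref{th:detect}(ii). This proves (ii).

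Finally, part (i) is deduced from part (ii) exactly as Theorem~\ref{th:projimpliesnilp} is deduced from Lemma~\ref{le:hocolim}. Given $\xi\in H^{j,*}(G,M)$ with $j>0$, one forms the colimit $X=\colim\{k\xrightarrow{\xi}\Omega^{-j}(M)\to\Omega^{-2j}(M^{\otimes2})\to\cdots\}$, so that $\xi$ is nilpotent if and only if $X$ is projective over $kG$. Extension of scalars commutes with this colimit, so $X_K\!\da_E$ encodes nilpotence of $\xi_K\!\da_E$ with coefficients in $M_K$; the comparison of the previous paragraph, applied with $M$ replaced by $X$ and also to the coefficient modules $M_K$ versus $M^K$, shows that $X_K\!\da_E$ is projective if and only if $X^K\!\da_E$ is, and that the two notions of coefficient module give the same notion of nilpotence of $\xi_K\!\da_E$. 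Thus the statement of (i) is equivalent to: projectivity of $X$ is detected on elementary sub-supergroup schemes after coextension of scalars — which is part (ii).

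The main obstacle is precisely this extension-versus-coextension comparison for restriction to finite sub-supergroup schemes; everything else is formal once Theorem~\ref{th:detect} is available, but the comparison genuinely uses self-injectivity of the relevant algebras together with the explicit Dade-type description of projectivity over the elementary supergroup schemes, and it is here that the work of \cite{Benson/Iyengar/Krause/Pevtsova:2017b} is invoked.
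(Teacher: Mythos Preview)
Your approach differs from the paper's. The paper does not deduce Theorem~\ref{th:coext} from Theorem~\ref{th:detect}; instead it reruns the entire inductive proof of Theorem~\ref{th:main}, observing that the only place where the choice of extension versus coextension of scalars matters is the treatment of cases~(a) and~(b) of Hypothesis~\ref{hyp:inductive}. There the Kronecker quiver lemma for $M_K$ is replaced by its coextension analogue \cite[Lemma~4.1]{Benson/Iyengar/Krause/Pevtsova:2017b}. Case~(c) involves no field extension at all, so nothing changes there, and the rest of the argument (Sections~\ref{se:steenrod}--\ref{se:coh}) is about the structure of $G$ rather than about $M$.

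Your proposed route has a genuine gap at the comparison step. You assert that for an elementary $E\le G_K$ defined over $K$, the $KE$-modules $M_K\!\da_E$ and $M^K\!\da_E$ are simultaneously projective, and you justify this via a Dade-type rank criterion together with the identities $\Im(\theta\text{ on }\Hom_k(K,N))=\Hom_k(K,\Im\theta_N)$ and $\Ker(\theta\text{ on }\Hom_k(K,N))=\Hom_k(K,\Ker\theta_N)$. But those identities require $\theta$ to be an operator on $N$. The generators $s,\sigma$ of $KE$ lie in $KG_K=K\otimes_k kG$, not in $kG$; writing $\theta=\sum_i\lambda_i\otimes g_i$ with $\lambda_i\in K$ and $g_i\in kG$, the action on $f\in M^K=\Hom_k(K,M)$ is $(\theta\cdot f)(\mu)=\sum_i g_i\, f(\lambda_i\mu)$, which mixes the $K$-argument with the $kG$-action and is not of the form $\Hom_k(K,\theta_M)$ for any endomorphism $\theta_M$ of $M$. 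So your identities do not apply, and the equivalence ``$M_K\!\da_E$ projective $\Leftrightarrow$ $M^K\!\da_E$ projective'' is not established by your argument. This comparison is precisely what \cite{Benson/Iyengar/Krause/Pevtsova:2017b} is designed to \emph{circumvent}: rather than prove it directly, one proves the coextension Kronecker quiver lemma and feeds it into the detection argument. Without an independent proof of the comparison, your deduction of part~(ii) from Theorem~\ref{th:detect} does not go through, and the same issue propagates to your reduction of part~(i).
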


\begin{proof}
The proof of Theorem~\ref{th:main} carries over to this case almost without change. 
The only difference occurs when $G$ satisfies (a) or (b) of Hypothesis~\ref{hyp:inductive}.  
Then we still proceed exactly as in \cite[Theorem 8.1]{Bendel:2001a} but appeal to 
\cite[Lemma 4.1]{Benson/Iyengar/Krause/Pevtsova:2017b}
for the main ingredient which is the appropriate version  of the Kronecker quiver lemma for $M^K$.
\end{proof} 

%%%%%%%%%%%%%%%%%%%%%%%%%%%%Steenrod algebra%%%%%%%%%%%%%%%%%%

\section{The Steenrod algebra}
\label{se:steenrod-algebra}
%Algebraic topology often deals with $\bbZ$-graded commutative or 
%$\bbZ$-graded cocommutative Hopf
%algebras, where the $\bbZ$-grading introduces the usual sign
%conventions. 

An \emph{affine $\bbZ$-graded group scheme} over $k$ is a covariant functor
from $\bbZ$-graded commutative $k$-algebras (again, the convention is that
$yx=(-1)^{|x||y|}xy$) to groups, whose underlying functor to sets is
representable. If $G$ is an affine $\bbZ$-graded group scheme over $k$
then its coordinate ring $k[G]$ is the representing object. It is a
$\bbZ$-graded commutative Hopf algebra. This gives a contravariant
equivalence of categories between affine $\bbZ$-graded group schemes
and $\bbZ$-graded commutative Hopf algebras.

An affine  $\bbZ$-graded group scheme $G$ has \emph{finite type} if each
graded piece of $k[G]$ is finite dimensional. In this case, the graded dual 
$kG^{i}=\Hom_k(k[G]^{-i},k)$ is a $\bbZ$-graded cocommutative Hopf
algebra of finite type. This gives a covariant equivalence of
categories between $\bbZ$-graded group schemes of finite type and
$\bbZ$-graded cocommutative Hopf algebras of finite type.

We are interested in particular in the \emph{finite} $\bbZ$-graded
group schemes; these are the ones for which not only is each graded
piece finite dimensional, but the total rank as a $k$-vector space is finite. 
%In other
%words, if $k$ is a field in degree zero, this means that there 
%are only finitely many degrees in which $k[G]$ is nonzero,
%whereas if $k$ is a ring of Laurent polynomials, it is equivalent to
%finite type.

Finite $\bbZ$-graded group schemes satisfy a detection theorem 
similar to Theorem~\ref{th:main}. In order to formulate it we start 
by observing that elementary supergroup schemes have natural $\bbZ$-grading. 

Recall that the group algebra of a $E^-_{m,n}$ has the following form: 
%Let $\widetilde E^-_{m,n}$ be a finite $\bbZ$-graded group scheme with 
%the group algebra the same as that of $E^-_{m,n}$,
\[ kE_{m,n}^-=k[s_1,\dots,s_{n-1},s_n,\si]/(s_1^p, \dots,s_{n-1}^p,s_n^{p^m},\si^2-s_n^p).\] 
We give it a $\bbZ$-grading by assigning degrees to the generators as follows:
\begin{equation}
\label{eq:grading}
|\sigma|=ap^n, \, |s_i| = 2ap^{i-1}
\end{equation}
where $a$ is an odd integer. 
The Hopf algebra structure is compatible with this grading and, hence, $E_{m,n}^-$ becomes a $\bbZ$-graded group scheme. We call such a group scheme a {\it $\bbZ$-lifting} of $E^-_{m,n}$. We write $\widetilde E^-_{m,n}$ for such a $\bbZ$-lifting without specifying the parameter $a$. 
For a finite group $\pi$ we give its group algebra $k\pi$ a $\bbZ$-grading 
by putting it in degree $0$. 

\begin{definition} A finite $\bbZ$-graded group scheme is called 
	{\it elementary} if it is a quotient of $\widetilde E^-_{m,n} \times (\bbZ/p)^{\times s}$ where $\widetilde E^-_{m,n}$ as a $\bbZ$-lifting of  $E^-_{m,n}$.
	\end{definition} 
\begin{remark} 
	\label{rem:grading} 
	Special cases include $\bbZ$-liftings of $\bbG_a^-$ and $\bbG_{a(r)}$. Even though these liftings \emph{a priori }depend on the choice of the degree in which we put the generator of the coordinate algebra $k[\bbG_a^-] \cong k[t]/t^2$ or $k[\bbG_{a(r)}] \cong k[T]/T^{p^r}$, we use the same notation for the $\bbZ$-graded version of $\bbG_a^-$ and $\bbG_{a(1)}$ suppressing this degree.  
	\end{remark}
We define a \emph{folding} functor 
\begin{equation}
\Fl\colon \bbZ\textrm{-graded algebras} \to \bbZ/2\textrm{-graded algebras}
\end{equation}
by sending $A = \bigoplus\limits_{i\in\bbZ}A_i$ to $\Fl(A) = \bar A$ 
with $\bar A_{\odd} = \bigoplus\limits_{i \in  \bbZ} A_{2i}$ and $\bar A_{\ev} 
= \bigoplus\limits_{i \in \bbZ} A_{2i+1}$. For any $\bbZ$--graded algebra $A$ 
there is an induced functor 
\begin{equation}
\Fl\colon A\text{-}\!\mod \to \bar A\text{-}\!\mod
\end{equation}
sending a $\bbZ$-graded $A$-module $M$ to a $\bbZ/2$-graded $\bar A$-module $\bar M$: 
\[\bar M = \bar M_\ev \oplus \bar M_\odd,
\quad \bar M_\ev = \bigoplus\limits_{i \in \bbZ} M_{2i}, \quad
\bar M_\odd = \bigoplus\limits_{i \in \bbZ} M_{2i+1}.\]

Finally, if $A$ is a $\bbZ$-graded cocommutative Hopf algebra corresponding to a group scheme $G$, we denote by $\bar G$ the  supergroup scheme with the group algebra $\bar A$. 
\begin{example}\label{ex:Zelem}  Let $\widetilde E^-_{m,n}$ be a $\bbZ$-lifting of $ E^-_{m,n}$.
Then $\Fl(k\widetilde E^-_{m,n}) = k E^-_{m,n}$ for any $\bbZ$-lifting $\widetilde E^-_{m,n}$ of $E^-_{m,n}$ as in \eqref{eq:grading}.
More generally, ``folding" a $\bbZ$-graded elementary group 
scheme results in an elementary supergroup scheme. 
\end{example}

A commutative $\bbZ$-graded $k$-algebra is a \emph{$\bbZ$-graded field} if
every homogeneous element is invertible. These are field extensions
$K$ of $k$ in degree zero, and rings of Laurent polynomials of the form
$K[u,u^{-1}]$ where $u$ has non-zero even degree.
Let $k[u^{\pm}] $ be the $\bbZ$-graded field $k[u,u^{-1}]$, where $u$ has degree $2$.  
Over a $\bbZ$-graded field, every graded module is free.
This means that it is isomorphic to a direct sum of shifts of $k[u^\pm]$. 
For a $\bbZ$-graded algebra $A$, let $A[u^\pm] = A \otimes
k[u^\pm]$.  If $N$ is a module over $\bar A=\Fl(A)$, we define the structure 
of $A[u^\pm]$-module on $N[u^\pm] = N \otimes k[u^\pm]$ as follows.
For $a_i \in A_i$, and $n_{\epsilon} \in N$ homogeneous elements with 
$i \in \bbZ$, $\epsilon = 0,1$ let 
\[a_i \circ n_{\epsilon} = \bar a_i n_{\epsilon} \otimes u^{\left[\frac{i+\epsilon}{2}\right]},\]
where $\bar a_i \in \bar A$ is the element corresponding to $a_i \in A_i$. Extending $k[u^\pm]$-linearly, we get  
the desired action 
\[(A \otimes k[u^\pm]) \times (N \otimes k[u^\pm]) \to N \otimes k[u^\pm]. \] 
This defines an \emph{unfolding} functor:
\begin{align}\label{eq:functorG}
\mcG: \, \bar A\text{-}\!\mod& \longrightarrow{} A[u^\pm]\text{-}\!\mod \\
 N & \longrightarrow N[u^\pm].\notag
\end{align}
%\begin{equation} 
%\label{eq:functorG}
%\xymatrix{
% \bar A\text{-}\!\mod \ar[r]^-{\mcG}& A[u^\pm]\text{-}\!\mod \\
% N \ar[r] & N[u^\pm].
%}
%\end{equation} 

\begin{proposition}\label{prop:folding} 
Let $A$ be a finitely generated $\bbZ$-graded algebra. 
The functor $\mcG\colon \bar A\text{-}\!\mod \to
A[u^\pm]\text{-}\!\mod$ of \eqref{eq:functorG} is an 
equivalence of categories.  Moreover, it fits into a commutative diagram:
	\[
	\xymatrix{A\text{-}\!\mod \ar[d]_-{-\otimes
            k[u^\pm]}\ar[r]^-{\Fl}& \bar A\text{-}\!\mod \ar[dl]_-{\mcG}\\
	A[u^\pm]\text{-}\!\mod
} \]
and takes projective modules to projective modules. 
\end{proposition}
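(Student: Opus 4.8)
The plan is to write $B=A[u^\pm]$, observe that $u\in B_2$ is a central unit so that every $\bbZ$-graded $B$-module $L$ is $2$-periodic ($u\colon L_n\to L_{n+2}$ is an isomorphism for all $n$), and then exhibit an explicit quasi-inverse to $\mcG$. That quasi-inverse $\mathsf F\colon B\text{-}\!\mod\to\bar A\text{-}\!\mod$ sends $L$ to the $\bbZ/2$-graded space $L_0\oplus L_1$, with $L_\epsilon$ placed in parity $\epsilon$, equipped with the action $\bar a\cdot x=u^{-[(i+\epsilon)/2]}(ax)$ for $a\in A_i$ and $x\in L_\epsilon$; the right-hand side lies in $L_{\overline{i+\epsilon}}$, and associativity of this action comes down to the elementary identity $[x+n]=[x]+n$ for $n\in\bbZ$ (here in the form $[(i+\overline{j+\epsilon})/2]+[(j+\epsilon)/2]=[(i+j+\epsilon)/2]$). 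There is no ambiguity of representatives, since the underlying map $A\to\bar A$ is merely a regrading.

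First one checks that $\mcG$ itself is well defined: giving $n_\epsilon\otimes u^j$ the $\bbZ$-degree $\epsilon+2j$, the formula $a_i\circ(n_\epsilon\otimes u^j)=\bar a_i n_\epsilon\otimes u^{j+[(i+\epsilon)/2]}$ is degree-preserving and associative, again by the same floor identity. Then one produces the two natural isomorphisms. Since $(N[u^\pm])_\epsilon=N_\epsilon\otimes u^0$ for $\epsilon\in\{0,1\}$ (with $N_0=N_\ev$, $N_1=N_\odd$), the map $n\otimes u^0\mapsto n$ is an $\bar A$-linear isomorphism $\mathsf F\mcG(N)\xrightarrow{\sim}N$, the linearity being immediate from comparing the two action formulas. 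In the other direction $x\otimes u^j\mapsto u^j x$ is a degree-preserving isomorphism $\mcG\mathsf F(L)\xrightarrow{\sim}L$, and it is $B$-linear exactly because the exponent $+[(i+\epsilon)/2]$ built into $\mcG$ cancels the exponent $-[(i+\epsilon)/2]$ built into $\mathsf F$. Both isomorphisms are plainly natural, so $\mcG$ is an equivalence of categories.

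For the commuting triangle it then suffices, after composing with the equivalence $\mcG$, to give a natural isomorphism $\mathsf F(M\otimes k[u^\pm])\cong\Fl(M)$ for a $\bbZ$-graded $A$-module $M$: unwinding the grading one has $(M\otimes k[u^\pm])_\epsilon=\bigoplus_{d\equiv\epsilon}M_d\otimes u^{-[d/2]}$, and $m_d\otimes u^{-[d/2]}\mapsto\bar m_d$ identifies this with $\Fl(M)$, compatibly with the $\bar A$-actions by yet another instance of $[d/2]+[(i+\bar d)/2]=[(i+d)/2]$. Hence $\mcG\circ\Fl\cong\mcG\circ\mathsf F\circ(-\otimes k[u^\pm])\cong(-\otimes k[u^\pm])$, which is the asserted commutativity. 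Finally $-\otimes k[u^\pm]=-\otimes_A B$ is extension of scalars along the graded ring map $A\to B$; being left adjoint to the exact restriction functor and carrying the free module $A$ to the free module $B$, it preserves projectives, and since $\mcG$ is an equivalence (hence preserves and reflects projective objects) we get both that $\mcG$ carries projective $\bar A$-modules to projective $B$-modules and that $\Fl$ carries projective $A$-modules to projective $\bar A$-modules. Finite generation of $A$ enters only to guarantee that all these functors restrict to the subcategories of finitely generated modules.

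No conceptual obstacle arises: the work is entirely the bookkeeping of transporting the $\bbZ$-grading through $\mcG$, governed throughout by the single identity $[x+n]=[x]+n$ for $n\in\bbZ$. The two points that actually demand care are fixing the $\bbZ$-gradings on $N[u^\pm]$ and on $M\otimes k[u^\pm]$ so that the action formulas become degree-preserving, and verifying that the candidate natural isomorphisms $\mathsf F\mcG\cong\id$ and $\mcG\mathsf F\cong\id$ are $\bar A$- respectively $B$-linear rather than merely bijective.
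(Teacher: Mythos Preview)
Your proof is correct and follows the same strategy as the paper: the quasi-inverse is precisely the functor $L\mapsto L_0\oplus L_1$, and the commutativity of the triangle reduces to the identification $\mcG\circ\Fl\cong -\otimes k[u^\pm]$. You have simply been more explicit than the paper about the $\bar A$-action on $L_0\oplus L_1$ and the floor-function bookkeeping that makes associativity and the natural isomorphisms work; your argument that $\mcG$ preserves projectives because it is an equivalence is in fact slightly cleaner than the paper's appeal to additivity and $\mcG(\bar A)=A[u^\pm]$.
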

\begin{proof} Commutativity of the diagram amounts to checking 
that folding and then unfolding via the functor $\mcG$ is simply 
extending scalars by the graded field $k[u^\pm]$. This is a direct 
calculation. The claim about projective modules follows from the 
fact that $\mcG$ is additive and $\mcG(\bar A) = A[u^\pm]$. 

To show that $\mcG$ is an equivalence, we note that multiplication by
the invertible element $u: M_i \to M_{i+2}$ is an isomorphism, and,
hence, identifies all odd (and, respectively, all even) homogeneous
components of an $A[u^\pm]$-module $M$. Hence, sending $M$ to $M_0
\oplus M_1$ gives a functor inverse to 
$\mcG$.
%\footnote{J: The equivalence of categories should be a descent
%  argument – is there a good reference? 
%
% D: It might be a bit heavy
%handed to do this via a reference.}
\end{proof}

\begin{corollary}\label{cor:Zproj}
Let $A$ be a finitely generated $\bbZ$-graded algebra. 
Then a graded $A$-module $M$ is projective if and only if the 
graded $\bar A$-module $\bar M$ is projective. 
\end{corollary}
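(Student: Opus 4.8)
The plan is to deduce Corollary~\ref{cor:Zproj} directly from Proposition~\ref{prop:folding}, using the commutative triangle and the fact that $\mcG$ is an equivalence preserving projectives. First I would recall that for a graded $A$-module $M$, extending scalars along the (faithfully flat) graded field extension $k \hookrightarrow k[u^\pm]$ neither creates nor destroys projectivity: $M$ is projective over $A$ if and only if $M[u^\pm] = M \otimes_k k[u^\pm]$ is projective over $A[u^\pm]$. The nontrivial direction here is that projectivity descends along $-\otimes k[u^\pm]$; this is standard faithfully flat descent of projectivity for finitely generated modules (or, concretely, one observes that $M$ is a graded summand of $M[u^\pm]$ viewed as an $A$-module, since $k$ is a graded-field summand of $k[u^\pm]$, so a projective $M[u^\pm]$ forces $M$ projective). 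The forward direction is obvious since $-\otimes k[u^\pm]$ is additive and sends $A$ to $A[u^\pm]$.

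Next I would combine this with the two properties of $\mcG$ from Proposition~\ref{prop:folding}: that $\mcG$ is an equivalence of categories $\bar A\text{-}\!\mod \to A[u^\pm]\text{-}\!\mod$, hence detects and preserves projectivity (an equivalence sends projectives to projectives in both directions); and that the triangle
\[
\mcG \circ \Fl \;\cong\; (-\otimes k[u^\pm])
\]
commutes. So for a graded $A$-module $M$ we have the chain of equivalences: $M$ is projective over $A$ $\iff$ $M[u^\pm]$ is projective over $A[u^\pm]$ (by the descent/ascent just discussed) $\iff$ $\mcG(\bar M) = (\mcG\circ\Fl)(M) \cong M[u^\pm]$ is projective over $A[u^\pm]$ (by the commuting triangle) $\iff$ $\bar M$ is projective over $\bar A$ (since $\mcG$ is an equivalence). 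Stringing these together gives exactly the statement of the corollary.

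The only real subtlety, and the step I expect to be the main obstacle to write cleanly, is the descent of projectivity along $-\otimes_k k[u^\pm]$, i.e. the implication "$M[u^\pm]$ projective over $A[u^\pm]$ $\Rightarrow$ $M$ projective over $A$". In the \emph{finitely generated} graded setting over a Noetherian graded ring this is straightforward — projectivity of a finitely generated graded module is equivalent to freeness of each localization, or one can argue via the splitting of $k \to k[u^\pm]$ as graded $k$-modules. If one does not want to assume finite generation of $M$ (only of $A$, as in the hypothesis), then one should instead invoke the fact that $-\otimes_k k[u^\pm]$ is faithful and exact, so it reflects exactness of the defining sequence $0 \to \Omega M \to P \to M \to 0$ splitting; concretely, $\mathrm{Hom}_A(M,-)$ is exact iff $\mathrm{Hom}_{A[u^\pm]}(M[u^\pm],-\otimes k[u^\pm])$ is exact on a cofinal family of test modules, which follows from $\mcG$ being an equivalence. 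I would phrase the argument so as to lean on the equivalence $\mcG$ and the commuting triangle for \emph{both} directions, reducing the descent statement to the fact, already implicit in Proposition~\ref{prop:folding}, that $\bar A\text{-}\!\mod$ and $A[u^\pm]\text{-}\!\mod$ have literally the same projective objects under $\mcG$.

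\begin{proof}
By Proposition~\ref{prop:folding}, the functor $\mcG\colon \bar A\text{-}\!\mod \to A[u^\pm]\text{-}\!\mod$ is an equivalence of categories and the diagram
\[
\xymatrix{A\text{-}\!\mod \ar[d]_-{-\otimes k[u^\pm]}\ar[r]^-{\Fl}& \bar A\text{-}\!\mod \ar[dl]_-{\mcG}\\ A[u^\pm]\text{-}\!\mod}
\]
commutes, so $\mcG(\bar M)\cong M[u^\pm] := M\otimes_k k[u^\pm]$ for every graded $A$-module $M$. Since $\mcG$ is an equivalence, $\bar M$ is a projective $\bar A$-module if and only if $M[u^\pm]$ is a projective $A[u^\pm]$-module. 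It therefore suffices to prove that $M$ is projective over $A$ if and only if $M[u^\pm]$ is projective over $A[u^\pm]$.

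If $M$ is projective over $A$, say a graded summand of a free module $F$, then $M[u^\pm]$ is a graded summand of the free $A[u^\pm]$-module $F[u^\pm]$, hence projective. Conversely, suppose $M[u^\pm]$ is projective over $A[u^\pm]$. The inclusion $k\hookrightarrow k[u^\pm]$ of graded $k$-modules splits (the complement being $\bigoplus_{j\ne 0}k\cdot u^j$), so $M$ is a graded $A$-module direct summand of the restriction of $M[u^\pm]=M\otimes_k k[u^\pm]$ along $A\hookrightarrow A[u^\pm]$. Now $A[u^\pm]$ is free, hence projective, as a graded $A$-module, so the restriction of any projective $A[u^\pm]$-module is projective over $A$; in particular the restriction of $M[u^\pm]$ is projective over $A$, and therefore so is its summand $M$.

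Combining the two equivalences, $M$ is projective over $A$ if and only if $\bar M$ is projective over $\bar A$.
\end{proof}
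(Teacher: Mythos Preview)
Your proof is correct and follows the same approach as the paper, which simply cites Proposition~\ref{prop:folding} together with the fact that extending scalars to a graded field does not affect projectivity. You have supplied the details the paper leaves implicit, in particular a clean argument for descent of projectivity along $-\otimes_k k[u^\pm]$ via the splitting $k\hookrightarrow k[u^\pm]$ and freeness of $A[u^\pm]$ over $A$.
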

\begin{proof}
This follows from Proposition~\ref{prop:folding} and the fact 
that extending scalars to a graded field does not affect projectivity.
\end{proof}

\begin{theorem}\label{th:Zgr}
Let $G$ be a finite $\bbZ$-graded unipotent group scheme, and $M$ be a $kG$-module. Then the
following hold.
\begin{enumerate}
\item[\rm (i)] An element $\xi$ of $H^{*,*}(G,M)$ is nilpotent if and only
  if for every $\bbZ$-graded field extension $K$ of $k$, and every  elementary
  subgroup scheme $E$ of $G_K$, the restriction of $\xi_K\in
  H^{*,*}(G_K,M_K)$ to $H^{*,*}(E,M_K)$ is nilpotent.
\item[\rm (ii)] A $kG$-module $M$ is projective if and only if for
  every $\bbZ$-graded field extension $K$ of $k$, and every elementary
  subgroup scheme $E$ of $G_K$, the restriction of $M_K$ to $E$ is
  projective.
\end{enumerate}
\end{theorem}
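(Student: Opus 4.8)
The plan is to deduce both statements from the $\bbZ/2$-graded detection theorem, Theorem~\ref{th:detect}, by means of the folding functor $\Fl$ of Section~\ref{se:steenrod-algebra}. Writing $A=kG$ and $\bar A=\Fl(A)=k\bar G$, one first observes that $\bar G$ is again a finite unipotent supergroup scheme: $G$ finite unipotent means that $A$ is a finite-dimensional local ring with nilpotent augmentation ideal, and this property is inherited by $\bar A$. For (ii) the engine is Corollary~\ref{cor:Zproj}: a $\bbZ$-graded $kG$-module $N$ is projective if and only if $\bar N$ is projective over $k\bar G$. Applying this to $M$ and feeding the result into Theorem~\ref{th:detect}(ii) (or the coextension version, Theorem~\ref{th:coext}(ii)) immediately gives that $M$ is projective if and only if, for every \emph{ordinary} field extension $L/k$ and every elementary sub-supergroup scheme $E\le\bar G_L$, the restriction $\res^{\bar G_L}_E(\bar M_L)$ is projective. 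So the whole content of (ii) is to see that this condition is equivalent to the one in the statement.

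One direction is easy: if $M$ is projective, then so is $M_K$ over $kG_K$ for every $\bbZ$-graded field extension $K/k$, and hence so is $\res^{G_K}_{\widetilde E}(M_K)$ for every $\bbZ$-graded subgroup scheme $\widetilde E\le G_K$, since $kG_K$ is free over $k\widetilde E$. For the converse I would start from an arbitrary elementary sub-supergroup scheme $E\le\bar G_L$ over an ordinary field $L$ and use Example~\ref{ex:Zelem} together with the classification Theorem~\ref{th:Witt-classification} to see that $E$ is abstractly the fold $\overline{\widetilde E}$ of a $\bbZ$-graded elementary group scheme $\widetilde E$. The key step is then to promote the inclusion $E\hookrightarrow\bar G_L$, after the harmless $\bbZ$-graded base change from $L$ to $L[u^\pm]$, to the fold of a $\bbZ$-graded embedding $\widetilde E\hookrightarrow G_{L[u^\pm]}$; granting this, Corollary~\ref{cor:Zproj} applied over $L[u^\pm]$ identifies projectivity of $\res^{\bar G_L}_E(\bar M_L)$ with that of $\res^{G_{L[u^\pm]}}_{\widetilde E}(M_{L[u^\pm]})$, which holds by hypothesis, and Theorem~\ref{th:detect}(ii) then yields that $\bar M$, and hence $M$, is projective.

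Part (i) I would obtain formally from (ii): the $\bbZ$-graded analogues of Lemma~\ref{le:hocolim} and of the argument in Theorem~\ref{th:projimpliesnilp} go through verbatim, so that for $\xi\in H^{j,*}(G,M)$ with $j>0$ which restricts nilpotently on every $\bbZ$-graded elementary subgroup scheme after $\bbZ$-graded field extension, the colimit $X=\colim\Omega^{-jn}(M^{\otimes n})$ restricts projectively on that family, hence is projective by (ii), hence $\xi$ is nilpotent; the reverse implication is just functoriality of restriction, and the degree-zero case is handled by hand. The hard part is the middle paragraph: unfolding a super-Hopf-algebra embedding of an elementary supergroup scheme into $\bar G_L$ to a $\bbZ$-graded one. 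This cannot be done from Proposition~\ref{prop:folding} at the level of module categories alone; it will require the compatibility of the folding/unfolding equivalence with the Hopf (and monoidal) structure, together with the $\bbZ$-rigidity of the elementary supergroup schemes recorded in Example~\ref{ex:Zelem} --- concretely, that every sub-super-Hopf-algebra of $k\bar G_L$ of the shape listed in Theorem~\ref{th:Witt-classification} becomes, after tensoring with $L[u^\pm]$, the fold of a $\bbZ$-graded sub-Hopf-algebra of $kG_{L[u^\pm]}$.
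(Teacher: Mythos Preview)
Your proposal is correct and follows essentially the same route as the paper: reduce projectivity of $M$ to projectivity of $\bar M$ via Corollary~\ref{cor:Zproj}, invoke Theorem~\ref{th:main} for $\bar G$, and for each elementary $\bar E\le \bar G_L$ produce a $\bbZ$-graded lift $\widetilde E\hookrightarrow G_{L[u^\pm]}$ whose fold recovers the given embedding. The paper handles the lifting step you flag as the crux with a one-line sketch---place the generator $s_n$ of $K\bar E$ into $KG[u^\pm]$ in a degree compatible with its image (adjusting by a power of $u$), then the relations in $kE^-_{m,n}$ force the degrees of $\sigma,s_{n-1},\dots,s_1$ in turn---so your caution there is well-placed but not an obstacle. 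Your derivation of (i) from (ii) via the $\bbZ$-graded analogue of Lemma~\ref{le:hocolim} and Theorem~\ref{th:projimpliesnilp} is a clean alternative to the paper's terse ``the argument for (i) is similar''.
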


\begin{proof}
We prove statement (ii). The argument for (i) is similar. 
Let $M$ be a $kG$-module satisfying the condition in (ii). 
By Corollary~\ref{cor:Zproj} it suffices to show that 
$\bar M$ is a projective $k\bar G$-module. 

Let $K/k$ be a (non-graded) field extension and $\overline{G}_K$ be 
the finite supergroup scheme with the group algebra ${K\bar G}$. 
Let $\bar E \subset \bar{G}_K$ be an elementary sub
supergroup scheme. Let $E$ be a $\bbZ$-graded lifting of $\bar E$. The inclusion $\bar E \subset \bar{G}$ lifts to an embedding $KE[u^\pm] \subset
KG[u^\pm]$. Indeed, to construct such a lifting we first place the generator $s_n$ of $KE$ into appropriate degree in $KG[u^\pm]$ using the parameter $u$ and then work along the relations to place $\sigma$ and $s_{n-1}, \ldots, s_1$.  
By assumption, the restriction of $M_K[u^\pm]$ to $KE[u^\pm]$ is
projective. Proposition~\ref{prop:folding} implies that the
restriction of $M_K$ to ${K\bar E}$ is projective. Since
this holds for any elementary sub supergroup scheme $\bar E$, 
Theorem~\ref{th:main} implies that $\bar{M}$ is 
projective as ${k\bar G}$-module. Hence, $M$ is projective. 
%If $K=K_0[u,u^{-1}]$ with $|u|=2$, there is a one to 
%one correspondence between finite $\bbZ$-graded group schemes $G$ over $K$
%and finite supergroup schemes $\ov G$ over $K_0$. This is given by identifying
%the even graded pieces using powers of $u$, and similarly with the odd
%graded pieces. Similarly if $M$ is a $KG$-module then $\ov M$ is a
%$K_0\ov G$-module. Furthermore, the space of 
 %$KG$-module homomorphisms
%$\Hom^i_{KG}(M,N)$ of degree $i$ is isomorphic to
%$\Hom^{\ov\imath}_{K_0\ov G}(\ov M,\ov N)$ where $\ov\imath$ is the
%reduction of $i$ modulo two.
%So if we use powers of  $u$ to identify an even shift of $M$
%with $M$, thus giving $\Mod(KG)$ a mod two internal shift, 
%there is an equivalence of categories between $\Mod(KG)$
%and $\Mod(K_0\ov G)$ preserving the internal shift modulo two.
%Projectivity is therefore preserved, and
%$\StMod(KG)$ is equivalent to $\StMod(K_0\ov G)$.
%Similarly, $H^{*,*}(\ov G,K_0)$ is obtained by reducing the
%internal grading modulo two on $H^{*,*}(G,K)$, again using
%powers of $u$.
%
%Since every extension $\bbZ$-graded field of $k$ can be extended
%to one of the type $K=K_0[u,u^{-1}]$ above, 
%the theorem follows from Theorem~\ref{th:detect}.
\end{proof}

Let $\mcA$ denote the Steenrod algebra over $\bbF_p$. Recall from Milnor~\cite{Milnor:1958a}, Steenrod and Epstein~\cite{Steenrod/Epstein:1962a}
that for $p$ odd, the graded dual $\mcA^*$ of $\mcA$ is a tensor product
\[ k[\xi_1,\xi_2,\dots] \otimes \Lambda(\tau_0,\tau_1,\dots) \]
of a polynomial ring in generators $\xi_n$ of degree $2p^n-2$ and
an exterior algebra in generators $\tau_n$ of degree $2p^n-1$.
We also set $\xi_0=1$. With this notation, the comultiplication
is given by
\[ \Delta(\xi_n)=\sum_{i=0}^n \xi_{n-i}^{p^i} \otimes \xi_i,\qquad
\Delta(\tau_n)=\tau_n\otimes 1 + \sum_{i=0}^n
\xi_{n-i}^{p^i}\otimes\tau_i. \]

If $A$ is a finite dimensional Hopf subalgebra of $\mcA$ then
the graded dual
$A^*$ is a finite dimensional quotient of $\mcA^*$. Let $\bar G$ be the
finite  supergroup scheme corresponding to the folding of $A$,
so that $\bbF_p\bar G\cong \bar A$ and $\bbF_p[\bar G]\cong \bar A^*$. We
use the same letters $\xi$, $\tau$ to denote the generators in the folded $\bbZ/2$-
graded algebra $\bar A$.
Then $\bbF_p[\bar G_{(1)}]$ is a quotient of $\mcA^*$ by a Hopf ideal
containing $\xi_1^p,\xi_2^p,\dots$.  Letting
$\bar\xi_n$ and $\bar\tau_n$ be the images of $\xi_n$ and
$\tau_n$ in this quotient, for $n\ge 1$ we have
\[ \Delta(\bar\xi_n)=\bar\xi_n\otimes 1 + 1\otimes\bar\xi_n,\qquad
\Delta(\bar\tau_n)=\bar\tau_n\otimes 1 + 1
\otimes\bar\tau_n+\bar\xi_n\otimes\bar\tau_0 \]
while $\Delta(\bar\tau_0)=\bar\tau_0\otimes 1 + 1\otimes \bar\tau_0$.
In other words, $\bar\xi_n$ ($n\ge 1$) and $\bar\tau_0$ are primitive,
and $\bar\tau_n$ ($n\ge 1$) are primitive modulo $\bar\tau_0$. 
Furthermore, $\xi_n$ is even whereas $\tau_0, \tau_n$ are odd.

If we isolate a single $n$, and dualise these relations for
$\bar\xi_n$, $\bar\tau_n$ and $\bar\tau_0$ we get the restricted
universal enveloping algebra of a three dimensional restricted Lie
superalgebra consisting of the matrices
\[ \left(\begin{array}{cc|c} 0&*&* \\ 0&0&* \\ \hline 0&0&0
\end{array}\right) \]
in $\GL(2|1)$. The dual elements $\bar\xi^*_n$ and $\bar\tau^*_n$ to
$\bar\xi_n$ and $\bar\tau_n$
are in the top row, and the dual element $\bar\tau^*_0$ to $\bar\tau_0$ is in
the second row. The only non-trivial commutator relation is
$[\bar\xi^*_n,\bar\tau^*_0] = \bar\tau^*_n$.

Dualising, 
we get a homomorphism  $\bar G_{(1)}\to
\bbG_a^-$, and the kernel is isomorphic to a subgroup scheme of
$(\bbG_{a(1)})^{\times s} \times (\bbG_a^-)^r$. Every subgroup scheme
again has this form, so we have proved the following lemma.

\begin{lemma}\label{le:St(1)}
Let $A$ be a finite dimensional Hopf subalgebra of the
Steenrod algebra, and let $\bar G$ be the supergroup scheme corresponding to the $\bbZ/2$-graded folding $\bar A$. Then there is a (possibly trivial)
homomorphism $\bar G_{(1)}\to \bbG_a^-$ whose kernel is isomorphic
to $(\bbG_{a(1)})^{\times s} \times (\bbG_a^-)^r$ for some $r,s\ge 0$.
The subgroup $(\bbG_a^-)^r$ is normal, and the quotient is commutative.
In particular, there is no sub supergroup scheme
isomorphic to $W_{m,1}^-$ for $m\ge 1$.
\qed
\end{lemma}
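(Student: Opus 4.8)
The plan is to promote the explicit description of $\bbF_p[\bar G_{(1)}]$ obtained above to a description of the restricted Lie superalgebra $\fg=\Lie(\bar G)=\Lie(\bar G_{(1)})$ (using Lemma~\ref{le:mcV}), and then to read each clause of the statement off the structure constants of $\fg$. Dualizing the coproduct formulas, $\fg$ has even part with basis the surviving $\bar\xi_n^*$ and odd part with basis the surviving $\bar\tau_n^*$ ($n\ge 0$), the only nonzero bracket among these basis vectors being $[\bar\xi_n^*,\bar\tau_0^*]=\bar\tau_n^*$; moreover the even part carries trivial $p$-restriction, since $\bbF_p[\bar G_{(1)}]$ is the coordinate ring of a height-one group scheme and so all its even generators $\bar\xi_n$ are $p$-truncated (concretely, killing all generators except one $\bar\xi_n$ is a Hopf-ideal quotient exhibiting $\bbG_{a(1)}\hookrightarrow\bar G_{(1)}$). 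The key consequence to record is that $[\fg_1,\fg_1]=0$: every odd element of $\fg$ has vanishing super self-bracket, hence squares to zero in $\bbF_p\bar G_{(1)}$.

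Next I would produce the homomorphism $\varphi\colon\bar G_{(1)}\to\bbG_a^-$ as the morphism dual to the Hopf-algebra map $\bbF_p[\bbG_a^-]=\bbF_p[\si]/(\si^2)\to\bbF_p[\bar G_{(1)}]$, $\si\mapsto\bar\tau_0$, which is well defined because $\bar\tau_0$ is primitive and squares to zero (and is trivial exactly when $\tau_0$ vanishes in $A^*$). Its kernel has coordinate ring $\bbF_p[\bar G_{(1)}]/(\bar\tau_0)$, and in that quotient the only non-primitive term $\bar\xi_n\otimes\bar\tau_0$ in $\Delta(\bar\tau_n)$ disappears, so all remaining generators become primitive; since the underlying algebra of $\bbF_p[\bar G_{(1)}]$ is a tensor product of copies of $\bbF_p[t]/(t^p)$ on the $\bar\xi_n$ and of $\bbF_p[\si]/(\si^2)$ on the $\bar\tau_n$ (Theorem~\ref{th:tensor-dec}), this identifies $\ker\varphi$ with $(\bbG_{a(1)})^{\times s}\times(\bbG_a^-)^{\times r}$, where $s$ counts the surviving $\bar\xi_n$ and $r$ the surviving $\bar\tau_n$ with $n\ge 1$. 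For the aside that every sub-supergroup scheme of such a group has the same form: such a group is its own first Frobenius kernel, so a closed sub-supergroup scheme is too, hence is connected of height $\le 1$ and determined by its Lie superalgebra, and any restricted sub-superalgebra of an abelian superalgebra with trivial $p$-restriction and with odd squares zero is again of this shape.

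The normality and commutativity statements are then immediate from $\fg$. The odd subalgebra $\mathfrak n=\langle\bar\tau_n^*:n\ge 1\rangle$ corresponding to the factor $(\bbG_a^-)^{\times r}$ is a restricted ideal of $\fg$, because $[\bar\xi_m^*,\bar\tau_n^*]=0$ for $n\ge 1$, $[\bar\tau_i^*,\bar\tau_j^*]=0$ for all $i,j$, and $\mathfrak n$ is stable under the restriction operations (its elements square to zero); hence $(\bbG_a^-)^{\times r}$ is normal in $\bar G_{(1)}$, and the quotient superalgebra $\fg/\mathfrak n$, in which the bracket $[\bar\xi_n^*,\bar\tau_0^*]=\bar\tau_n^*$ collapses, is abelian, so $\bar G_{(1)}/(\bbG_a^-)^{\times r}$ is commutative. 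For the final assertion, an isomorphism $W_{m,1}^-\cong H$ with a sub-supergroup scheme $H\le\bar G$ would give an embedding of restricted Lie superalgebras $\Lie(W_{m,1}^-)\hookrightarrow\Lie(\bar G)=\fg$ compatible with the multiplications on $\bbF_p H$ and $\bbF_p\bar G$; but $\Lie(W_{m,1}^-)$ (see Example~\ref{eg:t2pm}) contains an odd element $\si$ with $\si^2=\tfrac12[\si,\si]\ne 0$, contradicting $[\fg_1,\fg_1]=0$. This rules out all $m\ge 1$ simultaneously.

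The one step requiring genuine care is the first: translating "$\bar\xi_n$ primitive and $p$-truncated" and "$\bar\tau_n$ primitive modulo $\bar\xi_n\otimes\bar\tau_0$" into the precise restricted structure of $\fg$, and, in tandem with it, checking that the Hopf-ideal quotients used above really compute $\ker\varphi$ and $\bar G_{(1)}/(\bbG_a^-)^{\times r}$ as group schemes and not merely as algebras. Once $\fg$ is pinned down, every remaining claim is a direct inspection of its structure constants.
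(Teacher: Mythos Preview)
Your proposal is correct and follows essentially the same approach as the paper. The paper's ``proof'' is really just the discussion preceding the lemma (the lemma itself carries a bare \qed): it computes the coproduct on $\bbF_p[\bar G_{(1)}]$, dualises to obtain the Lie superalgebra with the single nontrivial bracket $[\bar\xi_n^*,\bar\tau_0^*]=\bar\tau_n^*$, and then asserts the conclusions. You do exactly this, but you explicitly verify the normality of $(\bbG_a^-)^{\times r}$ via the ideal $\mathfrak n=\langle\bar\tau_n^*:n\ge 1\rangle$, the commutativity of the quotient, and the exclusion of $W_{m,1}^-$ via $[\fg_1,\fg_1]=0$, all of which the paper leaves implicit.
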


Conceptually, what we have done amounts to showing that
the first Frobenius kernel of the Steenrod algebra is an
extension of $\bbG_a^-$ by an
infinite product of copies of $\bbG_{a(1)}\times \bbG_a^-$,
with gradings tending to infinity, in such a way that over
each factor the extension is the one described by a $\bbZ$-lifting of 
the above subgroup of $GL(2|1)$.

\begin{proposition} \label{pr:SteenrodElem}
Let $A$ be a finite dimensional sub Hopf algebra of the Steenrod
algebra $\mcA$ over $\bbF_p$, and let $G$ be the
corresponding finite unipotent connected $\bbZ$-graded group scheme.
If $E$ is an elementary $\bbZ$-graded subgroup scheme of $G$
then $E\cong \bbG_{a(n)} \times \bbG_a^-$.
\end{proposition}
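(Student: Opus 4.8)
The plan is to exploit the structure of the first Frobenius kernel of a finite-dimensional sub-Hopf algebra $A$ of the Steenrod algebra, as laid out in Lemma~\ref{le:St(1)} and the conceptual discussion following it. Let $E$ be an elementary $\bbZ$-graded subgroup scheme of $G$. By Theorem~\ref{th:Witt-classification} (in its $\bbZ$-graded incarnation), $E$ is a quotient of some $\widetilde E^-_{m,n}\times(\bbZ/p)^{\times s}$, and since $G$ is connected and unipotent, $E$ is connected, so the $(\bbZ/p)^{\times s}$ factor must be trivial and $E$ is (a $\bbZ$-lifting of) one of $\bbG_a^-$, $E^-_{m,n}$, or $E^-_{m,n,\mu}$. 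The goal is to rule out all the possibilities except $\bbG_{a(n)}\times\bbG_a^- = E^-_{1,n}$, and I would do this by showing first that $E$ has height at most one, and then analyzing the height-one case directly via its restricted Lie superalgebra.

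First I would observe that $E$, being a sub-supergroup scheme of $G$, has $E_{(1)}\le G_{(1)}$. By Lemma~\ref{le:St(1)}, $G_{(1)}$ sits in an extension $1\to K\to G_{(1)}\to\bbG_a^-$ (possibly trivial on the right) with $K\cong(\bbG_{a(1)})^{\times s}\times(\bbG_a^-)^r$, and the key point is that there is no sub-supergroup scheme of $G_{(1)}$ — hence none of $G$ — isomorphic to $W_{m,1}^-$ for any $m\ge 1$. Now, if $E$ had height $\ge 2$, then by Remark~\ref{re:masuoka} and the explicit structure of $k[E^-_{m,n}]$ and $k[E^-_{m,n,\mu}]$ recorded just before Theorem~\ref{th:Witt-classification}, the even part $(E^-_{m,n})_\ev$ would have height $m\ge 2$ whenever $E$ is of type $E^-_{m,n}$ or $E^-_{m,n,\mu}$ with $m\ge 2$; but a height-$\ge 2$ connected $E$ that is Witt elementary contains a copy of $W_{m,1}^-$ as a sub-supergroup scheme (this is visible from the diagrams in Construction~\ref{not:Emn^-} and Construction~\ref{not:Emnmu^-}: the odd node $\sigma$ together with the attached chain of even nodes forms a $W_{m',1}^-$ for some $m'\ge 1$), contradicting Lemma~\ref{le:St(1)}. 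So $E$ has height $\le 1$, which already forces $m=1$ in types $E^-_{m,n}$ and excludes type $E^-_{m,n,\mu}$ entirely (its group algebra has $s_n^{p^{m+1}}=0$ with $m\ge 1$, hence height $\ge 2$). For $m=1$ we have $E^-_{1,n}\cong\bbG_{a(n)}\times\bbG_a^-$ by Remark~\ref{rem:E1n}, which is exactly the claimed form.

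The one thing that still needs care is the possibility $n\ge 1$ arbitrary in $\bbG_{a(n)}\times\bbG_a^-$ — but this is precisely the asserted conclusion, so nothing needs to be excluded there; however I should double-check that a $\bbZ$-lifting of $\bbG_{a(n)}\times\bbG_a^-$ does sit inside $G$ with a legitimate grading, which follows since $K$ contains $(\bbG_{a(1)})^{\times s}\times(\bbG_a^-)^r$ and one can assemble $\bbG_{a(n)}$ out of a chain of the $\bbG_{a(1)}$-factors in $G_{(n)}$ much as in the lifting argument in the proof of Theorem~\ref{th:Zgr}. The main obstacle I anticipate is making the step ``$E$ Witt elementary of height $\ge 2$ contains $W_{m',1}^-$'' fully rigorous: one must produce an honest sub-supergroup scheme, not merely a subquotient, which means dualising to the restricted Lie superalgebra level (using Lemma~\ref{le:mcV}) and checking that the sub-Lie-superalgebra generated by a lift of $\sigma$ together with $\tfrac12[\sigma,\sigma]$ and its divided $p$-th powers is exactly $\Lie(W_{m',1}^-)$; the relations in Lemma~\ref{lem:extensions} and Lemma~\ref{lem:elemext} make this transparent, but the bookkeeping is where the real work lies.
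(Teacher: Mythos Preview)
Your overall strategy---use Lemma~\ref{le:St(1)} to forbid $W_{m',1}^-$ as a sub-supergroup scheme---is exactly the paper's. But your execution is tangled by a confusion between the parameter $m$ and the \emph{height} of $E$. The height of $E_{m,n}^-$ is $n$, not $m$: for instance $E_{1,n}^-\cong\bbG_{a(n)}\times\bbG_a^-$ has height $n$, which can be arbitrarily large. So your intermediate claim ``a Witt elementary of height $\ge 2$ contains some $W_{m',1}^-$'' is false, with $\bbG_{a(n)}\times\bbG_a^-$ ($n\ge 2$) as a counterexample; and the conclusion ``$E$ has height $\le 1$'' contradicts the very statement you are trying to prove.

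The paper avoids this by never invoking height. It simply passes to first Frobenius kernels, $E_{(1)}\le G_{(1)}$, and observes that when $m\ge 2$ the subalgebra of $kE_{m,n}^-$ (or $kE_{m,n,\mu}^-$) generated by the odd primitive $\sigma$ is $k[\sigma]/(\sigma^{2p^{m-1}})\cong kW_{m-1,1}^-$, giving an honest copy of $W_{m-1,1}^-$ inside $E_{(1)}$. This immediately contradicts Lemma~\ref{le:St(1)}. (For $E_{m,n,\mu}^-$ one actually gets $W_{m,1}^-$, so even the case $m=1$ of that family is excluded.) This is precisely the ``bookkeeping'' you flagged at the end, and it is the whole proof; your step of bounding the height is both incorrect and unnecessary. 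Finally, the paragraph about checking that some $\bbG_{a(n)}\times\bbG_a^-$ actually embeds in $G$ is irrelevant: the proposition only constrains the isomorphism type of elementary subgroups that happen to exist.
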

\begin{proof}
By Theorem~\ref{th:Witt-classification}, we have that
$\bar E \cong E_{m,n}^-$ or $\bar E \cong E_{m,n, \mu}^-$.
We need to show that $m=1$. But if $m \ge 2$,
the statement follows from the observation that
$(E_{m,n}^-)_{(1)}$ and $(E_{m,n, \mu}^-)_{(1)}$ both contain
$W_{m-1,1}^-$ as a subgroup scheme. But
$(\bbG_{a(1)})^{\times s} \times (\bbG_{a}^-)^{\times r}$ does not,
and therefore by Lemma~\ref{le:St(1)} neither does $\bar G_{(1)}$.
\end{proof}

The detection theorem for the finite dimensional subalgebras of the 
Steenrod algebra now follows from Theorem~\ref{th:Zgr}
and Proposition~\ref{pr:SteenrodElem}. Recall from Remark~\ref{rem:grading} that we use the notation $\bbG_{a(r)}$, $\bbG_{a}^-$ for $\bbZ$-liftings of the corresponding supergroup schemes. 

\begin{theorem}
\label{th:Steenrod}
Let $A$ be a finite dimensional sub Hopf algebra of the Steenrod
algebra $\mcA$ over $\bbF_p$. Then $A$ is the group algebra of a
$\bbZ$-graded finite  group scheme. The following hold:
\begin{enumerate}[\quad\rm(1)]
\item For an $A$-module $M$, an element $\xi$ in $H^{*,*}(A,M)$
is nilpotent if and only if for every $\bbZ$-graded field extension
$K$ of $k$,  the restriction of $\xi_K\in H^{*,*}(A_K,M_K)$ to
every subgroup scheme of $A_K$ isomorphic 
to $\bbG_{a(r)}$, $\bbG_a^-$, or $\bbG_{a(r)} \times \bbG_a^{-}$
is nilpotent.
\item An $A$-module $M$ is projective if and only if for every
$\bbZ$-graded field extension $K$ of $k$, the restriction of 
$M_K$ to every subgroup
scheme of $A_K$ isomorphic to $\bbG_{a(r)}$, $\bbG_a^-$, or
$\bbG_{a(r)} \times \bbG_a^-$ is projective. \qed
\end{enumerate}
\end{theorem}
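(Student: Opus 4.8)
The plan is to deduce the statement directly from Theorem~\ref{th:Zgr} together with Proposition~\ref{pr:SteenrodElem}; all that is really needed is to check the hypotheses of the first and to rephrase the conclusion of the second.

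First I would record that $A = kG$ for a finite $\bbZ$-graded \emph{unipotent} group scheme $G$. Indeed $\mcA^*$ is a $\bbZ$-graded commutative Hopf algebra, so $\mcA$, and hence its sub Hopf algebra $A$, is $\bbZ$-graded cocommutative; since $A$ is finite dimensional, the equivalence of categories recalled in Section~\ref{se:steenrod-algebra} produces a finite $\bbZ$-graded group scheme $G$ with $kG \cong A$, which settles the first assertion of the theorem. Because $A$ is a graded connected algebra concentrated in finitely many degrees, its augmentation ideal is nilpotent, so $A$ is local; thus $\bbF_p$ is the unique simple $A$-module, and $G$ is connected and unipotent. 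In particular Theorem~\ref{th:Zgr} applies to $G$.

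Next I would invoke Theorem~\ref{th:Zgr}: for an $A$-module $M$, an element of $H^{*,*}(A,M)$ is nilpotent (resp.\ $M$ is projective) exactly when the corresponding statement holds after restriction to every elementary $\bbZ$-graded subgroup scheme $E$ of $G_K$, as $K$ ranges over $\bbZ$-graded field extensions of $\bbF_p$. It then remains to identify these $E$. The argument proving Proposition~\ref{pr:SteenrodElem}, via Lemma~\ref{le:St(1)}, only refers to the Hopf algebra $A$ and to its first Frobenius kernel, both of which are unchanged (up to base change) on replacing $\bbF_p$ by a $\bbZ$-graded field $K$, since $KG_K \cong K \otimes_{\bbF_p} A$ as $\bbZ$-graded Hopf algebras; so the same reasoning shows that every elementary $\bbZ$-graded subgroup scheme of $G_K$ is isomorphic to a $\bbZ$-lifting of $\bbG_{a(n)} \times \bbG_a^-$ for some $n \ge 0$ (which is just $\bbG_a^-$ when $n = 0$), and conversely each such subgroup scheme is elementary by Remark~\ref{rem:E1n}.

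Finally I would reconcile this with the family appearing in the theorem. Since $\bbG_{a(r)}$ and $\bbG_a^-$ are subgroup schemes of $\bbG_{a(r)} \times \bbG_a^-$ and $\bbG_{a(0)} = 1$, the collection of subgroup schemes of $G_K$ isomorphic to $\bbG_{a(r)}$, $\bbG_a^-$, or $\bbG_{a(r)} \times \bbG_a^-$ contains all elementary $\bbZ$-graded subgroup schemes of $G_K$; and since nilpotence (resp.\ projectivity) is preserved by restriction, detection on a subfamily immediately forces detection on any larger family. Both statements follow. I expect the one point deserving care — rather than being a genuine obstacle — to be the persistence of Proposition~\ref{pr:SteenrodElem} over graded field extensions, in particular over $\bbF_p[u^{\pm}]$; but this reduces to the observation above that Lemma~\ref{le:St(1)} is purely a statement about the Hopf structure of $A$, which survives graded base change.
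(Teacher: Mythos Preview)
Your proposal is correct and follows exactly the approach the paper takes: the theorem is stated with a \qed, and the sentence preceding it simply says it follows from Theorem~\ref{th:Zgr} and Proposition~\ref{pr:SteenrodElem}. You are in fact more careful than the paper on two points it leaves implicit: you explicitly verify that $G$ is unipotent (required to invoke Theorem~\ref{th:Zgr}), and you flag and resolve the question of whether Proposition~\ref{pr:SteenrodElem} persists after passing to a $\bbZ$-graded field extension $K$.
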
 

Nakano and Palmieri \cite{Nakano/Palmieri:1998a} also considered the problem of
finding a detecting family for the mod $p$ Steenrod algebra. They do not consider
field extensions, and arrive at a larger family of detecting subalgebras, which they
call ``quasi-elementary''.

%The key feature of our main result as well as of its application 
%to the Steenrod algebra, is that our detecting family is minimal.  
%Our work also demonstrates that the relatively simple detection family which 
%works for finite dimensional sub Hopf algebras of the Steenrod algebra is an 
%anomaly rather than the norm for general finite dimensional 
%graded cocommutative Hopf algebras. 

%%%%%%%%%%%%%%%%%%%%%%%%APPENDIX%%%%%%%%%%%

\begin{appendix}
\section{Witt vectors and Dieudonn\'e modules}
\label{sec:Witt}

Recall that finite commutative connected unipotent group schemes form an abelian category
$\fA$ which is equivalent to an appropriate category of Dieudonn\'e modules. This is described for
example in Fontaine \cite{Fontaine:1977a}, but we give an outline here. What will interest
us is the Dieudonn\'e modules killed by $p$, which were classified by Koch \cite{Koch:2001a}.

We begin with a brief recollection concerning the Witt vectors. 
Define a polynomial $w_n$ in
variables $Z_0,\dots,Z_n$ with integer coefficients by
\[ w_n(Z_0,\dots,Z_n) = p^nZ_n + p^{n-1}Z_{n-1}^p + \dots +
Z_0^{p^n}. \]
Then the polynomials $S_i$ and $P_i$ in variables
$X_0,\dots,X_n,Y_0,\dots,Y_n$, 
again with integer coefficients, are defined by
\begin{align*} 
w_n(S_0,\dots,S_n) &= w_n(X_0,\dots,X_n)+w_n(Y_0,\dots,Y_n), \\
w_n(P_0,\dots,P_n) &= w_n(X_0,\dots,X_n)w_n(Y_0,\dots,Y_n).
\end{align*}
So for example $S_0=X_0+Y_0$, $P_0=X_0Y_0$, 
\[ S_1 = X_1 + Y_1 + \frac{(X_0+Y_0)^p - X_0^p-Y_0^p}{p}, \qquad
P_1 = pX_1Y_1 + X_0^pY_1 + X_1Y_0^p, \]
and so on. 

Witt vectors $W(k)$ over $k$ are vectors $(a_0,a_1,\dots)$
with $a_i\in k$, where $S_i$ and $P_i$ give the coordinates of
the sum and product:
\begin{align*}
(a_0,a_1,\dots)+(b_0,b_1,\dots) &= (S_0(a_0,b_0),S_1(a_0,a_1,b_0,b_1), \dots) \\
(a_0,a_1,\dots)(b_0,b_1,\dots) &= (P_0(a_0,b_0),P_1(a_0,a_1,b_0,b_1), \dots).
\end{align*}
Thus for example if $k=\bbF_p$
then $W(k)$ is the ring of $p$-adic integers $\bbZ_p$. More generally, $W(k)$ is
a local ring of mixed characteristic $p$. 
The Frobenius 
endomorphism of $k$ lifts to a ring endomorphism of $W(k)$ denoted $\bsi$.
It is defined by $(a_0,a_1\dots)^\bsi=(a_0^p,a_1^p,\dots)$.

More generally, if $A$ is a commutative $k$-algebra then $W(A)$ is the ring of
Witt vectors over $A$, defined using the same formulae. 
This defines a functor from commutative $k$-algebras to rings.
The additive part of this functor defines an affine group scheme over
$k$ denoted $W$, the \emph{additive Witt vectors}. If
we stop at length $m$ vectors, we obtain $W_m$, and we write $W_{m,n}$
for the $n$th Frobenius kernel of $W_m$.

There are two endomorphisms $V$ and $F$ of $W$ of interest to us. These are 
the Verschiebung $V$ defined by
\[ V(a_0,a_1,\dots)=(0,a_0,a_1,\dots), \]
and the Frobenius $F$ given by
\[ F(a_0,a_1,\dots)=(a_0^p,a_1^p,\dots). \]
These commute, and their product corresponds to multiplication by $p$ on Witt vectors.
Multiplication by a Witt vector $x\in W(k)$ also gives an endomorphism of $W$ which we
shall denote $x$ by abuse of notation. These are related to $V$ and $F$ by the relations
$Vx^\bsi=xV$ and $Fx=x^\bsi F$. 

We write $W_{m}$ for the group scheme of Witt vectors of length $m$, 
corresponding to the quotient $W(k)/(p^m)$ of $W(k)$. This is a group scheme
with a filtration whose quotients are $m$ copies of the additive group $\bbG_a$.
We write $W_{m,n}$ for the $n$th Frobenius kernel of $W_m$. This is a finite
group scheme with a filtration of length $mn$ whose quotients are copies of
$\bbG_{a(1)}$. 

The \emph{Dieudonn\'e ring} $D_k$ is generated 
over $W(k)$ by two commuting variables $V$ and $F$ satisfying the following 
relations:
\[ FV=VF=p,\qquad Vx^\bsi = xV, \qquad Fx=x^\bsi F \]
for $x\in W(k)$. Then $W$ is a module over $D_k$, as are its
quotients $W_m$ and their finite subgroup schemes $W_{m,n}$.

Recall that there is a duality on $\fA$ called Cartier duality, which
corresponds to taking the $k$-linear dual of the corresponding Hopf algebras.
We denote the Cartier dual of $G$ by $G^\sharp$.

Now consider the subcategory $\fA_{m,n}$ of $\fA$ consisting of the
those group schemes $G$ in $\fA$ such that $G$ has height at most
$n$ and the Cartier dual $G^\sharp$ has height at most $m$. Then
there is a covariant equivalence of categories between $\fA_{m,n}$ 
and the category $\mod(D_k/(V^m,F^n))$ of finite length modules over
the quotient ring $D_k/(V^m,F^n)$. This equivalence is given by
the functor
\[ \Hom_\fA(W_{m,n},-)\colon\fA_{m,n}\to\mod(D_k/(V^m,F^n)). \]

Write $\hat D_k$ for the corresponding completion $\displaystyle\lim_\leftarrow D_k/(V^m,F^n)$
Then every $\hat D_k$-module of finite length is a module for some quotient
of the form $D_k/(V^m,F^n)$, and these equivalences combine to
give an equivalence between $\fA$ and the category
$\fl(\hat D_k)$ of $\hat D_k$-modules
of finite length. Let us write 
\[\psi\colon \fl(\hat D_k)\to \fA\] 
for this equivalence. Thus for example $\psi(D_k/(V^m,F^n))\cong W_{m,n}$
and $\psi(D_k/(V^m,F^n,p)) \cong W_{m,n}/W_{m-1,n-1} \cong E_{m,n}$, where 
the last notation is introduced in Defintion~\ref{defn:Witt}.

Let $G = \psi(M)$ be a finite unipotent abelian group scheme, so that
$M$ is a finite length $D_k/(V^m,F^n)$-module for some $m,n\ge 1$. 
If we are only interested in the algebras structure of $G$, 
this means that we can ignore the action of $F$ on $M$ and just look 
at finite length modules for $D_k/(V^m,F)=W(k)[V]$ with $xV=Vx^\bsi$ 
($x\in W(k)$). Such modules are always direct sums
of cyclic submodules, and the cyclic modules are just truncations at
smaller powers of $V$. Translating through the equivalence $\psi$, we
have the following.

\begin{lemma}
\label{le:algebra} 
Let $G$ be a finite unipotent abelian group scheme. Then $kG$ is a isomorphic 
to a tensor product of algebras of the form $kW_{m,1} \simeq k[s]/s^{p^m}$. 
\end{lemma}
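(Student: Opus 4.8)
The plan is to reduce to the connected case and then deduce everything from Cartier duality together with the known structure of coordinate rings of finite connected group schemes; this is the precise form of the remark preceding the lemma that, to determine $kG$ as an algebra, one may forget the action of $F$.

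First I would split off the component group. Since $G$ is abelian, Lemma~\ref{le:pi0} gives $G\cong G^0\times\pi_0(G)$, hence $kG\cong kG^0\otimes_k k\pi_0(G)$. Because $G$ is abelian, $kG$ is a finite dimensional commutative $k$-algebra, and because $G$ is unipotent it is local with residue field $k$; a tensor product of two finite dimensional commutative $k$-algebras is local with residue field $k$ precisely when both factors are, so $kG^0$ and $k\pi_0(G)$ are each local with residue field $k$. In particular $\pi_0(G)$ is a finite abelian $p$-group, say $\pi_0(G)\cong\prod_i\bbZ/p^{a_i}$, and using the identity $t^{p^a}-1=(t-1)^{p^a}$ in characteristic $p$ we get $k\pi_0(G)\cong\bigotimes_i k[s]/(s^{p^{a_i}})=\bigotimes_i kW_{a_i,1}$. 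So it remains to treat $G^0$, which is connected and still unipotent abelian; I rename it $G$.

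The key point is then that $kG\cong k[G^\sharp]$ as Hopf algebras, hence as algebras, where $G^\sharp$ is the Cartier dual: this is just the definition $kG=\Hom_k(k[G],k)$ together with the description of Cartier duality as $k$-linear dualization of the Hopf algebras. Now I claim $G^\sharp$ is again connected unipotent commutative. Indeed, $G$ unipotent means $kG$ is local, so $k[G^\sharp]=kG$ is local, i.e.\ $G^\sharp$ is connected; and $kG^\sharp=\Hom_k(k[G^\sharp],k)=k[G]$ (double dual of a finite dimensional space), which is local since $G$ is connected, so $G^\sharp$ is unipotent. Hence the structure theorem for coordinate rings of finite connected group schemes applies to $G^\sharp$ (Waterhouse \cite[Theorem~14.4]{Waterhouse:1979a}; cf.\ Remark~\ref{re:masuoka}(3)): as a $k$-algebra $k[G^\sharp]\cong k[x_1,\dots,x_n]/(x_1^{p^{j_1}},\dots,x_n^{p^{j_n}})$. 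This is the tensor product $\bigotimes_i k[x_i]/(x_i^{p^{j_i}})$, and each factor is isomorphic to $kW_{j_i,1}$ by the description of $kW_{m,1}$ recalled in Example~\ref{eg:t2pm}. Therefore $kG=k[G^\sharp]\cong\bigotimes_i kW_{j_i,1}$, and reassembling with the component-group contribution proves the lemma.

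I do not expect a serious obstacle here: everything after the reductions is formal. The one thing needing care is exactly the claim that Cartier duality preserves connectedness and unipotence, which is handled above through the identifications $k[G^\sharp]=kG$ and $kG^\sharp=k[G]$ and the local-ring characterisations of the two properties; the rest is bookkeeping with tensor products. This argument is the honest version of the Appendix's slogan: forgetting $F$ on the Dieudonné module of $G$ corresponds, under the equivalence $\psi$ and Cartier duality, to recording precisely the truncated-polynomial algebra structure of $k[G^\sharp]=kG$.
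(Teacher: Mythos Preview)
Your argument is correct, and it takes a different route from the paper. The paper works entirely within the Dieudonn\'e framework of the appendix: writing $G=\psi(M)$ for a finite length $D_k/(V^m,F^n)$-module $M$, it observes that the algebra structure of $kG$ depends only on $M$ as a module over $W(k)[V]$ (forgetting $F$), and such modules split as direct sums of cyclics $W(k)[V]/(V^{m_i})$, each giving a tensor factor $kW_{m_i,1}$. Your proof bypasses Dieudonn\'e theory altogether: after splitting off $\pi_0(G)$, you use Cartier duality $kG\cong k[G^\sharp]$ to transport the question to the coordinate-ring side and then invoke Waterhouse's structure theorem directly. This is more elementary in that it needs none of the $\hat D_k$-module machinery, and it makes transparent the remark you allude to at the end --- forgetting $F$ on the Dieudonn\'e side is exactly remembering only the algebra (as opposed to coalgebra) structure of $kG=k[G^\sharp]$. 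The paper's approach, on the other hand, stays internal to the appendix and sets up the language used immediately afterwards in Lemma~\ref{ab:quotients} and Theorem~\ref{th:Koch}.
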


\begin{lemma}\label{ab:quotients}
Let $G$ be a finite unipotent abelian group scheme. If 
$\dim_k\Hom_{\Gr/k}(G,\bbG_{a(1)})=1$ and $G$ does not have $W_{2,2}$ as a quotient, 
then $G$ is isomorphic to a quotient of the group scheme $E_{m,n}$. 
\end{lemma}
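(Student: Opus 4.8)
The plan is to translate the statement into the language of Dieudonné modules via the equivalence $\psi$ and reduce it to a purely module-theoretic fact about finitely generated modules over $W(k)[V]$ (equivalently over $D_k/(F)$), following the same philosophy used for Lemma~\ref{le:algebra}. Write $G = \psi(M)$ where $M$ is a finite length module over $D_k/(V^m, F^n)$ for suitable $m,n$. The hypothesis $\dim_k\Hom_{\Gr/k}(G,\bbG_{a(1)})=1$ must be decoded: since $\bbG_{a(1)}=\psi(D_k/(V,F,p))$ and $\Hom_{\Gr/k}$ corresponds to $\Hom$ of Dieudonné modules, this says that $M/(V,F,p)M$ is one-dimensional over $k$, i.e. $M$ is generated by a single element as a module over (the relevant quotient of) $D_k$. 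So $M\cong D_k/L$ for a left ideal $L$ containing $(V^m,F^n)$. Dually, the quotient condition "$G$ does not have $W_{2,2}$ as a quotient" translates under $\psi$ (which is covariant, so quotients go to quotients) into: $M$ has no quotient isomorphic to $D_k/(V^2,F^2)$.

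First I would pin down the structure of cyclic $D_k$-modules $M = D_k/L$ of finite length. Because of the relations $FV=VF=p$, $Vx^\bsi=xV$, $Fx=x^\bsi F$, and because $k$ is perfect, such a module has a $W(k)$-basis adapted to the $V$- and $F$-filtrations; the key structural input is that one can choose a generator $g$ so that $L$ is described by relations $V^a g = (\text{unit})\cdot F^b g$ together with $F^n g=0$, $V^m g=0$ — this is exactly the normalization used in the proofs of Lemma~\ref{lem:extensions} and Lemma~\ref{lem:elemext}, where $\half[\sigma,\sigma]$ was brought into the form $s_n^{[p]^i}$ by absorbing units (using that $k$ is perfect to take $p$-power roots of scalars). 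I expect that after this normalization $M$ is determined up to isomorphism by the triple $(m,n,$ "gluing exponent"$)$, and that $\psi$ of such a module is precisely a quotient of some $E_{m,n}=(E_{m,n}^-)_\ev=\psi(D_k/(V^m,F^n,p))$ — indeed $E_{m,n}$ corresponds to the cyclic module $D_k/(V^m,F^n,p)$ with the "diagonal" gluing $F^{n-1}=$ unit $\cdot V^{m-1}$ after suitable reindexing, and quotients of it correspond to further quotient modules.

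The heart of the argument is then the following dichotomy for a cyclic finite-length module $M=D_k/L$ with $L\supseteq(V^m,F^n)$ and $p\in L$ after reduction (i.e. we may assume $pM$ is small and handle it separately, or work over $D_k/(p)$ directly since $W_{2,2}$ and $E_{m,n}$ are all killed by $p$): either $M$ is a quotient of $D_k/(V^{m'},F^{n'},p)$ with the diagonal gluing — in which case $G=\psi(M)$ is a quotient of $E_{m',n'}$ and we are done — or $M$ admits $D_k/(V^2,F^2)$ as a quotient, contradicting the hypothesis. To prove this I would argue by contraposition: if the "gluing exponent" fails to be diagonal, one can exhibit an explicit quotient map $M \twoheadrightarrow D_k/(V^2,F^2)$ by sending the generator $g$ to the generator of $D_k/(V^2,F^2)$ and checking the relations are compatible; the failure of the diagonal condition is exactly what makes $V^2$ and $F^2$ both act as zero on the target while the images of $g, Vg, Fg$ remain independent. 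The condition $\dim_k\Hom(G,\bbG_{a(1)})=1$ is used to guarantee $M$ is cyclic in the first place, so that this single-generator analysis applies.

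The main obstacle I anticipate is the bookkeeping with the ring $D_k/(p)$: classifying cyclic modules and making the normalization of the gluing relation precise, including the role of perfectness of $k$ in clearing units, and then carefully matching the resulting module with the Dieudonné module of $E_{m,n}$ as defined in Construction~\ref{not:Emn^-} (whose coproduct, though irrelevant for the algebra structure, fixes the indexing conventions). A secondary subtlety is that $M$ need not be killed by $p$ a priori; one either invokes that the hypotheses force it (since a non-$p$-torsion cyclic module would surject onto something with a $\bbG_{a(2)}$ or $W_{2,2}$ quotient, or would violate the $\Hom$-dimension count) or one runs the argument over $D_k/(V^m,F^n)$ with $p\ne0$ and checks the diagonal-gluing classification still goes through. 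Everything else is the same type of Lie-superalgebra-style relation-chasing already carried out in Section~\ref{se:Elem}.
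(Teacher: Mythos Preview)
Your opening is right: the hypothesis $\dim_k\Hom_{\Gr/k}(G,\bbG_{a(1)})=1$ says precisely that the Dieudonn\'e module $M$ is cyclic, so $M\cong D_k/I$. But you then misidentify where the content lies. Since $E_{m,n}=\psi(D_k/(V^m,F^n,p))$ and $V^m,F^n\in I$ automatically for large $m,n$, the conclusion ``$G$ is a quotient of some $E_{m,n}$'' is \emph{exactly} the statement $p\in I$. So after establishing cyclicity, the only thing left to prove is $pM=0$; there is no classification of gluings, no normalization of generators, no diagonal exponent to locate. What you label a ``secondary subtlety'' is the entire proof, and the elaborate structure theory you propose on top of it is superfluous.

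You also make a factual slip that obscures the logic: you write that ``$W_{2,2}$ and $E_{m,n}$ are all killed by $p$'', but $W_{2,2}=\psi(D_k/(V^2,F^2))$ has $p=VF\ne 0$ on its Dieudonn\'e module (which has length $4$, whereas $D_k/(V^2,F^2,p)$ has length $3$). This is precisely why the hypothesis is phrased in terms of $W_{2,2}$: it is the test object for $p$ acting nontrivially on a cyclic Dieudonn\'e module. The paper's argument is simply the contrapositive --- no $W_{2,2}$ quotient forces $p\in I$ --- after which $I\supseteq(V^m,F^n,p)$ and $G$ is a quotient of $E_{m,n}$. Your parenthetical remark that ``a non-$p$-torsion cyclic module would surject onto something with a $W_{2,2}$ quotient'' is essentially the right idea, but you have demoted the whole argument to an aside and then proposed to do a great deal of unnecessary work instead.
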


\begin{proof} 
The condition $\dim_k\Hom_{\Gr/k}(G,\bbG_{a(1)})=1$ implies that the corresponding Dieudonn\'e 
module is cyclic,  $G_\ev\cong\psi(D_k/I)$ for some ideal $I$ containing $V^m$ and $F^n$ for some 
$m$, $n$. Not having $W_{2,2}$ as a quotient implies that $p =FV$ kills $D_k/I$, and, hence, $G$ 
is isomorphic to a quotient $D_k/(V^m, F^n, p)$. But the latter is precisely $E_{m,n}$. 
\end{proof} 

The last thing we need is the classification of the quotients of the group scheme 
$E_{m,n}$.  In terms of Dieudonn\'e modules, we have
\[ E_{m,n} = \psi(D_k/(V^m,F^n,p)). \]
The isomorphism classes of quotients of $D_k/(V^m,F^n,p)$ were classified
by Koch \cite{Koch:2001a}. The main results of that paper may be stated as
follows.

\begin{theorem}
\label{th:Koch}
Every nonzero finite quotient of $\hat D_k/(p)$ as a left 
$\hat D_k$-module is isomorphic to either $M_{m,n}=D_k/(V^m,F^n,p)$
(of length $m+n-1$) or $M_{m,n,\mu}=D_k/(F^n-\mu V^m,p)$ 
(of length $m+n$) for some $m,n \ge 1$
and $0\ne \mu\in k$. The only isomorphisms among these modules are given
by $M_{m,n,\mu}\cong M_{m,n,\mu'}$ if and only if $\mu/\mu'=a^{p^{m+n}-1}$ for some $a\in k$.
\end{theorem}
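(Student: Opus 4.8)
The result is due to Koch \cite{Koch:2001a}; here is the route I would take. First I would reduce modulo $p$ and exploit perfectness of $k$: the ring $R:=\hat D_k/(p)$ has $k$-``basis'' $\{F^i\}_{i\ge1}\cup\{1\}\cup\{V^j\}_{j\ge1}$ (with formal sums allowed on each side), the multiplication being determined by $FV=VF=0$ (since $FV=VF=p$ in $D_k$), $Fa=a^pF$ and $Va=a^{1/p}V$ for $a\in k$, where $a^{1/p}$ makes sense precisely because $k$ is perfect. Thus $R$ is the amalgam $k[[F]]\oplus_k k[[V]]$ of two twisted formal power series rings glued along their constants; each of $k[[F]]$, $k[[V]]$ is complete local with every left ideal of the form $(F^a)$, resp.\ $(V^b)$ (one normalises a series to $F^a\cdot(\text{unit})$ using $p$-th roots), and $R$ itself is local with maximal ideal $\fm=(F,V)$ and residue field $k$. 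A nonzero finite quotient of $\hat D_k/(p)$ as a left $\hat D_k$-module is then automatically a cyclic $R$-module $Q=Rg$, and $FV=VF=0$ gives $Q=k[[F]]g+k[[V]]g$, so it suffices to analyse this sum.

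The crux is a local analysis I would carry out by hand. Restricting along $k[[F]]\hookrightarrow R$, the module $Q$ is cyclic over the twisted complete DVR $k[[F]]$, hence $\cong k[[F]]/(F^a)$ for a unique $a\ge1$; so $g,Fg,\dots,F^{a-1}g$ are $k$-independent and $F^ag=0$, and symmetrically $g,Vg,\dots,V^{b-1}g$ are independent with $V^bg=0$. Writing $I:=k[[F]]g\cap k[[V]]g$, I would apply $F$ and $V$ to a general element of $I$ and use $FV=VF=0$ together with the two independence statements to conclude $I\subseteq kg+kF^{a-1}g$, so $\dim_kI\in\{1,2\}$, with $\dim_kI=2$ exactly when $F^{a-1}g=\mu V^{b-1}g$ for some $\mu\in k^\times$ (this forces $a,b\ge2$, since $F,V$ act nilpotently, so $g$ is not infinitely $V$- or $F$-divisible). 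If $\dim_kI=1$, then $Q=kg\oplus Q^+\oplus Q^-$, where $Q^{\pm}$ is the span of the positive $F$-, resp.\ $V$-translates of $g$, so $\dim_kQ=a+b-1$; the relations $F^ag=0=V^bg$ present $Q$ as a quotient of $M_{b,a}$, and equality of dimensions gives $Q\cong M_{b,a}$. If $\dim_kI=2$, put $n=a-1\ge1$, $m=b-1\ge1$; then $F^n-\mu V^m$ lies in the defining left ideal of $Q$, so $Q$ is a quotient of $M_{m,n,\mu}$, and $\dim_kQ=a+b-2=m+n$ identifies $Q\cong M_{m,n,\mu}$. I expect the main obstacle to be exactly this step --- proving $\dim_kI\le2$ and that the defining ideal of $Q$ is cut out by the single relation found --- since the $p$-semilinearity of $F$ and $p^{-1}$-semilinearity of $V$ make the bookkeeping delicate; perfectness of $k$ is what keeps it manageable.

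Finally I would settle the isomorphism statement via the invariants $(\dim_kM,\dim_kFM,\dim_kVM)$, which equal $(m+n-1,n-1,m-1)$ for $M_{m,n}$ and $(m+n,n,m)$ for $M_{m,n,\mu}$: these triples never coincide and recover $(m,n)$ within each family, so the only possible coincidences are among the $M_{m,n,\mu}$ with fixed $(m,n)$. Any isomorphism $\phi\colon M_{m,n,\mu}\to M_{m,n,\mu'}$ must carry the generator $g$ to a generator $h=ag'+w$ of $M_{m,n,\mu'}$ with $a\in k^\times$ and $w\in\fm M_{m,n,\mu'}$; using $F^{n+1}=0=V^{m+1}$ on $M_{m,n,\mu'}$ one computes $F^nh=a^{p^n}\mu'\,V^mg'$ and $V^mh=a^{p^{-m}}V^mg'$, so $R$-linearity together with $F^ng=\mu V^mg$ forces $a^{p^n}\mu'=\mu\,a^{p^{-m}}$, i.e.\ $\mu/\mu'=\bigl(a^{p^{-m}}\bigr)^{p^{m+n}-1}$. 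Conversely, if $\mu/\mu'=c^{p^{m+n}-1}$ with $c\in k^\times$, then $g\mapsto c^{p^m}g'$ is a well-defined surjection between modules of equal dimension, hence an isomorphism. This is the asserted parametrisation.
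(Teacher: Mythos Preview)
Your argument is correct, with one expository slip to fix: you write ``the module $Q$ is cyclic over the twisted complete DVR $k[[F]]$'', but $Q$ restricted to $k[[F]]$ is not cyclic in general (e.g.\ $M_{2,2}$ restricted to $k[[F]]$ is $k[[F]]/(F^2)\oplus k$). What you mean, and what you actually use, is that the \emph{submodule} $k[[F]]g$ is cyclic, hence isomorphic to $k[[F]]/(F^a)$ for the unique $a$ with $F^ag=0$. With that correction the rest of the argument --- the intersection analysis $\dim_k I\le 2$, the dimension count, and the isomorphism computation --- goes through cleanly.

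Your route differs from the paper's in organisation rather than substance. The paper first pins down $m,n$ as the heights of $M^\sharp$ and $M$, so that $M$ is a quotient of $M_{m,n}=D_k/(V^m,F^n,p)$, and then analyses the kernel of $M_{m,n}\to M$ by noting that the socle of $M_{m,n}$ is two-dimensional (spanned by $V^{m-1}$ and $F^{n-1}$); minimality of $m,n$ forces the one-dimensional kernel to be $\langle F^{n-1}-\mu V^{m-1}\rangle$, and a further quotient contradicts minimality via the one-dimensional socle of $M_{m-1,n-1,\mu}$. You instead work from the generator upward, defining $a,b$ as the $F$- and $V$-nilpotence orders on $g$ and computing the intersection $k[[F]]g\cap k[[V]]g$. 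Your constraint $F^{a-1}g\in kV^{b-1}g$ is exactly the dual of the paper's socle statement, so the two arguments are mirror images. Your version is more self-contained (no appeal to Cartier duality or socle language), while the paper's is slightly more conceptual; neither is shorter. For the isomorphism classification both proofs do the same computation: the paper phrases it as $(F^n-\mu V^m)a\in(F^n-\mu' V^m)$, you compute $F^nh$ and $V^mh$ directly, and the reparametrisation $c=a^{p^{-m}}$ reconciles the two formulas for $\mu/\mu'$.
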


\begin{proof}[Outline of proof]
Let $M$ be a nonzero finite quotient of $\hat D_k/(p)$, let $m$ be the height of $M^\sharp$ and
$n$ be the height of $M$. Then $M$ is a finite quotient of $D_k/(V^m,F^n,p)$. So either
$M$ is isomorphic to $D_k/(V^m,F^n,p)$ or the kernel is at least one dimensional. If the
kernel has length one, then it is in the socle, which has length two, and is the image of $V^{m-1}$ and $F^{n-1}$.
By minimality of $m$ and $n$, the kernel is then $(F^{n-1}-\mu V^{m-1})$ for some $0\ne \mu \in k$.
If $M$ is equal to this, we have $M \cong M_{m-1,n-1,\mu}$. Otherwise $M$ is a proper quotient of $M_{m-1,n-1,\mu}$.
But the socle of $M_{m-1,n-1,\mu}$ is one dimensional, spanned by the image of $V^{m-1}$, so in this
case $M$ is a quotient of $M_{m-1,n-1}$, which implies that $m$ and $n$ are not minimal. This contradiction
proves that these are the only isomorphism types. 

The dimensions of $M/F^iM$ and $M/V^iM$ distinguish all isomorphism classes, with the
possible exception of isomorphisms between $M_{m,n,\mu}$ and $M_{m,n,\mu'}$. Such an isomorphism is
determined modulo radical endomorphisms by a scalar $a\in k^\times \subseteq W(k)^\times$. The equation 
$(F^n-\mu V^m)a=b(F^n-\mu' V^m)$ implies that $b=a^{\bsi^n}$ and $\mu a=b^{\bsi^m}\mu'$. Thus
\begin{equation*}
\mu/\mu'=a^{\bsi^{m+n}}a^{-1}=a^{p^{m+n}-1}. 
\qedhere
\end{equation*}
\end{proof}

\begin{remark}
Note that if $k=\bbF_p$ then this condition on $\mu$ and $\mu'$ is only
satisfied if $\mu=\mu'$, so there are $p-1$ isomorphism classes of $M_{m,n,\mu}$. 
But if $k$ is algebraically closed then the isomorphism
type of $M_{m,n,\mu}$ is independent of $\mu$.
\end{remark}
 
 \end{appendix}

\bibliographystyle{amsplain}
%\bibliography{../repcoh}

\begin{thebibliography}{10}

\bibitem{Alperin/Evens:1982a}
J.~L. Alperin and L.~Evens, \emph{{Varieties and elementary abelian subgroups}}, J.~Pure \& Applied Algebra \textbf{26} (1982), 221--227.

\bibitem{Avrunin/Scott:1982a}
G.~S. Avrunin and L.~L. Scott, \emph{{Quillen stratification for modules}},
  Invent.\ Math. \textbf{66} (1982), 277--286.

\bibitem{Bendel:2001a}
C.~Bendel, \emph{{Cohomology and projectivity of modules for finite group
  schemes}}, Math.\ Proc.\ Camb.\ Phil.\ Soc. \textbf{131} (2001), 405--425.

\bibitem{Bendel:2001b}
C.~Bendel, \emph{{Projectivity of modules for infinitesimal unipotent group  schemes}}, Proc.\ Amer.\ Math.\ Soc. \textbf{131} (2001), no.~3, 405--425.

\bibitem{Bendel/Friedlander/Suslin:1997a}
C.~Bendel, E.~Friedlander, and A.~Suslin, \emph{{Infinitesimal $1$-parameter subgroups and cohomology}}, J.~Amer.\ Math.\ Soc. \textbf{10} (1997),
  693--728.

\bibitem{Bendel/Friedlander/Suslin:1997b}
C.~Bendel, E.~Friedlander, and A.~Suslin,  \emph{{Support varieties for infinitesimal group schemes}}, J.~Amer.\
  Math.\ Soc. \textbf{10} (1997), 729--759.

\bibitem{Benson:1991a}
D.~J. Benson, \emph{{Representations and cohomology I: Basic representation
  theory of finite groups and associative algebras}}, Cambridge Studies in
  Advanced Mathematics, vol.~30, Cambridge University Press, 1991, reprinted in
  paperback, 1998.

\bibitem{Benson/Carlson:JPAA}
D.~J. Benson and J.~F. Carlson, \emph{{Nilpotence and generation in the stable
  module category}}, J.~Pure \& Applied Algebra (2018), to appear.

\bibitem{Benson/Carlson/Rickard:1996a}
D.~J. Benson, J.~F. Carlson, and J.~Rickard, \emph{{Complexity and varieties
  for infinitely generated modules, II}}, Math.\ Proc.\ Camb.\ Phil.\ Soc.
  \textbf{120} (1996), 597--615.

\bibitem{Benson/Iyengar/Krause/Pevtsova:2017b}
D.~J. Benson, S.~B. Iyengar, H.~Krause, and J.~Pevtsova, \emph{{Colocalising
  subcategories of modules over finite group schemes}}, Annals of $K$ Theory
  \textbf{2} (2017), no.~3, 387--408.

\bibitem{Benson/Iyengar/Krause/Pevtsova:2018a}
D.~J. Benson, S.~B. Iyengar, H.~Krause, and J.~Pevtsova,  \emph{{Stratification for module categories of finite group schemes}},
  J.~Amer.\ Math.\ Soc. \textbf{31} (2018), no.~1, 265--302.

\bibitem{Benson/Iyengar/Krause/Pevtsova:bikp4}
D.~J. Benson, S.~B. Iyengar, H.~Krause, and J.~Pevtsova,  \emph{{Local duality for representations of finite  group schemes}}, preprint, 2018; \texttt{arXiv:}


\bibitem{Benson/Iyengar/Krause/Pevtsova:2019a}
D.~J. Benson, S.~B. Iyengar, H.~Krause, and J.~Pevtsova, \emph{{$\pi$-points for unipotent finite supergroup schemes}}, in preparation.

\bibitem{Benson/Pevtsova:bp2}
D.~J. Benson and J.~Pevtsova, \emph{{Representations and cohomology of a family of finite supergroup schemes}}, Preprint, 2017.

\bibitem{Burke:2012a}
J.~Burke, \emph{{Finite injective dimension over rings with Noetherian
  cohomology}}, Math.\ Res.\ Lett. \textbf{19} (2012), no.~4, 741--752.

\bibitem{Carlson:1983a}
J.~F. Carlson, \emph{{The varieties and the cohomology ring of a module}},
  J.~Algebra \textbf{85} (1983), 104--143.

\bibitem{Cartan/Eilenberg:1956a}
H.~Cartan and S.~Eilenberg, \emph{{Homological algebra}}, Princeton
  Mathematical Series, no.~19, Princeton Univ.\ Press, 1956.

\bibitem{Chouinard:1976a}
L.~Chouinard, \emph{{Projectivity and relative projectivity over group rings}},
  J.~Pure \& Applied Algebra \textbf{7} (1976), 278--302.

\bibitem{Dade:1978b}
E.~C. Dade, \emph{{Endo-permutation modules over $p$-groups, II}}, Ann.\ of
  Math. \textbf{108} (1978), 317--346.

\bibitem{Demazure/Gabriel:1970a} M.~Demazure and P.~Gabriel,
  \emph{{Groupes alg\'{e}briques. Tome I: G\'{e}om\'{e}trie alg\'{e}brique,
      g\'{e}n\'{e}ralit\'{e}s, groupes commutatifs}}, Masson \& Cie,
  \'{E}diteur, Paris; North-Holland Publishing Co., Amsterdam, 1970.
 
\bibitem{Drupieski:2013a}
C.~Drupieski, \emph{{Cohomological finite generation for restricted Lie superalgebras and finite supergroup schemes}}, Representation Theory  \textbf{17} (2013), 469--507.

\bibitem{Drupieski:2016a}
C.~Drupieski, \emph{{Cohomological finite-generation for finite supergroup  schemes}}, Adv.\ in Math. \textbf{288} (2016), 1360--1432.

\bibitem{Drupieski/Kujawa:sv}
C.~Drupieski and J.~Kujawa, \emph{{On support varieties for Lie  superalgebras and finite supergroup schemes}}, J. Algebra \textbf{525} (2019), 64--110. 


\bibitem{Drupieski/Kujawa:ga}
C.~Drupieski and J.~Kujawa, \emph{{Graded analogues of one-parameter subgroups and
  applications to the cohomology of $GL_{m|n(r)}$}}, Adv. Math. \textbf{348} (2019), 277--352.


\bibitem{Drupieski/Kujawa:cs}
C.~Drupieski and J.~Kujawa, \emph{{On the cohomological spectrum and support varieties for infinitesimal unipotent supergroup schemes}}, Advances in Algebra, Springer Proc. Math. Stat., \textbf{277}, Springer, ()2019), 121--167.

\bibitem{Fontaine:1977a}
J.-M. Fontaine, \emph{{Groupes $p$-divisibles sur les corps locaux}},
  Ast\'erisque, vol. 47--48, Soci\'et\'e Math.\ de France, 1977.

\bibitem{Friedlander/Pevtsova:2005a}
E.~M. Friedlander and J.~Pevtsova, \emph{{Representation theoretic support
  spaces for finite group schemes}}, Amer.\ J.\ Math. \textbf{127} (2005),
  379--420, correction: AJM {\bf 128} (2006), 1067--1068.

\bibitem{Friedlander/Pevtsova:2007a}
E.~M. Friedlander and J.~Pevtsova,  \emph{{$\Pi$-supports for modules for finite groups schemes}}, Duke
  Math.\ J. \textbf{139} (2007), 317--368.


\bibitem{Friedlander/Suslin:1997a} E. Friedlander, A. Suslin, 
Cohomology of finite group schemes over a field,
{\em Invent. Math.} \textbf{127} (1997), 209--270.  
%**

\bibitem{Jantzen:2003a}
J.~C. Jantzen, \emph{{Representations of algebraic groups}}, American Math.\
  Society, 2003, 2nd ed.

\bibitem{Koch:2001a}
A.~Koch, \emph{{Witt subgroups and cyclic Dieudonn\'e modules killed by $p$}},
  Rocky Mountain J.\ Math. \textbf{31} (2001), no.~3, 1023--1038.

\bibitem{Kroll:1984a}
O.~Kroll, \emph{{Complexity and elementary abelian $p$-groups}}, J.~Algebra
  \textbf{88} (1984), 155--172.

\bibitem{Masuoka:2005a}
A.~Masuoka, \emph{{The fundamental correspondences in super affine groups and
  super formal groups}}, J.~Pure \& Applied Algebra \textbf{202} (2005),
  284--312.

\bibitem{Masuoka:2017a}
A.~Masuoka, \emph{{Hopf algebra techniques applied to super algebraic groups}},
  (2017), to appear.

\bibitem{May:1970a}
J.~P. May, \emph{{A general algebraic approach to Steenrod operations}}, The
  Steenrod algebra and its applications (F.~P. Peterson, ed.), Lecture Notes in
  Mathematics, vol. 168, Springer-Verlag, Ber\-lin/New York, 1970,
  pp.~153--231.

\bibitem{Milnor:1958a}
J.~Milnor, \emph{{The Steenrod algebra and its dual}}, Ann.\ of Math.
  \textbf{67} (1958), no.~1, 150--171.

\bibitem{Nakano/Palmieri:1998a}
D.~K. Nakano and J.~Palmieri, \emph{{Support varieties for the Steenrod
  algebra}}, Math.\ Zeit. \textbf{227} (1998), 663--684.

\bibitem{Quillen:1971b+c}
D.~G. Quillen, \emph{{The spectrum of an equivariant cohomology ring, I, II}},
  Ann.\ of Math. \textbf{94} (1971), 549--572, 573--602.

\bibitem{Quillen/Venkov:1972a}
D.~G. Quillen and B.~B. Venkov, \emph{{Cohomology of finite groups and
  elementary abelian subgroups}}, Topology \textbf{11} (1972), 317--318.

\bibitem{Serre:1965a}
J.-P. Serre, \emph{{Sur la dimension cohomologique des groupes profinis}},
  Topology \textbf{3} (1965), 413--420.

\bibitem{Steenrod/Epstein:1962a}
N.~E. Steenrod and D.~Epstein, \emph{{Cohomology operations}}, Ann.\ of Math.\
  Studies, vol.~50, Princeton Univ.\ Press, 1962.

\bibitem{Suslin:2006a}
A.~A. Suslin, \emph{{Detection theorem for finite group schemes}}, J.~Pure \&
  Applied Algebra \textbf{206} (2006), 189--221.

\bibitem{Waterhouse:1979a}
W.~C. Waterhouse, \emph{{Introduction to affine group schemes}}, Graduate Texts
  in Mathematics, vol.~66, Springer-Verlag, Ber\-lin/New York, 1979.

\bibitem{Wilkerson:1981a}
C.~W. Wilkerson, \emph{{The cohomology algebras of finite dimensional Hopf
  algebras}}, Trans.\ Amer.\ Math.\ Soc. \textbf{264} (1981), no.~1, 137--150.

\end{thebibliography}

\end{document}